\documentclass[11pt,reqno]{amsart}
\usepackage{geometry} 
\geometry{a4paper} 
\usepackage{amsmath,amssymb,amsthm, mathrsfs}
\usepackage{xcolor}
\usepackage{graphicx} % to inlcude graphics with \includegraphics

\usepackage{enumerate}
\usepackage{indentfirst}
\usepackage[colorlinks,
            linkcolor=black,
            anchorcolor=black,
            citecolor=black
            ]{hyperref}
\DeclareMathOperator{\diver}{\mathrm{div}}

\newtheorem{mydef}{Definition}
\newtheorem{thm}{Theorem}[section]
\newtheorem{lem}{Lemma}[section]

\newtheorem{rk}{Remark}
\newtheorem{cor}{Corollary}[section]
\numberwithin{rk}{section}
\numberwithin{prop}{section}
\numberwithin{mydef}{section}
\numberwithin{lem}{section}
\numberwithin{equation}{section}
\numberwithin{thm}{section}
\allowdisplaybreaks[2]

%\title[compressible isentropic Navier-Stokes equations]{Global  regular solution for  1D degenerate compressible isentropic Navier-Stokes equations with large data and far field vacuum}
%\author{}

%\author[label1]{Hao Li}
%\address[label1]{School of Mathematical Sciences,  Fudan University, Shanghai 200433, P. R. China;} \ead{\tt  hao\_li@fudan.edu.cn}
%\author[label2]{Yue Cao}
%\address[label2]{School of Mathematical Sciences, Shanghai Jiao Tong University,
%Shanghai 200240, P. R. China} \ead{\tt cao\_yue712@sjtu.edu.cn}
%\author[label2]{Shengguo Zhu \thanks{Corresponding author.}}
%%\address[label2]
% %\cortext[cor1]{Corresponding author.}
%\ead{\tt  zhushengguo@sjtu.edu.cn}

\title[Compressible Navier-Stokes equations]{Global  spherically symmetric solutions to  degenerate compressible Navier-Stokes  equations with large data and  far field vacuum}

\date{\today}

\author{Yue Cao}
\address[Yue Cao]{School of Mathematical Sciences, and MOE-LSC, Shanghai Jiao Tong University,
Shanghai 200240, P. R. China} \email{\tt cao\_yue712@sjtu.edu.cn}

\author{Hao Li}
\address[Hao Li]{School of Mathematical Sciences,  Fudan University, Shanghai 200433, P. R. China} \email{\tt  hao\_li@fudan.edu.cn}

\author{Shengguo Zhu }
\address[Shengguo Zhu]{School of Mathematical Sciences,  CMA-Shanghai,   and MOE-LSC,   Shanghai Jiao Tong University, Shanghai 200240, P. R. China.}
 \email{\tt  zhushengguo@sjtu.edu.cn}

 \begin{document}
\begin{abstract}

We consider  the initial-boundary value problem (IBVP) for the  isentropic compressible Navier-Stokes equations (\textbf{CNS})   in the domain  exterior to a ball in  $\mathbb R^d$ $(d=2\ \text{or} \ 3)$. When viscosity coefficients are
given as a constant multiple of the mass density $\rho$, based on
some analysis of the nonlinear structure of this system,  we prove the global existence   of the unique spherically symmetric classical   solution for (large)  initial data with spherical symmetry and  far field vacuum in some inhomogeneous Sobolev spaces. Moreover,  the solutions we obtained have  the conserved total mass and finite total energy.  $\rho$  keeps positive in the domain  considered  but decays to zero in the far field, which is consistent with the facts  that the total  mass  is conserved, and  \textbf{CNS} is a model of non-dilute fluids where $\rho$ is bounded away from the vacuum.
To prove the existence, on the one hand, we  consider   a  well-designed reformulated structure by introducing some new variables, which, actually, can transfer the degeneracies of the time evolution and the viscosity to the possible singularity of some special source terms. On the other hand, it is observed  that, for  the spherically symmetric flow,  the radial projection of the   so-called effective velocity   $\boldsymbol{v} =U+\nabla \varphi(\rho)$ ($U$ is the velocity of the fluid, and $\varphi(\rho)$ is a function of $\rho$ defined via the shear viscosity coefficient $\mu(\rho)$: $\varphi'(\rho)=2\mu(\rho)/\rho^2$),
verifies a damped transport equation which provides the possibility to obtain its upper bound.  Then combined with the BD entropy estimates, one can obtain the required uniform a priori estimates of the solution. 
It is worth pointing out that the frame work on the well-posedness theory established here can be applied to the  shallow water equations.
\end{abstract}

\subjclass[2010]{35A01, 35Q30, 76N10, 35B65, 35A09.}
%Mathematics Subject Classification (2010): 35B44; 35B65; 76D03.
\keywords{
Degenerate compressible Navier-Stokes equations,  Shallow water equations, Multi-dimension, Far field vacuum, Large data, Global-in-time well-posedness.}

\maketitle

\section{introduction}

The time evolution of the mass density $\rho\geq 0$ and the velocity $U=(U^{(1)},\cdots, U^{(d)})^\top$ $\in \mathbb{R}^d$  of a general viscous isentropic compressible fluid occupying a spatial domain $\Omega\subset \mathbb{R}^d$ is governed by the following isentropic \textbf{CNS}:
\begin{equation}
\label{eq:1.1}
\begin{cases}
\rho_t+\text{div}(\rho U)=0,\\[5pt]
(\rho U)_t+\text{div}(\rho U\otimes U)
+\nabla
P =\text{div} \mathbb{T}.
\end{cases}
\end{equation}
Here,  $x=(x_1,\cdots, x_d)^{\top}\in \Omega$, $t\geq 0$ are the space and time variables, respectively.  
 For the polytropic gases, the constitutive relation is given by
\begin{equation}
\label{eq:1.2}
P=A\rho^{\gamma}, \quad A>0,\quad  \gamma> 1,
\end{equation}
where $A$ is  an entropy  constant and  $\gamma$ is the adiabatic exponent. $\mathbb{T}$ denotes the viscous stress tensor with the  form
\begin{equation}\label{eq:1.3}
\mathbb{T}=2\mu(\rho)D(U)+\lambda(\rho)\text{div}U\,\mathbb{I}_d,
\end{equation}
 where $D(U)=\frac{1}{2}\big(\nabla U+(\nabla U)^\top\big)$ is the deformation tensor,    $\mathbb{I}_d$ is the $d\times d$ identity matrix,
\begin{equation}
\label{fandan}
\mu(\rho)=\alpha  \rho^\delta,\quad \lambda(\rho)=\beta  \rho^\delta,
\end{equation}
for some  constant $\delta\geq 0$,
 $\mu(\rho)$ is the shear viscosity coefficient, $\lambda(\rho)+\frac{2}{d}\mu(\rho)$ is the bulk viscosity coefficient,  $\alpha$ and $\beta$ are both constants satisfying
 \begin{equation}\label{10000}\alpha>0 \quad \text{and} \quad   2\alpha+d\beta\geq 0.
 \end{equation}

In the theory of gas dynamics, the \textbf{CNS} can be derived from the Boltzmann equations through the Chapman-Enskog expansion, cf. Chapman-Cowling \cite{chap} and Li-Qin \cite{tlt}. Under some proper physical assumptions, one can find that  the viscosity coefficients and heat conductivity coefficient $\kappa$ are not constants but functions of the absolute temperature $\theta$ such as:
\begin{equation}
\label{eq:1.5g}
\begin{split}
\mu(\theta)=&a_1 \theta^{\frac{1}{2}}F(\theta),\quad \lambda(\theta)=a_2 \theta^{\frac{1}{2}}F(\theta), \quad \kappa(\theta)=a_3 \theta^{\frac{1}{2}}F(\theta),
\end{split}
\end{equation}
for some constants $a_i$ $(i=1,2,3)$ (see \cite{chap}). Actually for the cut-off inverse power force models, if the intermolecular potential varies as $\ell^{-\varkappa}$,
where $\ell$ is intermolecular distance and $\varkappa$ is a positive constant, then in (\ref{eq:1.5g}):
$$F(\theta)=\theta^{b}\quad \text{with}\quad b=\frac{2}{\varkappa} \in [0,\infty).$$
In particular (see \S 10 of \cite{chap}), for ionized gas,
$\varkappa=1$ and $ b=2$;
for Maxwellian molecules,
$\varkappa=4$ and $ b=\frac{1}{2}$; 
while for rigid elastic spherical molecules,
$\varkappa=\infty$ and $  b=0$.

According to Liu-Xin-Yang \cite{taiping}, for isentropic and polytropic fluids, such a dependence is inherited through the laws of Boyle and Gay-Lussac:
$$
P=\mathcal{R}\rho \theta=A\rho^\gamma \quad \text{for \  constant} \quad \mathcal{R}>0,
$$i.e., $\theta=A\mathcal{R}^{-1}\rho^{\gamma-1}$, and one can see that the viscosity coefficients are functions of $\rho$ taking   the form $(\ref{fandan})$.  Actually, there do exist some physical models that satisfy the density-dependent viscosities  assumption \eqref{fandan}, such as Korteweg system, shallow water equations, lake equations and quantum Navier-Stokes system (see \cite{bd6,bd2, BN1, BN2, Gent, jun, Mar}).
In particular,  the  viscous shallow water model in two-dimensional (2-D) space   reads as 
\begin{equation}\label{shallow}
\left\{
\begin{aligned}
&h_t+\text{div}(h W)=0,\\
&(hW)_t+\text{div}(h W\otimes W)+\nabla h^2=\mathcal{V}(h,W),
\end{aligned}
\right.
\end{equation}
where $h$ denotes the height of the free surface, $W=(W^{(1)}, W^{(2)})^{\top}\in \mathbb R^2$ is the mean horizontal velocity of the fluid. $\mathcal{V}(h,W)$ is the viscous term. There are  several different viscous terms imposed, such as $$\text{div}(h D(W)), \quad \text{div}(h\nabla W), \quad h\Delta W, \quad \Delta(h W).$$
In particular, the case for $\mathcal{V}=\text{div}(h\nabla W)$ is corresponding to the well-known viscous Saint-Venant model.  The derivation of Gent \cite{Gent} suggests $\mathcal{V}=\text{div}(h D(W))$. A more recent careful derivation by Marche \cite{Mar} and  Bresch-Noble \cite{BN1, BN2} suggests that
\begin{equation*}\label{newvis}
\mathcal{V}(h, W)=\text{div}(2h D(W)+2h \text{div} W \mathbb{I}_2).
\end{equation*}
%Actually, one can easily  find that when $\gamma=2, \delta=1, d=2$, equations \eqref{eq:1.1}  cover most of shallow water models mentioned above.

In the current paper,  let  $\Omega=\{x\in\mathbb R^d|\,  |x|>a\}$ ($a>0$ is a constant) be an exterior domain in  $\mathbb R^d$ $(d=2\ \text{or} \ 3)$, and  $(\mu(\rho), \lambda(\rho))$ in \eqref{fandan} satisfy 
\begin{equation}\label{bdrelation}
\mu(\rho)=\alpha\rho,\quad \lambda(\rho)=0.
\end{equation}
We are concerned with global spherically symmetric (smooth) solutions taking the form 
\begin{equation}\label{duichenxingshi}
(\rho,U)(t,x)
=(\rho(t,|x|),u(t,|x|)\dfrac{x}{|x|})
\end{equation}
of the equations 
\eqref{eq:1.1}-\eqref{10000}  in the domain $\Omega$ with the initial data:
\begin{equation}\label{eqs:CauchyInit}
  (\rho,U)(0,x)=(\rho_0,U_0)(x)= ( \rho_0(|x|),u_0(|x|)\dfrac{x}{|x|})\, \quad\, \ \text{for}\quad  x \in \Omega,
\end{equation}
and the following   boundary conditions and far field behavior:
% We are concerned with the spherically symmetric solutions to \eqref{e1.1} global well-posedness of   regular solutions with large data and far field vacuum  to the Cauchy problem \eqref{e1.1} with the following initial data and far field behavior:
\begin{equation}\label{e1.3}
\begin{split}
\displaystyle
U(t,x)|_{|x|=a}=0\quad  \text{for}\quad   t\ge 0,\\[5pt]
\left(\rho(t,x),U(t,x)\right)\to \left(0,0\right)\quad  \text{as}\quad  \left|x\right|\to \infty\quad  \text{for}\quad   t\ge 0.&
\end{split}
\end{equation}

 There is a lot of literature on the global   well-posedness  of smooth solutions to the IBVP and Cauchy problem of    \eqref{eq:1.1}.  For  constant viscous flows   ($\delta=0$ in (\ref{fandan})),  when  $\inf_x {\rho_0(x)}>0$, the global well-posedness of strong solutions with arbitrarily large data in some bounded, one-dimensional (1-D)  domains has been proven by   Kazhikhov-Shelukhin \cite{KS}, and later, Kawashima-Nishida \cite{KN}  extended this theory  to the unbounded domains. In $\mathbb{R}^3$, Matsumura-Nishida \cite{MN} obtained  a unique global classical solution    for initial data close to a non-vacuum equilibrium in some Sobolev space $H^s(\mathbb{R}^3)$ ($s>\frac{5}{2}$) (see also Danchin \cite{dr} in some critical spaces).  In particular, Jiang \cite{J}  obtained the global existence of spherically symmetric smooth  solutions for (large) initial data with spherical symmetry to   the non-isentropic flow in the domain  exterior to a ball  in $\mathbb R^d$ $(d=2,3)$. 
 However, these approaches do not work when vacuum appears (i.e., $\inf_x {\rho_0(x)}=0$), which occurs when some physical requirements are imposed, such as finite total initial mass  and energy in the whole space.  One of the  main issues  in the presence of vacuum is the degeneracy of the time evolution operator, which makes it hard to  understand the behavior of $U$ near the vacuum.
A  remedy was suggested  by Cho-Choe-Kim \cite{CK3} in three-dimensional (3-D) space, where they imposed  initially a {\it compatibility condition}:
\begin{equation*}
-\text{div} \mathbb{T}_{0}+\nabla P(\rho_0)=\sqrt{\rho_{0}} g \quad \text{for\  \ some} \ \ g\in L^2(\mathbb{R}^3),
\end{equation*}
which  leads to  $\sqrt{\rho}U_t\in L^\infty ([0,T_{*}]; L^2(\mathbb{R}^3))$  for some  time $T_{*}>0$, and  they established    the local well-posedness of  strong  solutions with vacuum, which,  recently,  has been shown to be   a global one with small energy by Huang-Li-Xin \cite{HX1} in  $\mathbb{R}^3$. Later,  Jiu-Li-Ye \cite{j2}  proved that the 1-D Cauchy problem of  \eqref{eq:1.1} admits  a unique global classical solution  with arbitrarily large data and vacuum. Some interesting progress on the global  spherically symmetric strong solutions in annular or exterior  domains of $\mathbb R^d$ $(d\ge 2)$ can also be found in   Choe-Kim \cite{ck},  Ding-Wen-Yao-Zhu \cite{dwyz} and so on.
We also refer  Feireisl \cite{fu3}, Hoff \cite{H},   Hoff-Jenssen  \cite{HJ}, Lions \cite{lions} to readers      and the references therein for the existence theory of weak solutions with large data in multi-dimensional (M-D) space.

 For degenerate viscous flow ($\delta>0$ in (\ref{fandan})), when  $\inf_x {\rho_0(x)}>0$, some important progresses  on  the global well-posedness of  smooth solutions to the IBVP and Cauchy problem  of  \eqref{eq:1.1}  have been obtained, which include Constantin-Drivas-Nguyen-Pasqualotto \cite{cons}, Haspot \cite{HB}, Kang-Vasseur \cite{kv},  Mellet-Vasseur \cite{vassu2}  for 1-D flow with arbitrarily large data, and Sundbye \cite{Sundbye2}, Wang-Xu \cite{weike}    for initial data close to a non-vacuum equilibrium in some Sobolev spaces $H^s(\mathbb{R}^2)$ ($s>2$). 
 When vacuum appears, instead of the uniform elliptic structure in the constant viscous flow, the viscosity degenerates when density vanishes, which raises the difficulty of the problem to another level, i.e.,
\begin{equation}\label{doubled}
\displaystyle
 \underbrace{\rho(U_t+U\cdot \nabla U)}_{Degenerate   \ time \ evolution \ operator}+\nabla P= \underbrace{\text{div}(2\mu(\rho)D(U)+\lambda(\rho)\text{div}U \mathbb{I}_d)}_{Degenerate\ elliptic \ operator}.
\end{equation}
Usually, it is very hard to control the behavior of the fluids velocity $U$ near the vacuum.  A remarkable discovery of a new mathematical entropy function was made by Bresch-Desjardins \cite{bd6} for $\lambda(\rho)$ and $\mu(\rho)$   satisfying the relation 
\begin{equation}\label{bds}
 \lambda(\rho)=2(\mu'(\rho)\rho-\mu(\rho)),
\end{equation}
which offers an estimate
$$
\mu'(\rho)\nabla \sqrt{\rho}\in L^\infty ([0,T];L^2(\mathbb{R}^d))
$$
provided that 
$ \mu'(\rho_0)\nabla \sqrt{\rho_0}\in L^2(\mathbb{R}^d)$ for any $d\geq 1$.
This observation plays an important role in the development of the global existence of weak solutions with vacuum for system (\ref{eq:1.1}) and some related models, see Bresch-Desjardins \cite{bd6}, Bresch-Vasseur-Yu \cite{bvy},  Guo-Jiu-Xin \cite{gjx},  Guo-Li-Xin \cite{guo},  Li-Xin \cite{lz},  Vasseur-Yu \cite{vayu} and so on. However, the regularities and uniquness of such weak solutions with  vacuum  remain open.  Recently,  for the cases $\delta> 1$ in \eqref{fandan}, the  global well-posedness of regular solutions with vacuum for a class of smooth initial data that are of small density but possibly large velocities  in some homogeneous Sobolev spaces  has been established  by Xin-Zhu \cite{zz}.  Some other interesting results can also be seen in  Haspot \cite{Has}, Luo-Xin-Zeng \cite{luotao}, Yang-Zhao \cite{zyj} and so on.

In the current paper,  we establish  the global existence and uniqueness of  spherically symmetric smooth  solutions for (large)  initial data with spherical symmetry and  far field vacuum   in some inhomogeneous Sobolev spaces to the  IBVP \eqref{eq:1.1}-\eqref{10000} with   \eqref{eqs:CauchyInit}-\eqref{e1.3}  in the domain ($\Omega=\{x\in\mathbb R^d|\,  |x|>a\}$)  exterior to a ball in  $\mathbb R^d$ $(d=2\ \text{or} \ 3)$.
Notice that, under the assumption \eqref{bdrelation},  if  $\rho>0$, $\eqref{eq:1.1}_2$ can be formally rewritten as
\begin{equation}\label{qiyi}
\begin{split}
U_t+U\cdot\nabla U +\frac{A\gamma}{\gamma-1}\nabla\rho^{\gamma-1}+ LU=\nabla \ln\rho \cdot  Q(U),
\end{split}
\end{equation}
where the quantities $L U$ and  $Q(U)$ are given by
\begin{equation}\label{operatordefinition}
LU=-\alpha\Delta U-\alpha\nabla \text{div}U,\quad Q(U)= \alpha(\nabla U+(\nabla U)^\top).
\end{equation}
Therefore, the two quantities
$$(\rho^{\gamma-1}, \nabla \ln\rho)$$
will play significant roles in our analysis on the high order  regularities of the fluid velocity $U$. Due to  this observation, we first introduce a proper class of solutions called regular solutions to  the  IBVP \eqref{eq:1.1}-\eqref{10000} with  \eqref{eqs:CauchyInit}-\eqref{e1.3}.
\begin{mydef}\label{cjk}
Let $\Omega=\{x\in\mathbb R^d|\,  |x|>a\}$,  $\delta=1$, $q\in (3,6]$ and $T>0$. The pair $(\rho, U)$ is called a regular solution to the  IBVP \eqref{eq:1.1}-\eqref{10000} with  \eqref{eqs:CauchyInit}-\eqref{e1.3} in $[0,T]\times\Omega$, if $(\rho, U)$ satisfies this problem in the sense of distributions and:
\begin{equation*}
\begin{split}
 &(1)   \inf_{x\in \Omega} {\rho(t,x)}=0 \ \  \textrm{for} \ \ 0\leq t \leq T, \ \    0<\rho^{\gamma-1}\in C([0,T];H^2(\Omega)),\\
 &\ \    \ \ \nabla\ln\rho\in C([0,T]; L^q(\Omega)\cap D^1(\Omega));\\
 &(2) \  U\in C([0,T]; H^2(\Omega))\cap L^2([0,T]; D^3(\Omega)),\  U_t\in C([0,T]; L^2(\Omega))\cap L^2([0,T]; D^1(\Omega)).
\end{split}
\end{equation*}
\end{mydef}
Here and throughout this paper, we adopt the following  notations for the standard homogeneous  Sobolev spaces (see Galdi \cite{gandi}):
\begin{equation*}\begin{split}
&D^{k,r}(\Omega)=\{f\in L^1_{loc}(\Omega): \|\nabla^kf\|_{L^r(\Omega)}<\infty\}, \ \ \  D^k(\Omega)=D^{k,2}(\Omega).
\end{split}
\end{equation*}

Our main theorem on the  global-in-time  well-posedness theory  can be stated as follows.
\begin{thm}\label{th1}
Assume the physical parameters $(\delta, \gamma, \alpha,\beta)$ satisfy 
 \begin{equation}\label{cd1}
\delta=1,\quad \gamma>\frac32,\quad \alpha>0 \quad \text{and} \quad \beta=0.
\end{equation}
Let  $q\in(3,6]$ be some constant, and  the initial data $(\rho_0(x), U_0(x))= ( \rho_0(|x|),u_0(|x|)x/|x|)$ be   spherically symmetric and   satisfy
 \begin{equation}\label{id1}
 \begin{split}
&0<\rho_0(x)\in L^1(\Omega), \quad  (\rho_0^{\gamma-1}(x), U_0(x))\in H^2(\Omega),\\
& \nabla\ln\rho_0(x)\in L^q(\Omega)\cap L^\infty(\Omega)\cap D^1(\Omega).
\end{split}
\end{equation}
 Then  the  IBVP \eqref{eq:1.1}-\eqref{10000} with  \eqref{eqs:CauchyInit}-\eqref{e1.3} admits a unique global classical solution $(\rho(t,x), U(t,x))$
in $(0,\infty)\times\Omega$, and for any  $0<T<\infty$, $(\rho(t,x), U(t,x))$ is a regular one  in $[0,T]\times \Omega$ as defined in Definition \ref{cjk}, and
\begin{equation}\label{er2}
\begin{split}
&\rho\in C([0,T];L^1(\Omega)),
%\quad 0<\rho^{\gamma-1}\in C([0,T];H^2(\Omega)),
\quad (\rho^{\gamma-1})_t \in C([0,T]; H^1(\Omega)),\\
%&\psi\in C([0,T]; H^1(\Omega)),
& (\nabla\ln\rho)_t\in C([0,T]; L^2(\Omega)),\quad \nabla\ln\rho\in L^\infty([0,T]\times \Omega),\\
%&U\in C([0,T]; H^2(\Omega))\cap L^2([0,T]; H^3(\Omega)),\quad U_t\in C([0,T]; L^2(\Omega))\cap L^2([0,T]; D_*^1(\Omega)),\\
& t^{\frac12}U_t\in L^\infty([0,T];D^1(\Omega))\cap L^2([0,T];D^2(\Omega)),\quad t^{\frac12}U_{tt}\in L^2([0,T];L^2(\Omega)).
\end{split}
\end{equation}
Moreover,  $(\rho(t,x), U(t,x))$ is also a spherically symmetric one  taking the form \eqref{duichenxingshi}.
\end{thm}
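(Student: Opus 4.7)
The strategy is to reduce the problem to a one-dimensional formulation in the radial variable $r=|x|\in[a,\infty)$, establish local-in-time existence of a regular solution in the class of Definition \ref{cjk} through a reformulated system, and then extend it globally by means of uniform a priori estimates. Plugging the ansatz \eqref{duichenxingshi} into \eqref{eq:1.1} under assumption \eqref{bdrelation} yields a 1D system for $(\rho(t,r),u(t,r))$ on $r\in[a,\infty)$ with boundary condition $u(t,a)=0$ and $(\rho,u)\to(0,0)$ as $r\to\infty$. The two sources of degeneracy are the time-evolution operator $\rho u_t$ and the viscous operator, which both collapse where $\rho=0$. To disentangle them I would introduce the auxiliary unknowns
\begin{equation*}
\phi=\frac{A\gamma}{\gamma-1}\,\rho^{\gamma-1},\qquad \psi=\nabla\ln\rho,
\end{equation*}
so that wherever $\rho>0$ the momentum equation takes the nondegenerate parabolic form \eqref{qiyi}, with the only price being a source term $\psi\cdot Q(U)$ whose potential singularity is precisely what the regularity of $\psi$ built into Definition \ref{cjk} is designed to control.

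For local existence I would linearize \eqref{qiyi} together with the transport equations satisfied by $\phi$ and $\psi$ (both derived from $\eqref{eq:1.1}_1$), solve the resulting parabolic system for $U$ with frozen coefficients, and set up a Banach fixed-point argument in the natural energy space associated with \eqref{er2}. The hypothesis $\rho_0^{\gamma-1}\in H^2$ together with $\nabla\ln\rho_0\in L^q\cap L^\infty\cap D^1$ ensures that the coefficients of the linearized system are well-behaved, and in particular forces $\rho_0(r)>0$ at each finite $r$ while still allowing $\rho_0\to 0$ at infinity. Uniqueness of regular solutions then follows by a standard energy estimate for the difference of two solutions, using the nondegenerate elliptic structure of $LU$ in \eqref{qiyi} and the mass-type identity from the continuity equation to control the weighted $\sqrt{\rho}\,U$ terms.

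The heart of the argument is the global a priori estimate. Multiplying $\eqref{eq:1.1}_2$ by $U$ and combining with $\eqref{eq:1.1}_1$ gives the basic energy identity, yielding $(\sqrt{\rho}\,U,\rho^{\gamma/2})\in L^\infty_tL^2_x$ and $\sqrt{\rho}\,|\nabla U|\in L^2_{t,x}$ together with conservation of the total mass. Under \eqref{bdrelation} the pair $(\mu,\lambda)$ satisfies the BD relation \eqref{bds}, so the Bresch--Desjardins entropy estimate produces the additional bound $\nabla\sqrt{\rho}\in L^\infty_tL^2_x$. The crucial new ingredient is then the damped transport equation satisfied, for the spherically symmetric flow, by the radial component $v(t,r)=u(t,r)+2\alpha\,\partial_r\ln\rho(t,r)$ of the effective velocity $\boldsymbol{v}=U+\nabla\varphi(\rho)$ with $\varphi(\rho)=2\alpha\ln\rho$; schematically it reads
\begin{equation*}
v_t+u\,\partial_r v+a(t,r)\,v=b(t,r),
\end{equation*}
with a nonnegative damping coefficient $a$ coming from the pressure and $b$ controlled by the lower-order quantities already estimated. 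A characteristic argument then yields a uniform-in-time $L^\infty$ bound on $v$, from which one recovers a uniform upper bound on $\partial_r\ln\rho$, and hence on $\rho$ itself, once the available bound on $u$ is used. Feeding this upper bound back into \eqref{qiyi} and viewing $LU$ as a genuinely elliptic operator with now-bounded coefficient $\nabla\ln\rho$, I would then close the higher-order estimates: $H^2$ for $\phi$, $L^q\cap D^1$ for $\psi$, $H^2$ for $U$, and the $t$-weighted bounds on $U_t$ and $U_{tt}$ listed in \eqref{er2}.

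I expect the main obstacle to be propagating the $L^q\cap L^\infty\cap D^1$ norm of $\psi=\nabla\ln\rho$ uniformly in time, because $\psi$ satisfies a transport-type equation forced linearly in $\nabla U$, and $\nabla U$ is a priori controlled only in $L^2_t$ by the standard parabolic theory. The resolution is precisely the interplay between the $L^\infty$ bound for the effective velocity $v$, which gives direct control of $\psi$, and the improved $L^2_tH^2_x$ estimate for $U$ obtained by treating \eqref{qiyi} as a nondegenerate elliptic equation with bounded coefficient $\nabla\ln\rho$; it is this loop that allows the local solution to be continued for all $t>0$ and produces the regularity announced in \eqref{er2}. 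Finally, the spherical symmetry of the resulting solution is automatic: $(\rho(t,|x|),u(t,|x|)x/|x|)$ with $(\rho,u)$ the radial solution solves \eqref{eq:1.1}--\eqref{e1.3}, and uniqueness of regular solutions forces this to be \emph{the} solution.
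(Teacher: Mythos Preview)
Your overall architecture matches the paper's: reformulate via $\phi=\tfrac{A\gamma}{\gamma-1}\rho^{\gamma-1}$ and $\psi=\nabla\ln\rho$, prove local existence by linearization and iteration, then close global a priori estimates in the radial variable using the basic energy, the BD entropy, and the damped transport equation satisfied by the effective velocity $v=u+2\alpha\,\partial_r\ln\rho$. The statement about spherical symmetry via uniqueness is also how the paper proceeds.

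However, there are two genuine gaps in the a priori estimate loop. First, the order in which you obtain the density upper bound is inverted. You propose to get $|v|_\infty$ first, deduce $|\partial_r\ln\rho|_\infty$, and from that recover $|\rho|_\infty$. This cannot work as stated: a bound on $\partial_r\ln\rho$ with $\rho\to 0$ at infinity does not yield an upper bound on $\rho$, and in any case the characteristic argument for $v$ already requires $|\rho|_\infty$ (see below). In the paper the $L^\infty$ bound on $\rho$ is obtained \emph{before} anything involving $v$, directly from the BD entropy (which gives $(\sqrt\rho)_r\in L^\infty_t L^2_r$) plus mass conservation ($\sqrt\rho\in L^\infty_t L^2_r$) and the one-dimensional Sobolev embedding.

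Second, the phrase ``$b$ controlled by the lower-order quantities already estimated'' hides the main analytic difficulty and the role of the hypothesis $\gamma>\tfrac32$. The equation for $v$ is
\[
v_t+u\,v_r+\tfrac{A\gamma}{2\alpha}\rho^{\gamma-1}v=\tfrac{A\gamma}{2\alpha}\rho^{\gamma-1}u,
\]
so the characteristic argument needs $\int_0^t|\rho^{\gamma-1}u|_\infty\,ds<\infty$. Energy and BD entropy alone do not give this. The paper first proves a family of weighted $L^{\tilde q}$ estimates $|(r^m\rho)^{1/\tilde q}u|_{\tilde q}\le C$ (by testing the momentum equation against $r^m|u|^pu$), interpolates to obtain $\int_0^t|\rho^{\iota}u|_\infty\,ds\le C$ for any $\tfrac12<\iota<1$, and then writes $|\rho^{\gamma-1}u|_\infty\le|\rho|_\infty^{\gamma-1-\iota}|\rho^{\iota}u|_\infty$; this last step forces $\gamma-1>\tfrac12$, which is exactly where the restriction $\gamma>\tfrac32$ in \eqref{cd1} enters. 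Your sketch neither mentions this intermediate estimate nor explains how the constraint on $\gamma$ is used, so as written the loop does not close.
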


\begin{rk}
First, observe that we do not need any smallness assumption on $(\rho_0,U_0)$. 

Second, one can find the following class of spherically symmetric initial data $(\rho_0, U_0)$ satisfying the condition \eqref{id1}:
$$\rho_0(x)=\frac{1}{1+|x|^{2\sigma}},\quad U_0(x)=u_0(|x|)\frac{x}{|x|}\in C_0^2(\Omega),$$
for some $\sigma>\max\Big\{\frac{d}{2},\frac{d}{4(\gamma-1)}\Big\}=\frac{d}{2}$, and $u_0(r)\in C_0^2([a,\infty))$ with $u_0(a)=0$.

%Moreover, it should be pointed out that the theory established in Theorem \ref{th1} can be applied to the viscous  shallow water equations (i.e., $\delta = 1$, $\gamma = 2$ and $d=2$).
\end{rk}

Another purpose  of this article is to establish the well-posedness  with large data applicable to most of physically relevant models in shallow water theory. Our framework for system \eqref{shallow} is applicable  to the viscous terms of the forms  $\text{div}( h D(W))$, $\text{div}(2h D(W))$ and $\text{div}(h \nabla W)$. Actually,  when $\mathcal{V}=\text{div}(h D(W))$, this is a special case of
system \eqref{eq:1.1} with $\alpha=\frac{1}{2}$, $\beta=0$, $\delta=1$ and $\gamma=2$; when $\mathcal{V}=\text{div}(2h D(W))$, this is also a special case of system \eqref{eq:1.1} with $\alpha=1$, $\beta=0$, $\delta=1$ and $\gamma=2$.
Therefore, one simply replaces $(\rho,U)$ by $(h,W)$ in Theorem \ref{th1} to obtain the same conclusion  for these two classes of shallow water models, without further modifications. For the third case: $\mathcal{V}=\text{div}(h \nabla W)$, since the key information: BD entropy estimates (see Lemma \ref{l4.2}) in the proof of Theorem \ref{th1} also holds, then   one  can still obtain the desired global-in-time well-posedness from our framework with minor changes.   More precisely, considering the IBVP of system \eqref{shallow} with the initial data:
\begin{equation}\label{shallowchu}
  (h,W)(0,x)=(h_0,W_0)(x)= ( h_0(|x|),w_0(|x|)\dfrac{x}{|x|})\, \quad\, \ \text{for}\quad  x \in \Omega,
\end{equation}
and the following   boundary conditions and far field behavior:
\begin{equation}\label{shallowbian}
\begin{split}
\displaystyle
W(t,x)|_{|x|=a}=0\quad  \text{for}\quad   t\ge 0,\\[5pt]
\left(h(t,x),W(t,x)\right)\to \left(0,0\right)\quad  \text{as}\quad  \left|x\right|\to \infty\quad  \text{for}\quad   t\ge 0,&
\end{split}
\end{equation}
one has  the following theorem.
\begin{thm}\label{thshallow}
Assume  $\mathcal{V}=\text{div}(h D(W))$,  $\text{div}(2h D(W))$, or  $\text{div}(h \nabla W)$. Let  the initial data $(h_0(x), W_0(x)) = ( h_0(|x|),w_0(|x|)x/|x|)$ be   spherically symmetric and   satisfy
 \begin{equation*}
 \begin{split}
&0<h_0(x)\in L^1(\Omega), \quad  (h_0(x), W_0(x))\in H^2(\Omega),\\
& \nabla\ln h_0(x)\in L^q(\Omega)\cap L^\infty(\Omega)\cap D^1(\Omega),
\end{split}
\end{equation*}
for  some constant $q\in(3,6]$.
 Then  the  IBVP \eqref{shallow} with  \eqref{shallowchu}-\eqref{shallowbian} admits a unique global classical solution $(h(t,x), W(t,x))$
in $(0,\infty)\times\Omega$  satisfying  for any  $0<T<\infty$,
\begin{equation*}
\begin{split}
&\inf_{x\in \Omega} {h(t,x)}=0 \ \  \textrm{for} \ \ 0\leq t \leq T, \quad  0<h\in C([0,T];L^1(\Omega)\cap H^2(\Omega)),\\
&\nabla\ln h\in  C([0,T]; L^q(\Omega)\cap D^1(\Omega))\cap L^\infty([0,T]\times \Omega),\\
& W\in C([0,T]; H^2(\Omega))\cap L^2([0,T]; D^3(\Omega)),\quad h_t \in C([0,T]; H^1(\Omega)),\\
&   (\nabla \ln h)_t \in C([0,T]; L^2(\Omega)),\quad  W_t\in C([0,T]; L^2(\Omega))\cap L^2([0,T]; D^1(\Omega)),\\
& t^{\frac12}W_t\in L^\infty([0,T];D^1(\Omega))\cap L^2([0,T];D^2(\Omega)),\quad t^{\frac12}W_{tt}\in L^2([0,T];L^2(\Omega)).
\end{split}
\end{equation*}
Moreover,  $(h(t,x), W(t,x))$ is also a spherically symmetric one  taking the form:
\begin{equation*}
(h,W)(t,x)
=(h(t,|x|),w(t,|x|)\dfrac{x}{|x|}).
\end{equation*}
\end{thm}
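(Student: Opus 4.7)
The plan is to reduce the three cases to Theorem \ref{th1} with only minor adaptations. For $\mathcal{V}=\text{div}(hD(W))$, the system \eqref{shallow} coincides with \eqref{eq:1.1}--\eqref{10000} under the choices $(\alpha,\beta,\delta,\gamma)=(\tfrac12,0,1,2)$, after identifying $(\rho,U)\leftrightarrow(h,W)$; similarly, $\mathcal{V}=\text{div}(2hD(W))$ corresponds to $(\alpha,\beta,\delta,\gamma)=(1,0,1,2)$. Both choices satisfy hypothesis \eqref{cd1} (indeed $\gamma=2>3/2$), and since $\gamma=2$ gives $\rho^{\gamma-1}=\rho$, the initial-data conditions of Theorem \ref{thshallow} map exactly onto those of \eqref{id1}. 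Hence, for these two viscous terms, the conclusion follows by direct invocation of Theorem \ref{th1}, the spherical symmetry being preserved by the construction.

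For the third case $\mathcal{V}=\text{div}(h\nabla W)$, I would rerun the proof scheme of Theorem \ref{th1} with the elliptic operator $LW=-\Delta W$ replacing $LU=-\alpha\Delta U-\alpha\nabla\text{div}U$, and with the reformulated momentum equation
\begin{equation*}
W_t+W\cdot\nabla W+2\nabla h-\Delta W=\nabla\ln h\cdot\nabla W,
\end{equation*}
so the ``singular'' source $\nabla\ln h\cdot Q(U)$ in \eqref{qiyi} is replaced by the simpler $\nabla\ln h\cdot\nabla W$. The strategy outlined in the abstract then applies verbatim: first establish local existence for the reformulated system using $(h,\nabla\ln h,W)$ as unknowns, then propagate BD-type and higher-order a priori bounds to extend to arbitrary time.

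The key structural ingredients all survive. The BD entropy for $\mathcal{V}=\text{div}(h\nabla W)$ is classical (as remarked in the excerpt) and yields $\nabla\sqrt{h}\in L^\infty_tL^2_x$; this is the analogue of Lemma \ref{l4.2}. In spherical symmetry, writing the radial projection of the effective velocity $\boldsymbol{v}=W+\nabla\ln h$ and taking the radial part of the momentum equation, one still obtains a damped transport equation for the radial component of $\boldsymbol{v}$ (the cancellations in the radial computation depend only on the fact that the viscous coefficient is proportional to $h$ with $\delta=1$, not on the precise tensorial form), giving the crucial $L^\infty$ bound. The elliptic estimates for $W$ in $H^2$ and $D^3$ use the plain Laplacian, which is simpler than the Lamé operator and causes no difficulty on the exterior domain with Dirichlet data at $|x|=a$.

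The main technical obstacle, as in Theorem \ref{th1}, will be controlling $\nabla\ln h$ uniformly in time: one must show that the transport equation satisfied by $\nabla\ln h$, coupled to the damped transport equation for the effective velocity, keeps $\nabla\ln h$ bounded in $L^q\cap L^\infty\cap D^1$ globally in time, preventing the formation of singularities in $\ln h$ despite $h\to 0$ at infinity. Since the reformulated structure and the BD entropy identity are both preserved for $\text{div}(h\nabla W)$, the same a priori framework closes with only cosmetic changes in the commutator estimates coming from $\nabla W$ in place of $Q(W)$. All remaining regularity statements and uniqueness then follow by the same bootstrap and energy-difference arguments as for Theorem \ref{th1}.
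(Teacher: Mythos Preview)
Your treatment of the first two cases matches the paper exactly. For the third case $\mathcal{V}=\text{div}(h\nabla W)$, your plan to rerun the full machinery (local existence with $-\Delta$ in place of the Lam\'e operator, then BD entropy, effective velocity, and higher-order estimates) is correct and would work, but it overlooks a simplification the paper exploits. For a spherically symmetric velocity field $W(t,x)=w(t,r)\,x/r$, a direct computation shows $\partial_i W^j = w'(r)\,x_ix_j/r^2 + w(r)(\delta_{ij}r^2-x_ix_j)/r^3$ is symmetric in $i,j$, so $\nabla W=(\nabla W)^\top$ and hence $D(W)=\nabla W$. Consequently, after passing to radial coordinates, the IBVP for $\mathcal{V}=\text{div}(h\nabla W)$ coincides \emph{verbatim} with the radial problem \eqref{e1.5} for the case $d=2$, $\alpha=\tfrac12$, $\gamma=2$. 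The paper therefore does not redo any a priori estimates: it simply invokes Theorem \ref{jordan} directly, with only the local-existence step (\S2) needing the cosmetic replacement of $L$ by $-\Delta$. Your approach buys nothing extra and costs a second pass through Lemmas \ref{l4.1}--\ref{l4.10}; the paper's observation short-circuits all of that.
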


The rest of the paper is organized as follows: \S 2 is devoted to establishing the local-in-time well-posedness of  regular  solutions  in the M-D Eulerian  coordinate  to the  IBVP \eqref{eq:1.1}-\eqref{10000} with  \eqref{eqs:CauchyInit}-\eqref{e1.3}.   Here we need to consider   a  well-designed reformulated structure by introducing some new variables, which, actually, can transfer the degeneracies of the time evolution and the viscosity shown in \eqref{doubled}  to the possible singularity of some special source terms. In \S 3, we prove the global-in-time well-posedness of regular solutions   in the spherically symmetric Eulerian  coordinate to the reformulated IBVP \eqref{e1.5} (see Theorem \ref{jordan}) by deriving some  a priori estimates globally in time.  Here,  we have employed some  arguments motivated by  Bresch-Desjardins \cite{bd6} and  Bresch-Desjardins-Lin \cite{bd2}   to deal with the strong degeneracy of the density-dependent viscosity   $\mu(\rho)=\alpha \rho^\delta$, which include the well-known  BD entropy estimates (see Lemma \ref{l4.2}) and the effective velocity (see Lemma \ref{l4.4}). In \S 4, we see that  the global well-posedness  asserted in Theorem \ref{th1}  can be obtained by means of  Theorem \ref{jordan}. We also give a brief proof for Theorem \ref{thshallow}.
  Finally, we give two appendixes to list some lemmas that are frequently used in our proof, and  the conversion of some Sobolev  spaces between   the pure M-D  coordinate and  the spherically symmetric coordinate.

\section{Local-in-time well-posedness}

This section is devoted to  showing  the local-in-time existence  of the unique  regular solution to  the IBVP   \eqref{eq:1.1}-\eqref{10000} with  \eqref{eqs:CauchyInit}-\eqref{e1.3} in M-D spaces. Moreover, we show that if the initial data is  spherically symmetric, so is the  corresponding multi-dimensional  regular solution to  the IBVP   \eqref{eq:1.1}-\eqref{10000} with  \eqref{eqs:CauchyInit}-\eqref{e1.3}  in positive time.   Throughout this section, we adopt the following simplified notations, most of them are for the standard homogeneous and inhomogeneous Sobolev spaces:
\begin{equation*}\begin{split}
 \displaystyle
 &  L^p=L^p(\Omega),\quad H^s=H^s(\Omega), \quad D^{k,l}=D^{k,l}(\Omega),\quad D^k=D^{k,2}(\Omega),  \\[1pt]
 \displaystyle
 & W^{m,p}=W^{m,p}(\Omega),\quad  |f|_p=\|f\|_{L^p(\Omega)},\quad \|f\|_s=\|f\|_{H^s(\Omega)},\quad  \|f\|_{m,p}=\|f\|_{W^{m,p}(\Omega)},\\[1pt]
   \displaystyle
& |f|_{D^{k,l}}=\|\nabla^k f\|_{L^l(\Omega)},\quad
|f|_{D^{k}}=\|\nabla^k f\|_{L^2(\Omega)},\quad \|f\|_{X_1 \cap X_2}=\|f\|_{X_1}+\|f\|_{X_2},
 \end{split}
\end{equation*}
where $\Omega=\{x\in\mathbb R^d|\,  |x|>a\}$ is the domain  exterior to a ball in  $\mathbb R^d$ $(d=2\ \text{or} \ 3)$.

The main theorem of this section can be stated as follows.
\begin{thm}\label{zth}
Assume $d=2$ or $3$, and \eqref{cd1} holds except $\gamma >\frac{3}{2}$.
 If the initial data $( \rho_0,U_0)$ satisfy 
\begin{equation}\label{th78qq}
0<\rho_0\in L^1,\quad  (\rho_0^{\gamma-1}, U_0)\in H^2, \quad  \nabla\ln\rho_0\in  L^q \cap D^1,
\end{equation}
for some $q\in(3,6]$, then there exist a time $T_*>0$ and a unique regular solution $( \rho,U)(t,x)$ in $[0,T_*]\times\Omega$ to the IBVP \eqref{eq:1.1}-\eqref{10000} with  \eqref{eqs:CauchyInit}-\eqref{e1.3} which satisfies \eqref{er2} with $T$ replaced by $T_*$. 
 Moreover, if $( \rho_0,U_0)$ are  spherically symmetric in the sense of \eqref{eqs:CauchyInit}, then 
  the solution  $( \rho,U)$  is also  a spherically symmetric one taking the form \eqref{duichenxingshi}.
%\item[\rm (ii)]  $ \nabla\ln\rho \in  L^\infty([0,T_*]\times \Omega) $ provided that 
% $ \nabla\ln\rho_0 \in  L^\infty(\Omega) $.
%\end{enumerate}

\end{thm}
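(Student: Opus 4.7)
The plan is to transform the doubly degenerate system \eqref{doubled} into a non-degenerate (though possibly singular) one by a well-chosen change of unknowns, and then construct the solution by a linear iteration. Setting $\phi := \rho^{\gamma-1}$ and $\psi := \nabla\ln\rho$, and dividing the momentum equation by $\rho>0$, the problem \eqref{eq:1.1}--\eqref{10000} with \eqref{eqs:CauchyInit}--\eqref{e1.3} becomes formally equivalent to the coupled system
\begin{equation*}
\begin{cases}
\phi_t + U\cdot\nabla\phi + (\gamma-1)\phi\,\text{div}\,U = 0,\\[4pt]
\psi_t + (U\cdot\nabla)\psi + (\nabla U)^\top\psi + \nabla\,\text{div}\,U = 0,\\[4pt]
U_t + (U\cdot\nabla)U + \tfrac{A\gamma}{\gamma-1}\nabla\phi + LU = \psi\cdot Q(U),
\end{cases}
\end{equation*}
with $U|_{|x|=a}=0$ and the same far-field decay. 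The payoff is that the evolution and viscosity for $U$ are now uniformly parabolic via the Lam\'e operator $L$, and the degeneracy of the original equation has been transferred to the first-order source $\psi\cdot Q(U)$, which is manageable provided $\psi$ lives in a strong enough norm. Since $U$ vanishes on $\partial\Omega$, the characteristics of the two transport parts stay tangent to $\partial\Omega$, so no boundary condition for $\phi$ or $\psi$ is needed.

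I would then construct $(\phi,\psi,U)$ by a Picard iteration. Given a reference velocity $V$ in the class
\begin{equation*}
V\in C([0,T];H^2)\cap L^2([0,T];D^3),\quad V_t\in C([0,T];L^2)\cap L^2([0,T];D^1),
\end{equation*}
matching the initial data, I first solve the two linear transport equations for $\phi$ and $\psi$ along $V$ by the method of characteristics plus standard Friedrichs-type energy estimates; this propagates $\phi_0\in H^2$ and $\psi_0\in L^q\cap D^1$ together with their time regularities, and preserves the positivity $\rho=\phi^{1/(\gamma-1)}>0$. Next, I solve the linear Lam\'e system
\begin{equation*}
U_t + LU = \psi\cdot Q(V) - (V\cdot\nabla)V - \tfrac{A\gamma}{\gamma-1}\nabla\phi,\quad U|_{\partial\Omega}=0,\quad U|_{t=0}=U_0
\end{equation*}
by classical parabolic theory on the exterior domain $\Omega$, testing successively against $U$, $U_t$, and $-\Delta U$ to recover $H^2$ regularity for $U$, plus weighted-in-time estimates ($\sqrt{t}$-weights) for $\nabla U_t$ and $U_{tt}$. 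The main technical obstacle is closing the coupled a priori estimates: the $\psi$-equation contains $\nabla\,\text{div}\,U$, which demands $H^2$-control of $U$, while the $U$-equation contains the cross term $\psi\cdot Q(U)$ whose $H^1$-estimate produces $\psi\cdot\nabla^2 U$ and $\nabla\psi\cdot\nabla U$. The assumption $q\in(3,6]$ is precisely what renders these cross terms controllable: on $\Omega$ one has $L^q\cap D^1\hookrightarrow L^r$ for any $r<\infty$ ($d=2$) or $r\le 6$ ($d=3$), and Gagliardo--Nirenberg interpolation with the evolving $H^2$-bound on $U$ closes the estimates on a sufficiently short interval $[0,T_*]$.

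Once the iterates are uniformly bounded in the above class, consecutive differences can be shown to form a Cauchy sequence in a weaker norm (say $L^\infty_t L^2_x$ for $(\phi,\psi,U)$ together with $L^2_t D^1_x$ for $U$), and passage to the limit produces a regular solution in the sense of Definition~\ref{cjk} satisfying the bounds \eqref{er2}; uniqueness follows from the same weak-norm energy estimate applied to the difference of two solutions with identical data, using the $H^2$-regularity to absorb the variable coefficients and closing by Gr\"onwall. Finally, the reformulated system, the operator $L$, the Dirichlet boundary condition on $|x|=a$, and the far-field condition are all invariant under rotations about the origin; spherically symmetric data of the form $(\rho_0(|x|),u_0(|x|)x/|x|)$ therefore induce, at every step of the iteration, iterates of the same symmetric form, since the linear transport equations and the Lam\'e system both preserve this structure. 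By the uniqueness just established, the limit must itself be spherically symmetric, yielding the representation \eqref{duichenxingshi}.
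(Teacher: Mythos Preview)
Your proposal matches the paper's proof almost exactly: the same change of variables to $(\phi,\boldsymbol\psi,U)$, the same linearization freezing the velocity, the same Picard scheme with uniform bounds closed on a short time, and contraction of successive differences in a weaker norm. Two small refinements are needed for your sketch to go through. First, the contraction norm you propose is slightly too weak: the $\boldsymbol\psi$-difference equation carries the forcing $\nabla\operatorname{div}\overline U^{\,k}$, which cannot be integrated by parts against $\overline{\boldsymbol\psi}^{\,k+1}$ (there is no boundary condition on $\boldsymbol\psi$), so one must run the contraction at the $H^1$ level for $\overline U$ with $L^2_tH^2_x$ dissipation rather than merely $L^\infty_tL^2_x\cap L^2_tD^1_x$; the paper also inserts a short cutoff argument to verify that $\overline{\boldsymbol\psi}^{\,k+1}\in L^2$ at all, since $\boldsymbol\psi\in L^q\cap D^1$ does not by itself imply $L^2$. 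Second, for the spherical symmetry the paper does not propagate symmetry through the iterates but argues a posteriori: for each orthogonal matrix $H$ it checks by direct computation that $(\rho(t,Hx),\,H^\top U(t,Hx))$ is again a regular solution with the same data, invokes the uniqueness just proved to obtain $\rho(t,Hx)=\rho(t,x)$ and $H^\top U(t,Hx)=U(t,x)$, and then takes $H$ to be a $180^\circ$ rotation about the axis $x/|x|$ to force the transverse components of $U$ to vanish. Your iteration-preservation route would also work, but the a posteriori argument is cleaner since it avoids checking symmetry (and uniqueness) at each linear step.
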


%Actually, the same argument with minor modifications can be applied to prove the following corollary.
%\begin{cor}\label{loo}
%Assume $d=2$ or $3$, and \eqref{cd1} holds.
% If the initial data $( \rho_0,U_0)$ satisfy 
%\begin{equation}\label{th78qqmm}
% (0<\rho_0^{\frac{\gamma-1}{2}}, U_0)\in H^2(\Omega), \quad  \nabla\ln\rho_0\in  L^q(\Omega)\cap D^1(\Omega),
%\end{equation}
%for some $q\in(3,6]$,
%then there exist a time $T_*>0$ and a  unique regular solution $( \rho,U)(t,x)$  in $[0,T_*]\times\Omega$ to the  IBVP \eqref{eq:1.1}-\eqref{10000} with  \eqref{eqs:CauchyInit}-\eqref{e1.3}.  Moreover, if $( \rho_0,U_0)$ are  spherically symmetric in the sense of \eqref{eqs:CauchyInit}, then the properties $\rm (i)$-$\rm (ii)$ shown in Theorem \ref{zth} still hold.
%\end{cor}
Next we  only give the proof of Theorem \ref{zth} in 3-D space in  the following Sections \ref{see1}-\ref{com}, and   the 2-D case can be dealt with via the  completely same argument. We first make a reformulation for the  IBVP \eqref{eq:1.1}-\eqref{10000} with  \eqref{eqs:CauchyInit}-\eqref{e1.3}.

%in To do this, we first establish the local-in-time well-posedness of 3D initial-boundary value problem \eqref{e1.1}-\eqref{e1.3} which will be given in \S \ref{see4}.

%We will divided them into  two cases $0<\delta<1$ and $\delta=1$.
%In this section,  we first make a  reformulation to understand the intrinsic structure of system \eqref{e1.1} more clearly and then recall the local-in-time existence results to the Cauchy problem \eqref{e1.1}-\eqref{e1.3}. 

%\subsection{\textbf{Case 1:} $0<\delta<1$}

%This subsection will be divided into four parts. In \S 3.1.1, we make a reformulation to the Cauchy problem \eqref{e1.1}-\eqref{e1.3} in terms of some new variables. In \S 3.1.2, we provide a detail explanation for the initial condition \eqref{icd} in \cite{zz2}, which is no longer needed in one-dimensional case. In \S 3.1.3, the local-in-time existence result  of regular solutions is given. In \S 3.1.4, we verify some special initial conditions, which will be used to prove the global-in-time existence.
%global-in-time existence of regular solutions. 
%In this subsection, we first  make a reformulation to the Cauchy problem \eqref{e1.1}-\eqref{e1.3} in terms of some new variables, and then explan the necessity of the initial condition \eqref{icd} in \cite{zz2}, which is no longer needed in one-dimensional case.  Finally, we give the local-in-time existence result and verify some property of initial conditions.
%For the case $0<\delta<1$, the corresponding local-in-time well-posedenss can be achieved by the following four steps.

\subsection{Reformulation}\label{see1}
Via introducing the following new variables 
\begin{equation}\label{bianhuan}
\phi=\frac{A\gamma}{\gamma-1}\rho^{\gamma-1},\quad \boldsymbol{\psi} =\frac{1}{\gamma-1}\nabla\ln\phi=\nabla\ln\rho,
\end{equation}
 the  IBVP \eqref{eq:1.1}-\eqref{10000} with  \eqref{eqs:CauchyInit}-\eqref{e1.3} can be rewritten as 
\begin{equation}\label{eqn1}
\left\{
\begin{aligned}
&\ \phi_t+U\cdot \nabla\phi+(\gamma-1)\phi \text{div}U=0,\\[3pt]
&\ U_t+U\cdot \nabla U+\nabla \phi+LU=\boldsymbol{\psi}\cdot Q(U),\\
&\boldsymbol{\psi}_t+\sum\limits_{l=1}^3A_l(U)\partial_l\boldsymbol{\psi}+B(U) \boldsymbol{\psi}+\nabla\text{div}U=0,\\
&\ \big(\phi(0,x),U(0,x),\boldsymbol{\psi}(0,x)\big)=\Big(\frac{A\gamma}{\gamma-1}\rho_0^{\gamma-1},U_0,\nabla \ln \rho_0\Big)\quad \text{for} \quad x\in \Omega,\\[3pt]
&\ U(t,x)|_{|x|=a}=0  \quad \text{for} \quad t\ge 0,\\[3pt]
&\  (\phi(t,x),U(t,x),\boldsymbol{\psi}(t,x))\to (0,0,0) \quad \text{as}\quad  |x| \to \infty  \quad \text{for} \quad  t\geq 0,
\end{aligned}
\right.
\end{equation}
where    $A_l(U)=(a_{ij}^{(l)})_{3\times3}(i,j,l=1,2,3)$ are symmetric with $a_{ij}^{(l)}=U^{(l)}$ for $i=j$;  otherwise  $a_{ij}^{(l)}=0$, and $B(U)=(\nabla U)^{\top}$.
In order to solve the  IBVP \eqref{eq:1.1}-\eqref{10000} with  \eqref{eqs:CauchyInit}-\eqref{e1.3} locally in time, we need to prove the following theorem.

\begin{thm}\label{thh1}
Assume $d=3$, and \eqref{cd1} holds  except $\gamma >\frac{3}{2}$. If the initial data $\big(\phi_0(x), U_0(x),\boldsymbol{\psi}_0(x)\big)=\big(\phi(0,x),U(0,x),\boldsymbol{\psi}(0,x)\big)$ satisfy \eqref{th78qq},
then there exist a time $T_*>0$ and a unique strong solution $( \phi,U,\boldsymbol{\psi})$ in $[0,T_*]\times\Omega$ to the IBVP \eqref{eqn1},  satisfying
\begin{equation}\label{reg11qq}\begin{split}
& 0<\phi \in C([0,T_*];L^{\frac{1}{\gamma-1}}\cap H^2),\quad  \phi_t \in C([0,T_*];H^1),\\
& \boldsymbol{\psi} \in C([0,T_*]; L^q\cap D^1),\quad  \boldsymbol{\psi}_t \in C([0,T_*]; L^2),\\
& U\in C([0,T_*]; H^2)\cap L^2([0,T_*]; D^3),\quad U_t\in C([0,T_*]; L^2)\cap L^2([0,T_*]; D^1),\\
&t^{\frac12} U_t\in L^{\infty}([0,T_*]; D^1)\cap L^2([0,T_*]; D^2),\quad t^{\frac12} U_{tt} \in L^2([0,T_*]; L^2).
\end{split}
\end{equation}
\end{thm}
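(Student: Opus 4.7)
The plan is to prove Theorem \ref{thh1} by a linearization-iteration scheme tailored to the reformulated system \eqref{eqn1}. The key payoff of the reformulation is that the momentum equation now carries the uniformly elliptic Lamé operator $LU = -\alpha\Delta U - \alpha\nabla\text{div}\,U$, while the original viscous degeneracy has been pushed into the source term $\boldsymbol{\psi}\cdot Q(U)$ and into the non-diffusive first-order system for $\boldsymbol{\psi}$. This lets me treat the $U$-equation by classical parabolic theory on the exterior domain $\Omega$, and the $\phi$- and $\boldsymbol{\psi}$-equations (both being damped transports) by the method of characteristics.

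Concretely, given $(\phi^{k-1}, U^{k-1}, \boldsymbol{\psi}^{k-1})$, I would define $(\phi^k, U^k, \boldsymbol{\psi}^k)$ by: (i) solving the linear transport equation for $\phi^k$ with drift $U^{k-1}$, which preserves positivity through the exponential-along-characteristics formula and gives the $H^2$ bound; (ii) solving the linear Lamé IBVP
$$U^k_t + LU^k = -U^{k-1}\cdot\nabla U^{k-1} - \nabla\phi^k + \boldsymbol{\psi}^{k-1}\cdot Q(U^{k-1}), \qquad U^k|_{|x|=a}=0,$$
by standard parabolic regularity on $\Omega$; (iii) solving the linear symmetric hyperbolic system for $\boldsymbol{\psi}^k$ with coefficient $A_l(U^k)$, zeroth-order term $B(U^k)\boldsymbol{\psi}^k$, and source $-\nabla\,\text{div}\,U^k$. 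The aim is to close, uniformly in $k$ on some short interval $[0,T_*]$, a bound of the form
$$\|\phi^k\|_{L^\infty_t H^2} + \|\boldsymbol{\psi}^k\|_{L^\infty_t(L^q\cap D^1)} + \|U^k\|_{L^\infty_t H^2\cap L^2_t D^3} + \|U^k_t\|_{L^\infty_t L^2\cap L^2_t D^1} \le M,$$
together with the weighted estimates $\|t^{1/2}U^k_t\|_{L^\infty_t D^1\cap L^2_t D^2} + \|t^{1/2}U^k_{tt}\|_{L^2_t L^2}\le M$. The transport estimates are routine; the Lamé estimates use that the source $\boldsymbol{\psi}^{k-1}\cdot Q(U^{k-1})$ lies in $L^2$ by Hölder, exploiting $\boldsymbol{\psi}^{k-1}\in L^q$ with $q>3$ and $Q(U^{k-1})\in H^1\hookrightarrow L^r$ for every $r\in[2,6]$; the forcing $\nabla\,\text{div}\,U^k$ in the $\boldsymbol{\psi}^k$-equation is absorbed via the embedding $H^2\hookrightarrow W^{1,q}$ available because $q\le 6$. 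The weighted $t^{1/2}$-bounds come from differentiating the $U$-equation in time and testing against $tU^k_{tt}$, the weight compensating for the loss of one regularity level at $t=0$ caused by only having $U_0\in H^2$; at $t=0$ itself, $U_t$ is defined through the equation and lies in $L^2$ by the same Hölder argument applied to the data.

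With these uniform bounds in hand, I would show that the iterates form a Cauchy sequence in the weaker norm $L^\infty_t L^2_x$ for all three variables, by running energy estimates on the differences $(\delta\phi^k, \delta U^k, \delta\boldsymbol{\psi}^k)$. Interpolation against the uniform bounds then upgrades the limit to the full regularity class \eqref{reg11qq}, and the very same difference estimate applied to two hypothetical solutions delivers uniqueness. Throughout, positivity of $\phi^k$ — hence of $\rho^k$ — is preserved by the transport formula, so that $\boldsymbol{\psi}^k=\nabla\ln\rho^k$ remains well defined.

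The main obstacle I expect is closing the estimate on $\boldsymbol{\psi}$: its equation has no smoothing, so its regularity cannot be improved by the equation itself, and the source $\nabla\,\text{div}\,U$ must be controlled in $L^q\cap D^1$ uniformly in $k$. This requires $U\in H^2$ with a quantitative bound that does not deteriorate across the iteration, which is precisely what forces $T_*$ to be small in terms of the initial data via a Gronwall-type absorption. A secondary subtle point is the coupled use of the $L^q$ and $D^1$ norms of $\boldsymbol{\psi}$ — the $L^q$ norm is what controls the nonlinear product $\boldsymbol{\psi}\cdot Q(U)$ in the momentum equation, while the $D^1$ norm is what enables the higher-order estimates on $\boldsymbol{\psi}$ — and both must be closed simultaneously in a single Gronwall loop with the $H^2\cap L^2_tD^3$-bound on $U$.
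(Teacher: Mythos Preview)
Your approach is essentially the paper's: linearize by freezing the velocity, establish uniform-in-iteration a priori bounds at the levels stated, then contract in a weaker norm and recover full regularity by interpolation and weak limits. The iteration ordering differs cosmetically (the paper solves $\boldsymbol{\psi}^{k+1}$ from $U^k$ first and then lets $U^{k+1}$ see $\boldsymbol{\psi}^{k+1}$, whereas you use $\boldsymbol{\psi}^{k-1}$ in the $U^k$-equation), but this is immaterial.

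Two technical points in your sketch need sharpening. First, contracting in $L^\infty_t L^2_x$ as stated is problematic: the $\boldsymbol{\psi}$-difference equation carries the source $\nabla\text{div}\,\overline U^k$, which at the $L^2$ level forces you to control $|\nabla^2\overline U^k|_2$. The paper therefore runs the contraction for $(\overline\phi,\overline U)$ in $L^\infty_t H^1$ (with the parabolic dissipation $\int_0^t\|\nabla\overline U^{k+1}\|_1^2$ supplying the needed $|\nabla^2\overline U|_2$) and for $\overline{\boldsymbol\psi}$ in $L^\infty_t L^2$; you will need to upgrade your contraction norm accordingly. Second, since $\boldsymbol\psi\in L^q\cap D^1$ with $q>3$ but not a priori in $L^2$ on the exterior domain, the very statement $\overline{\boldsymbol\psi}^{k+1}\in L^2$ requires justification before you can test the difference equation against $\overline{\boldsymbol\psi}^{k+1}$; the paper handles this by a spatial cutoff argument (their Lemma \ref{lpsi}), and you should incorporate an analogous step.
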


%We will prove this Theorem \ref{thh1} in subsequent four subsections, and at the end of this section we will show that this theorem indeed implies Theorem.

\subsection{Linearization}\label{linear2}
We begin the proof of Theorem \ref{thh1} by   considering  the following linearized problem of $(\phi, U,\boldsymbol{\psi})$:
\begin{equation}\label{li4}
\left\{
\begin{aligned}
&\ \phi_t+V\cdot \nabla\phi+(\gamma-1)\phi \text{div}V=0,\\[3pt]
&\ U_t+V\cdot \nabla V+\nabla \phi+LU=\boldsymbol{\psi}\cdot Q(V),\\
&\boldsymbol{\psi}_t+\sum\limits_{l=1}^3A_l(V)\partial_l\boldsymbol{\psi}+B(V) \boldsymbol{\psi}+\nabla\text{div}V=0,\\
&\ \big(\phi(0,x),U(0,x),\boldsymbol{\psi}(0,x)\big)=\Big(\frac{A\gamma}{\gamma-1}\rho_0^{\gamma-1},U_0,\nabla \ln \rho_0\Big)\quad \text{for} \quad x\in \Omega,\\[3pt]
&\ U(t,x)|_{|x|=a}=0  \quad \text{for} \quad t\ge 0,\\[3pt]
&\  (\phi(t,x),U(t,x),\boldsymbol{\psi}(t,x))\to (0,0,0) \quad \text{as}\quad  |x| \to \infty  \quad \text{for} \quad  t\geq 0,
\end{aligned}
\right.
\end{equation} 
where 
$V=\left(V^{(1)},V^{(2)}, V^{(3)}\right)^{\top}\in \mathbb{R}^3$ is a known vector satisfying for any $T>0$,
\begin{equation}\label{vg}
\begin{split}
&V(t=0,x)=U(0,x),\quad  V(t,x)|_{|x|=a}=0 \quad \text{for} \quad 0\leq t \leq T,\\
&V\in C([0,T]; H^2)\cap L^2([0,T]; D^3),\quad V_t\in C([0,T]; L^2)\cap L^2([0,T]; D^1),\\
&t^{\frac12} V_t\in L^{\infty}([0,T]; D^1)\cap L^2([0,T]; D^2),\quad t^{\frac12} V_{tt} \in L^2([0,T]; L^2).
\end{split}
\end{equation}
%where $T>0$ is an arbitrary constant.

Next, the following global well-posedness of strong solutions $(\phi, U,\boldsymbol{\psi})$ to \eqref{li4} can be obtained by classical  arguments shown in  (\cite{CK3, oar, amj}). 
 \begin{lem}\label{lem1}
 Assume $d=3$, \eqref{cd1} holds  except $\gamma >\frac{3}{2}$ and $T>0$ is an arbitrarily large time. If the initial data $\big(\phi_0(x), U_0(x),\boldsymbol{\psi}_0(x)\big)=\big(\phi(0,x),U(0,x),\boldsymbol{\psi}(0,x)\big)$ satisfy \eqref{th78qq}, then there exists a unique strong solution $(\phi,U,\boldsymbol{\psi})$ in $[0,T]\times\Omega$ to (\ref{li4}) which satisfies the regularities in \eqref{reg11qq} with $T_*$ replaced by $T$.
\iffalse
\begin{equation}\label{reggh}
\begin{split}
&\phi^\eta\geq \underline{\eta}, \quad  \phi^\eta-\eta \in C([0,T]; H^2), \quad  \phi^\eta _t \in C([0,T]; H^1), \\
&\boldsymbol{\psi}^\eta \in C([0,T]; L^q\cap D^1),\quad  \boldsymbol{\psi}^\eta_t \in C([0,T]; L^2),\\
&U^\eta\in C([0,T]; H^2)\cap L^2([0,T]; D^3),\quad U^\eta_t\in C([0,T]; L^2)\cap L^2([0,T]; D^1),\\
&t^{\frac12} U^\eta_t\in L^{\infty}([0,T]; D^1)\cap L^2([0,T]; D^2),\quad t^{\frac12} U^\eta_{tt} \in L^2([0,T]; L^2),
\end{split}
\end{equation}
for any $T>0$, where $\underline{\eta}>0$ is a constant depending on $\eta$ and $T$.
\fi
\end{lem}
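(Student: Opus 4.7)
The plan is to exploit the one-way (triangular) coupling in \eqref{li4}: the $\phi$ equation depends only on the prescribed field $V$, the $\boldsymbol{\psi}$ equation depends only on $V$, and the $U$ equation is a linear parabolic system whose right-hand side involves $V$, $\phi$ and $\boldsymbol{\psi}$. I would therefore solve them in this order, using standard linear theory at each step, and then assemble the estimates required for \eqref{reg11qq}. Since $V$ is given with the regularity in \eqref{vg}, every coefficient and source that appears is already known to be sufficiently smooth, and there is no need for a fixed-point iteration here.

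For $\phi$, which satisfies a linear transport equation with smooth coefficient field $V$ vanishing on $\partial\Omega$, I would construct the solution via the method of characteristics. Positivity of $\phi$ and the $L^{1/(\gamma-1)}$ bound follow from the multiplicative representation along the flow. Differentiating and performing standard energy estimates with commutator arguments controlled by $\|V\|_{H^2}$ yields the $H^1$ and $H^2$ bounds for $\phi\in C([0,T];L^{1/(\gamma-1)}\cap H^2)$, and the equation itself then gives $\phi_t\in C([0,T];H^1)$. For $\boldsymbol{\psi}$, which satisfies a first-order linear symmetric hyperbolic system with source $-\nabla\mathrm{div}V\in L^2([0,T];H^1)$, I would again use characteristics (the principal part is diagonal with transport speed $V$) or a Friedrichs-type mollification argument to obtain $\boldsymbol{\psi}\in C([0,T];D^1)$ and $\boldsymbol{\psi}_t\in C([0,T];L^2)$. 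The $L^q$ bound is obtained by testing with $|\boldsymbol{\psi}|^{q-2}\boldsymbol{\psi}$ and using that $\mathrm{div}V\in L^\infty_tL^\infty_x$ via the Sobolev embedding $H^2(\Omega)\hookrightarrow L^\infty(\Omega)$, while $\nabla\mathrm{div}V\in L^2_t L^q_x$ handles the forcing term.

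For $U$, I would treat \eqref{li4} as a linear initial-boundary value problem for the uniformly strongly elliptic second-order parabolic system associated with $L$ (with Dirichlet condition on $\{|x|=a\}$ and far field decay), with inhomogeneity
\begin{equation*}
F:=-V\cdot\nabla V-\nabla\phi+\boldsymbol{\psi}\cdot Q(V)
\end{equation*}
and initial data $U_0\in H^2$. A Faedo--Galerkin construction combined with the standard elliptic regularity estimate $\|U\|_{H^2}\lesssim\|LU\|_{L^2}$ on the exterior domain (with homogeneous Dirichlet condition) delivers $U\in C([0,T];H^2)$, $U_t\in C([0,T];L^2)\cap L^2([0,T];D^1)$, and $U\in L^2([0,T];D^3)$ once $F\in L^2([0,T];H^1)$ is verified. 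For the weighted-in-time bounds $t^{1/2}U_t\in L^\infty_tD^1\cap L^2_tD^2$ and $t^{1/2}U_{tt}\in L^2_tL^2$, I would differentiate the $U$ equation in $t$, test with $tU_{tt}$ and with $tLU_t$, and absorb the singularity at $t=0$; the vanishing weight makes no compatibility condition on $(U_0,LU_0)$ necessary.

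The main obstacle will be controlling the product $\boldsymbol{\psi}\cdot Q(V)$ in $L^2([0,T];H^1)$, which is what is needed to propagate $U\in L^2_tD^3$ through the elliptic estimate. This requires combining the $L^\infty_tL^q\cap L^\infty_tD^1$ bound on $\boldsymbol{\psi}$ just obtained with a Hölder estimate of the form
\begin{equation*}
\|\boldsymbol{\psi}\cdot Q(V)\|_{H^1}\lesssim \|\boldsymbol{\psi}\|_{L^q\cap D^1}\,\|V\|_{H^2\cap W^{1,q^*}},\qquad \tfrac{1}{q}+\tfrac{1}{q^*}=\tfrac{1}{2},
\end{equation*}
which is finite because $q>3$ forces $q^*<6$ and hence the embedding $H^2(\Omega)\hookrightarrow W^{1,q^*}(\Omega)$ in three dimensions applies; this is precisely where the restriction $q\in(3,6]$ in \eqref{th78qq} enters. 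Uniqueness is then standard: subtracting two solutions gives a linear homogeneous system of the same structure, and basic energy estimates (together with positivity of $\phi$ to handle the transport term) force the difference to vanish.
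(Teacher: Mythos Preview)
Your proposal is correct and matches the paper's intent: the paper does not give a detailed proof of this lemma but simply states that it follows from ``classical arguments shown in \cite{CK3, oar, amj}'', and your sketch is precisely such a classical argument, exploiting the triangular structure of \eqref{li4} to solve the transport equation for $\phi$, the symmetric hyperbolic system for $\boldsymbol{\psi}$, and the linear Lam\'e parabolic system for $U$ in succession. The specific product estimates you isolate (in particular $\boldsymbol{\psi}\cdot Q(V)\in L^2_tH^1$ via the pairing $L^q\times L^{q^*}$ with $q^*=2q/(q-2)<6$) are exactly those the paper uses later in Lemmas \ref{pij} and \ref{llm3} when closing the a priori bounds, so your reading of where the restriction $q\in(3,6]$ enters is on target.
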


\subsection{A priori estimates}\label{uape}
Let $(\phi, U,\boldsymbol{\psi})$ be a  strong solution in $[0,T]\times\Omega$ obtained in Lemma \ref{lem1}, and we  will establish the corresponding  a priori estimates later. For this purpose, we first choose a positive constant $c_0$ such that 
\begin{equation}\label{houmian}
\begin{split}
1+\|\phi_0\|_{L^{\frac{1}{\gamma-1}}\cap H^2}+\|U_0\|_{2}+\|\boldsymbol{\psi}_0\|_{L^q\cap D^1}\leq c_0.
\end{split}
\end{equation}
We  assume there exist some time $T^*\in (0,T)$ and constants $c_i$ ($i=1,2,3$) such that
$$1< c_0\leq c_1 \leq c_2 \leq c_3, $$
and
\begin{equation}\label{jizhu1}
\begin{split}
\sup_{0\leq t \leq T^*}\| V(t)\|^2_1+\int_{0}^{T^*}\Big(  \|\nabla V(t)\|^2_{1}+|V_t(t)|_2^2\Big)\text{d}t \leq& c^2_1,\\
%\sup_{0\leq t \leq T^*}|V(t)|^2_{D^1}+\int_{0}^{T^*} \Big( |V(t)|^2_{D^2}+|V_t(t)|^2_{2}\Big)\text{d}t \leq& c^2_2,\\
\sup_{0\leq t \leq T^*}\Big(|V(t)|^2_{D^2}+|V_t(t)|^2_{2}\Big)+\int_{0}^{T^*} \Big( |V(t)|^2_{D^3}+|V_t(t)|^2_{D^1}\Big)\text{d}t \leq& c^2_2,\\
\text{ess}\sup_{0\leq t \leq T^*} t|V_t(t)|^2_{D^1}+\int_{0}^{T^*} t\Big(|V_{t}(t)|^2_{D^2}+|V_{tt}(t)|^2_{2}\Big)\text{d}t \leq& c^2_3,
\end{split}
\end{equation}
where  $T^*$ and  $c_i$ ($i=1,2,3$) will be determined  later, and depend only on $c_0$ and the fixed constants $A,\alpha, \gamma, T$.
 In the rest of \S 2, we use $C\geq 1$ to denote  a generic  constant depending only on fixed constants $A,\alpha, \gamma,T$ and may be different from line to line.
\iffalse
 Let $(\phi^\eta, U^\eta)$ be the unique strong solution to \eqref{li4} in $[0,T]\times \Omega$. Moreover, without causing ambiguity, we simply denote
 \begin{equation}
 (\phi^\eta_0, U^\eta_0, \boldsymbol{\psi}^\eta_0)\quad \text{as}\quad (\phi_0, U_0,\boldsymbol{\psi}_0)\quad\text{and}\quad 
 (\phi^\eta, U^\eta, \boldsymbol{\psi}^\eta)\quad \text{as}\quad (\phi, U,\boldsymbol{\psi}).
 \end{equation}
%In the rest of \S \ref{uape}
\fi
\subsubsection{The a priori estimates for $\phi$}
 We first give the estimates for $\phi$.

\begin{lem}\label{bos} 
\begin{equation*}
%\begin{split}
\|\phi(t)\|_{L^{\frac{1}{\gamma-1}}\cap H^2}\leq Cc_0, \quad |\phi_t(t)|_2\leq Cc_0c_1,\quad  |\phi_t(t)|_{D^1}\leq Cc_0c_2
 %&|\phi_t(t)|_2\leq Cc_0c_2,\quad  |\phi_t(t)|_{D^1}\leq Cc_0c_3,
%\end{split}
\end{equation*}
for $0\leq t \leq T_1=\min \{T^{*}, (1+Cc_3)^{-2}\}$.
\end{lem}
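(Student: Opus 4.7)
The plan is to treat \eqref{li4}$_1$ as a linear transport equation for $\phi$ with given drift $V$ and a divergence-type source $(\gamma-1)\phi\,\mathrm{div}\,V$, and derive the four estimates separately — the weighted-$L^p$ conservation by direct integration, the $H^2$ bound by a standard transport energy method, and the two $\phi_t$ bounds by reading them off of the equation itself.

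First, I would observe that in terms of $\rho$ the first equation in \eqref{li4} is just the continuity equation $\rho_t+\mathrm{div}(\rho V)=0$, and since $\phi^{1/(\gamma-1)}=\bigl(\frac{A\gamma}{\gamma-1}\bigr)^{1/(\gamma-1)}\rho$, this yields the pointwise identity
\[
\bigl(\phi^{1/(\gamma-1)}\bigr)_t+\mathrm{div}\bigl(\phi^{1/(\gamma-1)}V\bigr)=0.
\]
Integrating over $\Omega$ and using $V|_{|x|=a}=0$ together with the far-field decay gives $|\phi(t)|_{L^{1/(\gamma-1)}}=|\phi_0|_{L^{1/(\gamma-1)}}\le c_0$, which requires no smallness in $t$.

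Next, for the $H^2$ estimate I would apply $\partial^\beta$ for $|\beta|\le 2$, multiply by $\partial^\beta\phi$, and integrate. The leading drift term integrates by parts into $\tfrac12\int \mathrm{div}\,V\,|\partial^\beta\phi|^2\,dx$, and the remaining commutator and source contributions are controlled by Moser-type inequalities, yielding
\[
\frac{d}{dt}\|\phi\|_2^2 \le C\bigl(\|\nabla V\|_\infty+1\bigr)\|\phi\|_2^2+C\|\phi\|_\infty\|\phi\|_2\|V\|_{D^3}.
\]
Sobolev embedding in $\Omega\subset\mathbb R^3$ (see Appendix A / Galdi) gives $\|V\|_{W^{1,\infty}}\le C\|V\|_{H^3}$, so Gronwall produces
\[
\|\phi(t)\|_2 \le \|\phi_0\|_2\exp\!\Bigl(C\!\int_0^t\!(1+\|V\|_{D^3})\,ds\Bigr)\le c_0\,\exp\!\bigl(C(t+c_2 t^{1/2})\bigr).
\]
Restricting to $t\le T_1=\min\{T^*,(1+Cc_3)^{-2}\}$ (noting $c_2\le c_3$) makes the exponential $\le e$, so $\|\phi\|_2\le Cc_0$.

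Finally, the two $\phi_t$ bounds follow by reading $\phi_t=-V\!\cdot\!\nabla\phi-(\gamma-1)\phi\,\mathrm{div}\,V$ off of the equation. For the $L^2$ bound I would use Hölder in the form $|V\!\cdot\!\nabla\phi|_2\le |V|_6|\nabla\phi|_3$ with $|V|_6\le C|\nabla V|_2\le Cc_1$ and $|\nabla\phi|_3\le C\|\phi\|_2^{1/2}\|\nabla\phi\|_1^{1/2}\le C\|\phi\|_2\le Cc_0$ by the just-proved $H^2$ estimate, together with $|\phi\,\mathrm{div}\,V|_2\le|\phi|_\infty|\nabla V|_2\le Cc_0c_1$; for the $D^1$ bound, differentiating once and using Hölder and Sobolev everywhere gives $|\nabla\phi_t|_2\le C\|V\|_2\|\phi\|_2\le Cc_0c_2$. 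The main point of care is the $H^2$ step: balancing the commutator estimate against only the $L^2_tH^3_x$ control of $V$ forces the use of the $t^{1/2}$-factor from Cauchy–Schwarz, which is exactly what fixes the size of $T_1$ in terms of $c_3$; this is the only place where the restriction on $T_1$ is truly needed.
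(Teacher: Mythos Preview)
Your proof is correct and follows essentially the same route as the paper: a transport-equation energy estimate for $\|\phi\|_{L^{1/(\gamma-1)}\cap H^2}$ with Gronwall controlled by $\int_0^t\|V\|_3\,ds\le C(c_2 t+c_2 t^{1/2})$, followed by reading off $\phi_t$ directly from the equation. One small slip: in passing from your differential inequality to the Gronwall bound you drop the $\|\nabla V\|_\infty$ contribution from the exponent---it does not disappear but rather satisfies $\int_0^t\|\nabla V\|_\infty\,ds\le C(c_2 t+c_2 t^{1/2})$ via $\|\nabla V\|_\infty\le C\|V\|_3$, which is still harmless on $[0,T_1]$.
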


\begin{proof}
%First, it follows from the equation $\eqref{li4}_1$ that for $0\le t\le T_1$,
%\begin{equation}\label{pi}
%|\phi(t)|_\infty\le C |\phi_0|_\infty\exp\Big(\int_0^t|\text{div} V(s)|_\infty\text{d}s\Big)\le Cc_0.
%\end{equation}

It follows from \eqref{jizhu1} and standard arguments  for transport equation that 
\begin{equation}\label{gb}
\begin{split}
\|\phi(t)\|_{L^{\frac{1}{\gamma-1}}\cap H^2}\leq & \|\phi_0\|_{L^{\frac{1}{\gamma-1}}\cap H^2} \exp\Big(C\int_0^t \| V(s)\|_{3}\text{d}s\Big)\leq Cc_0,
%\leq c_0\exp\Big(Ct^{\frac{1}{2}}\Big(\int_0^t \| V(s)\|^2_{3}\text{d}s\Big)^{\frac{1}{2}}\Big)\leq Cc_0,
\end{split}
\end{equation}
for $0\leq t \leq T_1$, which, along with   $\eqref{li4}_1$, H\"older's inequality and \eqref{gb}, yields that
\begin{equation}\label{zhen6}
\begin{split}
|\phi_t(t)|_2\leq&  Cc_0c_1,\quad 
| \phi_t(t)|_{D^1}\leq Cc_0c_2 \quad \text{for} \quad   0\le t\le T_1.
%| \phi_t(t)|_{D^2}\leq& C\Big(| V\cdot \nabla \phi|_{D^2}+| \phi\text{div} V|_{D^2}\Big)(t)\leq Cc_0c_3,
\end{split}
\end{equation}

The proof of  Lemma \ref{bos} is complete.
\end{proof}

\subsubsection{The a priori estimates for $\boldsymbol{\psi}$}

Now we establish the estimates for $\boldsymbol{\psi}$.

\begin{lem}\label{pij} 
\begin{equation*}
\|\boldsymbol{\psi}(t)\|_{L^q\cap D^1}\leq Cc_0,\quad |\boldsymbol{\psi}_t(t)|_2\leq Cc_0c_2 \quad \ {\rm{for}} \quad 0\le t\le T_1.
\end{equation*}

\end{lem}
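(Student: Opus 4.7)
The equation for $\boldsymbol{\psi}$ is a first order linear transport system with a forcing $-\nabla\mathrm{div}\,V$ and a zeroth order coupling $B(V)\boldsymbol{\psi}$. The plan is to exploit this hyperbolic structure in two steps: first an $L^q$ energy estimate, then a $D^1$ estimate by differentiating, and finally read $|\boldsymbol{\psi}_t|_2$ off the equation.

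For the $L^q$ bound, I would multiply the $i$-th component of $\eqref{li4}_3$ by $|\boldsymbol{\psi}|^{q-2}\psi^{(i)}$, sum in $i$, and integrate over $\Omega$. Using $V=0$ on $\{|x|=a\}$ and the decay at infinity to justify the integration by parts, the convective contribution reduces to $\frac{1}{q}\int(\mathrm{div}\,V)|\boldsymbol{\psi}|^q$. Together with H\"older on the $B(V)\boldsymbol{\psi}$ term and on $\nabla\mathrm{div}\,V\cdot|\boldsymbol{\psi}|^{q-2}\boldsymbol{\psi}$, I will obtain
\[
\frac{d}{dt}|\boldsymbol{\psi}|_q \le C\|V\|_{W^{1,\infty}}|\boldsymbol{\psi}|_q + C|\nabla^2 V|_q,
\]
where $|\nabla^2 V|_q\le C\|V\|_3$ by the embedding $H^1\hookrightarrow L^q$ for $q\in(3,6]$ in dimension three. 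Gronwall, combined with $\int_0^t\|V\|_3\,ds\le\sqrt t\,c_2$ and $\int_0^t\|V\|_{W^{1,\infty}}\,ds\le C t^{5/8}c_2$ (the latter from GN $|\nabla V|_\infty\le C\|V\|_2^{1/4}|V|_{D^3}^{3/4}$), yields $|\boldsymbol{\psi}(t)|_q\le Cc_0$ as soon as $t\le T_1=\min\{T^*,(1+Cc_3)^{-2}\}$.

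For the $D^1$ bound, I would apply $\partial_k$ to $\eqref{li4}_3$, multiply by $\partial_k\boldsymbol{\psi}$ and integrate. The term with $V\cdot\nabla^2\boldsymbol{\psi}$ again becomes $\tfrac12\int(\mathrm{div}\,V)|\nabla\boldsymbol{\psi}|^2$. For the commutator terms I will use H\"older with the triple $(3,6,2)$, namely
\[
\int|\nabla^2 V|\,|\boldsymbol{\psi}|\,|\nabla\boldsymbol{\psi}|\le|\nabla^2 V|_3\,|\boldsymbol{\psi}|_6\,|\nabla\boldsymbol{\psi}|_2\le C\|V\|_3|\nabla\boldsymbol{\psi}|_2^{\,2},
\]
invoking the 3-D embedding $D^{1,2}\hookrightarrow L^6$ to convert $|\boldsymbol{\psi}|_6$ into $|\nabla\boldsymbol{\psi}|_2$ (this is the step where it is essential to have the $L^q$ bound already if one preferred a different H\"older splitting). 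The source contributes $\int|\nabla^3 V||\nabla\boldsymbol{\psi}|\le|V|_{D^3}|\nabla\boldsymbol{\psi}|_2$. A Gronwall argument of the same flavor then gives $|\boldsymbol{\psi}(t)|_{D^1}\le Cc_0$ on $[0,T_1]$.

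Finally, reading the equation pointwise,
\[
|\boldsymbol{\psi}_t|_2 \le |V|_\infty|\nabla\boldsymbol{\psi}|_2+|\nabla V|_3|\boldsymbol{\psi}|_6+|\nabla\mathrm{div}\,V|_2,
\]
and each factor is controlled: $|V|_\infty,|\nabla V|_3\le C\|V\|_2\le Cc_2$, $|\boldsymbol{\psi}|_6\le C|\nabla\boldsymbol{\psi}|_2\le Cc_0$, and $|\nabla^2 V|_2\le c_2$. This yields $|\boldsymbol{\psi}_t|_2\le Cc_0c_2$. The main technical point I anticipate is the absence of $W^{1,\infty}$ control on $V$ from $\|V\|_2\le c_2$ alone, which forces us to use the GN interpolation with $|V|_{D^3}$ and rely on the shortness of the time interval $T_1$ (encoded in $c_3$) to close the Gronwall bound; this is where the hierarchy $c_0\le c_1\le c_2\le c_3$ and the choice $T_1\le(1+Cc_3)^{-2}$ play their role.
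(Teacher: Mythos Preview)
Your proposal is correct and follows the same three-step scheme as the paper: an $L^q$ energy estimate on the transport system, then a $D^1$ estimate by differentiating once, and finally reading $|\boldsymbol{\psi}_t|_2$ off the equation and closing with Gronwall on the short interval $[0,T_1]$.

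The only noteworthy difference is the H\"older splitting you use for the commutator term $\int|\nabla^2 V|\,|\boldsymbol{\psi}|\,|\nabla\boldsymbol{\psi}|\,dx$ in the $D^1$ step (and, correspondingly, for the $B(V)\boldsymbol{\psi}$ term in the $|\boldsymbol{\psi}_t|_2$ bound). The paper pairs the exponents $(q^*,q,2)$ with $q^*=2q/(q-2)\in[3,6)$, using the $L^q$ bound on $\boldsymbol{\psi}$ established in Step~1 together with $|\nabla^2 V|_{q^*}\le C\|\nabla^2 V\|_{1}$; you instead use $(3,6,2)$ and the homogeneous Sobolev inequality $|\boldsymbol{\psi}|_6\le C|\nabla\boldsymbol{\psi}|_2$. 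Both routes close. The paper's choice is slightly more self-contained in that it only invokes the inhomogeneous embeddings of Lemma~\ref{ale1}, whereas your route needs the homogeneous $D^{1,2}\hookrightarrow L^6$ embedding on the exterior domain (valid here since $\boldsymbol{\psi}\in L^q$ decays at infinity, but it deserves a word of justification as $\boldsymbol{\psi}$ has no prescribed trace on $\partial\Omega$).
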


\begin{proof}
First, multiplying $\eqref{li4}_3$ by $q|\boldsymbol{\psi}|^{q-2}\boldsymbol{\psi}$ and integrating over $\Omega$, one has
\begin{equation*}
\frac{\text{d}}{\text{d}t}|\boldsymbol{\psi}|^q_q\leq C\big(|\nabla V|_\infty|\boldsymbol{\psi}|_q^q+|\nabla^2V|_{q}|\boldsymbol{\psi}|_q^{q-1}\big)\le C\big(|\nabla V|_\infty|\boldsymbol{\psi}|_q^q+\|\nabla^2 V\|_1|\boldsymbol{\psi}|_q^{q-1}\big),
\end{equation*}
which, along with \eqref{jizhu1} and the Gronwall inequality, yields that
\begin{equation}\label{beimian}|\boldsymbol{\psi}(t)|_q\le Cc_0\quad \text{for}\quad 0\le t\le T_1.\end{equation}

Second,  let $\varsigma=(\varsigma_1, \varsigma_2, \varsigma_3)^{\top} (|\varsigma|=1\  \text{and}\  \varsigma_i=0,1)$. Applying $\partial_x^{\varsigma}$ to $\eqref{li4}_3$,  multiplying by $\partial_x^\varsigma \boldsymbol{\psi}$ and then integrating over $\Omega$, one can get
\begin{equation*}
\begin{split}
\frac{\text{d}}{\text{d}t} |\nabla \boldsymbol{\psi}|_2
\le &C\big(|\nabla V|_{\infty} | \nabla\boldsymbol{\psi}|_2 +\|\nabla^2 V\|_{1}\big),
\end{split}
\end{equation*}
which, along with \eqref{jizhu1} and the Gronwall inequality, yields that 

\begin{equation}\label{y4}
|\boldsymbol{\psi}(t)|_{D^1}\le Cc_0 \quad \ {\rm{for}} \quad 0\le t\le T_1.
\end{equation}

Finally, it follows from the equations $\eqref{li4}_3$, \eqref{beimian} and \eqref{y4} that for $ 0\le t\le T_1,$
\begin{align*}
|\boldsymbol{\psi}_t|_2\le &C\big(|\nabla V|_{q^{*}} |\boldsymbol{\psi}|_q+|V|_{\infty}|\nabla \boldsymbol{\psi}|_2+|\nabla^2 V|_2\big)\le Cc_0c_2,
%\le & C\big(\|\nabla V\|_{1}|\boldsymbol{\psi}|_q+\|V\|_{2}|\nabla \boldsymbol{\psi}|_2+|\nabla^2 V|_2\big)\le Cc_0c_2,
\end{align*}
where $q^{*}=\frac{2q}{q-2}\in[3,6)$. 

The proof of  Lemma \ref{pij} is complete.
\end{proof}

\subsubsection{The a priori estimates for $U$}

\begin{lem}\label{llm3}
\begin{equation*}
\begin{split}
&\|U(t)\|_1^2+|U_t(t)|_2^2+\int_0^t \big( \| \nabla U(s)\|_{1}^2+|U_t(s)|_2^2+|U_t(s)|_{D^1}^2\big) {\rm{d}} s\le Cc_0^2,\\
&|U(t)|_{D^2}^2+t|U_t(t)|_{D^1}^2+\int_0^t \big(|U(s)|_{D^3}^2+s|U_{tt}(s)|_2^2+s|U_t(s)|_{D^2}^2\big){\rm{d}} s\le Cc_1^3 c_2^{\frac6q},
\end{split}
\end{equation*}
for $ 0\le t\le T_2=\min\big\{T^*, (1+Cc_3)^{-\frac{6q}{q-3}}\big\}$.

\end{lem}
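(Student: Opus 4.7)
Equation $\eqref{li4}_2$ is a linear parabolic equation for $U$ with sources prescribed by $V$, $\phi$, $\boldsymbol{\psi}$ already controlled by Lemmas \ref{bos}--\ref{pij}, so every estimate below follows one of two recipes: (i) pair the equation against $U$, $U_t$, or $tU_{tt}$, integrate by parts in $x$ (the boundary term vanishes by $U|_{|x|=a}=0$, and the Lam\'e operator $L=-\alpha(\Delta+\nabla\text{div})$ is symmetric and coercive against $|\nabla\cdot|_2^2+|\text{div}\cdot|_2^2$), then exploit short-time smallness; or (ii) treat $LU=(\text{rest})$ as a Lam\'e system with Dirichlet data and apply $L^2$ elliptic regularity on the exterior domain $\Omega$. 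Throughout I use the Sobolev embedding $H^1\hookrightarrow L^{q^*}$ with $q^*=2q/(q-2)\in[3,6)$ and the monotonicity $1\le c_0\le c_1\le c_2\le c_3$.

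\textbf{First estimate.} Pairing $\eqref{li4}_2$ with $U$ gives $\tfrac12\tfrac{d}{dt}|U|_2^2+\alpha(|\nabla U|_2^2+|\text{div}U|_2^2)=\int(\boldsymbol{\psi}\cdot Q(V)-V\cdot\nabla V-\nabla\phi)\cdot U$, where the viscous source is handled by $|\boldsymbol{\psi}|_q|\nabla V|_{q^*}|U|_{q^*}\le Cc_0c_2\|U\|_1$ and the remaining sources are routine. Pairing instead with $U_t$ produces $|U_t|_2^2+\tfrac{\alpha}{2}\tfrac{d}{dt}(|\nabla U|_2^2+|\text{div}U|_2^2)=(\text{source})\cdot U_t$; differentiating $\eqref{li4}_2$ once more in $t$ and pairing the resulting equation with $U_t$ yields the $\int_0^t|U_t|_{D^1}^2$ bound. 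The initial value $|U_t(0)|_2\le Cc_0$ is read off $\eqref{li4}_2$ at $t=0$ using $U_0\in H^2$ and $|\boldsymbol{\psi}_0\cdot Q(U_0)|_2\le|\boldsymbol{\psi}_0|_q\|U_0\|_2$. On a short enough interval $T_1$ all $c_1,c_2$ factors are absorbed by the accompanying time integrals, producing the claimed $Cc_0^2$ bound.

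\textbf{Second estimate, spatial part.} Rewriting $\eqref{li4}_2$ as $LU=\boldsymbol{\psi}\cdot Q(V)-U_t-V\cdot\nabla V-\nabla\phi$ with zero Dirichlet data, standard $L^2$ elliptic regularity for the Lam\'e operator on $\Omega$ yields $|U|_{D^2}\le C(|U_t|_2+|V\cdot\nabla V|_2+|\nabla\phi|_2+|\boldsymbol{\psi}\cdot Q(V)|_2)$. The convective piece satisfies $|V\cdot\nabla V|_2\le|V|_6|\nabla V|_3\le C\|V\|_1^{3/2}\|V\|_2^{1/2}\le Cc_1^{3/2}c_2^{1/2}$; and the viscous source, via interpolation $|\nabla V|_{q^*}\le|\nabla V|_2^{1-3/q}|\nabla V|_6^{3/q}$, satisfies $|\boldsymbol{\psi}\cdot Q(V)|_2\le Cc_0c_1^{1-3/q}c_2^{3/q}$. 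Squaring and using $c_0\le c_1$ together with $c_2^{6/q}\ge c_2$ (since $6/q\ge 1$), all contributions collapse into $|U|_{D^2}^2\le Cc_1^3c_2^{6/q}$. Applying $\nabla$ to the Lam\'e equation and repeating the argument yields the $\int_0^t|U|_{D^3}^2$ bound after integration in time.

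\textbf{Time-weighted estimates and main obstacle.} Differentiating $\eqref{li4}_2$ in $t$ gives
\begin{equation*}
U_{tt}+V_t\cdot\nabla V+V\cdot\nabla V_t+\nabla\phi_t+LU_t=\boldsymbol{\psi}_t\cdot Q(V)+\boldsymbol{\psi}\cdot Q(V_t);
\end{equation*}
multiplying by $tU_{tt}$, integrating over $\Omega\times[0,t]$ and integrating by parts in $s$ produces
\begin{equation*}
\int_0^t s|U_{tt}|_2^2\,ds+\tfrac{\alpha t}{2}\bigl(|\nabla U_t|_2^2+|\text{div}U_t|_2^2\bigr)=\tfrac{\alpha}{2}\!\int_0^t\!(|\nabla U_t|_2^2+|\text{div}U_t|_2^2)\,ds+(\text{source})\cdot tU_{tt}.
\end{equation*}
The first right-hand term is already $\le Cc_0^2$ by the first estimate, and elliptic regularity applied to the differentiated equation then produces the $\int_0^t s|U_t|_{D^2}^2$ bound with the same scaling. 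The \emph{main obstacle} lies in the source pairings containing $V_t$ and $V_{tt}$: the hypothesis \eqref{jizhu1} only provides $\int_0^t s|V_{tt}|_2^2\le Cc_3^2$ and the singular-in-$s$ bound $|V_t|_{D^1}\lesssim c_3 s^{-1/2}$. The worst pairing, $\int\boldsymbol{\psi}\cdot Q(V_t)\cdot U_{tt}$, must be treated by a H\"older split $|\boldsymbol{\psi}|_q|\nabla V_t|_{q^*}|U_{tt}|_{q^*}$ followed by interpolation $|\nabla V_t|_{q^*}\le|\nabla V_t|_2^{1-3/q}|\nabla V_t|_6^{3/q}$; this interpolation together with the $t$-weight needed to tame the initial-time singularity is precisely what forces the exponent $\tfrac{6q}{q-3}$ in the definition of $T_2$, so that with $T_2$ chosen this small any $c_3$-factors appearing on the right are absorbed by $T_2$-smallness, leaving the final constant scaling as $Cc_1^3c_2^{6/q}$ and a Gronwall argument closing the estimate.
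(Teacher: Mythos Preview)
Your overall strategy matches the paper's: energy pairings with $U$, $U_t$, $U_{tt}$ together with Lam\'e elliptic regularity (Lemma~\ref{df3}) for $|U|_{D^2}$, $|U|_{D^3}$, $|U_t|_{D^2}$. However, your H\"older splits are arithmetically wrong. Since $q^*=2q/(q-2)$ satisfies $\tfrac{1}{q}+\tfrac{1}{q^*}=\tfrac{1}{2}$, a three-factor H\"older with exponents $q$ and $q^*$ forces the third factor into $L^2$, not $L^{q^*}$. In the $|U|_2$ estimate this is harmless (the right bound is $|\boldsymbol{\psi}|_q|\nabla V|_{q^*}|U|_2$, and the argument still closes), but in the time-weighted step it is a genuine gap: your bound $|\boldsymbol{\psi}|_q|\nabla V_t|_{q^*}|U_{tt}|_{q^*}$ for $\int \boldsymbol{\psi}\cdot Q(V_t)\cdot U_{tt}$ would require control of $\nabla U_{tt}$ in $L^2$, which is nowhere available. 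The correct route is $\big|\int \boldsymbol{\psi}\cdot Q(V_t)\cdot U_{tt}\big|\le |\boldsymbol{\psi}|_q|\nabla V_t|_{q^*}|U_{tt}|_2$; Young's inequality absorbs $\tfrac12|U_{tt}|_2^2$ on the left, and the remaining $Cc_0^2|\nabla V_t|_{q^*}^2$, after your interpolation $|\nabla V_t|_{q^*}\le |\nabla V_t|_2^{1-3/q}\|\nabla V_t\|_1^{3/q}$ and time integration against the weight $s$, produces a term of size $c_3^4\,t^{(q-3)/q}$, which indeed drives the exponent $\tfrac{6q}{q-3}$ in $T_2$.

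One further technical point: multiplying by $tU_{tt}$ and integrating from $0$ presumes $s|\nabla U_t(s)|_2^2\to 0$ as $s\to 0^+$. Since one only knows $\nabla U_t\in L^2([0,T];L^2)$, this need not hold along every sequence; the paper integrates from $\tau>0$ to $t$ and then sends $\tau\to 0$ along a sequence furnished by Lemma~\ref{bjr} on which $s_k|\nabla U_t(s_k)|_2^2\to 0$. Also, Lemma~\ref{df3} requires the lower-order term $|U|_{D^1}$ on the right-hand side of the elliptic estimate; you should include it (it is already controlled by the first estimate).
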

\begin{proof}
\textbf{Step 1:} estimate on $|U|_2$.  Multiplying  $\eqref{li4}_2$ by $U$ and integrating over $\Omega$, then
\begin{equation}\label{y5}
\begin{split}
&\frac12\frac{\text{d}}{\text{d}t} |U|_2^2+\alpha |\nabla U|_2^2+\alpha|\text{div}U|_2^2
=\int_{\Omega} \big(-V\cdot \nabla V-\nabla\phi +\boldsymbol{\psi}\cdot Q(V)\big)\cdot U\text{d}x \\
\le & C\big( |V|_\infty |\nabla V|_2 + |\nabla\phi|_2+|\boldsymbol{\psi}|_q |\nabla V|_{q^{*}}\big)|U|_2 \le  C|U|_2^2+Cc_2^4,
\end{split}
\end{equation}
%It thus follows from \eqref{y5} that 
%\begin{equation*}
%\frac{\text{d}}{\text{d}t} |U|_2^2+ |\nabla U|_2^2\le C|U|_2^2+Cc_3^4,
%\end{equation*}
which, along with the Gronwall inequality, yields that 
\begin{equation}\label{dilingjie}
|U(t)|_2^2+\int_0^t | U(s)|_{D^1}^2 {\rm{d}} s\le Cc_0^2\quad \ {\rm{for}} \quad 0\le t\le T_2.
\end{equation}

\textbf{Step 2:} estimate on $|\nabla U|_2$. 
Multiplying  $\eqref{li4}_2$ by $U_t$ and integrating over $\Omega$, one gets
\begin{equation}\label{xl}
\begin{split}
&\frac{\text{d}}{\text{d}t} \big(\alpha |\nabla U|_2^2+\alpha |\text{div} U|_2^2\big)+|U_t|_2^2
=\int_{\Omega} (-V\cdot \nabla V-\nabla\phi +\boldsymbol{\psi}\cdot Q(V))\cdot U_t\text{d}x \\
\le & C( |V|_\infty |\nabla V|_2 + |\nabla\phi|_2+|\boldsymbol{\psi}|_q |\nabla V|_{q^{*}})|U_t|_2 \le  Cc_2^4+\frac14|U_t|_2^2,
\end{split}
\end{equation}
which, along with  the Gronwall inequality, implies that
\begin{equation}\label{diyijie}
| U(t)|_{D^1}^2+\int_0^t |U_t(s)|_2^2 {\rm{d}} s\le Cc_0^2 \quad \ {\rm{for}} \quad 0\le t\le T_2.
\end{equation}

It follows from   Lemma \ref{df3} (Appendix A) that 
\begin{equation}\label{y7}
\begin{split}
|U|_{D^2}\le &C\big( | -U_t-V\cdot \nabla V-\nabla\phi +\boldsymbol{\psi}\cdot Q(V)|_2+ |U|_{D^1}\big)\\
\le & C\big(|U_t|_2+ |V|_6 |\nabla V|_3 + |\nabla\phi|_2+|\boldsymbol{\psi}|_q |\nabla V|_{q^{*}}+ |U|_{D^1}\big)\\
\le & C\big(|U_t|_2+c_1^2 +c_1^{\frac32}c_2^{\frac3q}\big),
\end{split}
\end{equation}
where one has used the fact that (see Lemma \ref{ale1} ( Appendix A)) 
\begin{equation}\label{momo}
\begin{split}
|f|_3\le& |f|_2^{\frac12}|f|_6^{\frac12}\le C |f|_2^{\frac12}\|f\|_1^{\frac12}
\le C\big(|f|_2+|f|_2^{\frac12}|\nabla f|_2^{\frac12}\big),\\
|f|_{q^*}\le& |f|_2^{\frac{q-3}{q}}|f|_6^{\frac3q}\le C |f|_2^{\frac{q-3}{q}}\|f\|_1^{\frac3q}
\le C\big(|f|_2+|f|_2^{\frac{q-3}{q}}|\nabla f|_2^{\frac3q}\big).
\end{split}
\end{equation}

Consequently, it follows from \eqref{diyijie}-\eqref{y7} that
\[
\int_0^t |U(s)|_{D^2}^2 {\rm{d}} s\le Cc_0^2 \quad \ {\rm{for}} \quad 0\le t\le T_2.
\]

\textbf{Step 3:} estimate on $|U|_{D^2}$.
Differentiating $\eqref{li4}_2$ with respect to $t$, multiplying the resulting equation   by $U_t$ and integrating over $\Omega$, one has
\begin{align}\label{2.19kkk}
&\frac12\frac{\text{d}}{\text{d}t} |U_t|_2^2 +\alpha |\nabla U_t|_2^2+\alpha|\text{div} U_t|_2^2
= \int_{\Omega} \big( -(V\cdot \nabla V)_t-\nabla\phi_t +(\boldsymbol{\psi}\cdot Q(V))_t\big)\cdot U_t\text{d}x\nonumber \\
%\le & C\Big(|V|_{\infty} |\nabla V_t|_2|U_t|_2+|\phi_t|_2 |\nabla U_t|_2\\
%&+ \big(|V_t|_6|\nabla V|_2+|\psi|_6  |\nabla V_t|_2  +| \psi_t|_2 |\nabla V|_6\big) |U_t|_3\Big)\\
\le& C\Big((|V|_{\infty} |\nabla V_t|_2+|V_t|_6|\nabla V|_3)|U_t|_2+|\phi_t|_2 |\nabla U_t|_2 \nonumber\\
&+ |\boldsymbol{\psi}|_q  |\nabla V_t|_2|U_t|_{q^{*}} +| \boldsymbol{\psi}_t|_2 |\nabla V|_6|U_t|_3\Big)\\
%\le& C\Big((|V|_{\infty} |\nabla V_t|_2+|V_t|_6|\nabla V|_3)|U_t|_2+|\boldsymbol{\psi}_t|_2 |\nabla U_t|_2\\
%&+ |\boldsymbol{\psi}|_q  |\nabla V_t|_2\big(|U_t|_2+|U_t|_2^{\frac{q-3}{q}}|\nabla U_t|_2^{\frac 3q}\big) +|\boldsymbol{\psi}_t|_2 |\nabla V|_6\big(|U_t|_2+|U_t|_2^{\frac12}|\nabla U_t|_2^{\frac12}\big)\Big)\\
%\le &C\Big(\|V\|_2\|V_t\|_1|U_t|_2+|\phi_t|_2|\nabla U_t|_2\\
%&+\big(|\psi|_6 |\nabla V_t|_2+|\psi_t|_2\|V\|_1\big)\big(|U_t|_2+|U_t|_2^{\frac12}|\nabla U_t|_2^{\frac12}\big)\Big)\\
\le&C\big(1+c_2^4+c_2^{\frac{4q}{q-3}}\big)|U_t|_2^2+\frac{1}{c_2^2}\|V_t\|_1^2+\frac14 |\nabla U_t|_2^2+Cc_2^6,\nonumber
%\le & C\big(1+c_0^4c_3^4\big)|U_t|_2^2+Cc_0^2c_3^4+\frac{1}{c_3^2}\|V_t\|_1^2 +\frac{\alpha}{4}|\nabla U_t|_2^2,
\end{align}
where one has used \eqref{momo} for dealing with the terms $(|U_t|_{q^{*}},|U_t|_{3})$. Then 
%\begin{equation}\label{y88}
% \frac{\text{d}}{\text{d}t} |U_t|_2^2 + |\nabla U_t|_2^2\le  C\Big(1+c_2^{\frac{4q}{q-3}}\Big)|U_t|_2^2+Cc_2^6+\frac{1}{c_2^2}\|V_t\|_1^2.
% \end{equation}
 integrating \eqref{2.19kkk} over $(\tau, t) (\tau\in(0,t))$, one arrives at
 \begin{equation}\label{polo}
  |U_t(t)|_2^2 + \int_{\tau}^t |\nabla U_t(s)|_2^2 \text{d} s \le   |U_t(\tau)|_2^2 +C \int_{\tau}^t \Big(1+c_2^{\frac{4q}{q-3}}\Big)|U_t(s)|_2^2\text{d} s+Cc_2^6t+C.
 \end{equation}
It follows from   the equations $\eqref{li4}_2$, 
 %\[
 %|U_t(\tau)|_{2}\le C \big(|V|_{\infty} |\nabla V|_2 +|\nabla\phi|_2+ |\nabla^2 U|_2 +|\boldsymbol{\psi}|_q |\nabla V|_{q^{*}}\big)(\tau),
 %\]
 the continuity of $(\phi, U,\boldsymbol{\psi})$ and \eqref{vg}-\eqref{houmian} that 
 \begin{equation}
 \lim\sup_{\tau\to 0}|U_t(\tau)|_{2}\le C \big(|U_0|_{\infty} |\nabla U_0|_2 +|\nabla\phi_0|_2+ |\nabla^2 U_0|_2 +|\boldsymbol{\psi}_0|_q |\nabla U_0|_{q^{*}}\big)\le Cc_0^2.
 \end{equation}
Letting $\tau\to 0$ in \eqref{polo} and using the Gronwall inequality, one can get 
\begin{equation}\label{y9}
|U_t(t)|_2^2 +\int_0^t |U_t(s)|_{D^1}^2 \text{d}s \le Cc_0^2 \quad \ {\rm{for}} \quad 0\le t\le T_2.
\end{equation}
It thus  follows from \eqref{y7}  that
\[
|U(t)|_{D^2}\le  C\big(|U_t(t)|_2+c_1^2 +c_1^{\frac32}c_2^{\frac3q}\big)\le C c_1^{\frac32}c_2^{\frac3q} \quad \ {\rm{for}} \quad 0\le t\le T_2.
\]

Finally, according to  Lemma \ref{df3} (Appendix A), one has
\begin{equation}
\begin{split}
|U|_{D^3}\le &C\big( \| -U_t-V\cdot \nabla V-\nabla\phi +\boldsymbol{\psi}\cdot Q(V)\|_1+ |U|_{D^1}\big)\\
\le & C\big(\|U_t\|_1+ |V|_\infty \|\nabla V\|_1+|\nabla V|_6 |\nabla V|_3 + \|\nabla\phi\|_1\\
&+|\boldsymbol{\psi}|_q\|\nabla V\|_{1,q^{*}}+|\nabla\boldsymbol{\psi}|_2 |\nabla V|_\infty+ |U|_{D^1}\big)
%\le & C\big(\|U_t\|_1+ |V|_\infty \|\nabla V\|_1+|\nabla V|_6 |\nabla V|_3 + \|\nabla\phi\|_1\\
%&+|\boldsymbol{\psi}|_q\|\nabla V\|_2+|\nabla\boldsymbol{\psi}|_2 \|\nabla V\|_2+ |U|_{D^1}\big)\\
\le  C\big(|\nabla U_t|_2 +c_2^2+c_0|V|_{D^3}\big),
\end{split}
\end{equation}
which, along with \eqref{y9}, implies that
\[
\int_0^t |U(s)|_{D^3}^2 \text{d}s \le Cc_0^2 \quad \ {\rm{for}} \quad 0\le t\le T_2.
\]
%The proof of  Lemma \ref{llm3} is complete.
%\end{proof}
%\subsubsection{Time-weighted estimate for $U$}
%
%\begin{lem}\label{feimi}
%\begin{equation*}
%t|\nabla U_t|_2^2+\int_0^t \big(s|U_{tt}|_2^2+s|U_t|_{D^2}^2\big){\rm{d}}s\le Cc_3^2
%\end{equation*}
%for $0\le t\le T_3=\min\{T^*, (1+Cc_4)^{-\frac{6q}{q-3}}\}.$
%
%\end{lem}
%
%\begin{proof}
\textbf{Step 4:} time-weighted estimate on $U$. 
First,  differentiating $\eqref{li4}_2$ with respect to $t$, multiplying the resulting equations   by $U_{tt}$ and integrating over $\Omega$, one has
\begin{equation*}
\begin{split}
&\frac{\alpha}{2}\frac{\text{d}}{\text{d}t}\big(|\nabla U_t|_2^2+|\text{div} U_t|_2^2\big)+|U_{tt}|_2^2=-\int_{\Omega}\big((V\cdot\nabla V)_t+\nabla\phi_t-(\boldsymbol{\psi}\cdot Q(V))_t\big)U_{tt}\text{d}x\\
\le &C|U_{tt}|_2\Big(|V_t|_2|\nabla V|_\infty+|V|_\infty|\nabla V_t|_2+|\nabla\phi_t|_2+|\boldsymbol{\psi}_t|_2|\nabla V|_\infty+|\boldsymbol{\psi}|_q|\nabla V_t|_{q^*}\Big),
\end{split}
\end{equation*}
which, along with Young's inequality and \eqref{momo}, yields that
\begin{equation}\label{lrq}
%\begin{split}
\frac{\text{d}}{\text{d}t}\big(|\nabla U_t|_2^2+|\text{div} U_t|_2^2\big)+|U_{tt}|_2^2\le C\big(c_2^4\|V\|_3^2+c_2^2|\nabla V_t|_2^2+c_2^4+c_0^2|\nabla V_t|_2^{\frac{2(q-3)}{q}}\|\nabla V_t\|_1^{\frac6q}\big).
%\end{split}
\end{equation}

Second, multiplying \eqref{lrq} by $s$ and integrating over $(\tau,t)$, one can get 
\begin{equation}\label{yilu}
\begin{split}
t|\nabla U_t|_2^2+\int_\tau^t s|U_{tt}|_2^2\text{d}s\le& %C\Big(\tau|\nabla U_t(\tau)|_2^2+c_0^2+c_2^4 t+c_2^4\int_0^t s\|V\|_3^2\text{d}s\\
%&+c_2^2\int_0^t s|\nabla V_t|_2^2\text{d}s+c_0^2\int_0^t s|\nabla V_t|_2^{\frac{2(q-3)}{q}}\|\nabla V_t\|_1^{\frac6q}\text{d}s\Big)\\
 C\Big(\tau|\nabla U_t(\tau)|_2^2+c_0^2+c_3^6 t+c^4_3t^{\frac{q-3}{q}}\Big).
\end{split}
\end{equation}
Due to  \eqref{reg11qq} and Lemma \ref{bjr}  (Appendix A) that,  there exists a sequence $\{s_k\}$ such that 
\begin{equation}
s_k\to 0 \quad \text{and}\quad  s_k|\nabla U_t(s_k, \cdot)|_2^2\to 0 \quad \text{as}\quad k\to \infty.
\end{equation}
After choosing $\tau=s_k\to 0$  in  \eqref{yilu}, one  obtains 
\begin{equation}\label{cv}
t| U_t(t)|_{D^1}^2+\int_0^t s|U_{tt}(s)|_2^2\text{d}s\le Cc_0^2 \quad \ {\rm{for}} \quad 0\le t\le T_2.
\end{equation}

Finally, according to   Lemma \ref{df3} (Appendix A), one has
\begin{align*}
|U_t|_{D^2}\le& C\big(|U_{tt}|_2+|(V\cdot \nabla V)_t|_2+|\nabla \phi_t|_2+|(\boldsymbol{\psi}\cdot Q(V))_t|_2+| U_t|_{D^1}\big)\\
\le & C\big(|U_{tt}|_2+|V|_\infty|\nabla V_t|_2+|V_t|_2|\nabla V|_\infty+|\nabla \phi_t|_2+|\boldsymbol{\psi}|_q|\nabla V_t|_{q^*}\\
&+|\boldsymbol{\psi}_t|_2|\nabla V|_\infty+|\nabla U_t|_2\big),
\end{align*}
which, along with \eqref{momo} and \eqref{cv}, yields that for $0\le t\le T_2$,
\begin{align*}
\int_0^t s|U_t(s)|_{D^2}^2\text{d}s\le &C\big(c_0^2+c_3^4 t+c_3^6 t+c_3^4 t^{\frac{q-3}{q}}\big)\le Cc_0^2.
\end{align*}

The proof of  Lemma \ref{llm3} is complete.
\end{proof}

\iffalse
\begin{equation}\label{totay}
\begin{split}
\|U(t)\|_1^2+\int_0^t \big( \| \nabla U(s)\|_{1}^2+|U_t(s)|_2^2\big) {\rm{d}} s\le& Cc_0^2\\
|U(t)|_{D^2}^2+|U_t(t)|_2^2+\int_0^t \big(|U(s)|_{D^3}^2+|U_t(s)|_{D^1}^2\big){\rm{d}} s\le& Cc_2^3 c_3,\\
t|U_t(t)|_{D^1}^2+\int_0^t \big(s|U_{tt}(s)|_2^2+s|U_t(s)|_{D^2}^2\big)\text{d}s\le &Cc_2^3c_3,\\
|\phi(t)|_\infty+\|\phi(t)-\eta\|_2+ \|\phi_t(t)\|_{L^2\cap D^1}\le &Cc_0c_3,\\
\|\boldsymbol{\psi}(t)\|_{L^q\cap D^1}+ |\boldsymbol{\psi}_t(t)|_2\le& Cc_0c_3.
 \end{split}
\end{equation}
\fi
Finally, define the time $T^*=\min\big\{ T,(1+Cc_3)^{-\frac{6q}{q-3}}\big\}$,
and constants $c_i(i=1,2,3)$:
$$c_1=C^{\frac12}c_0, \quad c_2=c_3=C^{\frac{q}{2(q-3)}}c_1^{\frac{3q}{2(q-3)}}=C^{\frac{5q}{4(q-3)}}c_0^{\frac{3q}{2(q-3)}}.$$
It follows from Lemmas \ref{bos}-\ref{llm3} that for $0\le t\le T^*$,
\begin{equation}\label{lgg}
\begin{split}
\|U(t)\|_1^2+\int_0^t \big(\| \nabla U(s)\|_{1}^2+|U_t(s)|_2^2\big) {\rm{d}} s\le& c_1^2,\\
%|U(t)|_{D^1}^2+\int_0^t \big(|U_t(s)|_2^2+| U(s)|_{D^2}^2\big) {\rm{d}} s\le& c_2^2,\\
|U(t)|_{D^2}^2+|U_t(t)|_2^2+\int_0^t \big(|U_t(s)|_{D^1}^2+| U(s)|_{D^3}^2\big) {\rm{d}} s\le& c_2^2,\\
 t|U_t(t)|_{D^1}^2+\int_0^t \big(s|U_{tt}(s)|_2^2+s|U_t(s)|_{D^2}^2\big)\text{d}s\le &c_3^2,\\
\|\phi(t)\|_{L^{\frac{1}{\gamma-1}}\cap H^2}+ \|\phi_t(t)\|_{L^2\cap D^1}+\|\boldsymbol{\psi}(t)\|_{L^q\cap D^1}+ |\boldsymbol{\psi}_t(t)|_2\le& c_3^2.
\end{split}
\end{equation}

\subsection{Proof of Theorem \ref{thh1}}\label{bani} 
Our proof  is based on the classical iteration scheme and conclusions obtained  in  Sections 2.2-2.3. Let us denote as in Section 2.3 that 
%we define constants $c_{0}$, $c_{1}$, $c_{2}$, $c_3$, $c_4$ and assume that
\begin{equation*}
\begin{split}
&1+\|\phi_0\|_{L^{\frac{1}{\gamma-1}}\cap H^2}+\|U_0\|_{2}+\|\boldsymbol{\psi}_0\|_{L^q\cap D^1} \leq c_0.
\end{split}
\end{equation*}
Next, let  $U^0\in C([0,T^*];H^2)\cap  L^2([0,T^*];H^3) $ satisfy the following  problem:
\begin{equation*}\begin{split}
& \zeta_t-\triangle \zeta=0 \quad  \text{in} \quad  (0,\infty)\times \Omega;\quad \zeta(0,x)=U_0 \quad  \text{for} \quad  x\in\Omega; \\
& \zeta(t,x)|_{|x|=a}=0\quad  \text{and} \quad \zeta(t,x)\to 0 \quad    \text{as}\quad  \left|x\right|\to \infty\quad  \text{for}\quad  t\ge 0.
\end{split}
\end{equation*}
Choosing a time $T_{**}\in (0,T^*]$ small enough such that
\begin{equation}\label{jizhu}
\begin{split}
\sup\limits_{0\le t\le T_{**}}\|U^0(t)\|_1^2+\int_0^{T_{**}}\big( \|\nabla  U^0(t)\|_{1}^2+|U^0_t(t)|_2^2\big) {\rm{d}} t\le& c_1^2,\\
\sup_{0\leq t \leq T_{**}}\big(|U^0(t)|^2_{D^2}+|U^0_t(t)|^2_{2}\big)+\int_{0}^{T_{**}} \big( |U^0(t)|^2_{D^3}+|U^0_t(t)|^2_{D^1}\big)\text{d}t \leq& c^2_2,\\
\text{ess}\sup_{0\leq t \leq T_{**}} t|U^0_t(t)|^2_{D^1}+\int_{0}^{T_{**}}\big( t|U^0_{tt}(t)|^2_{2}+t|U^0_t(t)|^2_{D^2}\big)\text{d}t \leq& c^2_3.
\end{split}
\end{equation}

\begin{proof}
\textbf{Step 1:} existence. 
Let the beginning step of our iteration be $V=U^0$. Then one can get a strong solution $\big(\phi^1,U^1,\boldsymbol{\psi}^1 \big)$ of problem \eqref{li4}. Inductively, one constructs approximate sequences $\big(\phi^{k+1}, U^{k+1},\boldsymbol{\psi}^{k+1}\big)$ as follows:   given $(\phi^{k},U^{k},\boldsymbol{\psi}^k)$ for $k\geq 1$, define $(\phi^{k+1}, U^{k+1},\boldsymbol{\psi}^{k+1})$  by solving the following problem
\begin{equation}\label{li6}
\begin{cases}
\ \ \displaystyle \phi^{k+1}_t+U^{k}\cdot \nabla \phi^{k+1}+(\gamma-1)\phi^{k+1}\text{div} U^{k}=0,\\[2pt]
\ \ \displaystyle U^{k+1}_t+U^{k}\cdot\nabla U^{k} +\nabla \phi^{k+1}+LU^{k+1}=\boldsymbol{\psi}^{k+1}\cdot Q(U^{k}),\\[2pt]
\ \ \displaystyle \boldsymbol{\psi}^{k+1}_t+\sum_{l=1}^3 A_l(U^k) \partial_l\boldsymbol{\psi}^{k+1}+B(U^k)\boldsymbol{\psi}^{k+1}+\nabla \text{div}U^k=0,\\[2pt]
\ \ \displaystyle (\phi^{k+1}, U^{k+1}, \boldsymbol{\psi}^{k+1} )|_{t=0}=(\phi_0,U_0, \boldsymbol{\psi}_0)\quad \text{for}\quad x\in\Omega,\\[2pt]
\ \ \displaystyle U^{k+1}|_{|x|=a}=0\quad\text{for}\quad t\ge 0,\\[2pt]
\ \ \displaystyle (\phi^{k+1},U^{k+1}, \boldsymbol{\psi}^{k+1})\to (0,0,0)\quad \text{as}\quad |x|\to\infty\quad \text{for}\quad t\ge 0.
 \end{cases}
\end{equation}
By replacing $V$  with $U^k$ in \eqref{li4}, and $(\phi^k, U^k, \boldsymbol{\psi}^k)$ satisfies the uniform estimates \eqref{lgg}, we can solve the problem \eqref{li6}.
%This problem was obtained from \eqref{li4} by replacing $v$ with $ u^{k}$. Then we know that $(\phi^{k}, \psi^{k}, u^{k})$ $(k=1,2,...)$  satisfy the estimates in (\ref{jkk}).

Next we are going to prove that the whole sequence $(\phi^k,U^k,\boldsymbol{\psi}^k)$ converges strongly to a limit $(\phi,U,\boldsymbol{\psi})$. Let
\begin{equation*}
\overline{\phi}^{k+1}=\phi^{k+1}-\phi^k,\quad  \overline{U}^{k+1}=U^{k+1}-U^k,\quad  \overline{\boldsymbol{\psi}}^{k+1}=\boldsymbol{\psi}^{k+1}-\boldsymbol{\psi}^k.
\end{equation*}
Then from \eqref{li6},  one can deduce that
 \begin{equation}
\label{eq:1.2w}
\begin{cases}
\ \  \displaystyle \overline{\phi}^{k+1}_t+U^k\cdot \nabla\overline{\phi}^{k+1} +\overline{U}^k\cdot\nabla\phi ^{k}+(\gamma-1)\big(\overline{\phi}^{k+1} \text{div}U^k +\phi ^{k}\text{div}\overline{U}^k\big)=0,\\[2pt]
\ \ \overline{U}^{k+1}_t+ U^k\cdot\nabla \overline{U}^{k}+ \overline{U}^{k} \cdot \nabla U^{k-1} +\nabla\overline\phi^{k+1} +L\overline{U}^{k+1} \\[2pt]
=\boldsymbol{\psi}^{k+1}\cdot Q(\overline{U}^k)+\overline{\boldsymbol{\psi}}^{k+1}\cdot Q(U^{k-1}),\\
\ \ \displaystyle \overline{\boldsymbol{\psi}}^{k+1}_t+\sum_{l=1}^3 A_l(U^k) \partial_l\overline{\boldsymbol{\psi}}^{k+1}+B(U^k)\overline{\boldsymbol{\psi}}^{k+1}+\nabla \text{div}\overline{U}^k=\Upsilon^k_1+\Upsilon^k_2,
\end{cases}
\end{equation}
where $\Upsilon^k_1$  and $\Upsilon^k_2$ are defined by
\begin{equation*}
\Upsilon^k_1=-\sum_{l=1}^3(A_l(U^k) \partial_l\boldsymbol{\psi}^{k}-A_l(U^{k-1}) \partial_l\boldsymbol{\psi}^{k}),\quad \Upsilon^k_2=-(B(U^k) \boldsymbol{\psi}^{k}-B(U^{k-1}) \boldsymbol{\psi}^{k}).
\end{equation*}

We first consider  $\overline{\boldsymbol{\psi}}^{k+1}$. Actually, from Remark \ref{lalala} at the end of this section, one has 
\begin{lem}\label{lpsi}
\begin{equation*}
\overline{\boldsymbol{\psi}}^{k+1}\in L^\infty([0,T_{**}];H^1)\quad \text{for} \quad k=1,2,\cdots.
\end{equation*}
\end{lem}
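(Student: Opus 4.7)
The plan is to decompose $H^1 = L^2 \cap D^1$ and establish each half separately. The $D^1$-piece comes essentially for free: by the uniform iteration estimates \eqref{lgg}, the whole sequence $\{\boldsymbol{\psi}^m\}$ is bounded in $L^\infty([0,T_{**}]; L^q \cap D^1)$, so the difference $\overline{\boldsymbol{\psi}}^{k+1}=\boldsymbol{\psi}^{k+1}-\boldsymbol{\psi}^k$ automatically belongs to $L^\infty([0,T_{**}]; D^1)$ with a uniform bound. Hence only the $L^2$-bound requires genuine work.

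For the $L^2$-estimate I would perform a standard energy argument on $\eqref{eq:1.2w}_3$, exploiting the crucial fact that both $\boldsymbol{\psi}^{k+1}(0)$ and $\boldsymbol{\psi}^{k}(0)$ equal $\nabla\ln\rho_0$, so that $\overline{\boldsymbol{\psi}}^{k+1}(0)\equiv 0$. Multiplying $\eqref{eq:1.2w}_3$ by $\overline{\boldsymbol{\psi}}^{k+1}$ and integrating over $\Omega$, the transport term becomes, using $A_l(U^k)=U^{k,(l)}\mathbb{I}_3$ together with $U^k|_{|x|=a}=0$ and the far-field decay to discard boundary contributions,
\begin{equation*}
\sum_{l=1}^{3}\int_{\Omega} A_l(U^k)\partial_l \overline{\boldsymbol{\psi}}^{k+1}\cdot \overline{\boldsymbol{\psi}}^{k+1}\,\text{d}x = -\tfrac{1}{2}\int_{\Omega}\text{div}\,U^k\,|\overline{\boldsymbol{\psi}}^{k+1}|^{2}\,\text{d}x,
\end{equation*}
which is absorbed by $C|\nabla U^k|_{\infty}|\overline{\boldsymbol{\psi}}^{k+1}|_2^{2}$. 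The zero-order term $B(U^k)\overline{\boldsymbol{\psi}}^{k+1}$ contributes similarly.

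The source terms on the right-hand side are controlled in $L^2$ uniformly in $k$ via the bounds \eqref{lgg}: $|\nabla\text{div}\,\overline{U}^k|_2 \le C|\overline{U}^k|_{D^2}\le 2Cc_2$; $|\Upsilon^k_1|_2 \le C|\overline{U}^k|_{\infty}|\nabla\boldsymbol{\psi}^k|_2 \le C\|\overline{U}^k\|_{2}|\boldsymbol{\psi}^k|_{D^1}$ via the Sobolev embedding $H^2\hookrightarrow L^\infty$ in 3-D; and $|\Upsilon^k_2|_2 \le C|\nabla\overline{U}^k|_3|\boldsymbol{\psi}^k|_6 \le C\|\overline{U}^k\|_{2}|\boldsymbol{\psi}^k|_{D^1}$ via the embedding $H^1 \hookrightarrow L^6$ on the exterior domain. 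Applying Gronwall's inequality with vanishing initial data then yields $\overline{\boldsymbol{\psi}}^{k+1} \in L^\infty([0,T_{**}]; L^2)$.

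Combining this $L^2$-bound with the $D^1$-bound noted above completes the proof. No single step presents a serious obstacle here; the only mild care required is justifying the integration by parts, whose boundary contribution vanishes thanks to the no-slip condition on $U^k$ and the far-field decay of the iterates.
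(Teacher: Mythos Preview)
Your $D^1$-half is fine and matches the paper's last line. The $L^2$-half, however, has a genuine gap: the energy argument you describe is circular. From the regularity class \eqref{reg11qq}, each iterate $\boldsymbol{\psi}^m$ lies only in $C([0,T_{**}];L^q\cap D^1)$ with $q\in(3,6]$, and on the unbounded exterior domain $\Omega$ this does \emph{not} place $\boldsymbol{\psi}^m$ in $L^2$. Hence you have no a priori guarantee that $|\overline{\boldsymbol{\psi}}^{k+1}(t)|_2<\infty$, so multiplying $(\ref{eq:1.2w})_3$ by $\overline{\boldsymbol{\psi}}^{k+1}$ and integrating over $\Omega$ is not justified: the very quantity whose finiteness you want appears on both sides of your Gronwall inequality. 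The vanishing initial data $\overline{\boldsymbol{\psi}}^{k+1}(0)=0$ does not rescue this, since you would still need continuity of $t\mapsto|\overline{\boldsymbol{\psi}}^{k+1}(t)|_2$ to bootstrap, which is equally unavailable. The obstruction is not the boundary contribution in the integration by parts but the existence of the bulk integrals themselves.

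The paper handles this by spatial truncation. One sets $\overline{\boldsymbol{\psi}}^{k+1,R}=\overline{\boldsymbol{\psi}}^{k+1}X_R$ with a smooth cutoff $X_R$ supported in $\{|x|\le a+2R\}$, derives the transport equation for $\overline{\boldsymbol{\psi}}^{k+1,R}$ (which picks up a commutator $\sum_l A_l(U^k)\overline{\boldsymbol{\psi}}^{k+1}\partial_l X_R$), and runs the energy estimate on this compactly supported object, for which all integrals are finite. The commutator is bounded by $C|U^k|_{q^*}|\overline{\boldsymbol{\psi}}^{k+1}|_q$, uniformly in $R$, using only the known $L^q$-regularity. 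Gronwall then yields $\sup_t|\overline{\boldsymbol{\psi}}^{k+1,R}(t)|_2\le\widetilde C$ independent of $R$, and Fatou's lemma delivers $\overline{\boldsymbol{\psi}}^{k+1}\in L^\infty([0,T_{**}];L^2)$. This localization step is precisely the missing ingredient in your proposal.
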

Then  multiplying $(\ref{eq:1.2w})_3$ by $2\overline{\boldsymbol{\psi}}^{k+1}$ and integrating over $\Omega$, one arrives at
\begin{equation*}\label{go64aa}
\begin{split}
\frac{\text{d}}{\text{d}t}|\overline{\boldsymbol{\psi}}^{k+1}|^2_2\leq& C |\overline{\boldsymbol{\psi}}^{k+1}|_2\big(|\nabla U^k|_\infty |\overline{\boldsymbol{\psi}}^{k+1}|_2+|\nabla^2 \overline{U}^k|_2+|\Upsilon^k_1 |_2+|\Upsilon^k_2|_2\big)\\
\leq &C|\overline{\boldsymbol{\psi}}^{k+1}|_2\big(\|\nabla U^k\|_2|\overline{ \boldsymbol{\psi}}^{k+1}|_2+|\nabla^2 \overline U^k|_2+|\nabla \boldsymbol{\psi}^k|_2|\overline U^k|_\infty+|\boldsymbol{\psi}^k|_q|\nabla \overline U^k|_{q^{*}}\big)\\
\leq&  C\big(\|\nabla U^k\|_2+\epsilon^{-1}(1+|\nabla\boldsymbol{\psi}^k|_2^2+|\boldsymbol{\psi}^k|_q^2)\big)|\overline{\boldsymbol{\psi}}^{k+1}|^2_2+\epsilon \|\overline{U}^k\|^2_2,
%\leq& C\Big(\|\nabla U^k\|_2|\overline \psi^{k+1}|_2^2+\eta\|\overline U^k\|_2^2,
\end{split}
\end{equation*}
%which implies that 
%Thus, one can obtain that,  for $t\in[0,T_{**}]$,
%\begin{equation}\label{fly4}
%\frac{\text{d}}{\text{d}t}|\overline{\boldsymbol{\psi}}^{k+1}|^2_2
%\end{equation}
%where  
%\begin{equation*}
%\begin{split}
%A_\epsilon^k(t)=C\big(\|\nabla U^k\|_2+\epsilon^{-1}(1+|\nabla\boldsymbol{\psi}^k|_2^2+|\boldsymbol{\psi}^k|_q^2)\big)\quad \text{and}\quad  \int_0^t A_\epsilon^k(s)\text{d}s\le C+C_\epsilon t
%\end{split}
%\end{equation*}
for $t\in[0,T_{**}]$, where $\epsilon \in (0,1)$ is a constant to be determined later.
%or $ t\in[0,T_{**}].$

Second, for $\overline\phi^{k+1}$, multiplying $(\ref{eq:1.2w})_1$ by $2\overline{\phi}^{k+1}$ and integrating over $\Omega$, one has 
\begin{equation}\label{fly1}
\begin{split}
%\frac{\text{d}}{\text{d}t}|\overline{\phi}^{k+1}|^2_2
%=&-2\int_{\Omega}\Big(U^k\cdot \nabla\overline{\phi}^{k+1} +\overline{U}^k\cdot\nabla\phi^{k}+(\gamma-1)\big(\overline{\phi}^{k+1} \text{div}U^k +\phi ^{k}\text{div}\overline{U}^k\big)\Big)\overline{\phi}^{k+1} \text{d}x\\
\frac{\text{d}}{\text{d}t}|\overline{\phi}^{k+1}|^2_2\leq& C\big(|\nabla U^k|_\infty|\overline{\phi}^{k+1}|_2+ |\overline{U}^k|_6|\nabla \phi^k|_3+|\nabla\overline{U}^k|_2| \phi^k|_\infty\big)|\overline{\phi}^{k+1}|_2.
%\leq& C\big(\|\nabla U^k\|_{2}+\epsilon^{-1}(1+\|\phi^k\|_2)\big)|\overline\phi^{k+1}|_2^2+\epsilon\|\overline U^k\|_1^2.
%\leq & A^k_\eta(t)|\overline{\phi}^{k+1}(t)|^2_2+\eta \|\overline{u}^k(t)\|^2_1
\end{split}
\end{equation}
%which means that ($0<\eta \leq \frac{1}{10}$ is a constant)
%\begin{equation}\label{go64}\begin{split}
%\displaystyle
%&\frac{d}{dt}|\overline{\phi}^{k+1}(t)|^2_2\leq A^k_\eta(t)|\overline{\phi}^{k+1}(t)|^2_2+\eta \|\overline{u}^k(t)\|^2_1,\\[4pt]
%\displaystyle
%&A^k_\eta(t)=C\left(\|\nabla u^k\|_{2}+\frac{1}{\eta}(1+\|\phi^{k}\|^2_{3})\right),\ \text{and} \ \int_0^t A^k_\eta(s)\text{d}s\leq C +C_{\eta}t
%\end{split}
%\end{equation}
%for $t\in[0,T_{**}]$, where $C_{\eta}$ is a positive constant  depending on $\eta$ and  constant $C$.

Applying $\partial_{x}^{\zeta}$ ($|\zeta|=1$) to $(\ref{eq:1.2w})_{1}$, multiplying by $2 \partial_{x}^\zeta\overline{\phi}^{k+1}$ and  integrating over $\Omega$, then 
\begin{equation}\label{fly2}
\begin{split}
\frac{\text{d}}{\text{d}t}|\partial_{x}^\zeta\overline{\phi}^{k+1}|^2_{2}
%=&-2\int_{\Omega}\partial_{x}^\zeta\Big(U^k\cdot \nabla\overline{\phi}^{k+1} +\overline{U}^k\cdot\nabla\phi ^{k}\\
%&+(\gamma-1)(\overline{\phi}^{k+1} \text{div}U^k +\phi ^{k}\text{div}\overline{U}^k)\Big)\partial_{x}^\zeta\overline{\phi}^{k+1} \text{d}x\\
\leq& C\Big(|\nabla U^k|_\infty |\nabla \overline{\phi}^{k+1}|_{2}+|\nabla \phi^k|_{6}| \nabla \overline{U}^k|_3+|  \overline{U}^k|_\infty|\nabla^2 \phi^{k}|_2\\
&+|\nabla^2 U^k|_3 | \overline{\phi}^{k+1}|_{6}+|\nabla \overline{U}^k|_3 |\nabla \phi^{k}|_6+| \phi^k|_{\infty}  |\nabla \text{div} \overline{U}^k|_2   \Big)|\nabla \overline{\phi}^{k+1}|_{2},
%\leq& C\big(\|U^k\|_3+\epsilon^{-1}\|\phi^k\|_2^2\big)\|\overline\phi^{k+1}\|_1^2+\epsilon\|\overline U^k\|_2^2.
\end{split}
\end{equation}
which, along with  \eqref{fly1} and Young's inequality, yields  that for $ t\in[0,T_{**}]$,
\begin{equation}\label{fly3}
\frac{\text{d}}{\text{d}t}\| \overline{\phi}^{k+1}\|^2_1\leq  C\big(\| U^k\|_{3}+\epsilon^{-1}(1+\|\phi^k\|^2_2)\big)\| \overline{\phi}^{k+1}\|^2_1+\epsilon \|\overline{U}^k\|^2_2.
\end{equation}
%where
%\begin{equation*} 
%B_\epsilon^k(t)=\quad \text{and}\quad  \int_0^t B_\epsilon^k(s)\text{d}s\le C+C_\epsilon t  
%\end{equation*}

%\begin{equation}\label{go64aa2}
%\begin{cases}
%\displaystyle
%\frac{d}{dt}|\overline{\psi}^{k+1}|^2_2\leq \Psi^k_\eta(t)|\overline{\psi}^{k+1}|^2_2+\eta \|\overline{u}^k\|^2_2,\\[10pt]
%\displaystyle
%\Psi^k_\eta(t)=C\Big(\|\nabla u^k\|_{2}+\frac{1}{\eta}|\psi^{k}|^2_{\infty} +\frac{1}{\eta}\ |\nabla\psi^{k}\ |^2_{3}+\frac{1}{\eta}\Big),\ \text{and} \
% \int_0^t \Psi^k_\eta(s)\text{d}s\leq C+C_{\eta}t.
%\end{cases}
%\end{equation}

For $\overline{U}^{k+1}$, multiplying $(\ref{eq:1.2w})_2$ by $2\overline{U}^{k+1}$ and integrating over $\Omega$, one gets
\begin{equation}\label{ghbbb}
\begin{split}
&\frac{\text{d}}{\text{d}t}|\overline{U}^{k+1}|^2_2+2\alpha|\nabla\overline{U}^{k+1} |^2_2+2\alpha|\text{div}\overline{U}^{k+1} |^2_2\\
%=& -2\int_{\Omega}\Big( U^k \cdot \nabla \overline{U}^{k}+\overline{U}^{k} \cdot \nabla U^{k-1}
%+\nabla\overline\phi^{k+1}-\psi^{k+1}\cdot Q(\overline{U}^k)\\
%&-\overline{\psi}^{k+1}\cdot Q(U^{k-1}) \Big) \cdot \overline{U}^{k+1}\text{d}x\\
%\leq &C\Big(|U^k|_\infty |\nabla\overline{U}^{k}|_2 |\overline{U}^{k+1}|_2+|\overline{U}^{k}|_6 |\overline{U}^{k+1}|_2 |\nabla U^{k-1}|_3 +|\nabla \overline{U}^{k+1}|_2 |\overline{\phi}^{k+1}|_2 \\
%&+|\boldsymbol{\psi}^{k+1}|_q |\nabla\overline{U}^{k}|_{q^{*}} |\overline{U}^{k+1}|_{2}+|\overline{\boldsymbol{\psi}}^{k+1}|_2 |\nabla U^{k-1}|_\infty |\overline{U}^{k+1}|_2\Big)\\
\leq& C\Big(\big(|U^k|_\infty |\nabla\overline{U}^{k}|_2+|\overline{U}^{k}|_6|\nabla U^{k-1}|_3+|\boldsymbol{\psi}^{k+1}|_q |\nabla\overline{U}^{k}|_{q^{*}}\\
&+|\overline{\boldsymbol{\psi}}^{k+1}|_2 |\nabla U^{k-1}|_\infty\big) |\overline{U}^{k+1}|_2+|\nabla \overline{U}^{k+1}|_2 |\overline{\phi}^{k+1}|_2\Big).
\end{split}
\end{equation}
%which, along with Young's inequality,  yields that for $ t\in[0,T_{**}]$,
%\begin{equation}\label{gogo1}
%\begin{split}
%\frac{\text{d}}{\text{d}t}|\overline{U}^{k+1}|^2_2+\alpha |\nabla\overline{U}^{k+1} |^2_2
%\leq & C\big(1+\epsilon^{-1}\big(|U^{k}|^2_{\infty}+| \nabla U^{k-1}|^2_{3}+|\boldsymbol{\psi}^{k+1}|_q^2\big)\big)|\overline{U}^{k+1}|^2_2\\
%&+C\big(|\nabla U^{k-1}|^2_\infty |\overline{\boldsymbol{\psi}}^{k+1}|^2_{2}+|\overline{\phi}^{k+1}|^2_{2}\big)
%+\epsilon\|\overline{U}^{k}\|^2_2.
%\end{split}
%\end{equation}

%$$\int_0^t \big(E^k_\epsilon(s)+E^k_2(s)\big)\text{d}s\leq C+C_\epsilon t \quad \text{for}\quad  t\in[0,T_{**}].$$

Applying $\partial_{x}^{\zeta}$ to $(\ref{eq:1.2w})_2$ ($|\zeta|=1$), multiplying by $2\partial_{x}^\zeta\overline{U}^{k+1}$ and  integrating over $\Omega$,  then
\begin{equation*}
\begin{split}
&\frac{\text{d}}{\text{d}t}|\partial_{x}^\zeta\overline{U}^{k+1}|^2_{2}+2\alpha|\nabla \partial_{x}^\zeta\overline{U}^{k+1} |^2_2+2\alpha| \partial_{x}^\zeta \text{div} \overline{U}^{k+1} |^2_2\\
=& -2\int_{\Omega} \partial_{x}^\zeta\Big( U^k\cdot\nabla \overline{U}^{k}+\overline{U}^{k} \cdot \nabla U^{k-1}+ \nabla\overline\phi^{k+1}-\boldsymbol{\psi}^{k+1}\cdot Q(\overline{U}^k)\\
&-\overline{\boldsymbol{\psi}}^{k+1}\cdot Q(U^{k-1})\Big)\cdot \partial_{x}^\zeta\overline{U}^{k+1} \text{d}x\\
\le &C\Big(|\nabla \overline{U}^{k+1}|_2\big(|\nabla U^k|_\infty |\nabla\overline{U}^{k}|_2 +| U^k|_\infty |\nabla^2\overline{U}^{k}|_{2}+|\nabla\overline{U}^{k}|_2 |\nabla U^{k-1}|_\infty\\
& +|\overline{U}^{k}|_6 |\nabla^2 U^{k-1}|_3\big)+|\boldsymbol{\psi}^{k+1}|_q |\nabla^2\overline{U}^{k}|_{2}|\nabla\overline{U}^{k+1}|_{q^*}\\
&+|\nabla\boldsymbol{\psi}^{k+1}|_2 |\nabla \overline{U}^{k}|_{6}\big(|\nabla \overline{U}^{k+1}|_2+|\nabla\overline{U}^{k+1}|_2^{\frac12}|\nabla^2\overline{U}^{k+1}|_2^{\frac12}\big)\\
&+\big(|\nabla\overline\phi^{k+1}|_2+|\overline{\boldsymbol{\psi}}^{k+1}|_2 |\nabla U^{k-1}|_\infty\big) |\nabla \partial_{x}^\zeta\overline{U}^{k+1}|_2\Big),
\end{split}
\end{equation*}
which, along with  \eqref{momo}, \eqref{ghbbb} and  Young's inequality, yields that for $ t\in[0,T_{**}]$,
\begin{equation}\label{gogo13}
\begin{split}
\frac{\text{d}}{\text{d}t}\|\overline{U}^{k+1}\|^2_1+\alpha \|\nabla \overline{U}^{k+1} \|^2_{1}
\leq & C\big(\epsilon^{-1}(1+\|U^{k}\|^2_3+\|U^{k-1}\|^2_3)+\epsilon^{-\frac{q}{q-3}}\big)\|\overline{U}^{k+1}\|^2_1\\
&+C(\|U^{k-1}\|^2_3|\overline{\boldsymbol{\psi}}^{k+1}|^2_{2}+\|\overline{\phi}^{k+1}\|^2_{1})+\epsilon\|\overline{U}^{k}\|^2_2.
\end{split}
\end{equation}

Finally, let
\begin{equation*}\begin{split}
\Gamma^{k+1}(t)=&\sup_{0\leq s \leq t}\|\overline{\phi}^{k+1}(s)\|^2_{1}+\sup_{0\leq s \leq t}\|\overline{U}^{k+1}(s)\|^2_1+\sup_{0\leq s \leq t}|\overline{\boldsymbol{\psi}}^{k+1}(s)|^2_{ 2}.
\end{split}
\end{equation*}
According to the above estimates  and  the Gronwall inequality, one concludes that
\begin{equation*}
\begin{split}
&\Gamma^{k+1}(t)+\alpha\int_{0}^{t}\|\nabla\overline{U}^{k+1}(s)\|^2_1\text{d}s\\
\leq&  \Big( C\epsilon\int_{0}^{t}  \|\nabla \overline{U}^k(s)\|^2_1\text{d}s+C\epsilon t\sup_{0\leq s \leq t}|\overline{U}^{k}(s)|^2_2 \Big)\exp{(C+C_\epsilon t)}.
\end{split}
\end{equation*}

Choose $\epsilon>0$ and $T_* \in (0,\min\{1,T_{**}\})$ small enough such that
$$
C\epsilon\exp{C}\leq \min\Big\{\frac{1}{4}, \frac{\alpha}{4}\Big\}  \quad \text{and}\quad \text{exp}(C_\epsilon T_*) \leq 2.
$$

Then one gets easily
\begin{equation*}\begin{split}
\sum_{k=1}^{\infty}\Big(  \Gamma^{k+1}(T_*)+\alpha\int_{0}^{T_*} \|\nabla\overline{U}^{k+1}(s)\|^2_1\text{d}s\Big)\leq C<\infty,
\end{split}
\end{equation*}
%Due to $$\lim\limits_{k\to\infty}|\overline\psi^{k+1}|_q\le C\lim\limits_{k\to\infty}|\overline\psi^{k+1}|_2^{\frac{6-q}{2q}}\|\overline\psi^{k+1}\|_1^{\frac{3(q-2)}{2q}}\le C\lim\limits_{k\to\infty}|\overline\psi^{k+1}|_2^{\frac{6-q}{2q}}=0, $$
which means that the whole sequence $(\phi^k,U^k,\boldsymbol{\psi}^k)$ converges to a limit $(\phi, U,\boldsymbol{\psi})$ in the following strong sense:
\begin{equation}\label{str}
\begin{split}
&\phi^k\rightarrow \phi\ \ \text{in}\ \  L^\infty([0,T_*];H^1(\Omega)),\quad U^k\rightarrow U\ \ \text{in}\ \  L^\infty ([0,T_*];H^1(\Omega)),\\
&\boldsymbol{\psi}^k\rightarrow \boldsymbol{\psi} \ \  \text{in}\ \  L^\infty([0,T_*];L^2(\Omega_R)),
\end{split}
\end{equation}
where $\Omega_R=\{x\in\mathbb R^3|a<|x|\le R\}$ for any $R>a$.

On the other hand, by virtue of the uniform (with respest to $k$)  estimates \eqref{lgg}, there exists a subsequence (still denoted by $(\phi^k, U^k,\boldsymbol{\psi}^k)$) converging to the limit $(\phi, U,\boldsymbol{\psi})$ in the weak or $\text{weak}^{*}$  sense. According to the lower semi-continuity of norms, the corresponding estimates in \eqref{lgg} for $(\phi,U,\boldsymbol{\psi})$ still hold. Therefore, it is easy to show that $(\phi,U,\boldsymbol{\psi})$ is a weak solution in the sense of distributions to the  problem \eqref{eqn1} and satisfy the following regularities:
\begin{equation}\label{rjkqq}\begin{split}
&\phi> 0,\quad  \phi \in L^\infty([0,T_*];L^{\frac{1}{\gamma-1}}\cap H^2),\quad  \phi_t \in L^\infty([0,T_*];H^1),\\
& \boldsymbol{\psi} \in L^\infty([0,T_*] ; L^q\cap D^1),\quad  \boldsymbol{\psi}_t \in L^\infty([0,T_*]; L^2),\\
& U\in L^\infty([0,T_*]; H^2)\cap L^2([0,T_*] ; D^3),\quad U_t \in L^\infty([0,T_*]; L^2)\cap L^2([0,T_*] ; D^1),\\
&t^{\frac12} U_t\in L^{\infty}([0,T_*]; D^1)\cap L^2([0,T_*]; D^2),\quad t^{\frac12} U_{tt} \in L^2([0,T_*]; L^2).
\end{split}
\end{equation}

\textbf{Step 2:} uniqueness.  Let $(\phi_1,U_1,\boldsymbol{\psi}_1)$ and $(\phi_2,U_2,\boldsymbol{\psi}_2)$ be two regular solutions to the  problem \eqref{eqn1} satisfying the uniform estimates in \eqref{lgg}. Set
$$
\overline{\phi}=\phi_1-\phi_2,\quad\overline{U}=U_1-U_2,\quad  \overline{\boldsymbol{\psi}}=\boldsymbol{\psi}_1-\boldsymbol{\psi}_2.$$
%$$
%and
%$$\quad \overline{\psi}=\psi_1-\psi_2=\frac{2}{\gamma-1}\left(\nabla \phi_{1}/\phi_{1}-\nabla \phi_{2}/\phi_{2}\right).$$
Then $(\overline{\phi},\overline{U},\overline{\boldsymbol{\psi}})$ satisfies the system
 \begin{equation}
\label{zhuzhu}
\begin{cases}
\ \ \displaystyle \overline{\phi}_t+U_1\cdot \nabla\overline{\phi} +\overline{U}\cdot\nabla\phi_{2}+(\gamma-1)(\overline{\phi} \text{div}U_2 +\phi_{1}\text{div}\overline{U})=0,\\[4pt]
\ \ \displaystyle \overline{U}_t+ U_1\cdot\nabla \overline{U}+ \overline{U}\cdot \nabla U_{2}+\nabla\overline\phi +L\overline{U}=\boldsymbol{\psi}_1\cdot Q(\overline{U})+\overline{\boldsymbol{\psi}}\cdot Q(U_2),\\
\ \ \displaystyle \overline{\boldsymbol{\psi}}_t+\sum_{l=1}^3 A_l(U_1) \partial_l\overline{\boldsymbol{\psi}}+B(U_{1})\overline{\boldsymbol{\psi}}+\nabla \text{div}\overline{U}=\overline{\Upsilon}_1+\overline{\Upsilon}_2,
\end{cases}
\end{equation}
where $\overline{\Upsilon}_1$  and $\overline{\Upsilon}_2$ are defined by
\begin{equation*}
\overline{\Upsilon}_1=-\sum_{l=1}^3(A_l\big(U_{1}) \partial_l\boldsymbol{\psi}_{2}-A_l(U_{2}\big) \partial_l\boldsymbol{\psi}_{2}),\quad \overline{\Upsilon}_2=-\big(B(U_{1}) \boldsymbol{\psi}_{2}-B(U_{2}) \boldsymbol{\psi}_{2}\big).
\end{equation*}
Let
$$
\Phi(t)=\|\overline{\phi}(t)\|^2_{1}+|\overline{\boldsymbol{\psi}}(t)|^2_{ 2}+\|\overline{U}(t)\|^2_1.
$$
Similarly to the derivation of \eqref{fly1}-\eqref{gogo13}, one can also show that
\begin{equation}\label{gonm}\begin{split}
\frac{\text{d}}{\text{d}t}\Phi(t)+C\|\nabla \overline{U}(t)\|^2_1\leq G(t)\Phi (t),
\end{split}
\end{equation}
where $\displaystyle  \int_{0}^{t}G(s)\text{d}s\leq C$, for $0\leq t\leq T_*$. From  the Gronwall inequality, one concludes that
$\overline{\phi}=\overline{\boldsymbol{\psi}}=\overline{U}=0$,
then the uniqueness is obtained.

\textbf{Step 3:} The time-continuity  follows easily from the same procedure as in Lemma \ref{lem1}. Finally, Theorem \ref{thh1} is proved.

\end{proof}

\begin{rk}\label{lalala}
Now we give the proof of Lemma \ref{lpsi}.
\end{rk}

\begin{proof}
Let $X(x)\in C_0^\infty(\mathbb{R}^3)$ be a truncation function satisfying 
 \begin{equation}
0\le X(x)\le 1\quad \text{and}\quad
X(x)= \left\{
 \begin{aligned}
 &1\qquad \text{if}\quad 0\le |x|\le 1,\\
 &0\qquad\text{if}\quad|x|\ge 2.
 \end{aligned}
 \right.
 \end{equation}
 Define, for any $R> 0$, $ X_R(x)=X\big(\frac{|x|-a}{R}\big),\  \overline{\boldsymbol{\psi}}^{k+1,R}=\overline{\boldsymbol{\psi}}^{k+1}X_R$. Then from 
 $\eqref{eq:1.2w}_3$, 
\begin{equation}\label{gga}
\begin{split}
&\overline{\boldsymbol{\psi}}^{k+1,R}_t+\sum\limits_{l=1}^3 A_l(U^k)\partial_l\overline{\boldsymbol{\psi}}^{k+1,R}+B(U^k)\overline{\boldsymbol{\psi}}^{k+1,R}+\nabla\text{div}\overline U^k X_R\\
=&(\Upsilon_1^k+\Upsilon_2^k)X_R+\sum\limits_{l=1}^3A_l(U^k)\overline{\boldsymbol{\psi}}^{k+1}\partial_l X_R.
\end{split}
\end{equation}

Multiplying \eqref{gga} by $2\overline{\boldsymbol{\psi}}^{k+1,R}$ and integrating over $\Omega$, one can obtain 
\begin{equation}\label{xxx}
\begin{split}
\frac{\text{d}}{\text{d}t}|\overline{\boldsymbol{\psi}}^{k+1,R}|_2\le &C\big(|\nabla U^k|_\infty|\overline{\boldsymbol{\psi}}^{k+1,R}|_2+|\nabla\text{div}\overline U^k|_2+|U^k|_{q^*}|\overline{\boldsymbol{\psi}}^{k+1}|_q
\\
&+|\nabla\boldsymbol{\psi}^k|_2(|U^k|_\infty+|U^{k-1}|_\infty)+|\boldsymbol{\psi}^k|_q(|\nabla U^k|_{q^*}+|\nabla U^{k-1}|_{q^*})\big)\\
\le & \widetilde C|\nabla U^k|_\infty|\overline{\boldsymbol{\psi}}^{k+1,R}|_2+\widetilde C,
\end{split}
\end{equation}
where $\widetilde C>0$ is a generic constant depending on $C, q$ but independent of $R$. Then applying the Gronwall inequality to \eqref{xxx}, one gets
$$|\overline{\boldsymbol{\psi}}^{k+1,R}(t)|_2\le \widetilde C T_{**}\exp\big(\widetilde CT_{**}\big)\quad \text{for}\quad (t,R)\in[0,T_{**}]\times[0,\infty),$$
which, along with Fatou's lemma (see Lemma \ref{Fatou}  (Appendix A)), yields that 
\begin{equation}\label{dah}\overline{\boldsymbol{\psi}}^{k+1}\in L^\infty([0,T_{**}];L^2).\end{equation}
Finally, the desired conclusion can be obtained from  \eqref{dah} and $\nabla\overline{\boldsymbol{\psi}}^{k+1}\in L^\infty([0,T_{**}];L^2)$. 
\end{proof}

\subsection{\rm{Proof of Theorem \ref{zth}}}\label{com}
\subsubsection{The well-posedness theory in  Theorem \ref{zth}} 
First, it follows from the  assumption \eqref{th78qq} and Theorem \ref{thh1} that there exist a time $T_*>0$ and a unique regular solution $( \phi,U,\boldsymbol{\psi})$ in $[0,T_*]\times\Omega$ to  \eqref{eqn1} satisfying \eqref{reg11qq}.

Second, from the transformation in  \eqref{bianhuan}, one has
$$\rho(t,x)=\big(\frac{\gamma-1}{A\gamma}\phi\big)^{\frac{1}{\gamma-1}}(t,x)\quad \text{and}\quad \frac{\partial\rho}{\partial \phi}(t,x)=\frac{1}{\gamma-1}\big(\frac{\gamma-1}{A\gamma}\big)^{\frac{1}{\gamma-1}}\phi^{\frac{2-\gamma}{\gamma-1}}(t,x).$$
Then multiplying $\eqref{eqn1}_1$ by $\frac{\partial \rho}{\partial \phi}(t,x)$ yields the continuity equation $\eqref{eq:1.1}_1$; and multiplying $\eqref{eqn1}_2$ by $\rho(t,x)$ gives the momentum equations $\eqref{eq:1.1}_2$.

Thus we have shown that $(\rho,U)$ satisfies the IBVP  \eqref{eq:1.1}-\eqref{10000} with  \eqref{eqs:CauchyInit}-\eqref{e1.3} in the sense of distributions and the regularities in Definition \ref{cjk}. Moreover, it follows from the continuity equation that $\rho(t,x)>0$ for $[0,T_*]\times \Omega$. In summary, the IBVP  \eqref{eq:1.1}-\eqref{10000} with  \eqref{eqs:CauchyInit}-\eqref{e1.3} has a unique regular solution $(\rho, U)$.

\subsubsection{Spherically symmetric property  in  Theorem \ref{zth}} 
\begin{lem}\label{axl}
If  $( \rho_0,U_0)$ are  spherically symmetric in the sense of \eqref{eqs:CauchyInit}, then 
  the regular solution  $( \rho,U)(t,x)$ to the IBVP  \eqref{eq:1.1}-\eqref{10000} with  \eqref{eqs:CauchyInit}-\eqref{e1.3}  is also  a spherically symmetric one taking the form \eqref{duichenxingshi}.

\end{lem}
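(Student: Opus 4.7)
The plan is to exploit the rotational invariance of system \eqref{eq:1.1} together with the uniqueness already established in the preceding subsection. For each rotation $R\in SO(d)$ (which preserves $\Omega$ and its boundary $\{|x|=a\}$), define the rotated pair
\[
(\rho^R, U^R)(t,x) := \bigl(\rho(t, R^{\top} x),\ R\,U(t, R^{\top} x)\bigr).
\]
Because $\mathrm{div}_x(R\,V(R^{\top} x)) = (\mathrm{div}_y V)(R^{\top} x)$, because $R\,[(V\cdot\nabla_y)V](R^{\top} x) = (U^R\cdot\nabla_x)U^R(x)$, and because the Newtonian stress tensor $\mathbb{T}$ and the scalar pressure $P$ both transform covariantly under constant orthogonal changes of frame, a direct chain-rule computation shows that $(\rho^R, U^R)$ satisfies \eqref{eq:1.1}--\eqref{10000} on $[0,T_*]\times\Omega$, with the no-slip condition at $|x|=a$ and the far-field decay in \eqref{e1.3} both preserved.

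Next I would verify that the initial data are unchanged under this rotation. Using the spherical symmetry of $(\rho_0, U_0)$ in \eqref{eqs:CauchyInit},
\[
\rho_0(R^{\top} x) = \rho_0(|R^{\top} x|) = \rho_0(|x|),\qquad R\,U_0(R^{\top} x) = R\,u_0(|x|)\frac{R^{\top} x}{|x|} = u_0(|x|)\frac{x}{|x|},
\]
so $(\rho^R, U^R)|_{t=0} = (\rho_0, U_0)$. Since every functional norm appearing in Definition \ref{cjk} is rotation-invariant on $\Omega$, $(\rho^R, U^R)$ lies in the same regular-solution class on $[0,T_*]\times\Omega$ as $(\rho, U)$. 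The uniqueness part of Theorem \ref{zth} (inherited from Theorem \ref{thh1} via the correspondence of \S 2.5.1) therefore forces
\[
\rho(t,x) = \rho(t, R^{\top} x),\qquad U(t,x) = R\,U(t, R^{\top} x)\qquad \text{for every } R\in SO(d).
\]

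From the first identity, choosing rotations that send $x$ to $|x|\mathbf{e}$ for a fixed unit vector $\mathbf{e}\in\mathbb{R}^d$ gives $\rho(t,x) = \rho(t,|x|)$. For the velocity, first apply the second identity with rotations fixing $\mathbf{e}$: then $U(t, r\mathbf{e})$ is invariant under every rotation of the orthogonal complement of $\mathbf{e}$, and hence must take the form $u(t,r)\mathbf{e}$ for some scalar function $u(t,r)$. Feeding this back into $U(t,x) = R\,U(t, R^{\top} x)$ with any rotation $R$ mapping $\mathbf{e}$ to $x/|x|$ yields $U(t,x) = u(t,|x|)\, x/|x|$, which is exactly \eqref{duichenxingshi}.

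The main obstacle is the bookkeeping needed to verify that $(\rho^R, U^R)$ inherits every regularity requirement in Definition \ref{cjk}, including the weighted estimates such as $t^{1/2}U_t\in L^\infty([0,T_*];D^1)$ and the conditions on $\rho^{\gamma-1}$ and $\nabla\ln\rho$ in \eqref{er2}. This is routine since $R$ acts as an isometry on every $L^p(\Omega)$, commutes with $\partial_t$, and intertwines differentiation via $\nabla_x = R\nabla_y$, so each norm transfers component by component; still, it must be checked line by line. Beyond this, the argument is a direct consequence of uniqueness.
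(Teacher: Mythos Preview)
Your approach is essentially the same as the paper's: define the rotated pair, verify it solves the same IBVP with the same data and regularity, invoke uniqueness, and then read off the radial form. The paper carries out the verification that the rotated pair solves \eqref{eq:1.1} by an explicit index computation, and for the velocity conclusion it uses a single $180^\circ$ rotation about the axis through $x$ rather than the whole stabilizer, but the logic is identical.

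One point to tighten: you restrict to $R\in SO(d)$, whereas the paper works with the full orthogonal group. In $d=3$ this is harmless, since the stabilizer of a unit vector in $SO(3)$ is a copy of $SO(2)$ and any vector fixed by that subgroup is parallel to the axis. In $d=2$, however, the stabilizer of $\mathbf e$ in $SO(2)$ is trivial, so your argument ``$U(t,r\mathbf e)$ is invariant under every rotation of the orthogonal complement, hence parallel to $\mathbf e$'' yields no information; $SO(2)$-equivariance alone permits a tangential (swirl) component $u_\theta(t,r)\,\hat\theta$. To rule this out you need the reflection across the line through $\mathbf e$, i.e.\ you should run the argument over $O(d)$ (as the paper does) or, equivalently, check separately that the reflection also preserves the system and the initial data. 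With that adjustment your proof goes through in both dimensions.
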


\begin{proof}
First, for any  orthogonal real matrix $H=(h_{kl})_{3\times 3}$,  it follows from  \eqref{eqs:CauchyInit} that 
\begin{equation}\label{vl}
\rho_0(x)=\rho_0(Hx) \quad \text{and}\quad  u_0(|x|)=u_0(|Hx|).
\end{equation}

Second, denote 
$$\widetilde\rho(t,x)=\rho(t,Hx) \quad \text{and}\quad   \widetilde{U}(t,x)=H^{\top}U(t,Hx)=((H^{\top} U)^{(1)}, \cdots, (H^{\top} U)^{(d)})^{\top}.
$$
It follows from \eqref{vl} that $(\widetilde\rho(0,x), \widetilde{U}(0,x))=(\rho_0(x),U_0(x))$. 
%In fact, the desired conclusion follows from \eqref{vl} and 
%\begin{align*}
%\widetilde\rho(0,x)=&\rho(0,Hx)=\rho_0(Hx)=\rho_0(x),\\
%\widetilde{U}(0,x)=&H^{\top}U(0,Hx)=H^{\top}U_0(Hx)=H^{\top}u_0(|Hx|)\frac{Hx}{|Hx|}=u_0(|x|)\frac{x}{|x|}.
%\end{align*}
%which yields the desired conclusion.
Moreover,  it follows from $\eqref{e1.3}$ and the fact  $|Hx|=|x|$ that 
\begin{align*}
\widetilde U(t, x)|_{|x|=a}=H^{\top}U(t, Hx)|_{|x|=a}=0\quad \text{for}\quad t\ge 0,&\\
(\widetilde\rho(t,x), \widetilde{U}(t,x))\to (0,0)\quad \text{as} \quad |x|\to\infty \quad \text{for}\quad t\ge 0.
\end{align*}
Next we need to show  that $(\widetilde\rho(t,x), \widetilde{U}(t,x))$ is also a solution to the system \eqref{eq:1.1}-\eqref{10000}. The proof is divided into two steps.

%Then one only needs to prove that $(\tilde\rho(t,x), \widetilde{U}(t,x))$ solves  the equations $\eqref{eq:1.1}$. The proof will be divided into the following two steps.

\textbf{Step 1:} $(\widetilde\rho(t,x), \widetilde{U}(t,x))$ satisfies  $\eqref{eq:1.1}_1$. In order to facilitate the discussion, in the rest of the proof of Lemma \ref{axl},  we adopt the Einstein summation convention: an index that appears exactly twice in a term is implicitly summed over. For any orthogonal real matrix $H=(h_{kl})_{d\times d}$, and vectors $x=(x_1,\cdots, x_d)^{\top}$, $\xi=(\xi_1,\cdots, \xi_d)^{\top}$,  we denote 
\begin{equation}\label{sbo}
\xi=Hx\quad  \text{with} \quad \xi_k=h_{kl}x_l, \quad h_{kl}h_{lk}=1, \quad h_{lj}h_{nj}=\delta_{ln},
\end{equation}
where $\delta_{ln}$ is the Kronecker symbol satisfying $\delta_{ln}=1,$ when $l=n$; and $\delta_{ln}=0$, otherwise.
%Next we do the following calculation
Direct calculation gives
%It follows direct calculations and \eqref{sbo} that 
\begin{itemize}
\item For $\widetilde\rho_t$,  $
\widetilde\rho_t(t,x)=\rho_t(t,Hx)$;
\item For $\text{div}(\widetilde\rho\widetilde{U})$,
\begin{equation}\label{ma2}
\begin{split}
&\text{div}(\widetilde\rho\widetilde{U})=\frac{\partial}{\partial x_i}\big(\widetilde\rho (H^{\top} U)^i\big)
%=\frac{\partial\widetilde\rho}{\partial x_i} (H^{\top} U)^i+\widetilde\rho\frac{\partial}{\partial x_i}\big(H^{\top} U\big)^i\\
=\frac{\partial\rho}{\partial\xi_k}\frac{\partial\xi_k}{\partial x_i}(H^{\top} U)^i+\rho\frac{\partial}{\partial \xi_k}\big(H^{\top} U\big)^i\frac{\partial\xi_k}{\partial x_i}\\
%=&\frac{\partial\rho}{\partial\xi_k}\frac{\partial}{\partial x_i}\big(h_{kl}x_l\big)h_{ik}U^k+\rho\frac{\partial}{\partial \xi_k}(h_{ik}U^k)\frac{\partial}{\partial x_i}\big(h_{kl}x_l\big)\\
=&\frac{\partial\rho}{\partial\xi_k}h_{kl}\delta_{li}h_{ik}U^k+\rho h_{ik}\frac{\partial U^k}{\partial\xi_k} h_{kl}\delta_{li}\\
=&\frac{\partial\rho}{\partial\xi_k}h_{ki}h_{ik}U^k+\rho h_{ik}h_{ki}\frac{\partial U^k}{\partial\xi_k}=\nabla\rho\cdot U+\rho\text{div}U=\text{div}(\rho U),
\end{split}
\end{equation}
\end{itemize}
which, along with  $\eqref{eq:1.1}_1$, implies  that $(\widetilde\rho(t,x), \widetilde{U}(t,x))$ satisfies  $\eqref{eq:1.1}_1$.
%$$\widetilde\rho_t+\text{div}(\widetilde\rho \widetilde U)=0.$$
%which yields the desired conclusion. \smallskip
 %means that $(\widetilde\rho(t,x), \widetilde{U}(t,x))$ is  also  the solution of  $\eqref{eq:1.1}_1$. \smallskip

%The proof of \textbf{Step 1} is complete.

\textbf{Step 2:} $(\widetilde\rho(t,x), \widetilde{U}(t,x))$ satisfies  $\eqref{eq:1.1}_2$.
Direct calculation gives
%It follows direct calculations and \eqref{sbo} that 
%First, one can easily derive the following facts:
\begin{itemize}
\item 
For $(\widetilde\rho \widetilde U)_t$,
\begin{equation*}
(\widetilde\rho \widetilde U)_t=(\rho(H^{\top} U)^i)_t=(\rho h_{ik}U^k)_t=(\rho U^k)_t h_{ik}=H^{\top}(\rho U)_t;
\end{equation*}
\item 
For $\text{div}(\widetilde\rho \widetilde U\otimes \widetilde U)$,
\begin{align*}
\text{div}(\widetilde\rho \widetilde U\otimes \widetilde U)
%=&\frac{\partial(\widetilde\rho \widetilde U^i \widetilde U^j)}{\partial x_j}
=&\frac{\partial}{\partial x_j}\big(\rho(H^{\top}U)^i (H^{\top}U)^j\big)
%=\frac{\partial}{\partial x_j}\big(\rho(h_{ik} U^k)(h_{jl} U^l)\big)\\
=\frac{\partial}{\partial \xi_n}\big(\rho h_{ik} h_{jl} U^k  U^l\big)\frac{\partial}{\partial x_j}\big(h_{ns}x_s\big)\\
%=\frac{\partial}{\xi_n}\big(\rho U^k  U^l\big) h_{ik}h_{jl} h_{nj}\\
=&\frac{\partial}{\xi_n}\big(\rho U^k  U^l\big) h_{ik}\delta_{nl}=\frac{\partial}{\xi_l}\big(\rho U^k  U^l\big) h_{ik}=H^{\top}\text{div}(\rho U\otimes U);
\end{align*}
\item
 For $\nabla P(\widetilde\rho)$,
$$\nabla P(\widetilde\rho)=\frac{\partial P(\widetilde\rho)}{\partial x_i}=\frac{\partial P(\rho)}{\partial {\xi_k}}\frac{\partial}{\partial x_i}\big(h_{kl}x_l\big)=\frac{\partial P(\rho)}{\partial {\xi_k}}h_{ki}=H^{\top}\nabla P(\rho);$$
%\end{itemize}
%Notice that $$\text{div}\widetilde T=\text{div}(\mu(\tilde\rho)(\nabla \widetilde U+(\nabla\widetilde U)^{\top}))+\nabla (\lambda(\tilde\rho)\text{div} \widetilde U)\triangleq B_1+B_2,$$
%then we turn to  calculate $B_i(i=1,2)$ one by one.
%\begin{itemize}
\item 
For $\text{div}(\mu(\widetilde\rho)(\nabla \widetilde U+(\nabla\widetilde U)^{\top}))$,
\begin{align*}
&\text{div}(\mu(\widetilde\rho)(\nabla \widetilde U+(\nabla\widetilde U)^{\top}))
%=&\frac{\partial}{\partial x_j}\Big(\mu(\rho)\Big(\frac{\partial}{\partial x_i}\big(H^{\top} U\big)^j+\frac{\partial}{\partial x_j}\big(H^{\top} U)^i\big)\Big)\Big)\\
=\frac{\partial}{\partial x_j} \Big(\mu(\widetilde\rho)\Big(\frac{\partial}{\partial \xi_l}\big(h_{jk} U^k\big) h_{li}+\frac{\partial}{\partial \xi_k}\big(h_{il} U^l\big) h_{kj}\Big)\Big)\\
%=&\frac{\partial}{\partial x_j}\Big(\mu(\rho)\Big(\frac{\partial U^k}{\partial{\xi_l}} h_{jk}h_{li}+\frac{\partial  U^l}{\partial{\xi_k}} h_{il}h_{kj}\Big)\Big)\\
=&\frac{\partial}{\partial \xi_n}\Big(\mu(\rho)\Big(\frac{\partial U^k}{\partial{\xi_l}} h_{jk}h_{li}+\frac{\partial  U^l}{\partial{\xi_k}} h_{il}h_{kj}\Big)\Big) \frac{\partial}{\partial x_j}\big(h_{ns}x_s\big)\\
%=&\frac{\partial}{\partial\xi_n}\Big(\mu(\rho)\frac{\partial U^k}{\partial{\xi_l}}\Big) h_{jk}h_{li}h_{nj}+\frac{\partial}{\partial\xi_n}\Big(\mu(\rho)\frac{\partial U^l}{\partial{\xi_k}}\Big) h_{il}h_{kj}h_{nj}\\
%=&\frac{\partial}{\partial\xi_n}\Big(\mu(\rho)\frac{\partial U^k}{\partial{\xi_l}}\Big) h_{li}\delta_{kn}+\frac{\partial}{\partial\xi_n}\Big(\mu(\rho)\frac{\partial U^l}{\partial{\xi_k}}\Big) h_{il}\delta_{kn}\\
=&\frac{\partial}{\partial\xi_k}\Big(\mu(\rho)\Big(\frac{\partial U^k}{\partial{\xi_l}}h_{li}+\frac{\partial U^l}{\partial{\xi_k}} h_{il}\Big)\Big)=H^{\top}\text{div}\big(\mu(\rho)(\nabla U+(\nabla U)^{\top})\big);
\end{align*}

\item For $\nabla (\lambda(\widetilde\rho)\text{div} \widetilde U)$
\begin{align*}
\nabla (\lambda(\widetilde\rho)\text{div} \widetilde U)=&\frac{\partial}{\partial x_i}\big(\lambda(\rho)\partial_j(H^{\top} U)^j\big)=\frac{\partial}{\partial x_i}\Big(\lambda(\rho)\frac{\partial}{\partial \xi_k}\big(h_{jk} U^k\big)\frac{\partial}{\partial x_j}\big(h_{ks}x_s\big)\Big)\\
=&\frac{\partial}{\partial \xi_l}\Big(\lambda(\rho)\frac{\partial}{\xi_k}\big(h_{jk} U^k\big)h_{kj}\Big) \frac{\partial}{\partial x_i}\big(h_{ls}x_s\big)\\
=&\frac{\partial}{\partial \xi_l}\Big(\lambda(\rho)\frac{\partial U^k}{\partial{\xi_k}}\Big) h_{jk} h_{kj} h_{li}=H^{\top}\nabla(\lambda(\rho)\text{div} U);
\end{align*}
\end{itemize}
which, along with  $\eqref{eq:1.1}_2$, yields that  that 
%$$ (\widetilde\rho\widetilde U)_t+\text{div}\big(\widetilde\rho\widetilde U\otimes \widetilde U\big)+\nabla P(\widetilde\rho)=\text{div}(\mu(\widetilde\rho)(\nabla \widetilde U+(\nabla\widetilde U)^{\top}))+\nabla (\lambda(\widetilde\rho)\text{div} \widetilde U).$$
% which yields the desired conclusion.
 %implies that $(\widetilde\rho(t,x), \widetilde{U}(t,x))$ is  also  the solution of  $\eqref{eq:1.1}_2$. 
%Up to now, one has proved that
$(\widetilde\rho(t,x), \widetilde{U}(t,x))$  also satisfies  $\eqref{eq:1.1}_2$. Then $(\widetilde\rho(t,x), \widetilde{U}(t,x))$  is also a regular solution to the IBVP  \eqref{eq:1.1}-\eqref{10000} with  \eqref{eqs:CauchyInit}-\eqref{e1.3}, which along with the uniqueness obtained  in Theorem \ref{zth}, yields that 
\begin{equation}\label{cls}
\rho(t,Hx)=\rho(t,x), \quad H^{\top}U(t, Hx)=U(t,x).
\end{equation}
Then one has  $\rho(t,x)=\rho(t,|x|)$.   It remains to show $U(t,x)=u(t,|x|)\dfrac{x}{|x|}$ for some $u$. Let $x\in \mathbb{R}^3$ be any arbitrary displacement vector, and $H$ be the  matrix that performs a 180-degree rotation about the axis parallel to $x$.
\iffalse
\textbf{Case 1:} $d=2$.  
$$Hx=-x \quad \text{and}\quad U(t,Hx)=U(t,-x).$$ 
Next let $\widehat x=\frac{x}{|x|}$, i.e., a unit vector parallel to $x$.  Let  $(\widehat x, \widehat y)$ be an orthonormal basis. 
Then, for any  fixed time $t$,  there exist some real constants $x_1, y_1$ such that 
\begin{equation}\label{beef}
U(t,x)=x_1\widehat x+y_1\widehat y,\quad \text{and}\quad U(t,-x)=-x_1\widehat x+y_1\widehat y,
\end{equation}
which, along with the fact that $H$ is a 180-degree rotation around the origin, yields that 
\begin{equation}\label{bbssf}
HU(t,x)=-x_1\widehat x-y_1\widehat y.
\end{equation}
It thus follows from \eqref{beef}-\eqref{bbssf} that 
$$-x_1\widehat x+y_1\widehat y=-x_1\widehat x-y_1\widehat y,$$
which means that $-y_1=y_1$,  i.e., $y_1=0$.
\textbf{Case 2:} $d=3$. 
\fi
Then one has
$$Hx=x \quad \text{and}\quad U(t,Hx)=U(t,x).$$ 

Next let $\widehat x=\frac{x}{|x|}$, i.e., a unit vector parallel to $x$.  Let  $(\widehat x, \widehat y, \widehat z)$ be an orthonormal basis. 
Then, for any fixed time $t$,  there exist some real constants $x_1, y_1, z_1$ such that 
\begin{equation}\label{bee}
U(t,x)=x_1\widehat x+y_1\widehat y+z_1\widehat z,
\end{equation}
which, along with the fact that $H$ is a 180-degree rotation about  $\widehat x$, yields that 
\begin{equation}\label{bbss}
HU(t,x)=x_1\widehat x-y_1\widehat y-z_1\widehat z.
\end{equation}
It thus follows from \eqref{bee}-\eqref{bbss} that 
$$x_1\widehat x+y_1\widehat y+z_1\widehat z=x_1\widehat x-y_1\widehat y -z_1\widehat z,$$
which means that  $y_1=z_1=0$, and  for any fixed time $t$,
$$U(t,x)=x_1\widehat x=x_1\frac{x}{|x|},$$
which, along with the arbitrary choice of $x$ and  $t$, implies that  $U(t,x)$ is a radial vector.

The proof of Lemma \ref{axl} is complete.
\end{proof}
Thus the proof of  Theorem \ref{zth} is complete.

\section{Global-in-time spherically symmetric estimates}
The purpose of this section is to  establish the global-in-time energy estimates  in the spherically symmetric Eulerian  coordinate. To this end, we first consider the following reformulation of the IBVP  \eqref{eq:1.1}-\eqref{10000} with  \eqref{eqs:CauchyInit}-\eqref{e1.3}.
Throughout this section, we adopt the following simplified notations, most of them are for the standard homogeneous and inhomogeneous Sobolev spaces: for $I_a=[a,\infty)$ with some $a>0$,
\begin{equation*}\begin{split}
 \displaystyle
 &  L^p=L^p(I_a),\quad H^s=H^s(I_a), \quad D^{k,l}=D^{k,l}(I_a),\\
  \displaystyle
 & D^k=D^{k,2}(I_a),\quad 
  W^{m,p}= W^{m,p}(I_a), \quad  |f|_p=\|f\|_{L^p(I_a)},\\
   \displaystyle
  & \|f\|_s=\|f\|_{H^s(I_a)},\quad 
  \|f\|_{m,p}=\|f\|_{W^{m,p}(I_a)},\quad  |f|_{D^{k,l}}=\|\nabla^k f\|_{L^l(I_a)},\\
   \displaystyle
  &|f|_{D^{k}}=\|\nabla^k f\|_{L^2(I_a)},\quad 
\|f\|_{X_1 \cap X_2}=\|f\|_{X_1}+\|f\|_{X_2},\quad \int f \text{d}r=\int_{I_a} f \text{d}r.
 \end{split}
\end{equation*}

\subsection{\rm{Reformulation in the  spherically symmetric Eulerian coordinate}}\label{yyds}
 Let $( \rho,U)(t,x)$ in $[0,T_*]\times\Omega$ be the  unique regular solution   to the IBVP \eqref{eq:1.1}-\eqref{10000} with  \eqref{eqs:CauchyInit}-\eqref{e1.3} obtained in Theorem \ref{zth}, which has the following form
%and the corresponding spherically solutions take the  form
%According to Lemma \ref{axl}, we denote the spherically symmetric solutions as follows.
\begin{equation}\label{e1.4}
\rho(t,x)=\rho(t,r), \quad U(t,x)=u(t,r)\frac{x}{r},\quad r=|x|.
\end{equation}
Denote  $m=d-1$ ($d=2,3$). Then the IBVP \eqref{eq:1.1}-\eqref{10000} with  \eqref{eqs:CauchyInit}-\eqref{e1.3} can be rewritten into 
\begin{equation}\label{e1.5}
\begin{cases}
\displaystyle 
\ \rho_t+(\rho u)_r+\frac{m\rho u}{r}=0,\\[8pt]
\displaystyle
\ (\rho u)_t+(\rho u^2)_r+P_r-2\alpha\Big(\rho\big(u_r+\frac{m}{r} u\big)\Big)_r+\frac{2\alpha m\rho_r u}{r}+\frac{m\rho u^2}{r} =0,\\[8pt]
\displaystyle
\ (\rho(0,r), u(0,r))=(\rho_0(r), u_0(r))\quad \text{for}\quad r\in  I_a,\\[8pt]
\displaystyle
\ u(t,r)|_{r=a}=0 \quad \text{for} \quad  t\geq 0,\\[8pt]
\displaystyle
\  \left(\rho(t,r),u(t,r)\right)\to \left(0,0\right)\quad  \text{as}\quad  r\to \infty\quad  \text{for}\quad   t\ge 0.
\end{cases}
\end{equation}

Then by Theorem  \ref{zth}, Lemma  \ref{poss}  and Remark \ref{rec} (Appendix B), one has 
\begin{lem}\label{rth1} Let \eqref{cd1} hold  except $\gamma >\frac{3}{2}$.
Assume the initial data $(\rho_0(r), u_0(r))$ satisfy 
\begin{equation}\label{etm}
\begin{split}
&0<r^m\rho_0\in L^1,\quad  \big(r^{\frac{m}{2}}\rho_0^{\gamma-1}, r^{\frac{m}{2}}u_0\big)\in H^2, \quad  r^{\frac{m}{q}}(\ln\rho_0)_r\in L^q,\\
&(\ln\rho_0)_r\in L^\infty,\quad  r^{\frac{m}{2}}\big(r^{-1}(\ln\rho_0)_r,(\ln\rho_0)_{rr}\big)\in L^2,
\end{split}
\end{equation}
for some $q\in(3,6]$. Then there exist a positive time $T_*>0$ and a unique smooth solution $(\rho(t,r), u(t,r))$ in $[0,T_*]\times I_a$ to the   problem \eqref{e1.5}  satisfying 

\begin{equation}\label{spd}
\begin{split}
& 0<r^m\rho \in C([0,T_*];L^1),\quad   r^{\frac{m}{2}}\rho^{\gamma-1} \in C([0,T_*];H^2),\\
&r^{\frac{m}{2}}(\rho^{\gamma-1})_t\in C([0,T_*];H^1),\quad    r^{\frac{m}{q}}(\ln \rho)_r\in C([0,T_*];L^q),\\
& r^{\frac{m}{2}}\big(r^{-1}(\ln \rho)_r, (\ln \rho)_{rr}\big)\in C([0,T_*];L^2),\quad  r^{\frac{m}{2}}(\ln \rho)_{tr}\in C([0,T_*];L^2),\\
& r^{\frac{m}{2}}u\in C([0,T_*];H^2(I_a))\cap L^2([0,T_*]; D^3), \quad (\ln\rho)_r\in  L^\infty([0,T_*]\times I_a),\\
& r^{\frac{m}{2}}u_t\in C([0,T_*];L^2)\cap L^2([0,T_*];D^1),\\
& t^{\frac12}r^{\frac{m}{2}}u_{tt}\in L^2([0,T_*];L^2),\quad 
  t^{\frac12}r^{\frac{m}{2}}u_{tr}\in L^\infty([0,T_*];L^2)\cap L^2([0,T_*];D^1).
%&  t^{\frac12}r^{\frac{m}{2}}u_{tt}\in L^2([0,T];L^2(I_a)).
%& t^{\frac12}ru_{tr}\in L^\infty([0,T_*];L^2)\cap L^2([0,T_*];D^1),\quad t^{\frac12}ru_{tt}\in L^2([0,T_*];L^2).
\end{split}
\end{equation}
\end{lem}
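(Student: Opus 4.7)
The plan is to deduce Lemma \ref{rth1} as a direct corollary of the multi-dimensional local-in-time theory already established in Theorem \ref{zth}, together with the Sobolev-space conversion results between the M-D Eulerian coordinate and the spherically symmetric coordinate that are listed in Appendix B (Lemma \ref{poss} and Remark \ref{rec}). The three stages are: (i) lift the 1-D initial data \eqref{etm} to M-D initial data satisfying \eqref{th78qq}; (ii) apply Theorem \ref{zth} to obtain the unique regular M-D solution which, by Lemma \ref{axl}, is automatically spherically symmetric; (iii) descend the M-D regularities \eqref{er2}--\eqref{reg11qq} back to the weighted 1-D regularities \eqref{spd}.

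For the first stage, given $(\rho_0(r),u_0(r))$ satisfying \eqref{etm}, one defines the M-D datum $(\rho_0(x),U_0(x)) = (\rho_0(|x|), u_0(|x|) x/|x|)$ on $\Omega$. The conversion rules, which for a scalar radial function $f(r)$ read $|f|_{L^p(\Omega)}^p \sim \int_a^\infty |f(r)|^p r^m \,\mathrm{d}r$ and analogous expressions for derivatives, together with $|\nabla \ln \rho_0(x)| = |(\ln\rho_0)_r(r)|$ and $\nabla^2 \ln \rho_0(x)$ expressible through $(\ln\rho_0)_{rr}$ and $r^{-1}(\ln\rho_0)_r$, translate each assumption of \eqref{etm} into the corresponding assumption of \eqref{th78qq}. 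In particular $r^{m/2}\rho_0^{\gamma-1}\in H^2$ yields $\rho_0^{\gamma-1}\in H^2(\Omega)$, $r^{m/q}(\ln\rho_0)_r\in L^q$ gives $\nabla\ln\rho_0\in L^q(\Omega)$, and $r^{m/2}(r^{-1}(\ln\rho_0)_r,(\ln\rho_0)_{rr})\in L^2$ yields $\nabla\ln\rho_0\in D^1(\Omega)$; the $L^\infty$ condition on $(\ln\rho_0)_r$ is pointwise invariant.

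For the second stage, Theorem \ref{zth} produces a unique regular M-D solution $(\rho,U)$ on $[0,T_*]\times\Omega$ with the regularities of Definition \ref{cjk} and \eqref{er2}, and Lemma \ref{axl} guarantees the form $(\rho(t,x),U(t,x))=(\rho(t,|x|), u(t,|x|)x/|x|)$. One then verifies directly from \eqref{eq:1.1} (and the symmetric formulas for $\text{div}$ and $\nabla$ acting on radial objects) that the scalar pair $(\rho(t,r),u(t,r))$ solves \eqref{e1.5} on $[0,T_*]\times I_a$ with $u(t,a)=0$ inherited from the no-slip condition. Uniqueness in the 1-D sense follows from uniqueness in Theorem \ref{zth}.

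The third stage is the bookkeeping step, which I expect to be the main technical obstacle, since the higher-order M-D derivatives of $U = u(t,r)x/r$ expand into linear combinations of $u_r$, $u/r$, $u_{rr}$, $(u/r)_r$, etc., each coming with a different power of $r^{-1}$ that must be absorbed by the Jacobian $r^m$ in the change of variables. Using the Appendix B conversion rules term by term, the regularity $U\in C([0,T_*];H^2)\cap L^2([0,T_*];D^3)$ translates into $r^{m/2}u\in C([0,T_*];H^2)\cap L^2([0,T_*];D^3)$ (with the conventions of this section's notation), and likewise for $U_t$, the time-weighted quantities $t^{1/2}U_t$, $t^{1/2}U_{tt}$, and for the density-related quantities $\rho^{\gamma-1}$, $\nabla\ln\rho$ and their time derivatives. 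Combining these yields all the regularities listed in \eqref{spd}, which finishes the proof.
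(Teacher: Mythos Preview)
Your three-stage plan (lift the data to $\Omega$ via Lemma \ref{poss}/Remark \ref{rec}, apply Theorem \ref{zth}, then read the regularities back through Appendix B) is exactly the paper's approach, and for every item in \eqref{spd} except one it is pure bookkeeping as you describe.

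The one item you have not accounted for is $(\ln\rho)_r\in L^\infty([0,T_*]\times I_a)$. This does \emph{not} come out of the conversion machinery: the hypotheses \eqref{th78qq} of Theorem \ref{zth} do not include $\nabla\ln\rho_0\in L^\infty$, and the regularities actually produced by its proof (Theorem \ref{thh1}, list \eqref{reg11qq}) contain no $L^\infty$ control on $\boldsymbol{\psi}=\nabla\ln\rho$. So you cannot simply ``descend'' this bound from the M-D solution; it has to be derived from the extra initial assumption $(\ln\rho_0)_r\in L^\infty$ in \eqref{etm}. The paper isolates this as the only nontrivial point and handles it by introducing the effective velocity
\[
v=u+2\alpha(\ln\rho)_r,
\]
observing that it satisfies the damped transport equation
\[
v_t+uv_r+\tfrac{A\gamma}{2\alpha}\rho^{\gamma-1}v=\tfrac{A\gamma}{2\alpha}\rho^{\gamma-1}u,
\]
and then solving along characteristics to get $|v|_\infty\le C(|v_0|_\infty,\|\rho^{\gamma-1}\|_{L^\infty_tL^\infty_r},\|u\|_{L^1_tL^\infty_r})$. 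Since $u\in C([0,T_*];H^2)$ and $\rho^{\gamma-1}\in C([0,T_*];H^2)$ are already in hand, this closes and gives $(\ln\rho)_r=\tfrac{1}{2\alpha}(v-u)\in L^\infty([0,T_*]\times I_a)$. You should insert this argument; without it the last line of \eqref{spd} is unjustified and, more importantly, the global a~priori estimates in \S3 (Lemmas \ref{l4.4}--\ref{l4.6}) rely on exactly this mechanism, so omitting it here obscures the structure that drives the whole paper.
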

\begin{proof}The desired well-posedness to the   problem \eqref{e1.5}  is just a corollary of  Theorem  \ref{zth}, Lemma  \ref{poss}  and Remark \ref{rec} (in Appendix B).
We only need to  show $(\ln\rho)_r\in  L^\infty([0,T_*]\times I_a)$ if $(\ln\rho_0)_r\in L^\infty$.

Now we need to introduce the so-called  effective velocity:
\begin{equation}\label{evmm}
 v=u+\varphi(\rho)_r=u+2\alpha \rho^{-1}\rho_r,
\end{equation}
where $\varphi(\rho)$ is a function of $\rho$ defined by $\varphi'(\rho)=2\mu(\rho)/\rho^2$.
Then it follows from  the initial assumption \eqref{etm} that 
\begin{equation}\label{evmmnn}
 v_0(r)=v_0=u_0+\varphi(\rho_0)_r=u_0+2\alpha \rho^{-1}_0(\rho_0)_r\in L^\infty.
\end{equation}

Moreover,  it follows from  the equations  $\eqref{e1.5}_1$-$\eqref{e1.5}_2$ and the definition of $v$ that 
\begin{equation}\label{e-1.16mm}
v_t+uv_r+\frac{A\gamma}{2\alpha}\rho^{\gamma-1}v-\frac{A\gamma}{2\alpha}\rho^{\gamma-1}u=0.
\end{equation}

Via the  standard characteristic method, one can obtain 
\begin{equation}\label{trymm}
v=\left(v_0+\int_0^t \frac{A\gamma}{2\alpha} \rho^{\gamma-1}u\  \exp \Big(\int_0^s \frac{A\gamma}{2\alpha} \rho^{\gamma-1}{\rm{d}}\tau\Big){\rm{d}}s\right)\exp\left(-\int_0^t \frac{A\gamma}{2\alpha} \rho^{\gamma-1} {\rm{d}}s\right),
\end{equation}
which, along with the regularities of $(\rho^{\gamma-1},u)$ and  \eqref{evmm}-\eqref{evmmnn}, yields that $v, (\ln\rho)_r \in  L^\infty([0,T_*]\times I_a)$.

 The  proof of Lemma \ref{l4.4} is complete.

\end{proof}

Then the main conclusion of this section can be stated as follows.
\begin{thm}\label{jordan}
Let \eqref{cd1} hold.
Assume that the initial data $(\rho_0(r), u_0(r))$ satisfy \eqref{etm},
then \eqref{e1.5}  admits a unique global classical solution $(\rho(t,r), u(t,r))$
in $(0,\infty)\times I_a$  satisfying the  regularities in \eqref{spd} with $T_*$ replaced by arbitrarily large  $0<T<\infty$. 

\end{thm}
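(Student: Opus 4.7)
\textbf{Proof plan for Theorem \ref{jordan}.} The local solution on $[0,T_*]$ is supplied by Lemma \ref{rth1}. To promote it to a global one, I will show that for any fixed $0<T<\infty$, each of the norms appearing in \eqref{spd} remains finite on $[0,T]$; an iteration of the local existence theorem then yields the global solution. All estimates are carried out in the spherical coordinate $r\in I_a$ with weight $r^m$, $m=d-1$.

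The plan is to proceed in a hierarchy of five steps. First, testing $\eqref{e1.5}_1$ against $r^m$ and $\eqref{e1.5}_2$ against $r^m u$ gives conservation of total mass together with the basic energy identity
\[
\frac{d}{dt}\int_{I_a} r^m\Big(\tfrac12\rho u^2+\tfrac{A}{\gamma-1}\rho^\gamma\Big)dr+2\alpha\int_{I_a} r^m\rho\Big(u_r^2+\tfrac{m}{r^2}u^2\Big)dr=0,
\]
which yields the uniform control of $\sqrt{\rho}u$ in $L^\infty_tL^2$, of $\rho^{\gamma/2}$ in $L^\infty_tL^2$, and of the natural dissipation in $L^2_tL^2$. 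Second, I will implement the Bresch--Desjardins entropy (Lemma \ref{l4.2}): with $\varphi'(\rho)=2\mu(\rho)/\rho^2=2\alpha/\rho$, so $\varphi(\rho)=2\alpha\ln\rho$, one tests the momentum equation against the modified velocity $u+\varphi(\rho)_r$ to obtain uniform bounds for $\sqrt{\rho}\,(\ln\rho)_r$ in $L^\infty_tL^2$ and for $\nabla\rho^{\gamma/2}$ in $L^2_tL^2$.

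Third, and most importantly, I will use the radial effective velocity $v=u+2\alpha(\ln\rho)_r$ (Lemma \ref{l4.4}) which satisfies the damped transport equation
\[
v_t+uv_r+\tfrac{A\gamma}{2\alpha}\rho^{\gamma-1}v=\tfrac{A\gamma}{2\alpha}\rho^{\gamma-1}u
\]
already derived in the proof of Lemma \ref{rth1} (modulo a lower-order source involving $mu/r$ coming from the spherical geometry, which is easily absorbed since $r\geq a>0$). Integrating along characteristics and using the $L^\infty$ bound of $\rho^{\gamma-1}$ (propagated from $\rho_0^{\gamma-1}\in H^2$) together with an $L^\infty$ bound on $u$ derived from the combined basic/BD estimates via $u(t,r)=\int_a^r u_r(t,s)\,ds$ and the boundary condition $u(t,a)=0$, one obtains $v\in L^\infty_t L^\infty$. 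Since $(\ln\rho)_r=(v-u)/(2\alpha)$, this closes the $L^\infty$ bound on $(\ln\rho)_r$ on $[0,T]$, which is the key ingredient preventing the ``singular'' source $\boldsymbol{\psi}\cdot Q(U)$ in \eqref{qiyi} from blowing up in the far field.

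Fourth, with $(\ln\rho)_r\in L^\infty_{t,r}$ and $v\in L^\infty_{t,r}$ under control, the higher-order estimates are carried out as in Lemmas \ref{bos}--\ref{llm3}, but now globally in time: the transport equations for $\rho^{\gamma-1}$ and $(\ln\rho)_r$ propagate the $H^2$ and $L^q\cap D^1$ bounds given the $W^{1,\infty}$ information on $u$, while the momentum equation, viewed as a linear uniformly parabolic equation for $u$ with bounded coefficients, propagates $\|u(t)\|_{H^2}$, $|u_t(t)|_2$, and the $t^{1/2}$-weighted bounds on $u_t$ and $u_{tt}$ by successively testing against $u$, $u_t$, differentiating in time and testing against $u_{tt}$, exactly as in Section 2. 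Fifth, a standard continuation argument extends the local solution up to $T$, and since $T$ is arbitrary, the solution is global.

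The main obstacle is Step 3. The damped transport structure of $v$ is fragile: one needs the right-hand side $\rho^{\gamma-1}u$ to be integrable along characteristics uniformly in time, which requires an a priori $L^\infty$ estimate on $u$ that is itself obtained from BD entropy and the basic energy via a Sobolev-type interpolation along the radial variable. This interpolation uses $\rho^\gamma\in L^1_r$ with weight $r^m$ and the $L^2$ control of $\sqrt{\rho}\,(\ln\rho)_r$; it is precisely here that the hypothesis $\gamma>\tfrac32$ in \eqref{cd1} is needed to give sufficient decay of $\rho$ at infinity to carry out the interpolation, and to guarantee that the exponential factor in the Duhamel formula for $v$ dominates the source. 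Once this $L^\infty$ bound on $v$ (and hence on $(\ln\rho)_r$) is in place, the remaining estimates are essentially quantitative versions of the ones already developed in Section 2, performed globally in time.
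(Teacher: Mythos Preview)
Your overall architecture (local existence from Lemma \ref{rth1}, then global a priori estimates in the order basic energy $\to$ BD entropy $\to$ effective velocity $\to$ higher order, then continuation) matches the paper. The real gap is in Step 3, where you propose to obtain $|u|_\infty$ \emph{before} $|v|_\infty$ by writing $u(t,r)=\int_a^r u_r\,ds$ and invoking the basic/BD bounds. This does not work: the basic energy only gives $\sqrt{\rho}\,u_r\in L^2_{t,r}$, and the BD entropy gives $\sqrt{\rho}\,(\ln\rho)_r\in L^\infty_tL^2_r$; without a lower bound on $\rho$ (which you do not have in the far field) you cannot extract $u_r\in L^1_r$ or $|u|_\infty$. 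Your $L^\infty$ bound on $\rho^{\gamma-1}$ ``propagated from $\rho_0^{\gamma-1}\in H^2$'' is likewise circular, since propagating the $H^2$ norm along the transport equation already requires $W^{1,\infty}$ control of $u$.

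The paper closes this loop in the opposite order. First it gets $|\rho|_\infty$ directly from BD entropy plus mass conservation (Lemma \ref{l4.3}: $(\sqrt{\rho})_r\in L^2$ and $\sqrt{\rho}\in L^2$ give $\sqrt{\rho}\in H^1\hookrightarrow L^\infty$). Then it proves weighted $L^{\tilde q}$ bounds $|(r^m\rho)^{1/\tilde q}u|_{\tilde q}\le C$ by testing momentum against $r^m|u|^{p}u$ (Lemma \ref{lma}), and interpolates to get $\int_0^t|\rho^\iota u|_\infty\,ds\le C$ for any $\iota\in(\tfrac12,1)$ (Lemma \ref{cru}). Only then, in the Duhamel formula for $v$, does it write $\rho^{\gamma-1}u=\rho^{\gamma-1-\iota}(\rho^\iota u)$ and use $\gamma-1\ge\iota>\tfrac12$ --- this is precisely where $\gamma>\tfrac32$ enters --- to conclude $|v|_\infty\le C$. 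The bound $|u|_\infty$ (and then $|\psi|_\infty$) comes \emph{afterwards} (Lemmas \ref{ele}--\ref{l4.6}) using the already-established $|v|_\infty$. Incidentally, the damped transport equation for $v$ has \emph{no} residual $mu/r$ term: the spherical contributions cancel exactly in the derivation of \eqref{ess}, so your parenthetical is unnecessary.
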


%\begin{thm}\label{xisi}

%Let \eqref{kap1} hold.
%Assume that the initial data $(\rho_0(r), u_0(r))$ satisfies \eqref{etmm} and \eqref{kap3},
%$$\rho_0(x)=\rho_0(|x|),\quad  U_0(x)=u_0(|x|)\frac{x}{r}$$
% the following regularity conditions
% \begin{equation}\label{lov}
%(0<r^2\rho_0(r), r^2\rho_0^\gamma(r))\in L^1,  \quad %(r(\rho_0)^{\frac{\gamma-1}{2}}(r), ru_0(r))\in H^2, \quad %r(\ln\rho_0(r))_r\in H^1,
%\end{equation}
%then  the exterior problem \eqref{e1.5}  admits a unique global  classical solution $(\rho(t,r), u(t,r))$
%$$\rho=\rho(t,|x|), \quad u=u(t,|x|)\frac{x}{r}$$ 
%in $(0,\infty)\times I_a$,  and for any $0<T<\infty$, $(\rho(t,r), u(t,r))$ is a regular solution in $[0,T]\times I_a$ satisfying regularities in \eqref{spg} and \eqref{kap4}.  
% $(\rho, u)$  is a regular one in  $[0,T]\times \mathbb R$ as defined in Definition \ref{def1}, and 
\iffalse
\begin{equation}\label{jj}
\begin{split}
&(r^2\rho, r^2\rho^\gamma)\in C([0,T];L^1),\quad %r(\ln\rho)_{tr}\in C([0,T];L^2),\\
t^{\frac12}ru_{tr}\in L^\infty([0,T];L^2)\cap L^2([0,T];D^1),\\
& t^{\frac12}ru_{tt}\in L^2([0,T];L^2).
\end{split}
\end{equation}
\fi
%\end{thm}

 Let $T>0$ be some time and $(\rho(t,r), u(t,r))$ be  regular solutions to the  problem \eqref{e1.5} in $[0,T]\times I_a$ obtained in Lemma \ref{rth1}. The main aim in the rest of this section is to  establish the global-in-time  a priori estimates for these solutions.   Hereinafter, we denote  $C_0$ {\rm (resp. $C_0^a$)} a  generic positive constant depending only on $(\rho_0,u_0, A, \gamma,\alpha)$  {\rm (resp. $(C_0,a)$)}; $C$ {\rm (resp. $C^a$)} a generic positive constant depending only on $(C_0, T)$ {\rm (resp. $(C,a)$)}, which may be different  from line to line. 
\par\smallskip

\subsection{The $L^\infty$ estimate of  $\rho$}\label{like}
We consider the upper bound of the mass density $\rho$ in $[0,T]\times I_a$.
First, the standard energy estimates yield that
\begin{lem}\label{l4.1}
For any $T>0$, it holds that, for $0\leq t\leq T$,
\begin{equation*}\label{e-1.1}
\int r^m\left(\frac12\rho u^2+\frac{A}{\gamma-1}\rho^\gamma\right)(t,\cdot){\rm{d}}r+\int_0^t\int (2\alpha r^m \rho u_r^2+2\alpha m r^{m-2}\rho u^2) {\rm{d}}r{\rm{d}}s\leq C_0.
\end{equation*}
\end{lem}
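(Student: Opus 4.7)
The plan is to establish the differential (not just integral) energy identity
\[
\frac{d}{dt}\int r^m\Big(\tfrac12\rho u^2 + \tfrac{A}{\gamma-1}\rho^\gamma\Big)\,dr + \int\big(2\alpha r^m \rho u_r^2 + 2\alpha m r^{m-2}\rho u^2\big)\,dr = 0,
\]
and then integrate in time. All manipulations rely on the regularity in \eqref{spd} together with $u(t,a)=0$ and the far-field decay $(\rho,u)\to(0,0)$ from \eqref{e1.5} to discard boundary terms; these justifications are routine once the formal identity is established.

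First, I would multiply $\eqref{e1.5}_2$ by $r^m u$ and integrate over $I_a$. For the kinetic part, the identity $(\rho u)_t u = \tfrac12(\rho u^2)_t + \tfrac12 u^2\rho_t$ combined with $\eqref{e1.5}_1$ (to eliminate $\rho_t$) converts the transport piece into $\tfrac12(\rho u^2)_t + \tfrac12(\rho u^3)_r - \tfrac{m\rho u^3}{2r}$. The extra term $\frac{m\rho u^2}{r}\cdot u = \frac{m\rho u^3}{r}$ coming from the left-hand side of $\eqref{e1.5}_2$ combines with this, and integration by parts against $r^m$ kills all the $u^3$ contributions, leaving precisely $\tfrac{d}{dt}\int \tfrac12 r^m\rho u^2\,dr$.

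Second, for the pressure term, integration by parts gives $\int r^m P_r u\,dr = -\int r^m P(u_r + \tfrac{mu}{r})\,dr$. Independently, multiplying $\eqref{e1.5}_1$ by $\tfrac{A\gamma}{\gamma-1}\rho^{\gamma-1}$ and integrating against $r^m\,dr$, after one integration by parts, yields
\[
\tfrac{d}{dt}\int r^m\tfrac{A\rho^\gamma}{\gamma-1}\,dr + \int A r^m\rho^\gamma\big(u_r + \tfrac{mu}{r}\big)\,dr = 0,
\]
so the pressure term produces exactly the time derivative of the internal energy.

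Third, for the viscous contribution $-2\alpha\int r^m(\rho(u_r+\tfrac{m}{r}u))_r u\,dr + \int 2\alpha m r^{m-1}\rho_r u^2\,dr$, integration by parts on the first integral turns it into $2\alpha\int r^m\rho(u_r+\tfrac{mu}{r})^2\,dr$. Expanding the square produces a cross term $4\alpha m\int r^{m-1}\rho u u_r\,dr$; writing $2u u_r = (u^2)_r$ and integrating by parts generates $-2\alpha m(m-1)\int r^{m-2}\rho u^2\,dr - 2\alpha m\int r^{m-1}\rho_r u^2\,dr$, the last of which exactly cancels the $\rho_r$ source term in the momentum equation. The remaining algebra ($m^2 - m(m-1)=m$) then yields the claimed dissipation $2\alpha\int r^m \rho u_r^2\,dr + 2\alpha m\int r^{m-2}\rho u^2\,dr$.

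The only delicate point is justifying the vanishing of boundary contributions at $r=a$ (trivial from $u(t,a)=0$) and especially at $r=\infty$ for the cubic term $r^m\rho u^3$, the pressure term $r^m \rho^\gamma u$, and the viscous flux $r^m\rho(u_r+\tfrac{mu}{r})u$. Each of these is controlled by the regularity asserted in \eqref{spd} (which gives enough integrability to deduce decay along a sequence $r_k\to\infty$), so the formal computation is rigorous. Integrating the resulting pointwise-in-time identity from $0$ to $t$ and bounding the right-hand side by the initial energy $C_0$ (finite by \eqref{etm}) completes the proof.
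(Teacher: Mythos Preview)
Your proof is correct and follows essentially the same approach as the paper: multiply $\eqref{e1.5}_2$ by $r^m u$, use the continuity equation to convert transport and pressure contributions into time derivatives, and simplify the viscous terms to obtain the dissipation. The only cosmetic difference is that the paper first packages everything into a pointwise divergence identity (their \eqref{eap1}) before integrating, and handles the pressure term via $(r^m\rho u)_r=-(r^m\rho)_t$ rather than your route through $\int r^m P(u_r+\tfrac{mu}{r})\,dr$; both are equivalent.
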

\begin{proof}
First, multiplying $\eqref{e1.5}_2$ by $r^m u$ and using $\eqref{e1.5}_1$,  one has
\begin{equation}\label{eap1}
\begin{split}
&\Big(\frac{r^m}{2} \rho u^2\Big)_t+\Big(\frac{r^m}{2}\rho u^3+2\alpha m r^{m-1}\rho u^2\Big)_r+r^m P_r u\\
&-2\alpha\Big(r^m \rho u\big(u_r+\frac{m}{r} u\big)\Big)_r+2\alpha  r^{m}\rho u_r^2+2\alpha m r^{m-2}\rho u^2=0.
\end{split}
\end{equation}

Second, integrating \eqref{eap1} over $I_a$, one gets
\begin{equation*}\label{eap2}
\begin{split}
&\frac{\text{d}}{\text{d}t}\int \frac{r^m}{2}\rho u^2 \text{d}r+2\alpha\int r^m \rho u_r^2 \text{d}r+2\alpha m\int  r^{m-2}\rho u^2 \text{d}r=-\int r^m P_r u\text{d}r\\
=&-A\gamma \int r^m\rho^{\gamma-2}\rho_r \rho u \text{d}r=-\frac{A\gamma}{\gamma-1}\int r^m(\rho^{\gamma-1})_r\rho u\text{d}r\\
=&-\frac{A\gamma}{\gamma-1}\int (r^m\rho^{\gamma}u)_r \text{d}r +\frac{A\gamma}{\gamma-1}\int \rho^{\gamma-1}(r^m\rho u)_r \text{d}r\\
=&-\frac{A\gamma}{\gamma-1}\int \rho^{\gamma-1}(r^m\rho )_t \text{d}r=-\frac{A}{\gamma-1}\frac{\text{d}}{\text{d}t}\int  r^m\rho^\gamma \text{d}r,
\end{split}
\end{equation*}
where one has used the equation $\eqref{e1.5}_1$ and $\eqref{e1.5}_4$-$\eqref{e1.5}_5$.
Then the desired conclusion can be achieved by an  integration  over $[0,t]$.

The proof of Lemma \ref{l4.1} is complete.
\end{proof}

Second, we give the well-known BD entropy estimates.
\begin{lem}[\cite{bd2}]\label{l4.2}
For any $T>0$, it holds that
\begin{equation*}\label{e-1.2}
\int r^m\left(\frac12\rho\left|u+\varphi(\rho)_r\right|^2+\frac{A}{\gamma-1}\rho^\gamma\right)(t,\cdot){\rm{d}}r+ 2A\alpha\gamma\int_0^t\int r^m\rho^{\gamma-2}\rho_r^2{\rm{d}}r{\rm{d}}s\leq C_0
\end{equation*}
for $ 0\leq t\leq T$, where $\varphi'(\rho)=\frac{2\mu(\rho)}{\rho^2}$.
\end{lem}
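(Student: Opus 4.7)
The plan is to derive the BD entropy estimate by introducing the effective velocity $v = u + \varphi(\rho)_r = u + 2\alpha(\ln\rho)_r$ and showing that $v$ satisfies a remarkably clean equation modulo the pressure, namely $\rho(v_t + u v_r) = -P_r$. Once this reformulation is in hand, the BD entropy identity arises from an energy estimate on $v$, entirely parallel to the proof of Lemma \ref{l4.1}.

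First I would derive the equation for $v$. Dividing the continuity equation $\eqref{e1.5}_1$ by $\rho$ yields $(\ln\rho)_t + u(\ln\rho)_r + u_r + mu/r = 0$; differentiating in $r$ and multiplying by $2\alpha$ gives
$$\varphi(\rho)_{tr} + u\,\varphi(\rho)_{rr} + u_r\,\varphi(\rho)_r + 2\alpha u_{rr} + \frac{2\alpha m u_r}{r} - \frac{2\alpha m u}{r^2} = 0.$$
Multiplying this identity by $\rho$ and adding it to the momentum equation $\eqref{e1.5}_2$ (after expanding the viscous term $2\alpha(\rho(u_r+\frac{m}{r}u))_r$), the four quantities $2\alpha\rho u_{rr}$, $\frac{2\alpha m \rho u_r}{r}$, $\frac{2\alpha m \rho u}{r^2}$ and $\frac{2\alpha m \rho_r u}{r}$ cancel pairwise, while $\rho u_r\varphi(\rho)_r = 2\alpha u_r\rho_r$ cancels against $-2\alpha \rho_r u_r$. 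What survives is precisely
$$\rho v_t + \rho u v_r + P_r = 0.$$

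Next, multiplying this identity by $r^m v$, integrating over $I_a$, and exploiting the radial continuity equation in conservative form $(r^m\rho)_t + (r^m\rho u)_r = 0$ so that $\int r^m \rho v(v_t + u v_r)\,{\rm d}r = \frac{d}{dt}\int \frac{r^m}{2}\rho v^2\,{\rm d}r$ after an integration by parts in $r$, one obtains
$$\frac{d}{dt}\int \frac{r^m}{2}\rho v^2\,{\rm d}r = -\int r^m u P_r\,{\rm d}r - \int r^m \varphi(\rho)_r P_r\,{\rm d}r.$$
The first term on the right is exactly the pressure-work computed in the proof of Lemma \ref{l4.1}, equal to $-\frac{A}{\gamma-1}\frac{d}{dt}\int r^m\rho^\gamma\,{\rm d}r$. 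The second term, upon substituting $\varphi(\rho)_r = 2\alpha\rho_r/\rho$ and $P_r = A\gamma\rho^{\gamma-1}\rho_r$, collapses to $-2A\alpha\gamma\int r^m\rho^{\gamma-2}\rho_r^2\,{\rm d}r$. Combining and integrating over $[0,t]$ yields the claimed identity.

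The main obstacle is the bookkeeping of boundary terms from the integrations by parts. At $r=a$ every such term carries a factor of $u$ (via $r^m\rho u$ or $r^m u\rho^\gamma$), which vanishes by the no-slip condition $\eqref{e1.5}_4$; at $r=\infty$ one must verify that $r^m\rho u v^2$ and $r^m\rho^\gamma u$ decay sufficiently fast, which follows from the regularity recorded in Lemma \ref{rth1} together with the far-field behavior $\eqref{e1.5}_5$, using the identification $v = u + 2\alpha(\ln\rho)_r$ and the bound $(\ln\rho)_r \in L^\infty([0,T]\times I_a)$ already proved in Lemma \ref{rth1}. These decay checks are the only nontrivial bookkeeping; the structural content of the proof is the algebraic cancellation producing $\rho(v_t + u v_r) = -P_r$.
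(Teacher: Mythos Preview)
Your proposal is correct and follows essentially the same route as the paper: both derive the key identity $\rho(v_t + u v_r) + P_r = 0$ for the effective velocity $v = u + \varphi(\rho)_r$, then multiply by $r^m v$ and integrate, splitting $-\int r^m v P_r\,{\rm d}r$ into the pressure-work term from Lemma~\ref{l4.1} and the BD dissipation $-2A\alpha\gamma\int r^m\rho^{\gamma-2}\rho_r^2\,{\rm d}r$. The only cosmetic difference is that the paper carries the general $\varphi'(\rho)$ notation through the cancellation step before specializing to $\varphi'(\rho)=2\alpha/\rho$, whereas you work directly with $\varphi(\rho)_r = 2\alpha(\ln\rho)_r$ from the outset.
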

%The conclusions obtained in Lemmas \ref{l4.1}-\ref{l4.2} are classical, and the corresponding  proofs can be found in Appendix B. 
\begin{proof}
According to $\eqref{e1.5}_1$-$\eqref{e1.5}_2$, one has
\begin{align}
   &\varphi(\rho)_{tr}+(\varphi(\rho)_r u)_r+(\rho\varphi'(\rho) u_r)_r+\Big(\frac{m}{r}\rho u\varphi'(\rho)\Big)_r=0,\label{e5.2}\\
   & \rho(u_t+uu_r)+P_r-2\alpha\Big(\rho\big(u_r+\frac{m}{r} u\big)\Big)_r+\frac{2\alpha m \rho_r u}{r}=0.\label{e5.1}
  % &\varphi(\rho)_{xt}+(\varphi(\rho)_x u)_x+(\delta-1)\varphi(\rho)_x u_x+\rho^{\delta-1}u_{xx}=0.\label{e5.2}
   %&\rho(\varphi(\rho)_x+u\varphi(\rho)_{xx})+\delta\rho\varphi(\rho)_x u_x+(\rho^\delta)_x u_x. \label{e5.2}
\end{align}
Multiplying \eqref{e5.2} by $\rho$, one has 
\begin{equation}\label{es}
\rho \varphi(\rho)_{tr}+\rho u\varphi(\rho)_{rr}+(\rho^2\varphi'(\rho) u_r)_r+\Big(\frac{m}{r}\rho^2 \varphi'(\rho) u\Big)_r-\frac{m}{r}\rho\varphi'(\rho)\rho_r u=0.
\end{equation}
Then adding \eqref{es}  to  \eqref{e5.1}, by the definition of $v=u+\varphi(\rho)_r$,  one can obtain 
\begin{equation}\label{e5.3}
  \rho(v_t+u v_r)+P_r+\Big(\big(\rho^2\varphi'(\rho)-2\alpha\rho\big)\big(u_r+\frac{m}{r} u\big)\Big)_r+\frac{ m\rho_r u}{r}(2\alpha-\rho\varphi'(\rho))=0,
\end{equation}
which, along with $\varphi'(\rho)=\frac{2\mu(\rho)}{\rho^2}$ in \eqref{e5.3}, yields that 
 \begin{equation}\label{ess}
  \rho(v_t+u v_r)+P_r=0.
\end{equation}
Multiplying \eqref{ess} by $r^m v$ and 
integrating  over $[0,t]\times I_a$, one gets
$$\int r^m\left(\frac12\rho v^2+\frac{A}{\gamma-1}\rho^\gamma\right)(t,\cdot){\rm{d}}r+ 2A\alpha\gamma\int_0^t\int r^m\rho^{\gamma-2}\rho_r^2{\rm{d}}r{\rm{d}}s\leq C_0,$$
where one has used the equation $\eqref{e1.5}_1$ and $\eqref{e1.5}_4$-$\eqref{e1.5}_5$.

The proof of Lemma \ref{l4.2} is complete. 
\end{proof}

%In order to derive the $L^\infty$ estimates of $\rho$, one first needs to show the regular solution keeps the conservation of total mass in the following lemma.
Next we show the regular solution $(\rho(t,r),u(t,r))$ keeps the conservation of total mass.
\begin{lem}\label{ms}
For any $T>0$, it holds that
$$\int r^m\rho(t,\cdot) {\rm{d}}r=\int r^m\rho_0{\rm{d}}r \quad {\rm{for}}\ \ 0\leq t\leq T.$$
\end{lem}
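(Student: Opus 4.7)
The plan is to view this as a direct consequence of the continuity equation $\eqref{e1.5}_1$ rewritten in divergence form, combined with a cutoff argument that handles the far field. Multiplying $\eqref{e1.5}_1$ by $r^m$ and observing that $r^m(\rho u)_r + m r^{m-1}\rho u = (r^m \rho u)_r$, one arrives at the conservative reformulation
\begin{equation*}
(r^m \rho)_t + (r^m \rho u)_r = 0 \quad \text{in } (0,T)\times I_a.
\end{equation*}
Formally, integrating over $I_a$ and using $u(t,a)=0$ together with the far field decay should yield the claim; the only real issue is justifying the vanishing of the boundary term at infinity.

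To handle the far field, I would introduce a smooth cutoff $X_R(r)$ with $X_R=1$ on $[a,R]$, $X_R=0$ on $[2R,\infty)$, and $|X_R'|\le C/R$ supported in $[R,2R]$. Multiplying the conservative equation by $X_R$, integrating over $I_a$, and integrating by parts (the boundary term at $r=a$ vanishes since $u(t,a)=0$, while the boundary term at infinity vanishes by compact support of $X_R$) gives
\begin{equation*}
\frac{d}{dt}\int r^m \rho \, X_R \, {\rm d}r = \int r^m \rho u \, X_R' \, {\rm d}r.
\end{equation*}

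The main (and only) obstacle is showing that the right-hand side tends to $0$ as $R\to\infty$. Here I would invoke Cauchy-Schwarz together with the already-established energy bound from Lemma \ref{l4.1} and the regularity $r^m\rho\in C([0,T];L^1)$ from Lemma \ref{rth1}:
\begin{equation*}
\left|\int r^m \rho u\, X_R' \, {\rm d}r\right| \le \frac{C}{R}\left(\int_R^{2R} r^m \rho \, {\rm d}r\right)^{1/2}\left(\int_R^{2R} r^m \rho u^2 \, {\rm d}r\right)^{1/2}.
\end{equation*}
The second factor is uniformly bounded by $C_0$ via Lemma \ref{l4.1}, and the first factor tends to $0$ as $R\to\infty$ by absolute continuity of the integral, since $r^m \rho(t,\cdot)\in L^1$. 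Integrating in time from $0$ to $t$ and letting $R\to\infty$ (using monotone/dominated convergence on the left-hand side) yields the desired identity
\begin{equation*}
\int r^m \rho(t,\cdot)\, {\rm d}r = \int r^m \rho_0 \, {\rm d}r.
\end{equation*}
The argument is essentially routine once the regularity framework of Lemma \ref{rth1} is in place; the single subtle point is the uniform-in-time integrability needed to pass to the limit in the far field cutoff, which is furnished by the continuity in $L^1$.
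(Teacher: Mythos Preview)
Your proposal is correct, but the paper's argument is considerably shorter: it simply observes that $r^m\rho u(t,\cdot)\in W^{1,1}(I_a)$, so that $\int (r^m\rho u)_r\,{\rm d}r$ equals the boundary values, which vanish at $r=a$ by $\eqref{e1.5}_4$ and at infinity because a $W^{1,1}$ function that is also in $L^1$ must tend to zero there. No cutoff is needed. The $W^{1,1}$ membership is immediate from the regularity in \eqref{spd}: one has $r^m\rho\in L^1$, $u\in L^\infty$, $u_r\in L^\infty$ (via $r^{m/2}u\in H^2$ and $r\ge a$), and $(\ln\rho)_r\in L^\infty$, which together control each term in $(r^m\rho u)_r$.

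Your cutoff route trades this regularity verification for a softer argument based only on Lemma \ref{l4.1} and $r^m\rho\in L^1$; it is a perfectly valid alternative and arguably more robust (it would survive in settings where pointwise bounds on $u_r$ or $(\ln\rho)_r$ are not yet available). One small remark: in your final estimate you do not actually need the tail $\int_R^{2R} r^m\rho\,{\rm d}r\to 0$; the factor $C/R$ alone already forces the right-hand side to vanish since both bracketed integrals are uniformly bounded.
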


\begin{proof}
It follows from $\eqref{e1.5}_1$ that 
\begin{equation*}
    \frac{\text{d}}{\text{d}t}\int r^m\rho(t,\cdot)\text{d}r=-\int (r^m \rho u)_r(t,\cdot)\text{d}r=0,
\end{equation*}
where one has used $\eqref{e1.5}_4$ and  $r^m \rho u(t,\cdot)\in W^{1,1}$.

%$$\rho\in C([0,T];L^1)\quad \text{and}\quad \rho u(t,\cdot)\in W^{1,1}(\mathbb R).$$
The proof of Lemma \ref{ms} is complete.
\end{proof}

Now we are ready to give the uniform  upper bound of the density.

\begin{lem}\label{l4.3}
For any $T>0$, it holds that
\begin{equation*}\label{e4.3}
|\rho(t,\cdot)|_\infty\leq C^a \quad {\rm{for}}\ \ 0\leq t\leq T.
\end{equation*}
%where $C>0$ is a generic constant depending only on the initial data $(\rho_0, u_0)$ and  $\delta$.
\end{lem}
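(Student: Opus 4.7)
\medskip

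\noindent\textbf{Proof plan.} The strategy is to combine the basic energy estimate of Lemma~\ref{l4.1}, the BD entropy estimate of Lemma~\ref{l4.2}, and the conservation of total mass from Lemma~\ref{ms} to place $\sqrt{\rho}(t,\cdot)$ in $H^1(I_a)$ uniformly in time, and then invoke the one-dimensional Sobolev embedding $H^1(I_a)\hookrightarrow L^\infty(I_a)$. The presence of the positive lower bound $r\geq a>0$ is what allows us to strip off the $r^m$-weights, and this is precisely why the final constant is permitted to depend on $a$.

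First, I would extract a pointwise-in-time bound on $\int r^m \rho_r^2/\rho\,{\rm d}r$. Under the assumption \eqref{bdrelation}, we have $\varphi'(\rho)=2\alpha/\rho$, so $\varphi(\rho)_r=2\alpha\rho_r/\rho$ and $\rho|\varphi(\rho)_r|^2=4\alpha^2\rho_r^2/\rho$. Writing $v=u+\varphi(\rho)_r$ and using $|\varphi(\rho)_r|^2\leq 2|v|^2+2u^2$ gives
\begin{equation*}
\int r^m \rho|\varphi(\rho)_r|^2\,{\rm d}r \;\leq\; 2\int r^m\rho|v|^2\,{\rm d}r+2\int r^m\rho u^2\,{\rm d}r\;\leq\; C_0,
\end{equation*}
where the last bound comes from Lemmas~\ref{l4.1} and \ref{l4.2}, both of which provide pointwise-in-time control of the first term on the left-hand side of their respective inequalities. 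Consequently
\begin{equation*}
\int r^m \frac{\rho_r^2}{\rho}\,{\rm d}r \;\leq\; C_0 \qquad \text{for all } 0\leq t\leq T.
\end{equation*}

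Since $\rho_r^2/\rho = 4|(\sqrt{\rho})_r|^2$ and $r\geq a$, this gives
\begin{equation*}
\int_a^\infty |(\sqrt{\rho})_r|^2\,{\rm d}r \;\leq\; C_0^a,
\end{equation*}
while the conservation of total mass in Lemma~\ref{ms} (again using $r\geq a$) yields
\begin{equation*}
\int_a^\infty (\sqrt{\rho})^2\,{\rm d}r \;=\; \int_a^\infty \rho\,{\rm d}r \;\leq\; C_0^a.
\end{equation*}
Therefore $\|\sqrt{\rho}(t,\cdot)\|_{H^1(I_a)}\leq C_0^a$ uniformly for $0\leq t\leq T$.

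Finally, applying the one-dimensional Sobolev embedding $H^1(I_a)\hookrightarrow L^\infty(I_a)$ on the half-line yields $|\sqrt{\rho}(t,\cdot)|_\infty \leq C^a$, and squaring gives the desired $|\rho(t,\cdot)|_\infty\leq C^a$ uniformly in $t\in[0,T]$. I expect no serious obstacle here: the only subtle point is the conversion of the BD-entropy bound on $\int r^m\rho|v|^2$ into a bound on $\int r^m\rho_r^2/\rho$, which requires discarding the cross-term via the elementary Cauchy inequality while observing that $\int r^m\rho u^2$ is already pointwise controlled by the basic energy estimate.
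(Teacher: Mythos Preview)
Your proposal is correct and follows essentially the same route as the paper: combine the BD entropy estimate (Lemma~\ref{l4.2}) with the basic energy estimate (Lemma~\ref{l4.1}) to bound $(\sqrt{\rho})_r$ in $L^2(I_a)$, use mass conservation (Lemma~\ref{ms}) to bound $\sqrt{\rho}$ in $L^2(I_a)$, and then apply the one-dimensional Sobolev embedding $H^1(I_a)\hookrightarrow L^\infty(I_a)$. Your write-up is in fact more explicit than the paper's about how the $(\sqrt{\rho})_r$ bound is extracted from the BD entropy via the decomposition $\varphi(\rho)_r=v-u$ and the elementary inequality $|\varphi(\rho)_r|^2\le 2|v|^2+2u^2$.
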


\begin{proof}
First, according to Lemmas \ref{l4.1}-\ref{l4.2}, one gets
\begin{equation}\label{bdguji}
    \|(\sqrt\rho)_r\|_{L^\infty([0,T];L^2)}\le C^a.
\end{equation}
Second, according to Lemma \ref{ms}, one can obtain 
\begin{equation}\label{shouhengguji}
    \|\sqrt\rho\|_{L^\infty([0,T];L^2)}\le C^a.
\end{equation}
It thus follows from \eqref{bdguji}, \eqref{shouhengguji} and Sobolev embedding theorem (in Appendix A) that 
$$|\sqrt\rho|_\infty\le C\|\sqrt\rho\|_1\le C^a,$$
which implies that $$|\rho(t,\cdot)|_\infty\le C^a\quad \text{for}\quad 0\le t\le T.$$

The proof of Lemma \ref{l4.3} is complete.
\end{proof}

\subsection{The $L^2$ estimate of $r^{\frac{m}{2}}u$}
We consider the $L^2$ estimate of $r^{\frac{m}{2}}u$. For this purpose,  one first needs to show  the following several  auxiliary lemmas. The first one is on the $L^{\tilde q}$ estimate of  $(r^m\rho)^{\frac{1}{\tilde q}}u$ for any  $2\le \tilde q< \infty$.

\begin{lem} \label{lma}
Assume  $2\le \tilde q< \infty$. Then for any  $T>0$, it holds that 
\begin{equation*}
|(r^m\rho)^{\frac{1}{\tilde q}} u(t, \cdot)|_{\tilde q} \leq C^a \quad {\rm{for}}\ \ 0\leq t\leq T,
\end{equation*}
where the constant $C^a$ depends on $\tilde q$.
\end{lem}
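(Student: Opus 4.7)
The plan is to multiply the radial momentum equation $\eqref{e1.5}_2$ by $r^m|u|^{\tilde q-2}u$ and combine it with the continuity equation $\eqref{e1.5}_1$ to produce a differential identity for $M_{\tilde q}(t):=\int r^m\rho|u|^{\tilde q}(t,r)\,dr$. Using the conservative form $(r^m\rho)_t+(r^m\rho u)_r=0$ established in the proof of Lemma~\ref{l4.2} together with the algebraic identity $(\rho u)_t+(\rho u^2)_r=\rho(u_t+uu_r)+u[\rho_t+(\rho u)_r]$, I expect the $\tfrac{m\rho u^2}{r}$ contribution in $\eqref{e1.5}_2$ to annihilate against the $|u|^{\tilde q}$-weighted continuity equation, leaving a clean time--transport piece of the form $\tfrac{1}{\tilde q}[(r^m\rho|u|^{\tilde q})_t+(r^m\rho u|u|^{\tilde q})_r]$. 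After integrating over $I_a$, the no-slip boundary condition $u(t,a)=0$ kills the flux at $r=a$ and the far-field contribution vanishes by the decay built into \eqref{spd}.

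The second step is to process the pressure, the principal viscous term, and the $\tfrac{2\alpha m\rho_r u}{r}$ piece by integration by parts, using $u(t,a)=0$ again to kill every boundary term. The main structural observation is that the mixed integral $\int r^{m-1}\rho|u|^{\tilde q-2}u\,u_r\,dr$, which is generated both by the viscous IBP (via $(r^m|u|^{\tilde q-2}u)_r=mr^{m-1}|u|^{\tilde q-2}u+(\tilde q-1)r^m|u|^{\tilde q-2}u_r$) and by the $\rho_r$ IBP, appears with opposite signs and cancels exactly; the two $\int r^{m-2}\rho|u|^{\tilde q}\,dr$ contributions then combine into a single positive term. I therefore expect an identity of the schematic form
\begin{equation*}
\tfrac{1}{\tilde q}\tfrac{d}{dt}M_{\tilde q}+2\alpha(\tilde q-1)\int r^m\rho|u|^{\tilde q-2}u_r^2\,dr+2\alpha m\int r^{m-2}\rho|u|^{\tilde q}\,dr=Am\int r^{m-1}|u|^{\tilde q-2}u\,\rho^\gamma\,dr+A(\tilde q-1)\int r^m|u|^{\tilde q-2}u_r\,\rho^\gamma\,dr,
\end{equation*}
in which all cross terms have been cleared and the only remaining contributions on the right come from the pressure.

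To close the estimate I will invoke the uniform density bound $|\rho|_\infty\le C^a$ from Lemma~\ref{l4.3}, so that $\rho^\gamma\le C^a\rho$ (as $\gamma>1$), together with $r\ge a$ to trade $r^{m-1}$ for $r^m/a$. Young's inequality then absorbs the $u_r$-factor of the pressure residue into the dissipation $2\alpha(\tilde q-1)\int r^m\rho|u|^{\tilde q-2}u_r^2\,dr$, while H\"older interpolation against the conserved mass $\int r^m\rho\,dr$ from Lemma~\ref{ms} converts the lower-order moments $\int r^m\rho|u|^{\tilde q-1}\,dr$ and $\int r^m\rho|u|^{\tilde q-2}\,dr$ into $C^aM_{\tilde q}+C^a$. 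Gronwall's lemma then delivers $M_{\tilde q}(t)\le C^a$ on $[0,T]$, which is the desired estimate; the base case $\tilde q=2$ is Lemma~\ref{l4.1}. The chief obstacle of the argument is simply recognising the exact cancellation of the $r^{m-1}\rho|u|^{\tilde q-2}u\,u_r$ cross term in step two; once that is seen, the BD entropy of Lemma~\ref{l4.2} enters only indirectly, through its role in establishing the $L^\infty$ bound on $\rho$.
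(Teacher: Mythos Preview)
Your proposal is correct and follows essentially the same route as the paper: both derive the same energy identity \eqref{add} by testing $\eqref{e1.5}_2$ against $r^m|u|^{\tilde q-2}u$ and exploiting the cancellation of the $r^{m-1}\rho|u|^{\tilde q-2}u\,u_r$ cross terms, then close via the $L^\infty$ density bound of Lemma~\ref{l4.3}, mass conservation from Lemma~\ref{ms}, and Gronwall. Your treatment of the pressure residue via $\rho^\gamma\le C^a\rho$ followed by H\"older against $\int r^m\rho$ is slightly more direct than the paper's case-split interpolation in \eqref{adddd}, and has the minor advantage of handling all $\tilde q\ge 2$ uniformly without a separate interpolation step for $\tilde q\in(2,4)$.
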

\begin{proof}
First, multiplying $\eqref{e1.5}_2$ by $r^m |u|^{p}u$ $(p\ge2)$ and integrating the resulting equation over $I_a$, one arrives at 

\begin{equation}\label{add}
\begin{split}
&\frac{1}{p+2} \frac{\text{d}}{\text{d}t}\big|(r^m\rho)^{\frac{1}{p+2}} u\big|_{p+2}^{p+2}  +2\alpha (p+1) \big|(r^m\rho)^{\frac12} u^{\frac{p}{2}} u_r\big|_2^2\\
&+2\alpha m|(r^{m-2}\rho)^{\frac{1}{p+2}}u|_{p+2}^{p+2}\\
=& A(p+1)\int r^m \rho^{\gamma} |u|^p u_r{\rm{d}}r+Am\int r^{m-1} \rho^{\gamma} |u|^p u{\rm{d}}r\triangleq\sum_{i=1}^2 \text{J}_i,
%\le & \frac{\alpha(p+1)}{2} \int \rho^{\delta}|u|^p u_x^2{\rm{d}}x+\frac{A^2(p+1)}{2\alpha} \int \rho^{2\gamma-\delta} |u|^p{\rm{d}}x.
\end{split}
\end{equation}
where one has used the equation $\eqref{e1.5}_1$ and $\eqref{e1.5}_4$-$\eqref{e1.5}_5$.

Using H\"older's inequality and Young's inequality, one can obtain that 
\begin{equation}\label{addd}
\begin{split}
\text{J}_1\le &C|(r^m\rho)^{\frac12}|u|^{\frac{p}{2}} u_r|_2|r^{\frac{m}{2}}\rho^{\gamma-\frac12}|u|^{\frac{p}{2}}|_2\\
\le&\frac{\alpha(p+1)}{32}\big|(r^m \rho)^{\frac12} u^{\frac{p}{2}} u_r\big|_2^2+C\int r^m\rho^{2\gamma-1}|u|^p{\rm{d}}r.
\end{split}
\end{equation}
Then we need to estimate the last term on the right-hand side of \eqref{addd}. According to Lemmas \ref{l4.1} and \ref{ms}-\ref{l4.3}, one has 

\begin{equation}\label{adddd}
\begin{split}
&\int r^m \rho^{2\gamma-1}|u|^2{\rm{d}}r\le C|(r^m\rho)^{\frac14} u|_4|(r^m\rho)^{\frac12} u|_2|(r^m \rho)^{\frac14}|_4|\rho|_\infty^{2(\gamma-1)}\\
\le &C^a|(r^m \rho)^{\frac14} u|_4|(r^m \rho)^{\frac12} u|_2|r^m\rho|_1^{\frac14}\\
\le & C^a\big(1+|(r^m\rho)^{\frac14} u|_4^4\big)\quad \text{for}\quad p=2,\\[4pt]
&\int r^m\rho^{2\gamma-1}|u|^p{\rm{d}}r=\int r^m\rho^{2(\gamma-1)}\rho^{\frac{p-2}{p}}|u|^{\frac{(p-2)(p+2)}{p}}\rho^{\frac{2}{p}}|u|^{\frac{4}{p}}{\rm{d}}r\\
\le& C|\rho|_\infty^{2(\gamma-1)}\big|(r^m\rho)^{\frac{p-2}{p}}u^{\frac{(p-2)(p+2)}{p}}\big|_{\frac{p}{p-2}}\big|(r^m\rho)^{\frac{2}{p}}u^{\frac{4}{p}}\big|_{\frac{p}{2}}\\
\le&C^a\big|(r^m\rho)^{\frac{1}{p+2}}u\big|_{p+2}^{\frac{(p-2)(p+2)}{p}}|r^m\rho u^2|_1^{\frac{2}{p}}\\
\le&C^a\big(1+|(r^m\rho)^{\frac{1}{p+2}}u|_{p+2}^{p+2}\big)\quad \text{for}\quad p>2.
\end{split}
\end{equation}
Consequently, according to \eqref{addd}-\eqref{adddd}, one can obtain that for any $p\ge 2$,
$$\text{J}_1\le \frac{\alpha(p+1)}{32}\big|(r^m\rho)^{\frac12} u^{\frac{p}{2}} u_r\big|_2^2+C^a\big(1+|(r^m\rho)^{\frac{1}{p+2}}u|_{p+2}^{p+2}\big).$$

For the term $\text{J}_{2}$, one can similarly get
\begin{align*}
\text{J}_2=&Am\int r^{m-1} \rho^{\gamma} |u|^p u{\rm{d}}r\\
\le& C|(r^m\rho)^{\frac{p+1}{p+2}}|u|^{p+1}|_{\frac{p+2}{p+1}}|(r^m\rho)^{\frac{1}{p+2}}|_{p+2}|\rho|_\infty^{\gamma-1}|r^{-1}|_\infty\\
\le&C^a|(r^m\rho)^{\frac{1}{p+2}}u|_{p+2}^{p+1}|r^m\rho|_1^{\frac{1}{p+2}}\le C^a\big(1+|(r^m\rho)^{\frac{1}{p+2}}u|_{p+2}^{p+2}\big).
%\text{J}_3=&-4\alpha\int \rho |u|^{p+2}{\rm{d}}r\le C^a|(r^2\rho)^{\frac{1}{p+2}}u|_{p+2}^{p+2}. 
%\le &C\alpha(p+1)|(r^2\rho)^{\frac12}u^{\frac{p}{2}}%u_r|_2|(r^2\rho)^{\frac12}u^{\frac{p}{2}+1}|_2\\
%\le&\frac{\alpha(p+1)}{32}|(r^2\rho)^{\frac12}u^{\fr%ac{p}{2}}u_r|_2^2+C|(r^2\rho)^{\frac{1}{p+2}}u|_{p+2%}^{p+2}.
%\text{J}_4=&-4\alpha\int r\rho_r|u|^{p+2}{\rm{d}}r\\
%=&4\alpha(p+2)\int r\rho|u|^pu u_r{\rm{d}}r+4\alpha\int \rho |u|^{p+2}{\rm{d}}r\\
%\le &C\big(|(r^2\rho)^{\frac12}u^{\frac{p}{2}}u_r|_2%|(r^2\rho)^{\frac12}u^{\frac{p}{2}+1}|_2+|(r^2\rho)^%{\frac{1}{p+2}}u|_{p+2}^{p+2}\big)\\
%\le&\frac{\alpha(p+1)}{32}|(r^2\rho)^{\frac12}u^{\fr%ac{p}{2}}u_r|_2^2+C|(r^2\rho)^{\frac{1}{p+2}}u|_{p+2%}^{p+2}.
\end{align*}

Substituting the estimates for $\text{J}_i$ $(i=1,2)$ into \eqref{add}, one has 
\begin{equation}\label{ban}
\begin{split}
&\frac{\text{d}}{\text{d}t}\big|(r^m\rho)^{\frac{1}{p+2}} u\big|_{p+2}^{p+2} + \big|(r^m\rho)^{\frac12} u^{\frac{p}{2}} u_r\big|_2^2+|(r^{m-2}\rho)^{\frac{1}{p+2}}u|_{p+2}^{p+2}\\
\le& C^a\big(1+|(r^m\rho)^{\frac{1}{p+2}}u|_{p+2}^{p+2}\big).
\end{split}
\end{equation}

Second, integrating \eqref{ban} over $[0,t]$, one can obtain that   
\begin{equation*}
\begin{split}
&\big|(r^m\rho)^{\frac{1}{p+2}} u\big|_{p+2}^{p+2}  +\int_0^t \big(\big|(r^m\rho)^{\frac12} u^{\frac{p}{2}} u_r\big|_2^2+|(r^{m-2}\rho)^{\frac{1}{p+2}}u|_{p+2}^{p+2}\big){\rm{d}}s\\
\leq &\big|(r^m\rho_0)^{\frac{1}{p+2}} u_0\big|_{p+2}^{p+2}+C^a\Big(t+\int_0^t \big|(r^m\rho)^{\frac{1}{p+2}} u\big|_{p+2}^{p+2}\text{d}s\Big),
\end{split}
\end{equation*}
which, along with the Gronwall inequality and \eqref{etm}, yields that
\begin{equation}\label{poloo}
\begin{split}
&\big|(r^m\rho)^{\frac{1}{p+2}} u(t,\cdot)\big|_{p+2}^{p+2} +\int_0^t\big( \big|(r^m\rho)^{\frac12}u^{\frac{p}{2}} u_r\big|_2^2+|(r^{m-2}\rho)^{\frac{1}{p+2}}u|_{p+2}^{p+2}\big)(s,\cdot) {\rm{d}}s \\
\le &C^a \big(1+\big|(r^m\rho_0)^{\frac{1}{p+2}} u_0\big|_{p+2}^{p+2}\big)\\
\le & C^a\big(1+|r^m\rho_0|_1 |u_0|_{\infty}^{p+2}\big)\le C^a\quad {\rm{for}}\ \ 0\leq t\leq T.
\end{split}
\end{equation}

Finally, the desired conclusion can be achieved  from \eqref{poloo} and Lemma \ref{l4.1}. 

The proof of Lemma \ref{lma} is complete.
\end{proof}

The following lemma gives the estimate of $\int_0^t |\rho^\iota u(s,\cdot)|_\infty {\rm{d}}s$ $(\frac12<\iota<1)$, which plays a crucial role in obtaining the $L^\infty$ estimate of the  effective velocity $v$ (see \eqref{ev}).

\begin{lem}\label{cru}
For any  $T>0$, it holds that 
\begin{equation*}
\int_0^t |\rho^{\iota }u(s,\cdot)|_{\infty} {\rm{d}}s\leq C^a \quad {\rm{for}}\ \ 0\leq t\leq T.
\end{equation*}
\end{lem}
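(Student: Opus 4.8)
The plan is to estimate the spatial sup-norm of $\rho^\iota u$ by the $L^1$-norm of its radial derivative and then integrate in time, the inputs being the energy estimate (Lemma~\ref{l4.1}), the BD entropy estimate (Lemma~\ref{l4.2}), the conservation of mass (Lemma~\ref{ms}), and the higher-integrability estimate of Lemma~\ref{lma}. Since $u(t,a)=0$ and $u(t,r)\to0$ as $r\to\infty$, the function $\rho^\iota u(t,\cdot)$ vanishes at both ends of $I_a$, so
$$|\rho^\iota u(t,\cdot)|_\infty\le|(\rho^\iota u)_r(t,\cdot)|_1\le\iota\int\rho^{\iota-1}|\rho_r||u|\,{\rm{d}}r+\int\rho^\iota|u_r|\,{\rm{d}}r=:\text{K}_1+\text{K}_2.$$
I would first record two uniform-in-time auxiliary bounds. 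From $v=u+2\alpha\rho^{-1}\rho_r$ together with Lemmas~\ref{l4.1}--\ref{l4.2} one gets $\int r^m\rho^{-1}\rho_r^2\,{\rm{d}}r\le C^a$; and, writing $r^{-m}\rho^{2\iota-1}=(r^m\rho)^{2\iota-1}\,r^{-2m\iota}$ and applying H\"older's inequality with exponents $(2\iota-1)^{-1},\,(2-2\iota)^{-1}$ together with Lemma~\ref{ms}, one gets $\int r^{-m}\rho^{2\iota-1}\,{\rm{d}}r\le C^a$. Here the integral $\int_a^\infty r^{-2m\iota/(2-2\iota)}\,{\rm{d}}r$ converges precisely because $\iota>\frac1d$ (and $\frac1d=\frac12$ exactly when $d=2$), which is where the hypothesis $\iota>\frac12$ is used.

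For $\text{K}_2$, Cauchy--Schwarz with the weight $r^m$ gives $\text{K}_2\le\big(\int r^{-m}\rho^{2\iota-1}\,{\rm{d}}r\big)^{1/2}\big(\int r^m\rho u_r^2\,{\rm{d}}r\big)^{1/2}\le C^a\big(\int r^m\rho u_r^2\,{\rm{d}}r\big)^{1/2}$, so that $\int_0^t\text{K}_2\,{\rm{d}}s\le C^a t^{1/2}\big(\int_0^t\!\int r^m\rho u_r^2\,{\rm{d}}r\,{\rm{d}}s\big)^{1/2}\le C^a$ by Lemma~\ref{l4.1}. The substantive term is $\text{K}_1$: splitting $\rho^{\iota-1}|\rho_r||u|=(\rho^{\iota-1/2}|u|)(\rho^{-1/2}|\rho_r|)$ and using Cauchy--Schwarz with the weight $r^m$ yields
$$\text{K}_1\le\Big(\int r^{-m}\rho^{2\iota-1}u^2\,{\rm{d}}r\Big)^{1/2}\Big(\int r^m\rho^{-1}\rho_r^2\,{\rm{d}}r\Big)^{1/2}\le C^a\Big(\int r^{-m}\rho^{2\iota-1}u^2\,{\rm{d}}r\Big)^{1/2}.$$
To bound the remaining integral I would write $r^{-m}\rho^{2\iota-1}u^2=(r^m\rho u^{2k})^{2\iota-1}\,r^{-2m\iota}$ with $k=(2\iota-1)^{-1}$ and apply H\"older's inequality with exponents $(2\iota-1)^{-1},\,(2-2\iota)^{-1}$: the first factor is $\le C^a$ by Lemma~\ref{lma} with $\tilde q=2k=\frac{2}{2\iota-1}\in[2,\infty)$ (legitimate since $\frac12<\iota<1$), and the second factor is again the convergent integral above. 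Hence $\text{K}_1\le C^a$ uniformly in $t$, so $\int_0^t\text{K}_1\,{\rm{d}}s\le C^a$, and adding the two estimates proves the lemma.

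I expect the term $\text{K}_1$ to be the main obstacle: it couples the density gradient $\rho_r$ with the velocity $u$ near the far-field vacuum, and the obvious energy bound $\int r^m\rho u^2\le C_0$ carries the ``wrong'' weight $r^m$ rather than $r^{-m}$, so one cannot close the estimate from the basic energy/BD identities alone. Closing it requires the arbitrarily high integrability of $(r^m\rho)^{1/\tilde q}u$ furnished by Lemma~\ref{lma}, combined with a weighted H\"older inequality whose convergence condition forces $\iota>\frac1d$ --- which is precisely the stated restriction $\iota>\frac12$ in the borderline dimension $d=2$.
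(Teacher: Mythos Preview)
Your proof is correct and uses the same core ingredients as the paper (energy, BD entropy, mass conservation, and Lemma~\ref{lma}), but the route to the $L^\infty$ bound is genuinely different. The paper does not estimate $|(\rho^\iota u)_r|_1$ directly; instead it drops all $r^m$ weights at the outset via $r\ge a$, bounds $|(\rho^\iota u)_r|_{1/\iota}$ by H\"older (pairing $\rho^{-1/2}\rho_r\in L^2$ with $\rho^{\iota-1/2}u\in L^{2/(2\iota-1)}$, and $\rho^{\iota-1/2}\in L^{2/(2\iota-1)}$ with $\rho^{1/2}u_r\in L^2$), and then applies a Gagliardo--Nirenberg interpolation $|\rho^\iota u|_\infty\le C|\rho^\iota u|_2^{1-\Xi}|(\rho^\iota u)_r|_{1/\iota}^\Xi$ with $\Xi=\tfrac{1}{3-2\iota}$. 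Your approach is more elementary in that it avoids the interpolation step entirely, at the cost of retaining the $r^m$ weight and then having to verify the convergence of $\int_a^\infty r^{-2m\iota/(2-2\iota)}\,{\rm d}r$; this produces the condition $\iota>1/d$, which is weaker than the standing hypothesis $\iota>1/2$ for $d=3$ and exactly matches it for $d=2$. One small correction to your commentary: in the paper's argument the condition $\iota>\tfrac12$ is used not for any weight convergence but simply so that the exponent $\tfrac{2}{2\iota-1}$ is finite and Lemma~\ref{lma} applies --- the same place you use it.
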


\begin{proof}
First, it follows from   direct calculations that 
\begin{equation*}
(\rho^\iota u)_r=\iota \rho^{-\frac12}\rho_r\rho^{\iota-\frac12} u+ \rho^{\iota-\frac12}\rho^{\frac12} u_r.
\end{equation*}
According to Lemmas \ref{l4.1}-\ref{ms}, \ref{lma} and $\frac12<\iota<1$, one gets for $ 0\leq t\leq T$,
\begin{equation*}
|\rho^{-\frac12}\rho_r(s,\cdot)|_2+|\rho^{\iota-\frac12}u(s,\cdot)|_{\frac{2}{2\iota-1}}+|\rho^{\iota-\frac12}(s,\cdot)|_{\frac{2}{2\iota-1}}+\int_0^t |\rho^{\frac{1}{2}}u_r(s,\cdot)|_2^2 {\rm{d}}s\leq C^a,
\end{equation*}
which yields that 
\begin{equation}\label{sgg}
\int_0^t |(\rho^{\iota}u)_r(s,\cdot)|_{\frac{1}{\iota}}^2\text{d}s\le C^a \quad {\rm{for}}\ \ 0\leq t\leq T.
\end{equation}

Second, notice that $\frac{1}{\iota}\in(1,2)$ and  $\rho^\iota u$ decays to zero at infinity, then by using Newton-Leibniz formula and H\"older's inequality, one obtains 
\begin{equation}\label{fake}|\rho^{\iota}u|_\infty\le C|\rho^{\iota}u|_2^{1-\Xi}|(\rho^{\iota}u)_r|^\Xi_{\frac{1}{\iota}}\quad \text{with}\quad \Xi=\frac{1}{3-2\iota}\in(0,1).\end{equation}
%Indeed \eqref{fake} follows from the far field behavior $\rho^\iota u\to 0\ \text{as}\  r\to\infty$ and the Newton-Leibniz formula. 
Thus it follows from  \eqref{fake},  Lemmas \ref{l4.1}, \ref{l4.3} and \eqref{sgg} that

$$\int_0^t |\rho^{\iota}u(s,\cdot)|_{\infty} {\rm{d}}s\leq C \int_0^t\Big(1+|(\rho^{\iota}u)_r|^2_{\frac{1}{\iota}}\Big)(s,\cdot)\text{d}s\leq C^a \quad {\rm{for}}\ \ 0\leq t\leq T.$$

The proof of Lemma \ref{cru} is complete.
\end{proof}

Next we show the  $L^\infty$ estimate of the effective velocity:
\begin{equation}\label{ev}
 v=u+\varphi(\rho)_r=u+2\alpha \rho^{-1}\rho_r.
\end{equation}

\begin{lem}\label{l4.4}
Assume $\gamma >\frac32$ additionally. Then for any  $T>0$, it holds that 
\begin{equation*}
|v(t,\cdot)|_{\infty}\leq C^a\quad {\rm{for}}\ \ 0\leq t\leq T.
\end{equation*}
\end{lem}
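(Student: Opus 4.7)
The plan is to exploit the damped transport structure of $v$ that was already uncovered in the proof of Lemma \ref{rth1}. From equations \eqref{e-1.16mm}--\eqref{trymm} we know that along characteristics $\dot y(t;r)=u(t,y(t;r))$, $y(0;r)=r$, the effective velocity satisfies
\begin{equation*}
\frac{{\rm{d}}}{{\rm{d}}t}v(t,y(t;r))+\frac{A\gamma}{2\alpha}\rho^{\gamma-1}(t,y(t;r))\,v(t,y(t;r))=\frac{A\gamma}{2\alpha}\rho^{\gamma-1}(t,y(t;r))\,u(t,y(t;r)).
\end{equation*}
Because the damping coefficient $\tfrac{A\gamma}{2\alpha}\rho^{\gamma-1}$ is nonnegative, Duhamel's formula gives, for every characteristic,
\begin{equation*}
|v(t,y(t;r))|\le |v_0|_\infty+\frac{A\gamma}{2\alpha}\int_0^t |\rho^{\gamma-1}u(s,\cdot)|_\infty\,{\rm{d}}s.
\end{equation*}
Since the characteristics starting from $I_a$ cover $I_a$ (as $u(\cdot,a)=0$ keeps the boundary invariant and $u$ is Lipschitz in $r$ from \eqref{spd}), this will pointwise control $v$ on $[0,T]\times I_a$.

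The initial term $|v_0|_\infty=|u_0+2\alpha(\ln\rho_0)_r|_\infty$ is finite by the assumptions in \eqref{etm} (since $(\ln\rho_0)_r\in L^\infty$ and $u_0\in H^2$ with vanishing trace at $r=a$ gives $u_0\in L^\infty$). So the task reduces to bounding the time integral of $|\rho^{\gamma-1}u|_\infty$ uniformly on $[0,T]$, and this is exactly where the assumption $\gamma>\tfrac{3}{2}$ enters: it lets me pick $\iota$ in the admissible range $(\tfrac12,1)$ of Lemma \ref{cru}, specifically $\iota=\gamma-1$ when $\tfrac{3}{2}<\gamma<2$ and any $\iota\in(\tfrac12,1)$ when $\gamma\ge 2$. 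Using the uniform upper bound $|\rho|_\infty\le C^a$ from Lemma \ref{l4.3} to absorb the excess power, I get
\begin{equation*}
|\rho^{\gamma-1}u|_\infty\le |\rho|_\infty^{\gamma-1-\iota}\,|\rho^{\iota}u|_\infty\le C^a\,|\rho^{\iota}u|_\infty,
\end{equation*}
after which Lemma \ref{cru} delivers $\int_0^t|\rho^{\iota}u(s,\cdot)|_\infty\,{\rm{d}}s\le C^a$, closing the estimate.

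In this sense the present lemma is essentially a corollary of Lemma \ref{cru} via the damped-transport representation; the genuinely hard work was done upstream. The only real obstacle is cosmetic, namely verifying that \textbf{the threshold $\gamma>\tfrac32$ is precisely what one needs} so that $\gamma-1>\tfrac12$, matching the strict lower bound on $\iota$ required by Lemma \ref{cru}. No time-weighted or higher-regularity estimates are invoked, which is consistent with $|v|_\infty$ being a zeroth-order pointwise quantity.
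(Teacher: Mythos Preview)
Your proposal is correct and follows essentially the same route as the paper: both use the damped transport equation \eqref{e-1.16mm} along characteristics to reduce the $L^\infty$ bound on $v$ to controlling $\int_0^t|\rho^{\gamma-1}u|_\infty\,{\rm d}s$, then split $\rho^{\gamma-1}u=\rho^{\gamma-1-\iota}\cdot\rho^{\iota}u$ and invoke Lemmas \ref{l4.3} and \ref{cru}. Your observation that the nonnegativity of the damping coefficient makes the exponential weights in \eqref{trymm} at most $1$ is a slightly cleaner justification of the Duhamel bound than the paper's appeal to Lemma \ref{l4.3} at that step, but the argument is otherwise identical.
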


\begin{proof}
According to the proof of Lemma \ref{rth1}, one has that $v$ satisfies the damped transport equation \eqref{e-1.16mm}.
Then via the  standard characteristic method, one can obtain 
\begin{equation}\label{try}
v=\left(v_0+\int_0^t \frac{A\gamma}{2\alpha} \rho^{\gamma-1}u\  \exp \Big(\int_0^s \frac{A\gamma}{2\alpha} \rho^{\gamma-1}{\rm{d}}\tau\Big){\rm{d}}s\right)\exp\left(-\int_0^t \frac{A\gamma}{2\alpha} \rho^{\gamma-1} {\rm{d}}s\right).
\end{equation}
%Second, from \eqref{etm} and  Sobolev embedding inequality (see Lemma \ref{ale1} in Appendix A), one has
%\begin{equation}\label{qianru}
%\begin{split}
%|\psi_0|_\infty=&|r^{-\frac mq}r^{\frac mq}\psi_0|_\infty\leq C^a|r^{\frac mq}\psi_0|_\infty\leq C^a|r^{\frac mq}\psi_0|_q^{1-\iota}|\partial_r(r^{\frac mq}\psi_0)|_2^\iota\\
%\leq &C^a|r^{\frac mq}\psi_0|_q^{1-\iota}\Big(|r^{\frac m2}\partial_r\psi_0|_2|r^{\frac mq-\frac m2}|_\infty+|r^{\frac mq}\psi_0|_q|r^{-1}|_{\frac{2q}{q-2}}\Big)^\iota\\
%\leq &C^a,
%\end{split}
%\end{equation}
%where $\iota=\frac{2}{q+2}\in(0,1)$. 
According to \eqref{try} and Lemma \ref{l4.3}, one deduces that
\begin{equation}\label{e-1.17}
|v|_\infty\le  C^a\Big(|v_0|_\infty+\int_0^t|\rho^{\gamma-1}u|_\infty{\rm{d}}s\Big)\\
\le C^a\Big(|v_0|_\infty+\int_0^t |\rho|^{\gamma-1-\iota}_{\infty} |\rho^{\iota}u|_\infty{\rm{d}}s\Big),
\end{equation}
which, along with   Lemmas \ref{l4.3} and  \ref{cru}, yields that for any $\gamma\ge 1+\iota>\frac32$,
\begin{equation*}
|v(t, \cdot)|_\infty\le C^a \quad {\rm{for}}\ \ 0\leq t\leq T.
\end{equation*}

 The  proof of Lemma \ref{l4.4} is complete.
\end{proof}

For simplicity of the following calculations,   one  introduces the following  variables \begin{equation}\label{tr}
 \phi=\frac{A\gamma}{\gamma-1}\rho^{\gamma-1}, \quad \psi=\frac{1}{\gamma-1}(\ln \phi)_r=(\ln \rho)_r,
 \end{equation}
then  the  problem  \eqref{e1.5} can be rewritten into
\begin{equation}\label{e2.2}
\left\{
\begin{aligned}
&\ \phi_t+u\phi_r+(\gamma-1)\phi u_r+m(\gamma-1)r^{-1}\phi u=0,\\[6pt]
&\ u_t+u u_r+\phi_r-2\alpha u_{rr}=2\alpha\psi u_r+2\alpha m( r^{-1} u)_r,\\[6pt]
& \psi_t+(\psi u)_r+u_{rr}+m(r^{-1}u)_r=0,\\[6pt]
&\ (\phi(0,r),u(0,r),\psi(0,r))=(\phi_0(r),u_0(r),\psi_0(r))\\
=&\Big(\frac{A\gamma}{\gamma-1}\rho_0^{\gamma-1}(r),u_0(r),(\ln \rho_0(r))_r\Big)\quad \text{for} \quad r\in I_a,\\[6pt]
&\ u(t,r)|_{r=a}=0  \quad \text{for} \quad  t\geq 0,\\[6pt]
&\  (\phi(t,r),u(t,r),\psi(t,r))\to (0,0,0) \quad \text{as}\quad  r \to \infty  \quad \text{for} \quad  t\geq 0.
\end{aligned}
\right.
\end{equation}

The following lemma shows the  boundedness of $\int_0^t|u(s,\cdot)|^2_\infty{\rm{d}}s$, which will be used to obtain the $L^2$ estimate of $r^{\frac{m}{2}}u$.
\begin{lem}\label{ele}
Assume $\gamma >\frac32$ additionally. Then for any  $T>0$, it holds that 
$$|u(t,\cdot)|_2^2+\int_0^t(|u_r|_2^2+|r^{-1}u|_2^2+|u|^2_\infty)(s,\cdot){\rm{d}}s\le C^a \quad {\rm{for}}\ \  0\le t\le T.$$
\end{lem}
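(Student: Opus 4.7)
The plan is to derive a differential inequality for $|u|_2^2$ by exploiting the uniform bound on the effective velocity $v=u+2\alpha\psi$ proved in Lemma \ref{l4.4}. The key observation is that $2\alpha\psi=v-u$, so the troublesome source $2\alpha\psi u_r$ on the right-hand side of $\eqref{e2.2}_2$ rewrites as $vu_r-uu_r$; absorbing $-uu_r$ into the convective term gives the reformulation
$$u_t+2uu_r+\phi_r-2\alpha u_{rr}-2\alpha m(r^{-1}u)_r=vu_r,$$
whose right-hand side is now controlled by $|v|_\infty\le C^a$ rather than the only-$L^q$ quantity $\psi$.

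The second step is to multiply this equation by $u$ and integrate over $I_a$, using $u(a)=0$ and the far-field decay of $u,u_r$ to eliminate every boundary term. The cubic $\int 2u^2u_r\,dr$ vanishes, $-2\alpha\int uu_{rr}\,dr=2\alpha|u_r|_2^2$, and $-2\alpha m\int u(r^{-1}u)_r\,dr=\alpha m|r^{-1}u|_2^2$ (using $\int r^{-1}uu_r\,dr=\frac12|r^{-1}u|_2^2$), yielding
$$\frac12\frac{d}{dt}|u|_2^2+2\alpha|u_r|_2^2+\alpha m|r^{-1}u|_2^2=\int\phi u_r\,dr+\int vuu_r\,dr.$$
Cauchy--Schwarz and Young's inequality bound the right-hand side by $\alpha|u_r|_2^2+C^a|u|_2^2+C|\phi|_2^2$. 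The auxiliary bound $|\phi|_2^2\le C^a$, uniform in time, follows from $|\rho|_\infty\le C^a$ (Lemma \ref{l4.3}) and mass conservation (Lemma \ref{ms}): since $2\gamma-3>0$,
$$\int\rho^{2(\gamma-1)}\,dr\le|\rho|_\infty^{2\gamma-3}\int\rho\,dr\le C^a a^{-m}\int r^m\rho\,dr\le C^a,$$
which is exactly where the hypothesis $\gamma>\frac32$ is again used. The Gronwall inequality, together with the initial control $|u_0|_2\le a^{-m/2}|r^{m/2}u_0|_2\le C_0^a$ from \eqref{etm}, then delivers $|u(t,\cdot)|_2^2+\int_0^t(|u_r|_2^2+|r^{-1}u|_2^2)\,ds\le C^a$.

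For the $L^\infty$ piece, the standard 1D identity $u(r)^2=-2\int_r^\infty uu_s\,ds$ (valid since $u\to 0$ at infinity by the regularity of Lemma \ref{rth1}) gives $|u|_\infty^2\le 2|u|_2|u_r|_2$, and Cauchy--Schwarz in time yields
$$\int_0^t|u(s,\cdot)|_\infty^2\,ds\le 2\sup_{[0,t]}|u|_2\cdot\sqrt{t}\Big(\int_0^t|u_r|_2^2\,ds\Big)^{1/2}\le C^a.$$
The main obstacle---and the reason the lemma requires $\gamma>\frac32$---is the treatment of the source $2\alpha\psi u_r$: a direct use of $|\psi|_q$ with $q\in(3,6]$ would force H\"older norms of $u$ or $u_r$ not yet controlled, while the rewriting $2\alpha\psi=v-u$ enabled by Lemma \ref{l4.4} converts it into a term that closes against $|u|_2$ and $|u_r|_2$.
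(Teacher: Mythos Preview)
Your proof is correct and follows essentially the same strategy as the paper: both rely on the identity $2\alpha\psi=v-u$ from Lemma~\ref{l4.4} to tame the source $2\alpha\psi u_r$, then close a Gronwall loop on $|u|_2^2$. The only cosmetic difference is in the pressure term: the paper also applies the $v$-substitution to $\phi_r=\frac{A\gamma}{2\alpha}\rho^{\gamma-1}(v-u)$ and bounds $\int\rho^{\gamma-1}(v-u)u\,dr$ via $|(r^m\rho)^{1/2}|_2|v|_\infty|\rho|_\infty^{\gamma-3/2}$, whereas you integrate by parts and bound $|\phi|_2$ via $|\rho|_\infty^{2\gamma-3}\int\rho\,dr$; both routes exploit $\gamma>\tfrac32$ in the same way and yield the same conclusion.
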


\begin{proof}
First, multiplying $\eqref{e2.2}_2$ by $u$ and integrating over $I_a$, one has 
\begin{equation}\label{ell}
%\begin{split}
\frac12\frac{\text{d}}{\text{d}t}|u|_2^2+2\alpha|u_r|_2^2+\alpha m|r^{-1}u|_2^2
=-\int \big(uu_r +\phi_r-2\alpha\psi u_r\big)u{\rm{d}}r\triangleq \sum_{i=3}^5 \text{J}_i.
%\end{split}
\end{equation}
According to H\"older's inequality,  Lemmas \ref{ms}-\ref{l4.3}  and \ref{l4.4}, one  can obtain that 
\begin{align*}
\text{J}_3=&-\int u^2 u_r\text{d}r=0,\\
\text{J}_4=&-A\gamma\int \rho^{\gamma-2}\rho_r u{\rm{d}}r=-\frac{A\gamma}{2\alpha}\int \rho^{\gamma-1}(v-u)u{\rm{d}}r\\
\le &C(|(r^m\rho)^{\frac12}|_2|u|_2|v|_\infty|\rho|_\infty^{\gamma-\frac32}|r^{-\frac{m}{2}}|_\infty+|\rho|_\infty^{\gamma-1}|u|_2^2)\\
\le & C^a (1+|u|_2^2),\\
\text{J}_5=&2\alpha\int \rho^{-1}\rho_r u_r u{\rm{d}}r=\int (v-u)u u_r{\rm{d}}r\\
\le & C|v|_\infty|u|_2|u_r|_2\le \frac{\alpha}{4}|u_r|_2^2+C^a|u|_2^2.
%\text{J}_6=&2\alpha m\int r^{-1}u_r u{\rm{d}}r\le C|u|_2|u_r|_2|r^{-1}|_\infty\le \frac{\alpha}{4}|u_r|_2^2+C^a|u|_2^2.
\end{align*}

Substituting the estimates for $\text{J}_i$ $(i=3,4,5)$ into \eqref{ell} and integrating the resulting equation over $[0,t]$, one arrives that 
$$|u|_2^2+\int_0^t (|u_r|_2^2+|r^{-1}u|_2^2){\rm{d}}s\le C^a\Big(t+\int_0^t |u|_2^2{\rm{d}}s\Big),$$
which, along with the Gronwall inequality, yields that 
\begin{equation}\label{pol}
|u(t,\cdot)|_2^2+\int_0^t (|u_r|_2^2+|r^{-1}u|_2^2)(s,\cdot)\text{d}s\le C^a  \quad {\rm{for}}\ \  0\le t\le T.
\end{equation}
It thus follows from \eqref{pol} that 
$$\int_0^t |u(s,\cdot)|^2_\infty{\rm{d}}s\le C\int_0^t( |u|_2^2+|u_r|_2^2)(s,\cdot){\rm{d}}s\le C^a.$$

The proof of Lemma \ref{ele} is complete.
\end{proof}

 Now we give the  $L^2$ estimate of $r^{\frac{m}{2}}u$.

\begin{lem}\label{l4.5}
Assume $\gamma >\frac32$ additionally. Then for any $T>0$,  it holds that 
\begin{equation*}
|r^{\frac{m}{2}}u(t,\cdot)|_2^2+\int_0^t\big( |r^{\frac{m}{2}} u_r|_2^2+|r^{\frac{m-2}{2}}u|_2^2\big)(s, \cdot){\rm{d}}s\leq C^a\quad {\rm{for}}\ \  0\le t\le T.
\end{equation*}
\end{lem}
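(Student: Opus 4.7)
The plan is to perform a weighted $L^2$ energy estimate with weight $r^m$: multiply the momentum equation $\eqref{e2.2}_2$ by $r^m u$ and integrate over $I_a$. The boundary contribution at $r=a$ vanishes thanks to $u(t,a)=0$, and all far-field boundary terms vanish by the decay in $\eqref{e1.5}_5$. Integration by parts turns the viscous term into
$$-2\alpha\int r^m u u_{rr}\,dr = 2\alpha\int r^m u_r^2\,dr + 2\alpha m\int r^{m-1} u u_r\,dr,$$
while the geometric source becomes
$$2\alpha m\int r^m u(r^{-1}u)_r\,dr = 2\alpha m\int r^{m-1}u u_r\,dr - 2\alpha m\int r^{m-2}u^2\,dr.$$
After moving this to the LHS the cross terms $2\alpha m\int r^{m-1}u u_r\,dr$ cancel exactly, leaving the desired dissipation $2\alpha|r^{m/2}u_r|_2^2 + 2\alpha m|r^{(m-2)/2}u|_2^2$ together with $\tfrac12\tfrac{d}{dt}|r^{m/2}u|_2^2$.

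To control the right-hand side, I would use the substitution $2\alpha\psi = v-u$ so that the term $2\alpha\int r^m \psi u_r u\,dr$ becomes $\int r^m v u u_r\,dr - \int r^m u^2 u_r\,dr$; Lemma \ref{l4.4} provides $|v|_\infty \le C^a$, and the resulting $|r^{m/2}u|_2|r^{m/2}u_r|_2$ is handled by Young's inequality. The pressure term $\int r^m u\phi_r\,dr$ is rewritten as $-m\int r^{m-1}u\phi\,dr - \int r^m u_r\phi\,dr$. The second piece is estimated by Young's inequality after noting that, since $\gamma>\tfrac32$ and $|\rho|_\infty\le C^a$, one has $\int r^m\phi^2 \lesssim |\rho|_\infty^{2\gamma-3}\int r^m\rho \le C^a$ by Lemma \ref{ms}. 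For the first piece one writes $\rho^{\gamma-1}=\rho^{1/2}\rho^{\gamma-3/2}$ (the essential place where $\gamma>\tfrac32$ is used) and exploits $r^{-1}\le a^{-1}$ to obtain
$$m\int r^{m-1}u\phi\,dr \le C^a|r^{-1}|_\infty|r^{m/2}u|_2|r^{m/2}\rho^{1/2}|_2 \le C^a(1+|r^{m/2}u|_2^2),$$
again by conservation of mass. The cubic term $\int r^m u^2 u_r\,dr$ integrates by parts into $-\tfrac{m}{3}\int r^{m-1}u^3\,dr$, which is bounded by $|u|_\infty|r^{(m-2)/2}u|_2|r^{m/2}u|_2$.

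Putting these estimates together yields a differential inequality of the form
$$\tfrac{d}{dt}|r^{m/2}u|_2^2 + \alpha|r^{m/2}u_r|_2^2 + \alpha m|r^{(m-2)/2}u|_2^2 \le C^a\bigl(1+|u|_\infty^2\bigr)\bigl(1+|r^{m/2}u|_2^2\bigr),$$
after absorbing $\tfrac{\alpha}{4}|r^{m/2}u_r|_2^2$ and $\tfrac{\alpha m}{4}|r^{(m-2)/2}u|_2^2$ back into the LHS. Since $|u|_\infty^2\in L^1_t([0,T])$ by Lemma \ref{ele}, Gronwall's inequality finishes the proof. The main obstacle is the pressure term $m\int r^{m-1}u\phi\,dr$: there is no $\rho$-weight in $u$, and the only way to convert $\phi=\tfrac{A\gamma}{\gamma-1}\rho^{\gamma-1}$ into something compatible with the conserved quantity $\int r^m\rho\,dr$ is to pull out a factor of $\rho^{1/2}$, which is exactly the content of the restriction $\gamma>\tfrac32$ (the same threshold that was required in Lemma \ref{l4.4} to close the $L^\infty$ estimate of $v$).
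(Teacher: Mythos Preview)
Your proposal is correct and follows essentially the same route as the paper: multiply $\eqref{e2.2}_2$ by $r^m u$, integrate, use $2\alpha\psi=v-u$ together with $|v|_\infty\le C^a$ from Lemma~\ref{l4.4}, and close via Gronwall with $\int_0^T|u|_\infty^2\,ds\le C^a$ from Lemma~\ref{ele}. The only minor variation is in the pressure term: you integrate $\int r^m u\,\phi_r$ by parts and estimate $\int r^m\phi^2$ via $\rho^{2(\gamma-1)}=\rho^{2\gamma-3}\rho$, whereas the paper avoids integration by parts by writing $\phi_r=A\gamma\rho^{\gamma-1}\rho^{-1}\rho_r=\tfrac{A\gamma}{2\alpha}\rho^{\gamma-1}(v-u)$ and again invoking $|v|_\infty\le C^a$; both reductions use $\gamma>\tfrac32$ in the same way and yield the same bound.
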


\begin{proof}
% According to  $\eqref{e2.2}_2$, one can get
%\begin{equation}\label{e4.14}
%r^2 u_t+r^2u u_r+A\gamma r^2\rho^{\gamma-2}\rho_r=2\alpha r^2 u_{rr}+2\alpha r^2\rho^{-1}\rho_r u_r+4\alpha ru_r-4\alpha u.
%\end{equation}
Multiplying $\eqref{e2.2}_2$ by $r^m u$ and integrating over $I_a$, one has 
\begin{equation}\label{e4.15}
\begin{split}
&\frac12\frac{\text{d}}{\text{d}t} |r^{\frac{m}{2}}u|_2^2 +2\alpha | r^{\frac{m}{2}}u_r|_2^2+2\alpha m |r^{\frac{m-2}{2}}u|_2^2\\
=&-\int r^m\big(uu_r+\phi_r-2\alpha\psi u_r\big) u{\rm{d}}r\triangleq  \sum_{i=6}^{8} \text{J}_i.
\end{split}
\end{equation}
Using  H\"older's  inequality, Young's inequality, Lemmas \ref{ms}-\ref{l4.3}  and \ref{l4.4}, we estimate $\text{J}_i$ $(i=6,7,8)$ as follows:
\begin{equation*}
\begin{split}
%\text{J}_{6}=&-2\alpha m\int r^{m-1} uu_r{\rm{d}}r\le \frac14|r^{\frac{m}{2}}u_r|_2^2+C^a|r^{\frac{m}{2}}u|_2^2,\\
\text{J}_{6}=&-\int r^m u^2 u_r{\rm{d}}r
\le  C\int r^{m-1}|u|^3 {\rm{d}}r\\
\le & C^a|u|_\infty|r^{\frac{m}{2}}u|_2^2,\\
\text{J}_{7}=&-A\gamma \int r^m \rho^{\gamma-2}\rho_r u{\rm{d}}r=-\frac{A\gamma}{2\alpha}\int r^m \rho^{\gamma-1} (v-u) u {\rm{d}}r\\
\leq& C\big(| (r^m\rho)^{\frac12} |_{2}|r^{\frac{m}{2}}u|_{2}|v|_\infty|\rho|_{\infty}^{\gamma-\frac32}+|\rho|_{\infty}^{\gamma-1}|r^{\frac{m}{2}}u|_{2}^2\big)\\
\leq & C^a(1+|r^{\frac{m}{2}}u|_{2}^2),\\
\text{J}_{8}=&2\alpha\int r^m \rho^{-1}\rho_r u_r u{\rm{d}}r=\int r^m\left(v-u\right)u_r u{\rm{d}}r\\
\leq &C^a\big(|v|_{\infty}| r^{\frac{m}{2}}u_r|_{2}|r^{\frac{m}{2}}u|_{2}+|u|_\infty|r^{\frac{m}{2}}u|_2^2\big)\\
\leq& \frac14|r^{\frac{m}{2}}u_r|_2^2+C^a(1+|u|_\infty)|r^{\frac{m}{2}}u|_{2}^2.
\end{split}
\end{equation*}
Substituting the estimates for $\text{J}_i$ $(i=6,7,8)$ into \eqref{e4.15}, using the Gronwall  inequality and Lemma \ref{ele}, one has
\begin{equation*}\label{e4.16}
|r^{\frac{m}{2}}u(t,\cdot)|_2^2+\int_0^t\big(| r^{\frac{m}{2}}u_r|_2^2+|r^{\frac{m-2}{2}}u|_2^2\big)(s,\cdot){\rm{d}}s\leq C^a\quad {\rm{for}}\ \ 0\leq t\leq T.
\end{equation*}

The proof of Lemma \ref{l4.5} is complete.
\end{proof}

\subsection{The first order  estimates of $r^{\frac{m}{2}}u$}
Now we  consider the  estimates of $|r^{\frac{m}{2}}u_r|_2$.
\begin{lem}\label{l4.6}

Assume $\gamma >\frac32$ additionally. Then for any $T>0$,  it holds that 
\begin{equation*}\label{e-1.21}
\big(| r^{\frac{m}{2}}u_r|_2^2+|\psi|_\infty\big)(t,\cdot)+\int_0^t\big( |r^{\frac{m}{2}}u_t|_2^2+|r^{\frac{m}{2}}u_{rr}|_2^2+|r^{\frac{m}{2}}u_r|_{\infty}\big)(s,\cdot){\rm{d}}s\leq C^a \quad {\rm{for}}\ \ 0\leq t\leq T.
 \end{equation*}
\end{lem}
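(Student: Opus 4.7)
\textbf{Proof plan for Lemma \ref{l4.6}.} The plan is to test the reformulated momentum equation $\eqref{e2.2}_2$ against $r^m u_t$ on $I_a$. After integration by parts on the two viscous contributions, using $u(t,a)=0$ (hence $u_t(t,a)=0$) and the far-field decay of $u$ to annihilate boundary terms, the cross pieces $2\alpha m\int r^{m-1}u_r u_t\,{\rm d}r$ cancel and what remains telescopes into exact time derivatives, giving the identity
\begin{equation*}
|r^{\frac{m}{2}}u_t|_2^2+\alpha\frac{\rm d}{{\rm d}t}\bigl(|r^{\frac{m}{2}}u_r|_2^2+m|r^{\frac{m-2}{2}}u|_2^2\bigr)=-\!\int r^m uu_ru_t\,{\rm d}r-\!\int r^m\phi_ru_t\,{\rm d}r+2\alpha\!\int r^m\psi u_ru_t\,{\rm d}r.
\end{equation*}

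Next I would estimate the three right-hand terms via H\"older and Young. The convective term is $\le\varepsilon|r^{\frac{m}{2}}u_t|_2^2+C|u|_\infty^2|r^{\frac{m}{2}}u_r|_2^2$. Using the effective-velocity identity $\psi=(v-u)/(2\alpha)$, the $\psi$-term is $\le\varepsilon|r^{\frac{m}{2}}u_t|_2^2+C(|v|_\infty^2+|u|_\infty^2)|r^{\frac{m}{2}}u_r|_2^2$; Lemma \ref{l4.4} provides $|v|_\infty\le C^a$ and Lemma \ref{ele} provides $\int_0^t|u|_\infty^2\,{\rm d}s\le C^a$, so the coefficient is integrable in time. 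For the pressure term, writing $\phi_r=\frac{A\gamma}{2\alpha}\rho^{\gamma-1}(v-u)$ and invoking $\gamma>\frac{3}{2}$,
\begin{equation*}
|r^{\frac{m}{2}}\phi_r|_2^2\le C|\rho|_\infty^{2\gamma-3}\!\int r^m\rho(v^2+u^2)\,{\rm d}r\le C^a,
\end{equation*}
by Lemmas \ref{l4.1}--\ref{l4.2} and \ref{l4.3}. Choosing $\varepsilon$ small and applying Gronwall's inequality with the time-integrable coefficient $1+|v|_\infty^2+|u|_\infty^2$ then yields
\begin{equation*}
|r^{\frac{m}{2}}u_r(t)|_2^2+|r^{\frac{m-2}{2}}u(t)|_2^2+\int_0^t|r^{\frac{m}{2}}u_t|_2^2\,{\rm d}s\le C^a.
\end{equation*}

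With $|r^{\frac{m}{2}}u_r|_2\le C^a$ in hand, the elementary embedding $|u|_\infty\le C|u|_2^{1/2}|u_r|_2^{1/2}$, combined with $|u|_2\le C^a$ (Lemma \ref{ele}) and $|u_r|_2\le a^{-m/2}|r^{\frac{m}{2}}u_r|_2$, gives $|u|_\infty\le C^a$ pointwise in $t$, and therefore $|\psi|_\infty\le(|v|_\infty+|u|_\infty)/(2\alpha)\le C^a$. Reading $\eqref{e2.2}_2$ as a pointwise expression for $u_{rr}$, namely
\begin{equation*}
-2\alpha u_{rr}=-u_t-uu_r-\phi_r+2\alpha\psi u_r+2\alpha m(r^{-1}u)_r,
\end{equation*}
I obtain $|r^{\frac{m}{2}}u_{rr}|_2\le C^a(|r^{\frac{m}{2}}u_t|_2+1)$, whence $\int_0^t|r^{\frac{m}{2}}u_{rr}|_2^2\,{\rm d}s\le C^a$. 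Finally, the 1-D Sobolev inequality $|f|_\infty^2\le C|f|_2|f_r|_2$ applied to $f=r^{\frac{m}{2}}u_r$, using $|f_r|_2\le C^a(1+|r^{\frac{m}{2}}u_{rr}|_2)$, yields $|r^{\frac{m}{2}}u_r|_\infty\le C^a(1+|r^{\frac{m}{2}}u_{rr}|_2^{1/2})$; a H\"older argument in time then closes $\int_0^t|r^{\frac{m}{2}}u_r|_\infty\,{\rm d}s\le C^a$.

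The main obstacle is the pressure term: neither Lemma \ref{l4.1} nor Lemma \ref{l4.2} alone suffices to control $|r^{\frac{m}{2}}\phi_r|_2$, and it is precisely the pointwise bound $|v|_\infty\le C^a$ from the damped transport equation for the effective velocity (Lemma \ref{l4.4}), together with the exponent condition $\gamma>\frac{3}{2}$ converting one factor of $\rho^{\gamma-1}$ into $\rho$, that allows the pressure term to be absorbed by the basic mass/energy/BD information and the argument to be closed.
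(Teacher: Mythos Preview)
Your proof is correct and follows essentially the same strategy as the paper's: test $\eqref{e2.2}_2$ against $r^m u_t$, use the effective-velocity identity $\psi=(v-u)/(2\alpha)$ together with Lemma~\ref{l4.4} to control the $\psi u_r$ and pressure terms, close by Gronwall with Lemma~\ref{ele}, and then read off $|\psi|_\infty$, $|r^{m/2}u_{rr}|_2$, and $|r^{m/2}u_r|_\infty$ from the equation and 1-D Sobolev. Two minor organizational differences: you recognize the term $2\alpha m\!\int r^{m-2}uu_t$ as the time derivative $\alpha m\frac{d}{dt}|r^{\frac{m-2}{2}}u|_2^2$, whereas the paper leaves it on the right-hand side and bounds it via $|r^{m/2}u|_2\le C^a$ from Lemma~\ref{l4.5}; and for $|r^{m/2}\phi_r|_2$ you invoke the BD entropy $\int r^m\rho v^2\le C_0$ directly, while the paper uses mass conservation together with $|v|_\infty$---both routes are valid and rely on the same circle of ideas.
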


\begin{proof}
First, multiplying $\eqref{e2.2}_2$ by $r^m u_t$ and integrating over $I_a$, one arrives at 
\begin{equation}\label{e-1.20}
    \begin{split}
&\alpha\frac{\text{d}}{\text{d}t}| r^{\frac{m}{2}}u_r|_2^2+| r^{\frac{m}{2}}u_t|_2^2\\
=&-\int r^m\big(u u_r +\phi_r-2\alpha \psi u_r+2\alpha m r^{-2}u\big) u_t{\rm{d}}r\triangleq \sum_{i=9}^{12} \text{J}_i.
     \end{split}
\end{equation}
%where one has used $\eqref{e1.5}_4$ and the fact $r^2 u_r u_t(t,\cdot)\in W^{1,1}$.

According to  H\"older's inequality, Lemmas \ref{ms}-\ref{l4.3}, \ref{l4.4} and  \ref{l4.5}, one can obtain that 
\begin{equation*}\label{strange}
    \begin{split}
 \text{J}_{9}=&-\int r^m u u_r u_t{\rm{d}}r\leq C|u|_{\infty}|r^{\frac{m}{2}}u_r|_{2}|r^{\frac{m}{2}}u_t|_{2}\\
        \le & C|u|_\infty^2|r^{\frac{m}{2}}u_r|_{2}^2+\frac18 |r^{\frac{m}{2}}u_t|_{2}^2,\\
 \text{J}_{10}=&-A\gamma\int r^m\rho^{\gamma-2}\rho_r u_t{\rm{d}}r
      = -\frac{A\gamma}{2\alpha}\int r^m\rho^{\gamma-1} (v-u) u_t{\rm{d}}r\\
      \le & C \big(|(r^m\rho)^{\frac12}|_{2} |v|_\infty|\rho|_\infty^{\gamma-\frac32} + |\rho|_{\infty}^{\gamma-1} |r^{\frac{m}{2}}u|_2\big) |r^{\frac{m}{2}}u_t|_2\\
     \le & C^a+\frac18 |r^{\frac{m}{2}}u_t|^2_2,\\
\text{J}_{11}=&2\alpha \int r^m\rho^{-1}\rho_r u_r u_t{\rm{d}}r=\int r^m(v-u)u_r u_t{\rm{d}}r\\
           \le &C\left(|v|_{\infty}+|u|_{\infty}\right)|r^{\frac{m}{2}}u_r|_{2}|r^{\frac{m}{2}}u_t|_{2}\\
          \le &C^a(1+|u|_{\infty}^2) |r^{\frac{m}{2}}u_r|_2^2+\frac18|r^{\frac{m}{2}}u_t|_{2}^2,\\
%\text{J}_{15}=&4\alpha \int  r u_r u_t{\rm{d}}r\le C|ru_r|_2|ru_t|_2  \le C|ru_r|_2^2+\frac18 |ru_t|^2_2,\\            
\text{J}_{12}=&-2\alpha m \int r^{m-2} u u_t{\rm{d}}r\\
\le & C|r^{\frac{m}{2}}u|_2|r^{-2}|_\infty|r^{\frac{m}{2}}u_t|_2\le C^a+\frac18 |r^{\frac{m}{2}}u_t|^2_2.       
    \end{split}
\end{equation*}

Substituting the estimates for $\text{J}_i$ $(i=9,\cdots,12)$ into \eqref{e-1.20}, one gets
\begin{equation}\label{e-1.23}
\frac{\text{d}}{\text{d}t}|r^{\frac{m}{2}}u_r|_2^2+|r^{\frac{m}{2}}u_t|_2^2\leq C^a(1+|u|_\infty^2)|r^{\frac{m}{2}}u_r|^2_{2}+C^a,
\end{equation}
which, along with  the Gronwall inequality and Lemma \ref{ele}, yields that 
 \begin{equation}\label{eg}
 |r^{\frac{m}{2}}u_r(t,\cdot)|_2^2+\int_0^t  |r^{\frac{m}{2}}u_t(s,\cdot)|_2^2{\rm{d}}s\leq C^a \quad {\rm{for}}\ \ 0\le t\le T.
\end{equation}
Consequently, it follows from \eqref{eg} and Lemmas \ref{l4.4}-\ref{ele} that
\begin{equation}\label{shuoming}
|\psi(t,\cdot)|_{\infty}\leq C(|v(t,\cdot)|_{\infty}+|u(t,\cdot)|_{\infty})\leq C^a(1+\|u(t,\cdot)\|_1)\leq C^a \quad {\rm{for}}\ \ 0\le t\le T.
\end{equation}

Second, according to $\eqref{e2.2}_2$, Lemmas \ref{ms}-\ref{l4.3}, \ref{l4.4} and \eqref{eg}-\eqref{shuoming}, one has 
\begin{equation}\label{sec}
\begin{split}
&|r^{\frac{m}{2}}u_{rr}|_2\\
\le& C\big(|r^{\frac{m}{2}}u_t|_2+|r^{\frac{m}{2}} uu_r|_2+|r^{\frac{m}{2}} \phi_r|_2+|r^{\frac{m}{2}} \psi u_r|_2+|r^{\frac{m-2}{2}}u_r|_2+|r^{\frac{m-4}{2}}u|_2\big)\\
\le &C^a\big(|r^{\frac{m}{2}}u_t|_2+|r^{\frac{m}{2}}u_r|_2|u|_\infty+|(r^m\rho)^{\frac12}|_2|v|_\infty|\rho|_\infty^{\gamma-\frac32}+|r^{\frac{m}{2}}u|_2|\rho|_\infty^{\gamma-1}\\
&+|r^{\frac{m}{2}}u_r|_2|\psi|_\infty+\|u\|_1\big)\le C^a(1+|r^{\frac{m}{2}}u_t|_2),
\end{split}
\end{equation}
which, along with \eqref{eg}, yields that
\begin{align*}
\int_0^t |r^{\frac{m}{2}}u_{rr}(s,\cdot)|_2^2{\rm{d}}s\le & C^a\int_0^t\big(1+|r^{\frac{m}{2}}u_t(s,\cdot)|_2^2\big){\rm{d}}s\le C^a,\\
\int_0^t|r^{\frac{m}{2}}u_r(s,\cdot)|_\infty{\rm{d}}s\le & C\int_0^t\|r^{\frac{m}{2}}u_r(s,\cdot)\|_1{\rm{d}}s\\
\le &  C^a\int_0^t (1+|r^{\frac{m}{2}}u_{rr}(s,\cdot)|_2^2){\rm{d}}s\le C^a.
\end{align*}

The proof of Lemma \ref{l4.6} is complete. 
\end{proof}

%\subsection{The $L^\infty$ estimate of $\psi$}
%First, we show that 
%\begin{lem}\label{app}
%\begin{equation}
%\psi\in L^\infty([0,T_*];L^\infty(\Omega^*))\quad  \text{for}\quad (t,r)\in[0,T_*]\times\Omega^*.
%\end{equation}
%\end{lem}
%
%\begin{proof}
%Define, for any $R>0$, $
%f_R(r)=f(\frac{r}{R})\ \psi^R=\psi f_R$, where $f$ is defined in Lemma \ref{cle}.  Then, according to $\eqref{e1.5}_1$, one has 
%\begin{equation}\label{edg}
%\psi^R_t+u\psi^R_r+\psi^R u_r+u_{rr} f_R+\big(\frac{mu}{r}\big)_r f_R=u\psi \partial_rf_R.
%\end{equation}
%Multiplying \eqref{edg} by $2\psi^R$ and integrating over $\Omega^*$, one can get
%\begin{equation}\label{baby}
%\begin{split}
%\frac{\text{d}}{\text{d}t}|\psi^R|_2\le &C^{a,q}\big(|u_r|_\infty|\psi^R|_2+|u|_{q^*}|\psi|_q+\|u\|_1+|u_{rr}|_2\big)\\
%\le& C^{a,q}(|u_r|_\infty|\psi^R|_2+1),
%\end{split}
%\end{equation}
%where $C^{a,q}>0$ is a generic constant depending on $C,q$, and is independent of $R$. Then applying  the  Gronwall inequality to \eqref{baby}, one arrives at
%$$|\psi^R|_2\le C^{a,q}T_*\exp(C^{a,q}T_*),$$
%which, along with the Fatou lemma  (see Lemma \ref{Fatou} in Appendix A), yields that 
%$$\psi\in L^\infty([0,T_*];L^2(\Omega^*)).$$
%Due to $\psi_r\in L^\infty([0,T_*];L^2(\Omega^*))$, one can get $\psi\in L^\infty([0,T_*];H^1(\Omega^*))$.
%Thus, the desired conclusion can be obtained from Sobolev embedding theorem.
%\end{proof}
% 
%Thus, it follows from  Lemmas \ref{app}, \ref{l4.4}-\ref{l4.6} and \eqref{ev} that 
%\begin{equation}
%|\psi|_\infty=\frac{1}{2\alpha}|v-u|_\infty\le C(|v|_\infty+|u|_\infty)\le C.
%\end{equation}
%
\subsection{The lower order estimates of $\psi$}
Now we show  the estimates on $|r^{\frac{m-2}{2}}\psi|_2$ and  $|r^{\frac{m}{q}}\psi|_q$ for some $q\in(3,6]$.

%We consider the  estimate of $|r^{\frac{m}{2}}u_{rr}|_2$ in this section. For this purpose,  we first  give the estimates for $|\psi|_q$ and $|r^{\frac{m}{q}}\psi|_q$ for some $q\in(3,6)$.
 
 \begin{lem}\label{l4.7}
 Assume $\gamma >\frac32$ additionally. Then for any $T>0$, it holds that
 \begin{equation*}\label{e4.34}
|r^{\frac{m-2}{2}}\psi(t,\cdot)|_{2}+|r^{\frac{m}{q}}\psi(t,\cdot)|_q\leq C^a \quad {\rm{for}}\ \  0\leq t\leq T.
\end{equation*}
 \end{lem}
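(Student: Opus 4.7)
The approach is to derive both bounds via weighted energy estimates applied directly to the transport equation $\eqref{e2.2}_3$ for $\psi$, using the bounds on $u$ already obtained in Lemmas \ref{l4.5} and \ref{l4.6}. The crucial structural fact is that, since the spatial domain is $r\ge a>0$, any negative power of $r$ is bounded by a constant depending on $a$, so one can always replace $r^{(m-2)/2}$ or $r^{m/q}$ by $a$-weighted multiples of $r^{m/2}$, matching the weight appearing in the pre-established estimates for $u$.

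First I would prove the weighted $L^2$ bound by multiplying $\eqref{e2.2}_3$ by $r^{m-2}\psi$ and integrating over $I_a$. The convection term $\int r^{m-2}\psi(\psi u)_r\,{\rm d}r$ produces, after integration by parts (using $u(t,a)=0$ and the decay of $\psi$ at infinity), contributions of the type $\int r^{m-3}u\psi^2\,{\rm d}r$ and $\int r^{m-2}u_r\psi^2\,{\rm d}r$, both dominated by $C^a(|u_r|_\infty + |u|_\infty)|r^{(m-2)/2}\psi|_2^2$. For the linear sources $\int r^{m-2}\psi[u_{rr} + m(r^{-1}u)_r]\,{\rm d}r$, I would use Cauchy--Schwarz together with $r^{(m-2)/2}\le a^{-1}r^{m/2}$ to reduce all integrals to inner products of $|r^{(m-2)/2}\psi|_2$ with $|r^{m/2}u_{rr}|_2$, $|r^{m/2}u_r|_2$, and $|r^{m/2}u|_2$. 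Young's inequality followed by Gronwall's lemma then closes the estimate, since $|u_r|_\infty\in L^1_t$, $|r^{m/2}u_{rr}|_2\in L^2_t$, and $|u|_\infty, |r^{m/2}(u,u_r)|_2\in L^\infty_t$ are all bounded by $C^a$ from the earlier lemmas.

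Next I would prove the weighted $L^q$ bound by multiplying $\eqref{e2.2}_3$ by $qr^m|\psi|^{q-2}\psi$ and integrating over $I_a$. Integration by parts on the convection term yields $-m\int r^{m-1}u|\psi|^q\,{\rm d}r + (q-1)\int r^m u_r|\psi|^q\,{\rm d}r$, both absorbed into $C^a(|u|_\infty + |u_r|_\infty)|r^{m/q}\psi|_q^q$. The source terms, handled by H\"older's inequality and $r\ge a$, give contributions bounded by $C|r^{m/q}\psi|_q^{q-1}\bigl(|r^{m/q}u_{rr}|_q + C^a|r^{m/q}u_r|_q + C^a|r^{m/q}u|_q\bigr)$. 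The factors $|r^{m/q}u_r|_q$ and $|r^{m/q}u|_q$ follow by the 1D Sobolev embedding $H^1\hookrightarrow L^q$ ($q\in(3,6]$) applied to $r^{m/2}u_r$ and $r^{m/2}u$, which are controlled by the $H^1$ bounds already in Lemmas \ref{l4.5}--\ref{l4.6}.

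The principal obstacle will be to control $|r^{m/q}u_{rr}|_q$ in a time-integrable way. My plan is to use the momentum equation $\eqref{e2.2}_2$ to write $2\alpha u_{rr} = u_t + uu_r + \phi_r - 2\alpha\psi u_r - 2\alpha m(r^{-1}u)_r$, and then estimate each piece in the weighted $L^q$ norm separately: the terms $uu_r$, $\phi_r$ and $\psi u_r$ are controlled by the $L^\infty$ bounds on $u$, $\psi$, $\phi$ combined with Sobolev embedding applied to $u_r$; the term $(r^{-1}u)_r$ is handled using $r\ge a$; and the piece $u_t$ contributes the $L^2_t$ driving data from Lemma \ref{l4.6}. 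A final application of Young's inequality $|r^{m/q}\psi|_q^{q-1}G\le \frac{q-1}{q}|r^{m/q}\psi|_q^q + \frac{1}{q}G^q$ together with Gronwall's inequality and the initial bound $|r^{m/q}\psi_0|_q\le C_0$ from \eqref{etm} then yields the desired estimate.
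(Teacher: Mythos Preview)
Your weighted $L^2$ argument is sound: the source $u_{rr}$ pairs against $r^{(m-2)/2}\psi$ by Cauchy--Schwarz, and $|r^{(m-2)/2}u_{rr}|_2\le a^{-1}|r^{m/2}u_{rr}|_2$ lies in $L^2_t$ by Lemma~\ref{l4.6}, so Gronwall closes.

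The $L^q$ argument, however, has a gap precisely at the step you flag as the principal obstacle. Your H\"older split produces $|r^{m/q}u_{rr}|_q$, and via the momentum equation this reduces to controlling $|r^{m/q}u_t|_q$. But Lemma~\ref{l4.6} gives only $r^{m/2}u_t\in L^2_tL^2_r$; passing to $L^q_r$ with $q>2$ on the unbounded interval $I_a$ would require either $L^\infty_r$ or $H^1_r$ information on $u_t$, i.e.\ a bound on $u_{tr}$. That is not available until Lemma~\ref{l4.8}, which in the paper's logical order comes \emph{after} the present lemma. So the ``$L^2_t$ driving data'' you invoke does not furnish the $L^q_r$ norm you need.

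A repair staying within your framework: first run the same energy argument with weight $r^{m/2}$ (not $r^{(m-2)/2}$) to obtain $|r^{m/2}\psi|_2\le C^a$, and then in the $L^q$ step exploit $|\psi|_\infty\le C^a$ from Lemma~\ref{l4.6} to split instead as
\[
\Bigl|\int r^m|\psi|^{q-2}\psi\,u_{rr}\,{\rm d}r\Bigr|\le |\psi|_\infty^{\,q-2}\,|r^{m/2}\psi|_2\,|r^{m/2}u_{rr}|_2\le C^a|r^{m/2}u_{rr}|_2,
\]
which is $L^2_t$ and feeds Gronwall without touching $u_t$ in $L^q_r$.

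The paper bypasses this difficulty altogether by working not with $\psi$ but with the effective velocity $v=u+2\alpha\psi$, whose damped transport equation \eqref{e-1.16mm} contains \emph{no} second-order derivative of $u$ --- only the zero-order term $\tfrac{A\gamma}{2\alpha}\rho^{\gamma-1}(v-u)$. Multiplying by $r^m|v|^{q-2}v$ and integrating yields $|r^{m/q}v|_q\le C^a$ directly via Gronwall, with the only inputs being $|u_r|_\infty,|u|_\infty\in L^1_t$ and $|r^{m/q}u|_q\in L^\infty_t$ from Lemmas~\ref{l4.5}--\ref{l4.6}; then $\psi=(v-u)/(2\alpha)$ gives the result. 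This is why the effective velocity was introduced: it removes the $u_{rr}$ source from the estimate entirely.
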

 
 \begin{proof}
First,  multiplying \eqref{e-1.16mm} by $r^m|v|^{q-2}v$ and integrating the resulting equation over $I_a$, it follows from Lemma \ref{l4.3} and $\eqref{e1.5}_4$-$\eqref{e1.5}_5$ that
 \begin{equation*}
 \begin{split}
\frac{\text{d}}{\text{d}t}|r^{\frac{m}{q}}v|_q\leq &C\big((|u_r|_{\infty}+|r^{-1}|_\infty|u|_\infty)|r^{\frac{m}{q}}v|_{q}+|\rho|^{\gamma-1}_{\infty}|r^{\frac{m}{q}}v|_{q}+|\rho|^{\gamma-1}_{\infty}|r^{\frac{m}{q}}u|_{q}\big)\\
\le& C^a(1+|u_r|_{\infty}+|u|_\infty )|r^{\frac{m}{q}}v|_{q}+C^a,
\end{split}
 \end{equation*}
which, along with Lemmas \ref{l4.5}-\ref{l4.6} and the Gronwall inequality,  yields that
\begin{equation}\label{e4.35}
|r^{\frac{m}{q}}v(t,\cdot)|_q\leq C^a \quad {\rm{for}}\ \ 0\leq t\leq T.
\end{equation}
It thus follows from the definitions of $(v,\psi)$ in \eqref{evmm}, \eqref{tr}, \eqref{e4.35} and Lemmas  \ref{l4.5}-\ref{l4.6} that 
\begin{equation*}
%\begin{split}
|r^{\frac{m}{q}}\psi(t,\cdot)|_{q}\leq C\big(|r^{\frac{m}{q}}v(t,\cdot)|_{q}+|r^{\frac{m}{q}}u(t,\cdot)|_{q}\big)\leq C^a\quad {\rm{for}}\ \ 0\leq t\leq T.
%|\psi(t,\cdot)|_{4}\leq C\left(|v(t,\cdot)|_{4}+|u(t,\cdot)|_{4}\right)\leq C\quad {\rm{for}}&\ \ 0\leq t\leq T.
%\end{split}
\end{equation*}
%which, along with \eqref{shuoming},
%$$|\psi(t,\cdot)|_{\infty}\leq C\left(|v(t,\cdot)|_{\infty}+|u(t,\cdot)|_{\infty}\right)\leq C^a\quad {\rm{for}}\ \ 0\leq t\leq T,$$
%yields that for any $q\in(3,\infty]$,
%$$|\psi(t,\cdot)|_q\le C^a \quad {\rm{for}}\ \ 0\leq t\leq T.$$

Second, repeating the above procedure again, one can similarly get
$$|r^{\frac{m-2}{2}}\psi|_2\le C^a|r^{\frac{m}{2}}\psi|_2\le C^a\big(|r^{\frac{m}{2}}v(t,\cdot)|_{2}+|r^{\frac{m}{2}}u(t,\cdot)|_{2}\big)\leq C^a\quad {\rm{for}}\ \ 0\leq t\leq T.$$

The proof of Lemma \ref{l4.7} is complete. 
 \end{proof}

\subsection{The  second  order  estimates of $r^{\frac{m}{2}}u$}
Now we show the estimate of $|r^{\frac{m}{2}}u_{rr}|_2$.
\begin{lem}\label{l4.8}
 Assume $\gamma >\frac32$ additionally. Then for any $T>0$,  it holds that 
 \begin{equation*}
\big(|r^{\frac{m}{2}}u_t|^2_2+|u_{rr}|_2^2+|r^{\frac{m}{2}} u_{rr}|_2^2\big)(t,\cdot)+\int_0^t \big( |r^{\frac{m}{2}}u_{tr}|_2^2+|r^{\frac{m-2}{2}}u_t|_2^2\big)(s,\cdot) {\rm{d}}s  \le C^a 
\end{equation*}
for $0\leq t\leq T.$
\end{lem}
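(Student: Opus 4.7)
My strategy is to run a standard time-derivative energy estimate on $\eqref{e2.2}_2$ and then recover the spatial second-order bound by treating the momentum equation as an elliptic relation for $u_{rr}$, exactly as in the derivation of $\eqref{sec}$. Concretely, I would differentiate $\eqref{e2.2}_2$ in $t$ to obtain
\begin{equation*}
u_{tt}+(uu_r)_t+\phi_{tr}-2\alpha u_{trr}=2\alpha(\psi u_r)_t+2\alpha m(r^{-1}u_t)_r,
\end{equation*}
multiply by $r^m u_t$, integrate over $I_a$, and integrate by parts in the dissipative term. The boundary contribution at $r=a$ vanishes because of $u|_{r=a}=0$ (hence $u_t|_{r=a}=0$), and decay at infinity kills the other boundary term. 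This yields the identity
\begin{equation*}
\tfrac{1}{2}\tfrac{d}{dt}|r^{m/2}u_t|_2^2+2\alpha|r^{m/2}u_{tr}|_2^2+2\alpha m|r^{(m-2)/2}u_t|_2^2=\sum_i \mathrm{K}_i,
\end{equation*}
where the right-hand side collects the nonlinear and coupling contributions.

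Next, I would replace $\phi_t$ and $\psi_t$ by spatial quantities using $\eqref{e2.2}_1$ and $\eqref{e2.2}_3$. After these substitutions, every $\mathrm{K}_i$ is a product involving factors from the list $\{u,u_r,u_{rr},\phi,\phi_r,\psi,v\}$ together with at most one factor among $\{u_t,u_{tr}\}$. Each of these factors has a uniform bound supplied by the previous lemmas: $|\rho|_\infty$, $|v|_\infty$, $|\psi|_\infty$ (Lemmas \ref{l4.3}, \ref{l4.4}, \ref{l4.6}), $|r^{m/2}u|_2$, $|r^{m/2}u_r|_2$, $|r^{m/2}u_{rr}|_2$ (Lemmas \ref{l4.5}, \ref{l4.6}), and $|r^{m/q}\psi|_q$ (Lemma \ref{l4.7}). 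Using H\"older's inequality, Sobolev embedding on $I_a$, and Young's inequality, every $\mathrm{K}_i$ can be absorbed as $\tfrac{\alpha}{8}|r^{m/2}u_{tr}|_2^2$ plus a term of the form $C^a(1+F(t))|r^{m/2}u_t|_2^2+C^a G(t)$, where $F,G\in L^1([0,T])$ by the previously established bounds. The critical step is controlling the two terms coming from $(\psi u_r)_t$: the piece $\psi u_{tr}$ is handled by the $L^\infty$ bound $|\psi|_\infty\le C^a$ from Lemma \ref{l4.6}, which allows $\int r^m\psi u_{tr} u_t\,dr$ to be absorbed into the dissipation; the piece $\psi_t u_r$ uses $\eqref{e2.2}_3$ to write $\psi_t=-(\psi u)_r-u_{rr}-m(r^{-1}u)_r$ and is then controlled by the bounds on $u_{rr}$ and $\psi\in L^q\cap L^\infty$. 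The initial value $|r^{m/2}u_t(0,\cdot)|_2$ is bounded by $C_0$ directly from the momentum equation at $t=0$ and the initial regularity \eqref{etm}. Gronwall's inequality then gives
\begin{equation*}
|r^{m/2}u_t(t,\cdot)|_2^2+\int_0^t\bigl(|r^{m/2}u_{tr}|_2^2+|r^{(m-2)/2}u_t|_2^2\bigr)\,ds\le C^a.
\end{equation*}

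Finally, to obtain the pointwise-in-time bounds on $|r^{m/2}u_{rr}|_2$ and $|u_{rr}|_2$, I would rewrite $\eqref{e2.2}_2$ as
\begin{equation*}
2\alpha u_{rr}=-u_t-uu_r-\phi_r+2\alpha\psi u_r+2\alpha m(r^{-1}u)_r-2\alpha m r^{-2}u,
\end{equation*}
multiply by $r^{m/2}$, and estimate in $L^2$ exactly as in $\eqref{sec}$; every term on the right is now pointwise-in-time bounded by the newly obtained $|r^{m/2}u_t|_2$ together with Lemmas \ref{l4.3}--\ref{l4.7}. The bound $|u_{rr}|_2\le C^a$ follows from the same relation after dropping the weight (the weight $r^{m/2}$ is bounded below by $a^{m/2}>0$, so $|u_{rr}|_2\le a^{-m/2}|r^{m/2}u_{rr}|_2$). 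The main obstacle throughout is the coupling between the top-order term $u_{tr}$ and the density-dependent factor $\psi$; this is precisely where the $L^\infty$ control of $\psi$ established in Lemma \ref{l4.6} (which itself rests on the effective-velocity bound Lemma \ref{l4.4} and on $\gamma>\tfrac{3}{2}$) is indispensable.
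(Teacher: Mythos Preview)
Your plan matches the paper's proof: differentiate $\eqref{e2.2}_2$ in $t$, test against $r^m u_t$, substitute $\psi_t$ via $\eqref{e2.2}_3$, close by Gronwall, and then recover $|r^{m/2}u_{rr}|_2$ from the elliptic relation \eqref{sec}. Two technical points deserve care when you write the details. First, after substituting $\psi_t=-(u\psi)_r-u_{rr}-m(r^{-1}u)_r$, the factor $(u\psi)_r$ contains $u\psi_r$, and no bound on $\psi_r$ is available at this stage (that is Lemma~\ref{l4.9}); the paper avoids this by integrating $\int r^m(u\psi)_r u_r u_t\,dr$ by parts, shifting the derivative onto $r^m u_r u_t$ so that only $|\psi|_\infty$ and $|u|_\infty$ enter. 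Second, your listing of $|r^{m/2}u_{rr}|_2$ among the ``uniform bounds'' from Lemmas~\ref{l4.5}--\ref{l4.6} is imprecise: Lemma~\ref{l4.6} gives only $\int_0^t|r^{m/2}u_{rr}|_2^2\,ds\le C^a$. Hence the $u_{rr}$-terms must either be routed into your time-integrable $G$, or (as the paper does) replaced via the pointwise relation \eqref{uxxl1}, $|r^{m/2}u_{rr}|_2\le C^a(1+|r^{m/2}u_t|_2)$, which makes the Gronwall inequality mildly nonlinear but still closes because $\int_0^t|r^{m/2}u_t|_2^2\,ds\le C^a$ from Lemma~\ref{l4.6}.
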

 
\begin{proof} 
 First,  according to  Lemmas \ref{ms}-\ref{l4.3},  \ref{l4.4} and \ref{l4.5}, one gets
 \begin{equation*}
 \begin{split}
&|r^{\frac{m}{2}}\phi_r|_2= |A\gamma r^{\frac{m}{2}}\rho^{\gamma-1}\rho^{-1}\rho_r|_2\\
\leq &  C\big(|\rho|^{\gamma-\frac32}_{\infty} |(r^m\rho)^{\frac12}|_2|v|_\infty+|r^{\frac{m}{2}}u|_2|\rho|_\infty^{\gamma-1}\big)\leq C^a.
\end{split}
\end{equation*}
Then it follows from $\eqref{e2.2}_1$ that 
\begin{align*}
|r^{\frac{m}{2}}\phi_t|_2\le& C\big(|r^{\frac{m}{2}}u\phi_r|_2+|r^{\frac{m}{2}}\phi u_r|_2+|r^{\frac{m-2}{2}}\phi u|_2\big)\\
\le&C^a\big(|u|_\infty|r^{\frac{m}{2}}\phi_r|_2+|\phi|_\infty|r^{\frac{m}{2}}u_r|_2+|\phi|_\infty|u|_2\big)\le C^a.
\end{align*}

According to \eqref{sec}, one has
\begin{equation}\label{uxxl1}
|u_{rr}|_2+|r^{\frac{m}{2}}u_{rr}|_2\le C^a\big(1+|r^{\frac{m}{2}}u_t|_2\big).
\end{equation}

Second, differentiating $(\ref{e2.2})_2$ with respect to $t$, 
multiplying the resulting equation by $r^mu_t$ and integrating over $I_a$, one has 
\begin{equation}\label{util2}
\begin{split}
&\frac{1}{2} \frac{\text{d}}{\text{d}t}|r^{\frac{m}{2}}u_t|^2_2+2\alpha|r^{\frac{m}{2}}u_{tr}|^2_2+2\alpha m|r^{\frac{m-2}{2}}u_t|_2^2\\
=&-\int r^m\big((uu_r)_t+\phi_{tr}-2\alpha(\psi u_r)_t \big)u_t {\rm{d}}r \triangleq  \sum_{i=13}^{15}\text{J}_i.
\end{split}
\end{equation}

Using the equations $\eqref{e2.2}_1$, $\eqref{e2.2}_3$, H\"older's inequality, Young's inequality, Lemmas \ref{l4.3}, \ref{l4.5}-\ref{l4.7} and \eqref{uxxl1}, we estimate $\text{J}_i$ $(i=13,14,15)$ as follows:
 \begin{equation*}
\begin{split}
\text{J}_{13}=&-\int r^m(u u_r)_t u_t {\rm{d}}r
=- \int r^m( u u_{tr} u_t+u_t^2 u_r ){\rm{d}}r\\
\le & C\big( |u|_{\infty} |r^{\frac{m}{2}}u_{tr}|_2 |r^{\frac{m}{2}}u_t|_2+ |u_r|_{\infty} |r^{\frac{m}{2}}u_t|_2^2\big)\\
\le & C^a\big(1+\|u\|_2^2\big) |r^{\frac{m}{2}}u_t|_2^2+\frac{\alpha}{8}|r^{\frac{m}{2}}u_{tr}|^2_2\\
\le & C^a\big(1+|r^{\frac{m}{2}}u_{t}|_2^2\big) |r^{\frac{m}{2}}u_t|_2^2+\frac{\alpha}{8}|r^{\frac{m}{2}}u_{tr}|^2_2,\\
\text{J}_{14}=& -\int r^m\phi_{tr}  u_t{\rm{d}}r = \int \phi_t (mr^{m-1}u_t+r^mu_{tr}){\rm{d}}r \\
\le& C (|r^{\frac{m}{2}}\phi_t|_2 |r^{\frac{m}{2}}u_{t}|_2|r^{-1}|_\infty+|r^{\frac{m}{2}}u_{tr}|_2|r^{\frac{m}{2}}\phi_t|_2)\\
\le& C^a(1+|r^{\frac{m}{2}}u_t|_2^2)+\frac{\alpha}{8}|r^{\frac{m}{2}}u_{tr}|^2_2,\\
\text{J}_{15}=&2\alpha\int r^m (\psi u_r)_t u_t {\rm{d}}r \\
=&2\alpha\int r^m\Big(\psi u_{tr} -\big((u \psi)_r+u_{rr}+(mr^{-1}u)_r\big)u_r\Big)u_t{\rm{d}}r\\
\leq &C^a\Big(|\psi|_\infty|r^{\frac{m}{2}}u_{tr}|_2|r^{\frac{m}{2}}u_t|_{2}+(1+|\psi|_\infty)|u|_\infty\big(|r^{\frac{m}{2}}u_r|_2|r^{\frac{m}{2}}u_t|_2\\
&+|r^{\frac{m}{2}}u_{rr}|_2|r^{\frac{m}{2}}u_t|_{2}+ |r^{\frac{m}{2}}u_r|_2|r^{\frac{m}{2}}u_{tr}|_2\big)+|r^{\frac{m}{2}}u_t|_2|u_r|_\infty|r^{\frac{m}{2}}u_{rr}|_2\Big)\\
\leq &C^a\big(1+ |r^{\frac{m}{2}}u_t|_2\big)|r^{\frac{m}{2}}u_t|_2^2+C^a+\frac{\alpha}{8}|r^{\frac{m}{2}}u_{tr}|^2_2.
%\text{J}_{16}=&2\alpha m\int r^{m-1} u_{tr} u_t{\rm{d}}r\le C^a|r^{\frac{m}{2}}u_t|_2^2+\frac{\alpha}{8}|r^{\frac{m}{2}}u_{tr}|^2_2.
%\text{J}_{17}=&-2\alpha m\int r^{m-2}u_t^2{\rm{d}}r\le C^a|r^{\frac{m}{2}}u_t|_2^2.
\end{split}
\end{equation*}

Substituting the estimates for $\text{J}_i$ $(i=13,14,15)$ into \eqref{util2}, one can obtain 
\begin{equation}\label{lg}
 \frac{\text{d}}{\text{d}t}|r^{\frac{m}{2}}u_t|^2_2+|r^{\frac{m}{2}}u_{tr}|^2_2+|r^{\frac{m-2}{2}}u_t|_2^2\le C^a \big(1+(1+|r^{\frac{m}{2}}u_{t}|_2^2 )| r^{\frac{m}{2}}u_t|^2_2\big).
\end{equation}
Integrating \eqref{lg} over $(\tau, t)(\tau\in (0,t))$, one has 
\begin{equation}\label{lg1}
\begin{split}
&|r^{\frac{m}{2}}u_t(t,\cdot)|_2^2+\int_\tau^t\big( |r^{\frac{m}{2}}u_{tr}|_2^2+|r^{\frac{m-2}{2}}u_t|_2^2\big)(s,\cdot) {\rm{d}}s\\
\leq& |r^{\frac{m}{2}}u_t(\tau,\cdot)|_2^2+C^a\int_\tau^t\big(1+\big(1+|r^{\frac{m}{2}}u_{t}|_2^2 \big)| r^{\frac{m}{2}} u_t|^2_2\big)(s,\cdot) {\rm{d}}s.
\end{split}
\end{equation}

It follows from $\eqref{e2.2}_2$ that 
\begin{equation}
\begin{split}
&|r^{\frac{m}{2}}u_t(\tau,\cdot)|_2\\
\leq & C^a\big( |u|_\infty|r^{\frac{m}{2}}u_r|_2+|r^{\frac{m}{2}}\phi_r|_2+|r^{\frac{m}{2}}u_{rr}|_2+|\psi|_\infty|r^{\frac{m}{2}}u_r|_2+\|u\|_1\big)(\tau),
\end{split}
\end{equation}
which, together with the time continuity of $(\rho,u)$ and \eqref{etm},  yields that
\begin{equation*}
\begin{split}
&\lim\sup_{\tau\to 0} |r^{\frac{m}{2}}u_t(\tau,\cdot)|_2\\
\leq & C^a\big(|u_0|_\infty\|r^{\frac{m}{2}} u_0\|_1+|r^{\frac{m}{2}}(\phi_0)_r|_2+|\psi_0|_\infty|r^{\frac{m}{2}}(u_0)_r|_2+\|r^{\frac{m}{2}}u_0\|_2\big)\leq C_0^a.
\end{split}
\end{equation*}
Letting $\tau\to 0$ in \eqref{lg1} and using the Gronwall inequality, one can obtain
\[
|r^{\frac{m}{2}}u_t(t,\cdot)|^2_2+\int_0^t \big( |r^{\frac{m}{2}}u_{tr}|_2^2+|r^{\frac{m-2}{2}}u_t|_2^2\big)(s,\cdot) \text{d}s \le C^a \quad {\rm{for}}\ \ 0\leq t\leq T.
\]
It thus follows from  \eqref{uxxl1} that
\[
|u_{rr}(t,\cdot)|_2+ |r^{\frac{m}{2}}u_{rr}(t,\cdot)|_2 \le C^a\big(1+|r^{\frac{m}{2}}u_t(t,\cdot)|_2\big)\le C^a  \quad {\rm{for}}\ \ 0\leq t\leq T.
\]

The proof of Lemma \ref{l4.8} is complete.
\end{proof}

\subsection{The  high  order  estimates of $r^{\frac{m}{2}}\phi$ and $r^{\frac{m}{2}} \psi$}

The following lemma provides  the high order estimates of $r^{\frac{m}{2}}\phi$ and $r^{\frac{m}{2}} \psi$.

\begin{lem}\label{l4.9}  
 Assume $\gamma >\frac32$ additionally. Then for any $T>0$,  it holds that 
\begin{equation*}
\begin{split}
&|r^{\frac{m}{2}}\phi_{rr}(t,\cdot)|^2_2+|r^{\frac{m}{2}}\phi_{tr}(t,\cdot)|^2_2+|r^{\frac{m}{2}}\psi_r(t,\cdot)|_2^2+|r^{\frac{m}{2}}\psi_t(t,\cdot)|^2_{2}\\
&+\int_{0}^{t}\big(|u_{rrr}|^2_2+|r^{\frac{m}{2}}u_{rrr}|_2^2+|r^{\frac{m}{2}}\phi_{tt}|^2_{2}\big)(s,\cdot){\rm{d}}s\leq C^a\quad {\rm{for}}\ \ 0\leq t\leq T.
\end{split}
\end{equation*}

 \end{lem}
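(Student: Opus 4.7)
The plan is to obtain all six bounds through a single coupled energy argument that links a transport equation for $\phi_{rr}$, a transport equation for $\psi_r$, and an elliptic representation of $u_{rrr}$ in terms of these two quantities together with $u_{tr}$. Writing $\tilde{D}u=u_r+mr^{-1}u$, I would first apply $\partial_r^2$ to $\eqref{e2.2}_1$ to get
\begin{equation*}
(\phi_{rr})_t+u(\phi_{rr})_r+[2u_r+(\gamma-1)\tilde{D}u]\phi_{rr}+u_{rr}\phi_r+2(\gamma-1)\phi_r(\tilde{D}u)_r+(\gamma-1)\phi(\tilde{D}u)_{rr}=0,
\end{equation*}
and analogously differentiate $\eqref{e2.2}_3$ once in $r$ to derive
\begin{equation*}
(\psi_r)_t+u(\psi_r)_r+2u_r\psi_r+\psi u_{rr}+u_{rrr}+m(r^{-1}u)_{rr}=0.
\end{equation*}
Multiplying each by the appropriate weighted test function ($r^m\phi_{rr}$ and $r^m\psi_r$, respectively) and integrating by parts in $r$, the boundary contribution at $r=a$ vanishes because $u(t,a)=0$, and the contribution at infinity vanishes by the far-field decay encoded in the regularities of Lemma 3.1.

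Using H\"older, Young, the $L^\infty$ bounds on $\rho,u,\psi,v$ from Lemmas 3.3--3.8, and the already-closed estimates on $u_r,u_{rr},\phi,\phi_r,\psi$, every term on the right reduces to an acceptable form except those containing the highest derivative $u_{rrr}$ (appearing through $(\tilde{D}u)_{rr}$ in the $\phi_{rr}$ equation and directly in the $\psi_r$ equation). This is precisely where I would invoke elliptic regularity: differentiating $\eqref{e2.2}_2$ once in $r$ and solving for $u_{rrr}$ yields
\begin{equation*}
2\alpha u_{rrr}=u_{tr}+u_r^2+uu_{rr}+\phi_{rr}-2\alpha\psi_r u_r-2\alpha\psi u_{rr}-2\alpha m(r^{-1}u)_{rr},
\end{equation*}
from which Lemmas 3.7--3.8 give
\begin{equation*}
|u_{rrr}|_2+|r^{\frac{m}{2}}u_{rrr}|_2\le C^a\bigl(|r^{\frac{m}{2}}u_{tr}|_2+|r^{\frac{m}{2}}\phi_{rr}|_2+|r^{\frac{m}{2}}\psi_r|_2+1\bigr).
\end{equation*}
Inserting this back into the two transport estimates, summing, and using Young's inequality to absorb $|r^{m/2}u_{rrr}|_2^2$ on the left, one obtains
\begin{equation*}
\frac{d}{dt}\bigl(|r^{\frac{m}{2}}\phi_{rr}|_2^2+|r^{\frac{m}{2}}\psi_r|_2^2\bigr)+|r^{\frac{m}{2}}u_{rrr}|_2^2\le C^a\bigl(1+|r^{\frac{m}{2}}u_{tr}|_2^2\bigr)\bigl(1+|r^{\frac{m}{2}}\phi_{rr}|_2^2+|r^{\frac{m}{2}}\psi_r|_2^2\bigr),
\end{equation*}
and Gronwall combined with $\int_0^t|r^{m/2}u_{tr}|_2^2\,ds\le C^a$ from Lemma 3.8 closes the bound; the unweighted piece $|u_{rrr}|_2$ is then handled by repeating the elliptic formula without the weight.

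The remaining time-derivative quantities are algebraic consequences of the equations. From $\eqref{e2.2}_1$, $\phi_{tr}$ is expressed through $u_r\phi_r,u\phi_{rr},\phi_r\tilde Du,\phi(\tilde Du)_r$, all $L^2_r$-controlled by what precedes; from $\eqref{e2.2}_3$, $\psi_t$ is expressed through $\psi_r u,\psi u_r,u_{rr},(r^{-1}u)_r$. For $\phi_{tt}$, I would differentiate $\phi_t=-u\phi_r-(\gamma-1)\phi\tilde Du$ in $t$ and substitute $u_t$ from $\eqref{e2.2}_2$ and $\phi_t$ from $\eqref{e2.2}_1$; the resulting $L^2_tL^2_r$ bound requires only the already-established $\int_0^t|r^{m/2}u_{tr}|_2^2\,ds$, the $L^2_tH^2_r$ control of $u$, and the pointwise-in-time bounds on $\phi,\phi_r,u,u_r$ that we have just closed. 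The main obstacle I anticipate is keeping the coupling between $\phi_{rr}$, $\psi_r$, and $u_{rrr}$ tight enough that no uncontrolled derivative like $\psi_{rr}$ or $u_{tt}$ is generated; this is resolved by always eliminating $u_{rrr}$ through the elliptic identity rather than evolving it, by differentiating $\psi$ only once in space, and by invoking Lemma 3.8 to provide the integrated control of $u_{tr}$ that feeds Gronwall.
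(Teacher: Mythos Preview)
Your proposal is correct and follows essentially the same route as the paper: derive transport inequalities for $|r^{m/2}\phi_{rr}|_2^2$ and $|r^{m/2}\psi_r|_2^2$ from $\eqref{e2.2}_1$ and $\eqref{e2.2}_3$, close them via the elliptic representation of $u_{rrr}$ obtained from $\partial_r\eqref{e2.2}_2$ together with the time-integrated bound on $|r^{m/2}u_{tr}|_2$ from Lemma~\ref{l4.8}, and then read off $\phi_{tr},\psi_t,\phi_{tt}$ algebraically from the equations. The only cosmetic difference is that the paper substitutes the elliptic bound for $u_{rrr}$ directly into the right-hand side and applies Gronwall, recovering $\int_0^t|r^{m/2}u_{rrr}|_2^2\,ds$ afterward, whereas you phrase this as ``absorbing $|r^{m/2}u_{rrr}|_2^2$ on the left''; since the transport equations carry no genuine dissipation in $u_{rrr}$, what you are really doing is the same substitution step, and the argument goes through unchanged.
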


 \begin{proof} 
 The proof will be divided into two steps.
 
\textbf{Step 1:} the estimate on $r^{\frac{m}{2}}u_{rrr}$.
First, it follows from $\eqref{e2.2}_2$, H\"older's inequality, Young's inequality,  Lemmas \ref{l4.5}-\ref{l4.6} and  \ref{l4.8} that 
 \begin{equation}\label{uxxxl1}
\begin{split}
|r^{\frac{m}{2}}u_{rrr}|_2\le& C\big(|r^{\frac{m}{2}}u_{tr}|_{2}+|r^{\frac{m}{2}}(u  u_r)_r|_2+|r^{\frac{m}{2}}\phi_{rr}|_2+|r^{\frac{m}{2}}(\psi u_r)_r|_2\\
&+|r^{\frac{m}{2}}(r^{-1}u_r)_r|_2+|r^{\frac{m}{2}}(r^{-2}u)_r|_2\big)\\
\le & C^a\big(|r^{\frac{m}{2}}u_{tr}|_2+|u|_{\infty} | r^{\frac{m}{2}}u_{rr}|_{2}+| u_r|_{\infty}| r^{\frac{m}{2}}u_r|_2+|r^{\frac{m}{2}}\phi_{rr}|_2\\
&+|r^{\frac{m}{2}}\psi_r|_{2}|u_r|_{\infty}+|r^{\frac{m}{2}}u_{rr}|_2|\psi|_\infty+\|r^{\frac{m}{2}}u\|_2\big)\\
\le & C^a\big(1+|r^{\frac{m}{2}}u_{tr}|_2+|r^{\frac{m}{2}} \psi_r|_2+|r^{\frac{m}{2}}\phi_{rr}|_2\big).
\end{split}
\end{equation}

Second,  considering   the last two terms  on the right-hand side of \eqref{uxxxl1}, 
for $|r^{\frac{m}{2}}\psi_r|_2$, 
%according to the definition of $\psi$ and the equation $(\ref{e2.2})_1$, one has 
%\begin{equation}\label{amz}
%\psi_t+(\psi u)_r+u_{rr}+m(r^{-1}u)_r=0.
%\psi_{tr}+(\psi u)_{rr}+u_{rrr}+(2r^{-1}u)_{rr}=0.
%\end{equation}
differentiating $\eqref{e2.2}_3$ with respect to $r$, multiplying the resulting equation  by $r^m\psi_r$ and integrating over $I_a$, one can obtain 
 \begin{equation}\label{he1}
 \begin{split}
\frac{\text{d}}{\text{d}t}|r^{\frac{m}{2}} \psi_r|^2_2=&-2\int r^m\big((\psi u)_{rr}+u_{rrr}+m(r^{-1}u)_{rr}\big)\psi_r{\rm{d}}r\\
\le & C^a|r^{\frac{m}{2}}\psi_r|_2\big( \|u\|_2|r^{\frac{m}{2}}\psi_r|_2+|\psi|_\infty|r^{\frac{m}{2}}u_{rr}|_2+|r^{\frac{m}{2}}u_{rrr}|_2+\|r^{\frac{m}{2}}u\|_2 \big)\\
\le& C^a\big(  1+ |r^{\frac{m}{2}}\psi_r|_2^2 +|r^{\frac{m}{2}} u_{rrr}|_2^2   \big)\\
\le& C^a\big(1+|r^{\frac{m}{2}}\psi_r|_2^2+|r^{\frac{m}{2}}u_{tr}|_2^2+|r^{\frac{m}{2}}\phi_{rr}|_2^2\big),
\end{split}
\end{equation}
where one has used  \eqref{uxxxl1}, Lemmas \ref{l4.5}-\ref{l4.6} and  \ref{l4.8}.

For $|r^{\frac{m}{2}}\phi_{rr}|_2$. Applying $\partial_{r}^2$ to $\eqref{e2.2}_1$,  multiplying the resulting equation by $r^m\phi_{rr}$  and integrating over $I_a$, one has
\begin{equation}\label{phixxl1}
\begin{split}
\frac{\text{d}}{\text{d}t}|r^{\frac{m}{2}} \phi_{rr}|^2_2=&-2\int r^m\big((u\phi_r)_{rr}+(\gamma-1)(\phi u_r)_{rr}+m(\gamma-1)(r^{-1}\phi u)_{rr}\big)\phi_{rr}{\rm{d}}r\\
\le &C^a  \big(|u_r |_\infty |r^{\frac{m}{2}}\phi_{rr} |_2^2+|r^{\frac{m}{2}}u_{rr}|_2|r^{\frac{m}{2}}\phi_{rr}|_2|\phi_r|_\infty\\
&+|\phi|_\infty|r^{\frac{m}{2}}\phi_{rr}|_2(|r^{\frac{m}{2}}u_{rrr}|_2+|r^{\frac{m}{2}}u_{rr}|_2)+|u|_\infty|r^{\frac{m}{2}}\phi_{rr}|_2^2\\
&+|r^{\frac{m}{2}}\phi_{rr}|_2|r^{\frac{m}{2}}u_r|_\infty|\phi_r|_2+|r^{\frac{m}{2}}\phi_{rr}|_2\|\phi\|_1\|u\|_1\big) \\
\le& C^a\big(1+|r^{\frac{m}{2}}\phi_{rr}|_2^2+|r^{\frac{m}{2}}u_{rrr}|_2^2\big)\\
\le& C^a\big(1+|r^{\frac{m}{2}}\phi_{rr}|_2^2+|r^{\frac{m}{2}}u_{tr}|_2^2+|r^{\frac{m}{2}}\psi_r|_2^2\big),
\end{split}
\end{equation}
where one has used \eqref{uxxxl1} and Lemmas \ref{l4.5}-\ref{l4.6}, \ref{l4.8}.

Combining \eqref{he1}-\eqref{phixxl1}, one can obtain that 
 \begin{equation*}
\frac{\text{d}}{\text{d}t}\big(|r^{\frac{m}{2}} \psi_r|^2_2+| r^{\frac{m}{2}}\phi_{rr}|^2_2\big)
\le C^a\big(  1+|r^{\frac{m}{2}}u_{tr}|_2^2+ |r^{\frac{m}{2}}\psi_r|_2^2 +| r^{\frac{m}{2}}\phi_{rr}|_2^2 \big),
\end{equation*}
which, along with the Gronwall  inequality and Lemma \ref{l4.8}, yields that
\begin{equation}\label{chenxs}
|r^{\frac{m}{2}}\psi_r(t,\cdot)|_2+|r^{\frac{m}{2}}\phi_{rr}(t,\cdot)|_2\le C^a \quad {\rm{for}}\ \ 0\leq t\leq T.
\end{equation}
At last, it follows from \eqref{uxxxl1}, \eqref{chenxs} and Lemma \ref{l4.8} that
\begin{equation}
\begin{split}
&\int_{0}^{t}\big(|u_{rrr}|^2_2 + |r^{\frac{m}{2}}u_{rrr}|_2^2\big)(s,\cdot) \text{d} s\\
 \le& C^a \int_{0}^{t}\big(1+ |r^{\frac{m}{2}}\psi_r|_2^2+|r^{\frac{m}{2}}u_{tr}|_2^2
 +|r^{\frac{m}{2}} \phi_{rr}|_2^2 \big) (s,\cdot)\text{d} s\le C^a\quad {\rm{for}}\ \ 0\leq t\leq T.
\end{split}
\end{equation}

\textbf{Step 2:} estimates on time derivatives of the density. First, according to $\eqref{e2.2}_3$,  H\"older's  inequality, Lemmas \ref{l4.5}-\ref{l4.6}, \ref{l4.8} and \eqref{chenxs}, one has
\begin{equation*}
\begin{split}
|r^{\frac{m}{2}}\psi_t|_2\le& C\big(|r^{\frac{m}{2}}(\psi u)_r|_2+| r^{\frac{m}{2}}u_{rr}|_2+|r^{\frac{m}{2}}(r^{-1}u)_r|_2\big)\\
\le& C^a\big(|r^{\frac{m}{2}}u_r|_2|\psi|_\infty+|r^{\frac{m}{2}}\psi_r|_2|u|_\infty+\|r^{\frac{m}{2}}u\|_2\big)\le C^a.
\end{split}
\end{equation*}

Second, according to  $\eqref{e2.2}_1$, H\"older's  inequality, Lemmas \ref{l4.5}-\ref{l4.6}, \ref{l4.8} and \eqref{chenxs}, one obtains
\begin{equation*}
\begin{split}
|r^{\frac{m}{2}}\phi_{tr}|_2 \le & C\big(|r^{\frac{m}{2}}(u\phi_r)_{r}|_2+|r^{\frac{m}{2}}(\phi u_r)_r|_2+|r^{\frac{m}{2}}(r^{-1}\phi u)_r|_2\big)\\
 \le& C^a\big(| u|_{\infty}  |r^{\frac{m}{2}}\phi_{rr}|_2+| u_r|_{\infty} |r^{\frac{m}{2}}\phi_r|_2+|\phi|_{\infty} |r^{\frac{m}{2}}u_{rr}|_2\\
 &+\|r^{\frac{m}{2}}\phi\|_1\|u\|_1\big)\le C^a,\\
|r^{\frac{m}{2}}\phi_{tt}|_2\le& C(|r^{\frac{m}{2}}(u\phi_r)_t|_2+|r^{\frac{m}{2}}(\phi u_r)_t|_2+|r^{\frac{m-2}{2}}(\phi u)_t|_2\big)\\
\le & C^a\big(|r^{\frac{m}{2}}u_t|_2|\phi_r|_{\infty}+|u|_{\infty}|r^{\frac{m}{2}}\phi_{tr}|_2+|r^{\frac{m}{2}}\phi_t|_2| u_r|_{\infty}\\
&+|\phi|_{\infty}|r^{\frac{m}{2}} u_{tr}|_2+|r^{\frac{m}{2}}\phi_t|_2|u|_\infty+|\phi|_\infty|r^{\frac{m}{2}}u_t|_2\big)\\
\le & C^a\big(1+|r^{\frac{m}{2}}u_{tr}|_2\big), 
\end{split}
\end{equation*}
which, along with Lemma \ref{l4.8}, yields that
\[
 \int_{0}^{t}|r^{\frac{m}{2}}\phi_{tt}(s,\cdot)|^2_{2}\text{d}s \leq C^a\quad {\rm{for}}\ \ 0\leq t\le T.
\]

The proof of Lemma \ref{l4.9} is complete.
\end{proof}

\subsection{ Time-weighted energy estimates of  $r^{\frac{m}{2}}u$}
At last,  one gives  the time-weighted energy estimates of  $r^{\frac{m}{2}}u$.

\begin{lem}\label{l4.10}  
 Assume $\gamma >\frac32$ additionally. Then for any $T>0$,  it holds that 
\begin{equation*}
\begin{split}
&t|r^{\frac{m}{2}} u_{tr}(t,\cdot)|^2_2+\int_{0}^{t} s \big(|r^{\frac{m}{2}}u_{tt}|^2_2+ |r^{\frac{m}{2}}u_{trr}|_2^2 \big)(s,\cdot){\rm{d}}s\leq C^a\quad {\rm{for}}\ \ 0\leq t\leq T.
\end{split}
\end{equation*}
\end{lem}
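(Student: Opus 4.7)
The plan is to derive a differential inequality of the form
$$\frac{d}{dt}\bigl(|r^{m/2}u_{tr}|_2^2+m|r^{(m-2)/2}u_t|_2^2\bigr)+|r^{m/2}u_{tt}|_2^2\le C^a(1+|r^{m/2}u_{tr}|_2^2),$$
multiply by $t$, then pass to the limit $\tau\downarrow 0$ to remove the a priori uncontrolled quantity $|r^{m/2}u_{tr}(0)|_2^2$. First, I would differentiate $\eqref{e2.2}_2$ in $t$, multiply by $r^m u_{tt}$, and integrate over $I_a$. The key identity comes from integration by parts: combining $-2\alpha\int r^m u_{trr}u_{tt}\,dr$ and $-2\alpha m\int r^{m-1}u_{tr}u_{tt}\,dr$ yields $\alpha\frac{d}{dt}|r^{m/2}u_{tr}|_2^2$, while $2\alpha m\int r^{m-2}u_t u_{tt}\,dr=\alpha m\frac{d}{dt}|r^{(m-2)/2}u_t|_2^2$. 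Thus I obtain
\begin{equation*}
|r^{m/2}u_{tt}|_2^2+\alpha\frac{d}{dt}\bigl(|r^{m/2}u_{tr}|_2^2+m|r^{(m-2)/2}u_t|_2^2\bigr)=-\!\!\int\! r^m\bigl((uu_r)_t+\phi_{tr}-2\alpha(\psi u_r)_t\bigr)u_{tt}\,dr.
\end{equation*}

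Next, I would bound the right-hand side term-by-term using H\"older and Young inequalities together with the a priori bounds from Lemmas \ref{l4.3}--\ref{l4.9}. The $(uu_r)_t$ and $(\psi u_r)_t$ contributions reduce, after replacing $\psi_t$ via $\eqref{e2.2}_3$, to expressions controlled by $|u|_\infty,|u_r|_\infty,|\psi|_\infty,|r^{m/2}u_{tr}|_2,|r^{m/2}u_r|_2,|r^{m/2}u_{rr}|_2$, all of which are finite for $0\le t\le T$; the $\phi_{tr}$ term is handled by placing $|r^{m/2}\phi_{tr}|_2\le C^a$ from Lemma \ref{l4.9}. After absorbing a small fraction of $|r^{m/2}u_{tt}|_2^2$ into the left, this yields
$$\frac{d}{dt}\bigl(|r^{m/2}u_{tr}|_2^2+m|r^{(m-2)/2}u_t|_2^2\bigr)+|r^{m/2}u_{tt}|_2^2\le C^a\bigl(1+|r^{m/2}u_{tr}|_2^2\bigr).$$

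I would then multiply by $s$ and integrate on $(\tau,t)$, producing
$$t|r^{m/2}u_{tr}(t)|_2^2+\int_\tau^t s|r^{m/2}u_{tt}(s)|_2^2\,ds\le \tau|r^{m/2}u_{tr}(\tau)|_2^2+C^a\int_\tau^t\bigl(1+|r^{m/2}u_{tr}|_2^2\bigr)ds.$$
Since by Lemma \ref{l4.8} we have $r^{m/2}u_t\in L^2([0,T];D^1)$, Lemma \ref{bjr} (Appendix A) supplies a sequence $\tau_k\to 0$ with $\tau_k|r^{m/2}u_{tr}(\tau_k)|_2^2\to 0$. Letting $\tau=\tau_k\to0$ and invoking Gronwall (with $\int_0^t|r^{m/2}u_{tr}|_2^2\,ds\le C^a$ from Lemma \ref{l4.8}) produces the bound on $t|r^{m/2}u_{tr}(t)|_2^2$ and on $\int_0^t s|r^{m/2}u_{tt}|_2^2\,ds$. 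Finally, for $|r^{m/2}u_{trr}|_2$, I would solve the $t$-differentiated momentum equation algebraically for $u_{trr}$:
$$2\alpha u_{trr}=u_{tt}+(uu_r)_t+\phi_{tr}-2\alpha m r^{-1}u_{tr}+2\alpha m r^{-2}u_t-2\alpha(\psi u_r)_t,$$
estimate the RHS in $L^2(r^m\,dr)$ using all previously established bounds, and integrate $s$ times the square in time. The main obstacle will be the passage $\tau\downarrow 0$: since $|r^{m/2}u_{tr}|_2$ is not a priori finite at $t=0$, one must justify the subsequence argument carefully using the time regularity from Lemma \ref{l4.8}; once this is done, all other estimates are routine consequences of the uniform bounds already at hand.
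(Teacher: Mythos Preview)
Your proposal is correct and follows essentially the same argument as the paper: differentiate $\eqref{e2.2}_2$ in $t$, multiply by $r^m u_{tt}$, derive the differential inequality $\frac{d}{dt}|r^{m/2}u_{tr}|_2^2+|r^{m/2}u_{tt}|_2^2\le C^a(1+|r^{m/2}u_{tr}|_2^2)$, multiply by $s$, and use Lemma~\ref{bjr} with the $L^2_tD^1$ bound on $r^{m/2}u_t$ from Lemma~\ref{l4.8} to send $\tau\to 0$; then recover $|r^{m/2}u_{trr}|_2$ algebraically from the equation. The only cosmetic difference is that you absorb the $2\alpha m r^{-2}u_t$ contribution into the energy as $\alpha m\frac{d}{dt}|r^{(m-2)/2}u_t|_2^2$, whereas the paper leaves it on the right and estimates it directly via $|r^{m/2}u_t|_2\le C^a$; both are equivalent, and Gronwall is not actually needed at the end since $\int_0^t|r^{m/2}u_{tr}|_2^2\,ds\le C^a$ already.
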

\begin{proof}
First, differentiating $(\ref{e2.2})_2$ with respect to $t$, multiplying the resulting equation  by $ r^mu_{tt}$ and integrating over $I_a$, it follows from Lemmas \ref{l4.5}-\ref{l4.6} and  \ref{l4.8}-\ref{l4.9} that 
\begin{equation*}\label{etrq}
\begin{split}
\alpha\frac{\text{d}}{\text{d}t} |r^{\frac{m}{2}}u_{tr}|_2^2+ |r^{\frac{m}{2}}u_{tt}|_2^2
=&-\int r^m\big((u u_r)_t+\phi_{tr}-2\alpha(\psi u_r)_t+2\alpha m r^{-2} u_{t}\big)u_{tt}{\rm{d}}r\\
\le& C^a|r^{\frac{m}{2}}u_{tt}|_2\big(|r^{\frac{m}{2}}u_t|_2|u_r|_\infty+|r^{\frac{m}{2}}u_{tr}|_2|u|_\infty+|r^{\frac{m}{2}}\phi_{tr}|_2\\
&+|r^{\frac{m}{2}}\psi_t|_2|u_r|_\infty+|\psi|_\infty|r^{\frac{m}{2}}u_{tr}|_2+|r^{\frac{m}{2}}u_t|_2\big)\\
\le &C^a|r^{\frac{m}{2}}u_{tt}|_2\big(1+|r^{\frac{m}{2}}u_{tr}|_2\big),
\end{split}
\end{equation*}
which, along with Young's inequality, yields that 
\begin{equation}\label{etrq1}
\frac{\text{d}}{\text{d}t}|r^{\frac{m}{2}}u_{tr}|_2^2+ |r^{\frac{m}{2}}u_{tt}|_2^2\leq C^a\big(1+|r^{\frac{m}{2}}u_{tr}|_2^2\big).
\end{equation}

\iffalse
According to H\"older's inequality  and Lemmas \ref{l4.1}-\ref{l4.9}, one can obtain that  
\begin{equation*}
\begin{split}
\text{J}_{16}=&-\int r^m(u_tu_r+u u_{tr})u_{tt}\text{d}r\\
\le & C\big( |r^{\frac{m}{2}}u_{tt}|_2|r^{\frac{m}{2}}u_t|_\infty|u_r|_2+|r^{\frac{m}{2}}u_{tt}|_2|r^{\frac{m}{2}}u_{tr}|_2|u|_\infty \big)\\
\le&C^a|r^{\frac{m}{2}}u_{tt}|_2\big(|r^{\frac{m}{2}}u_t|_2+|(r^{\frac{m}{2}}u_t)_r|_2+|r^{\frac{m}{2}}u_{tr}|_2\big)\\
\le& \frac18|r^{\frac{m}{2}}u_{tt}|_2^2+C^a(1+|r^{\frac{m}{2}}u_{tr}|_2^2),\\
\sum_{i=17}^{19}\text{J}_{i}=&\int r^m\big(-\phi_{tr}+2\alpha(\psi u_r)_t-2\alpha m r^{-2} u_t\big)u_{tt}\text{d}r\\
\le &C^a|r^{\frac{m}{2}}u_{tt}|_2\big(|r^{\frac{m}{2}}\phi_{tr}|_2+|r^{\frac{m}{2}}\psi_t|_2|u_r|_\infty+|\psi|_\infty|r^{\frac{m}{2}}u_{tr}|_2+|r^{\frac{m}{2}}u_t|_2\big)\\
\le& \frac18|r^{\frac{m}{2}}u_{tt}|_2^2+C^a(1+|r^{\frac{m}{2}}u_{tr}|_2^2),
\end{split}
\end{equation*}
which, along with (\ref{etrq}), implies that 
\begin{equation}\label{etrq1}
\frac{\text{d}}{\text{d}t}|r^{\frac{m}{2}}u_{tr}|_2^2+ |r^{\frac{m}{2}}u_{tt}|_2^2\leq C^a\big(1+|r^{\frac{m}{2}}u_{tr}|_2^2\big).
\end{equation}
\fi

Second, multiplying \eqref{etrq1} by $s$ and integrating with respect to $s$ over $[\tau, t]$ for $\tau\in(0,t)$, one has 
\begin{equation}\label{etrq2}
t|r^{\frac{m}{2}} u_{tr}(t,\cdot)|_2^2+ \int_\tau^t s|r^{\frac{m}{2}}u_{tt}(s,\cdot)|_2^2\text{d}s\leq \tau|r^{\frac{m}{2}}u_{tr}(\tau,\cdot)|_2^2+C^a.
\end{equation}
%According to  Lemma \ref{l4.8}, one has

It follows from Lemma \ref{l4.8} that $$r^{\frac{m}{2}} u_{tr}\in L^2([0,T];L^2),$$
which, along with  Lemma \ref{bjr} (Appendix A), implies that there exists a sequence $\{s_k\}$ such that 
\begin{equation*}
s_k\to 0, \quad \text{and}\quad  s_k|r^{\frac{m}{2}} u_{tr}(s_k, \cdot)|_2^2\to 0 \quad \text{as}\quad k\to \infty.
\end{equation*}
After choosing $\tau=s_k\to 0$  in  \eqref{etrq2}, one can obtain
\begin{equation}\label{etrq3}
t|r^{\frac{m}{2}} u_{tr}(t,\cdot)|_2^2+ \int_0^t s|r^{\frac{m}{2}}u_{tt}(s,\cdot)|_2^2\text{d}s\leq C^a \quad {\rm{for}}\ \ 0\leq t\leq T.
\end{equation}

Finally, according to $(\ref{e2.2})_2$,   Lemmas \ref{l4.5}-\ref{l4.6} and  \ref{l4.8}-\ref{l4.9}, one has
 \begin{equation*}
 \begin{split}
 &|r^{\frac{m}{2}}u_{trr}|_2 \\
% =& |\rho^{1-\delta} \rho^{\delta-1} u_{txx}|_2\le  |\rho|_{\infty}^{1-\delta} |\rho^{\delta-1} u_{txx}|_2\\
 \le & C\big(|r^{\frac{m}{2}}u_{tt}|_{2}+|r^{\frac{m}{2}}(u  u_r)_t|_2+|r^{\frac{m}{2}}\phi_{tr}|_2+|r^{\frac{m}{2}}(\psi u_r)_t|_2+|r^{\frac{m-2}{2}}u_{tr}|_2+|r^{\frac{m-4}{2}}u_t|_2\big)\\
\le & C^a\big(1+|r^{\frac{m}{2}}u_{tt}|_2 +|r^{\frac{m}{2}}u_t|_2|u_r|_\infty+|r^{\frac{m}{2}}u_{tr}|_2|u|_\infty+|r^{\frac{m}{2}}\phi_{tr}|_2+|\psi|_\infty|r^{\frac{m}{2}}u_{tr}|_2\\
&+|r^{\frac{m}{2}}\psi_t|_2|u_r|_\infty+|r^{\frac{m}{2}}u_{tr}|_2\big)\le  C^a\big(1 + |r^{\frac{m}{2}}u_{tt}|_2+ |r^{\frac{m}{2}}u_{tr}|_{2}\big),
 \end{split}
\end{equation*}
which, along with \eqref{etrq3}, implies that
\[
\int_{0}^{t} s  |r^{\frac{m}{2}}u_{trr}(s,\cdot)|_2^2{\rm{d}}s\leq C^a\quad {\rm{for}}\ \ 0\leq t\leq T.
\]

The proof of Lemma \ref{l4.10} is complete.
\end{proof}

\subsection{\textbf{Proof of Theorem \ref{jordan}}}\label{solo} 
 Based on the local-in-time well-posedness obtained in Lemma \ref{rth1} and the global-in-time a priori estimates established in Lemmas \ref{l4.1}-\ref{l4.10}, now we are ready to give the proof of Theorem \ref{jordan}.
 
\textbf{Step 1:} the global well-posedness of regular solutions.  First, Lemma \ref{rth1} guarantees that there exists a local-in-time regular solution $(\rho(t,r), u(t,r))$ to the problem \eqref{e1.5} in $[0,T_*]\times I_a$ for some $T_*>0$. Let $\bar{T}>0$ be the life span of the regular solution shown in Lemma \ref{rth1}. It is obvious that $\bar{T}\ge T_*$. Then we claim that $\bar{T}=\infty$. Otherwise, if $\bar{T}<\infty$, 
according to  the uniform a priori estimates obtained in Lemmas \ref{l4.1}-\ref{l4.10} and the standard weak compactness theory, one can know that for any sequence $\{t_k\}_{k=1}^\infty$ satisfying $0<t_k<\bar{T}$ and $$t_k\to \bar{T} \quad \text{as}\quad k\to \infty,$$ 
there exists a subsequence $\{t_{1k}\}_{k=1}^\infty\subset \{t_{k}\}_{k=1}^\infty$ and functions $(\phi, u, \psi)(\bar{T},r)$ such that 
\begin{equation}\label{f2}
\begin{split}
    r^{\frac m2}\phi(t_{1k},r)\rightharpoonup  r^{\frac m2}\phi(\bar{T},r) \quad &\text{in\,\,} H^2 \quad \text{as}\,\, k\to\infty,\\ 
     r^{\frac m2}u(t_{1k},r)\rightharpoonup  r^{\frac m2}u(\bar{T},r) \quad &\text{in\,\,} H^2 \quad  \text{as}\,\, k\to\infty,\\
     r^{\frac mq}\psi(t_{1k},r)\rightharpoonup  r^{\frac mq}\psi(\bar{T},r) \quad &\text{in\,\,} L^q \quad  \text{as}\,\, k\to\infty,\\
     \psi(t_{1k},r)\stackrel{*}\rightharpoonup  \psi(\bar{T},r) \quad &\text{in\,\,} L^\infty \quad  \text{as}\,\, k\to\infty,\\
     r^{\frac {m-2}{2}}\psi(t_{1k},r)\rightharpoonup  r^{\frac {m-2}{2}}\psi(\bar{T},r) \quad& \text{in\,\,} L^2 \quad  \text{as}\,\, k\to\infty,\\
     r^{\frac m2}(\psi(t_{1k},r))_r\rightharpoonup  r^{\frac m2}(\psi(\bar{T},r))_r \quad &\text{in\,\,} L^2 \quad  \text{as}\,\, k\to\infty.
\end{split}
\end{equation}

Second, we want  to show that functions $(\phi, u, \psi)(\bar{T},r)$ satisfy all the initial assumptions shown  in Lemma \ref{rth1}, which include \eqref{etm} and  the following relationship between $\phi$ and $\psi$: 
\begin{equation}\label{ff} 
\psi=\frac{1}{\gamma-1}\big(\ln\phi\big)_r.
\end{equation}
It follows from \eqref{f2} that \eqref{etm} except $r^m\rho\in L^1$ still  holds at the  time $t=\bar{T}$.

 Next for  the relation in \eqref{ff}, we need to  consider the following equation
\begin{equation}\label{limit-3}
\phi_t+u\phi_r+(\gamma-1)\phi u_r+m(\gamma-1)r^{-1}\phi u=0,
\end{equation}
which holds in $[0,\bar{T})\times I_a$ in the classical sense. Actually, if we regard  $(\phi(\bar{T},r), u(\bar{T},r),\\
\psi(\bar{T},r))$  and
$$\phi_t(\bar{T},r)=-u(\bar{T}, r)\phi_r(\bar{T}, r)-(\gamma-1)\phi(\bar{T}, r)\big( u_r(\bar{T}, r)+mr^{-1}u(\bar{T}, r)\big),$$ 
as the extended definitions of  $(\phi(t, r), u(t, r), \psi(t, r),\phi_t(t, r))$ at the  time $t=\bar{T}$, then one has
$$
\text{ess}\sup_{0\leq t \leq \bar{T}}(\|\phi(t,\cdot)\|_2+\|\phi_t(t,\cdot)\|_1)\leq C^a<\infty,$$
which, together with the Sobolev embedding theorem (Appendix A), implies that 
\begin{equation}\label{f3}
    \phi(t, r)\in C([0,\bar{T}];H^1).
\end{equation}
It follows from the first line in  $\eqref{f2}$ and the consistency of weak convergence and  strong convergence in $H^1$ space that 
 \begin{equation}\label{f4}
    \phi(t_{1k},r)\to  \phi(\bar{T},r) \quad \text{in} \quad H^1 \quad \text{as}\quad  k\to\infty.
\end{equation} 
Notice that for any $R>a$, there exists a generic constant $C(\bar{T}, R)$ such that 
\begin{equation}\label{f5}
\phi(t,r)\ge C(\bar{T}, R)\quad \text{for\,\,any}\quad (t,r)\in[0,\bar{T}]\times (a,R],
\end{equation}
which, along with the last four lines in $\eqref{f2}$ and \eqref{f4}, implies that \eqref{ff} holds.

It remains to show that $r^m\rho(\bar{T},r)\in L^1$. According to \eqref{f3} and the Sobolev embedding theorem (Appendix A), one can obtain   $$\phi(t,r)\in C([0,\bar{T}]\times I_a),$$ which, along with the fact $\rho=\big(\frac{\gamma-1}{A\gamma}\phi\big)^{\frac{1}{\gamma-1}}$, yields that
\begin{equation}\label{xiaohan}
r^m\rho(t,r)\in C([0,\bar{T}]\times I_a).
\end{equation}
Moreover, one has  
\begin{equation}\label{yizhi}
\int r^m\rho(t,r)\text{d}r=\int r^m\rho_0(r)\text{d}r \quad  \text{and}\quad  0\leq t<\bar{T}.
\end{equation}

It follows from \eqref{xiaohan}-\eqref{yizhi} and Fatou's lemma (Appendix A) that,
$$\int r^m\rho(\bar{T},r)\text{d}r=\int \liminf_{k\to\infty} r^m\rho(t_{1k},r)\text{d}r\le \liminf_{k\to\infty}\int r^m\rho(t_{1k},r)\text{d}r<\infty,$$
which implies that $r^m\rho(\bar{T},r)\in L^1$. Thus we have shown that $(\rho(\bar{T},r),u(\bar{T},r))$ satisfy all the initial assumptions on the initial data of Lemma \ref{rth1}.

Finally, if  we solve \eqref{e1.5} with the initial time $\bar{T}$, then Lemma \ref{rth1} ensures that for some constant $T_0>0$, $(\rho, u)(t,r)$ is  the unique regular solution in $[\bar{T},\bar{T}+T_0]\times I_a$ to this problem. It follows from the boundedness of all required norms of the solution $(\rho,u)(t,r)$ in  $[0,\bar{T}+T_0]\times I_a$ and the standard arguments for proving the time continuity of the regular solution  that, $(\rho, u)(t,r)$ is actually  the unique regular solution in $[0,\bar{T}+T_0]\times I_a$ to the  problem \eqref{e1.5}, which  contradicts to the fact that $0<\bar{T}<\infty$ is the maximal existence time.

\begin{rk}
According to the  proof in this subsection, the definitions of $(\phi(\bar{T},r)$,\\
$u(\bar{T},r), \psi(\bar{T},r))$ do not depend on the choice of the time sequences $\{t_k\}_{k=1}^\infty$.
\end{rk}

%\textbf{repeat the argument in Section 3.7 of our 1-D paper}. one can show that $(\rho(t,r), u(t,r))$ is a regular solution to  \eqref{e1.5} in $[0,T]\times I_a$ for any $T>0$ by using standard argument.

\textbf{Step 2:} the global well-posedness of classical solutions. Now we show that the regular solution obtained above is indeed a classical one in $(0,\infty)\times I_a$. 

Actually,  for any $T>0$, first  due to   $\phi>0$ and 
$$\rho=\big(\frac{\gamma-1}{A\gamma}\phi\big)^{\frac{1}{\gamma-1}},$$
one has 
\begin{equation}\label{jichulianxu}
(\rho, \rho_t, \rho_r, u,  u_r)\in C([0,T]\times I_a).
\end{equation}

Second, for the term $u_t$, according to   Lemma \ref{l4.10}, one has 
\begin{equation}\label{wtw}
\begin{split}
&t^{\frac12}u_{tr}\in L^\infty([0,T];L^2),\quad t^{\frac12}u_{tt}\in L^2([0,T];L^2),\\
& t^{\frac12}u_{trr}\in L^2([0,T];L^2), \quad t^{\frac12}u_{ttr}\in L^2([0,T];H^{-1}),
\end{split}
\end{equation}
which, along with  the classical Sobolev embedding  theorem:
\begin{equation}\label{qian}\begin{split}
&L^2([0,T];H^1)\cap W^{1,2}([0,T];H^{-1})\hookrightarrow C([0,T];L^2),
\end{split}
\end{equation}
 yields that for any $0<\tau<T$,
\begin{equation}\label{utlianxu}
tu_t\in C([0,T];H^1) \quad \text{and}\quad u_t\in C([\tau,T]\times I_a).
\end{equation}

Finally, it remains to  show that
 $$ u_{rr} \in C([\tau,T]\times I_a)$$ 
 for any $0<\tau<T$. Actually, it follows from the fact $\rho>0$, \eqref{jichulianxu} and \eqref{utlianxu} that
$$u_{rr}=\frac{1}{2\alpha}\big(u_t+u u_r+A\gamma\rho^{\gamma-2}\rho_r-2\alpha\rho^{-1}\rho_r u_r-2\alpha m (r^{-1} u)_r\big)\in C([\tau,T]\times I_a).$$

The proof of Theorem \ref{jordan} is complete. 

%\subsection{\textbf{Proof of Theorem \ref{xisi}}}
%The proof of Theorem \ref{xisi} can be achieved via the similar argument used in Section \ref{solo}.

\section{Proof of Theorems \ref{th1}-\ref{thshallow} }\label{se46}

%Based on the local-in-time well-posedness obtained in Lemma \ref{l2}  and the global-in-time a priori estimates established in Lemmas \ref{l4.1}-\ref{l4.10}, 
With Theorems \ref{zth} and \ref{jordan}  at hand, 
now  we turn to the  proof of Theorems \ref{th1}-\ref{thshallow}. In the rest of this section, we denote  $r=|x|$.
\subsection{Proof of Theorem \ref{th1}}
\begin{proof}

\textbf{Step 1}: the existence of the global  unique regular solution.
First,  in Theorem \ref{th1}, one assumes that  the initial data $$(\rho_0(x), U_0(x))= ( \rho_0(r),u_0(r)\frac{x}{r})$$
be   spherically symmetric and   satisfy \eqref{id1}, which, along with Lemma \ref{poss} and Remark  \ref{rec} in Appendix B, yields that  the  data $(\rho_0(r), u_0(r))$ satisfy \eqref{etm}.
Then it follows from Theorem \ref{jordan} that the IBVP \eqref{e1.5}  admits a unique global classical solution $(\rho(t,r), u(t,r))$
in $(0,\infty)\times I_a$  satisfying the  regularities in \eqref{spd} with $T_*$ replaced by  arbitrarily large time  $0<T<\infty$. 
Then we  denote
\begin{equation}\label{jjj}
\rho(t,x)=\rho(t,r),\quad U(t,x)=u(t,r)\frac{x}{r},
\end{equation}
which, along with the boundary condition and the far field behavior:
\begin{equation}\label{e1.5kkk}
\begin{cases}
\displaystyle
\ u(t,r)|_{r=a}=0 \quad \text{for} \quad  t\geq 0,\\[6pt]
\displaystyle
\  \left(\rho(t,r),u(t,r)\right)\to \left(0,0\right)\quad  \text{as}\quad  r\to \infty\quad  \text{for}\quad   t\ge 0,
\end{cases}
\end{equation}
in problem \eqref{e1.5}, yields that the pair $(\rho(t,x), U(t,x))$ satisfies the assumptions \eqref{eqs:CauchyInit}-\eqref{e1.3}.
Moreover, it follows from the definition of  $(\rho(t,x), U(t,x))$ in \eqref{jjj}, direct calculations and the equations of $(\rho(t,r), u(t,r))$ in  \eqref{e1.5} that $(\rho(t,x),U(t,x))$ also satisfies the equations \eqref{eq:1.1}-\eqref{10000} pointwisely.

Second,   similarly to the proof of Lemma  \ref{poss}  and Remark \ref{rec},   based on  the regularities  of $(\rho(t,r),u(t,r))$ in \eqref{spd},  one has that for any $T>0$, $(\rho(t,x), U(t,x))$  satisfies   the regularities shown  in Definition \ref{cjk} and \eqref{er2}. In summary,  we have shown that the pair $(\rho(t,x), U(t,x))$ constructed by \eqref{jjj} is actually a regular solution to the IBVP \eqref{eq:1.1}-\eqref{10000} with  \eqref{eqs:CauchyInit}-\eqref{e1.3}.

Finally, the uniqueness follows easily from the same procedure as in Section 2.4.

\textbf{Step 2}: the existence of the global  unique classical solution. 
%\textbf{ to show that $(\rho(t,x),U(t,x))$ is a classical solution of the 3D problem, based on the fact that  the solution $(\rho(t,r),u(t,r))$ obtained in Theorem 3.1 is a classical one, which has been shown in Section 3}
Now we  show   that the regular solution  obtained  above  is indeed a classical one  in $(0,\infty)\times \Omega$, which can be achieved by the following lemma.  

\begin{lem}\label{boss}
Since 
\begin{equation}\label{123333}
\begin{split}
&(\rho, \rho_t, \rho_r, u,  u_r)\in C([0,\infty)\times I_a), \quad  (u_t, u_{rr}) \in C((0,\infty)\times I_a),
\end{split}
\end{equation}
then one has 
\begin{equation}\label{126666}
\begin{split}
&(\rho, \rho_t, \nabla \rho, U,  \nabla U)\in C([0,\infty)\times \Omega), \quad  (U_t, \nabla^2 U) \in C((0,\infty)\times \Omega).
\end{split}
\end{equation}
\end{lem}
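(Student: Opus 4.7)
The plan is to translate the one-dimensional (radial) regularity of $(\rho,u)$ on $[0,\infty)\times I_a$ into the multi-dimensional regularity of $(\rho,U)$ on $[0,\infty)\times\Omega$ through the pointwise representation
\[
\rho(t,x)=\rho(t,r),\qquad U(t,x)=u(t,r)\,\frac{x}{r},\qquad r=|x|.
\]
The decisive structural observation is that, because $\Omega=\{|x|>a\}$ with $a>0$, the map $x\mapsto r=|x|$ is smooth on $\Omega$ and satisfies $r\geq a>0$ there. Hence all rational expressions in $x$ and $r$ that will appear through the chain rule are smooth on $\Omega$, and the only issue is to check that the $(t,r)$-factors that appear are indeed among the quantities the hypothesis \eqref{123333} certifies to be continuous.

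First I would dispose of the zeroth-order and time-derivative statements. Continuity of $\rho$ and $\rho_t$ on $[0,\infty)\times\Omega$ follows from composition with the smooth map $(t,x)\mapsto(t,r)$. For $U(t,x)=u(t,r)x/r$ the factor $x/r$ is continuous on $\Omega$, so continuity of $U$ on $[0,\infty)\times\Omega$ is immediate. Similarly, $U_t=u_t(t,r)x/r$ inherits continuity on $(0,\infty)\times\Omega$ directly from $u_t\in C((0,\infty)\times I_a)$.

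Next I would handle the first spatial derivatives via the chain rule. For the density, $\partial_i\rho=\rho_r\,x_i/r$, continuous on $[0,\infty)\times\Omega$ since $\rho_r\in C([0,\infty)\times I_a)$ and $x_i/r\in C(\Omega)$. For the velocity, a direct computation gives
\[
\partial_i U^{(j)}=u_r(t,r)\,\frac{x_ix_j}{r^{2}}+u(t,r)\left(\frac{\delta_{ij}}{r}-\frac{x_ix_j}{r^{3}}\right),
\]
which is a linear combination of continuous functions of $(t,r)$ (namely $u$ and $u_r$) with continuous coefficients on $\Omega$ (built out of $x_i$ and $1/r$ with $r\geq a$). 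Hence $\nabla U\in C([0,\infty)\times\Omega)$.

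Finally, for the second spatial derivatives of $U$, I would differentiate the formula above once more. Schematically,
\[
\partial_k\partial_i U^{(j)}=u_{rr}\,\frac{x_ix_jx_k}{r^{3}}+u_r\,\mathsf{R}^{(1)}_{ijk}(x)+u\,\mathsf{R}^{(2)}_{ijk}(x),
\]
where $\mathsf{R}^{(1)}_{ijk}$ and $\mathsf{R}^{(2)}_{ijk}$ are explicit rational functions of $x$ involving only the coordinates $x_\ell$ and negative powers of $r$. Since $r\geq a>0$ on $\Omega$, these coefficients are continuous on $\Omega$; the $(t,r)$-factors $u_{rr}$, $u_r$, $u$ are continuous on $(0,\infty)\times I_a$ by hypothesis, whence $\nabla^{2}U\in C((0,\infty)\times\Omega)$. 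No step in the argument is delicate: the boundary condition $r\geq a>0$ removes any potential singular behavior at the origin, which would otherwise be the main obstacle; here it is absent, and the proof is pure bookkeeping of the chain rule.
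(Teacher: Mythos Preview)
Your proposal is correct and follows essentially the same approach as the paper: both arguments compute the multi-dimensional derivatives of $\rho$ and $U$ via the chain rule in terms of the radial quantities $\rho,\rho_t,\rho_r,u,u_r,u_t,u_{rr}$ and rational coefficients in $x$ and $r$, then invoke the hypothesis together with the fact that $r\geq a>0$ on $\Omega$ makes those coefficients continuous. The only cosmetic difference is that the paper writes out the full explicit formula for $\partial_k\partial_i U^{(j)}$, whereas you record it schematically; the substance is identical.
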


\begin{proof}
It follows from the definition of  $(\rho(t,x), U(t,x))$ in \eqref{jjj} and  direct calculations that 
\begin{equation}\label{wwo}
    \begin{split}
        \frac{\partial\rho(t,x)}{\partial x_i}=&(\rho(t,r))_r\frac{x_i}{r},\quad \frac{\partial\rho(t,x)}{\partial t}=(\rho(t,r))_t,\quad\frac{\partial U(t,x)}{\partial  t}=(u(t,r))_t\frac{x}{r},\\
        \frac{\partial U^j(t,x)}{\partial x_i}=&\frac{\partial}{\partial x_i}\Big(u(t,r)\frac{x_j}{r}\Big)=(u(t,r))_r\frac{x_i x_j}{r^2}+ u(t,r)\frac{\delta_{ij}r^2-x_i x_j}{r^3},\\
       \frac{\partial^2 U^k(t,x)}{\partial x_i\partial x_j}=&(u(t,r))_{rr}\frac{x_i x_j x_k}{r^3}+(u(t,r))_r\Big(\frac{\delta_{ij}x_k+\delta_{ik}x_j+\delta_{jk}x_i}{r^2}-3\frac{x_i x_j x_k}{r^4}\Big)\\
       &+u(t,r)\Big(\frac{3x_i x_j x_k}{r^5}-\frac{\delta_{jk}x_i+\delta_{ji}x_k+\delta_{ik}x_j}{r^3}\Big),
    \end{split}
\end{equation}
which, along with \eqref{123333}, yields \eqref{126666} holds.

The proof of Lemma \ref{boss} is complete.

%Then the desired regularities of solutions $(\rho(t,x), U(t,x))$ shown  in Theorems \ref{th1}-\ref{th2} can be obtained from  Theorems \ref{jordan}-\ref{xisi} and  \eqref{mvp}-\eqref{wwo}.
\end{proof}
The proof of Theorem \ref{th1} is complete.

\end{proof}

\subsection{Proof of Theorem \ref{thshallow} }
\begin{proof}
\textbf{Step 1}: the cases $\mathcal{V}=\text{div}(h D(W))$, or $\text{div}(2h D(W))$.
 Actually,  when $\mathcal{V}=\text{div}(h D(W))$, this is a special case of
system \eqref{eq:1.1} with 
$$d=2,\quad \alpha=\frac{1}{2},\quad \beta=0,\quad  \delta=1 \quad \text{and} \quad  \gamma=2;$$ when $\mathcal{V}=\text{div}(2h D(W))$, this is also a special case of system \eqref{eq:1.1} with 
$$d=2,\quad \alpha=1,\quad \beta=0,\quad  \delta=1 \quad \text{and} \quad  \gamma=2.$$
Therefore, one simply replaces $(\rho,U)$ by $(h,W)$ in Theorem \ref{th1} to obtain the same conclusion  for these two classes of shallow water models, without further modifications.

\textbf{Step 2}: the case $\mathcal{V}=\text{div}(h \nabla W)$. We still hope  to  prove the desired global-in-time well-posedness based on  the  framework established in the proof of Theorem \ref{th1}.
Since   system \eqref{shallow} with $\mathcal{V}=\text{div}(h \nabla W)$ is not a special case of  the system \eqref{eq:1.1}-\eqref{10000} formally, some necessary explanations need to  be given.

First, according to the proof of Theorem \ref{zth}, one has that the corresponding local-in-time well-posedenss can be obtained  via the completely same argument used in \S 2.

Second, according to the proof of Theorem \ref{jordan}, for getting  the corresponding  global-in-time  a priori estimates, one  needs to consider a   reformulated problem in the spherically symmetric Eulerian  coordinate.

Concerning the spherically symmetric solutions $(h(t,x),W(t,x))$ taking  the  form: $$(h,W)(t,x)
=(h(t,r),w(t,r)\dfrac{x}{r}),$$ then the IBVP \eqref{shallow} with \eqref{shallowchu}-\eqref{shallowbian} can be rewritten as
\begin{equation}\label{eqshallow}
   \left\{
    \begin{split}
     &\ h_t+(hw)_r+\frac{ h w}{r}=0,\\[5pt]
     &\ (hw)_t+(hw^2)_r+(h^2)_r-\Big(h\big(w_r+\frac{1}{r}w\big)\Big)_r+\frac{ h_r w}{r}+\frac{ h w^2}{r}=0,\\[5pt]
     &\ (h(0,r),w(0,r))=(h_0(r),w_0(r))\quad \text{for} \quad r\in I_a,\\[5pt]
     &\ w(t,r)|_{r=a}=0  \quad \text{for} \quad  t\geq 0,\\[5pt]
     &\  (h(t,r),w(t,r))\to (0,0) \quad \text{as}\quad  r \to \infty  \quad \text{for} \quad  t\geq 0.
    \end{split}
    \right.
\end{equation}
It is obvious that the  problem \eqref{eqshallow}  is actually  a special case of problem  \eqref{e1.5} with 
$$d=2,\quad \alpha=\frac{1}{2}\quad \text{and} \quad \gamma=2.$$
Therefore, one simply replaces $(\rho,u)$ by $(h,w)$ in the proof of  Theorem \ref{jordan} to obtain the same uniform a priori estimates shown in Lemmas \ref{l4.1}-\ref{l4.10}  for this  shallow water model, without further modifications. 

Then the proof of  Theorem \ref{thshallow} is complete.

\end{proof}

\appendix

\section{ Basic lemmas}

%In this appendix, we list some useful lemmas which were used frequently in the previous sections.
%The first one is the  Sobolev embedding inequality on an exterior domain which can be deduced from \cite{gandi}.

This appendix is devoted to listing  some useful lemmas which were used frequently in the previous sections.
The first one is the  Sobolev embedding theorem (see Theorem 4.12 (page 85) of  \cite{af}).
%which can be deduced from \cite{gandi}.
%\begin{lem}[\cite{ln}]\label{ale1}
%Let $O'$ be the whole space $\mathbb R^d$ or an exterior domain in $\mathbb R^d$,  $1\le q, \ r\le \infty$, and let $p$ be such that $$\frac{1}{p}=\frac{j}{d}+\zeta\Big(\frac{1}{r}-\frac{m}{d}\Big)+(1-\zeta)\frac{1}{q},\quad \text{and}\quad \frac{j}{m}\le \zeta\le 1.$$ 
%Then there exists a generic  constant $C>0$ depends only on $j,m,d,q,r$ and $\zeta$ such that for $f\in L^q\cap D^{m,r}(O')$,
%\begin{equation}\label{ae1}
%    |D^j f|_p\leq C|D^m f|_r^\zeta |f|_q^{1-\zeta},
%\end{equation}
%with the exceptions that if $j=0, rm<d, q=\infty$ we assume that $f$ vanishes at infinity or $f\in L^{\tilde q}$ for some finite $\tilde q>0$, while if $1<r<\infty$ and $m-j-d/r$ is a non-negative integer we take $j/m\le \zeta<1$.
%
%In particular, let $d=1, j=0$ and $p=\infty$ in \eqref{ae1}, one has 
%\begin{equation*}
%    |f|_\infty\leq C|D^m f|_r^{\zeta}|f|_q^{1-\zeta}, \quad \text{and}\quad \zeta\Big(\frac 1r-m\Big)+(1-\zeta)\frac1q=0.
%\end{equation*}
%\end{lem}
%For the exterior domain in $\mathbb R^d$ with smooth boundary, we have the following lemma.
%For the exterior domain $O'\subset \mathbb R^d$ with smooth boundary $\partial O'$ and interior radius $a$, we have the following lemma.

\begin{lem}[\cite{af}]\label{ale1}
Assume  $\mathbb{D}$ is  a domain in $\mathbb R^d$ which  satisfies the cone condition, and $(j,m,p)$ are all constants satisfying  $j\ge 0$, $m^*\ge 1$ and $1\le p<\infty$.
\begin{itemize}
\item[$({\rm{i}})$] If $m^*p<d$, then 
$$W^{j+m^*, p}(\mathbb{D})\hookrightarrow W^{j,q}(\mathbb{D})\quad \text{for} \quad    p\le q\le \frac{dp}{d-m^*p};$$
\iffalse
 \begin{equation*}
     \left\{
     \begin{split}
         &W^{j+m^*, p}(\mathbb{D})\hookrightarrow W^{j,q}(\mathbb{D})\quad j>0,\   p\le q\le \frac{dp}{d-m^*p},\\
          &W^{m^*, p}(\mathbb{D})\hookrightarrow L^q(\mathbb{D})\quad j=0\  p\le q\le \frac{dp}{d-m^*p};\\
     \end{split}
     \right.
 \end{equation*}
 \fi
\item[$({\rm{ii}})$] if $m^*p=d$, then $$W^{m^*,p}(\mathbb{D})\hookrightarrow L^q(\mathbb{D})\quad \text{for} \quad  p\le q<\infty;$$
\item [$({\rm{iii}})$] if $m^*p>d$ or $m^*=d$ and $p=1$, then $$W^{j+m^*, p}(\mathbb{D})\hookrightarrow C_B^j(\mathbb{D}),$$
\end{itemize}
 where $C_B^j(\mathbb{D})=\{f\in C^j(\mathbb{D})|D^\alpha f\in L^\infty(\mathbb{D}), |\alpha|\le j\}.$ The embedding constants here  depend only on $d,m^*, p,q,j$ and the property of the cone $\mathscr{C}$ in the cone condition.
\end{lem}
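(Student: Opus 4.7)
The plan is to reduce the embeddings on a general domain $\mathbb{D}$ satisfying the cone condition to the corresponding embeddings on $\mathbb{R}^d$ via a bounded extension operator, and then to establish the $\mathbb{R}^d$ embeddings by the classical Gagliardo-Nirenberg-Sobolev inequality, iteration, and Morrey's inequality. This is exactly the route taken in Chapters 4--5 of \cite{af}, from which the statement is quoted; so the sketch below describes how one would reconstruct that proof.

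First, invoking the cone condition, one constructs a bounded linear extension operator $E: W^{j+m^*,p}(\mathbb{D}) \to W^{j+m^*,p}(\mathbb{R}^d)$ whose operator norm depends only on $d, j, m^*, p$ and the cone $\mathscr{C}$. This is the key geometric ingredient: for a generic open set such an operator need not exist, but the uniform existence of a translated cone inside $\mathbb{D}$ at every point allows either Calder\'on's integral extension (covering $\mathbb{D}$ by pieces adapted to the cones, extending locally by reflection through the axis of the cone, and patching by a partition of unity) or the Stein/Jones construction. Once $E$ is at hand, the three claimed embeddings on $\mathbb{D}$ follow from the corresponding embeddings on $\mathbb{R}^d$ by composing with $E$ and restricting.

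Next, on $\mathbb{R}^d$ one argues as follows. For case (i), $m^*p<d$, the base step is the Gagliardo-Nirenberg-Sobolev inequality $\|u\|_{L^{p^*}(\mathbb{R}^d)} \le C\|\nabla u\|_{L^p(\mathbb{R}^d)}$ with $p^*=dp/(d-p)$, proved for $p=1$ by slicing and iterated H\"older and for general $p\in[1,d)$ by applying the $p=1$ case to a power of $|u|$. Iterating this $m^*$ times on derivatives of order $\le j$ and interpolating against the $L^p$ norms gives $W^{j+m^*,p}(\mathbb{R}^d)\hookrightarrow W^{j,q}(\mathbb{R}^d)$ for all $q\in[p, dp/(d-m^*p)]$. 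For case (ii), $m^*p=d$, one takes $p<d/m^*$ approaching the critical exponent and uses a truncation/John-Nirenberg-type argument to cover all finite $q$. For case (iii), $m^*p>d$ (or $m^*=d,\, p=1$), the relevant ingredient is Morrey's inequality $\|u\|_{C^{0,1-d/p}(\mathbb{R}^d)}\le C\|u\|_{W^{1,p}(\mathbb{R}^d)}$ for $p>d$, proved via averaged pointwise estimates on differences of $u$; applying Morrey to derivatives of order $\le j$ yields $W^{j+m^*,p}(\mathbb{R}^d)\hookrightarrow C_B^j(\mathbb{R}^d)$.

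The main obstacle is verifying that the cone condition actually delivers a bounded extension $E$: Calder\'on's construction requires a careful Littlewood-Paley-type decomposition of $\mathbb{D}$ together with singular-integral bounds on the resulting reflection multipliers, and the dependence of the constants on $\mathscr{C}$ has to be tracked to get the quantitative embedding claimed. Once this is in place, the $\mathbb{R}^d$ portion is entirely classical and essentially automatic.
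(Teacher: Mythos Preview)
The paper does not supply its own proof of this lemma: it is quoted from Adams--Fournier \cite{af} (Theorem 4.12, page 85), and the appendix simply cites that reference. So there is no paper proof to compare against beyond what is in \cite{af} itself.

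Your sketch follows a natural reduce-to-$\mathbb{R}^d$ strategy, but the actual Adams--Fournier proof of Theorem 4.12 does \emph{not} go through an extension operator. The bare cone condition stated here (each point of $\mathbb{D}$ is the vertex of a congruent copy of a fixed finite cone $\mathscr{C}$ lying in $\mathbb{D}$) is in general \emph{not} sufficient to produce a bounded extension $E:W^{m,p}(\mathbb{D})\to W^{m,p}(\mathbb{R}^d)$; Calder\'on's and Stein's extension theorems require the stronger uniform cone condition or a Lipschitz boundary, respectively. What Adams--Fournier do instead is work directly on $\mathbb{D}$: for each $x\in\mathbb{D}$ they write $u(x)$ (and its derivatives) via an integral representation along rays inside the cone $\mathscr{C}_x$, bound the result pointwise by Riesz-potential-type integrals supported in $\mathscr{C}_x$, and then integrate over $\mathbb{D}$ using Young's inequality. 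This gives the embeddings with constants depending only on $\mathscr{C}$, without any extension step.

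So your plan has a gap at exactly the step you flagged as the main obstacle: the cone condition alone does not deliver the extension operator you want. If you strengthen the hypothesis to the uniform cone condition (which is harmless for the exterior domain $\Omega$ actually used in this paper), your route does work and is essentially equivalent in strength; but for the lemma as stated you would need to follow the direct in-domain argument of \cite{af}.
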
 

\begin{rk}[see Section 4.6 (page 82) of \cite{af}]
$\mathbb{D}$ satisfies the cone condition if there exists a finite cone $\mathscr{C}$ such that each $x\in \mathbb{D}$ is the vertex of a finite cone $\mathscr{C}_x$ contained in  $\mathbb{D}$ and congruent to  $\mathscr{C}$. Note that $\mathscr{C}_x$  need not   to be obtained from $\mathscr{C}$ by parallel transformation, but simply by rigid motion.
\end{rk}

The second one is the well-known   Fatou's lemma which can be found in \cite{realrudin}.
\begin{lem}[\cite{realrudin}]\label{Fatou}
Given a measure space $(V,\mathcal{F},\nu)$ and a set $X\in \mathcal{F}$, let  $\{f_n\}$ be a sequence of $(\mathcal{F} , \mathcal{B}_{\mathbb{R}_{\geq 0}} )$-measurable non-negative functions $f_n: X\rightarrow [0,\infty]$. Define the function $f: X\rightarrow [0,\infty]$ by setting 
$$
f(x)= \liminf_{n\rightarrow \infty} f_n(x),$$
for every $x\in X$. Then $f$ is $(\mathcal{F},  \mathcal{B}_{\mathbb{R}_{\geq 0}})$-measurable, and
$$
\int_X f(x) \text{\rm d}\nu \leq \liminf_{n\rightarrow \infty} \int_X f_n(x) \text{\rm d}\nu.
$$
\end{lem}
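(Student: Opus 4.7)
The plan is to reduce Fatou's lemma to the Monotone Convergence Theorem (MCT) by introducing the auxiliary sequence of pointwise infima. The key observation is that $\liminf$ can be rewritten as a monotone limit, which aligns exactly with the hypotheses of MCT.

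First, I would define $g_n(x) := \inf_{k \geq n} f_k(x)$ for each $n \in \mathbb{N}$ and each $x \in X$. Since the pointwise infimum of a countable family of $(\mathcal{F},\mathcal{B}_{\mathbb{R}_{\geq 0}})$-measurable functions is again measurable, each $g_n$ is non-negative and measurable. By construction, the sequence $\{g_n\}$ is pointwise non-decreasing in $n$, and its pointwise limit equals
$$\lim_{n \to \infty} g_n(x) = \sup_{n} \inf_{k \geq n} f_k(x) = \liminf_{n \to \infty} f_n(x) = f(x),$$
which in particular confirms that $f$ is $(\mathcal{F},\mathcal{B}_{\mathbb{R}_{\geq 0}})$-measurable.

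Next, since $g_n \leq f_k$ pointwise whenever $k \geq n$, monotonicity of the Lebesgue integral for non-negative measurable functions gives
$$\int_X g_n \, \mathrm{d}\nu \leq \int_X f_k \, \mathrm{d}\nu \quad \text{for all } k \geq n,$$
and hence $\int_X g_n \, \mathrm{d}\nu \leq \inf_{k \geq n} \int_X f_k \, \mathrm{d}\nu$. Now I would apply MCT to the non-decreasing sequence $\{g_n\}$ of non-negative measurable functions to obtain
$$\int_X f \, \mathrm{d}\nu = \int_X \lim_{n \to \infty} g_n \, \mathrm{d}\nu = \lim_{n \to \infty} \int_X g_n \, \mathrm{d}\nu \leq \lim_{n \to \infty} \inf_{k \geq n} \int_X f_k \, \mathrm{d}\nu = \liminf_{n \to \infty} \int_X f_n \, \mathrm{d}\nu,$$
which is the desired inequality.

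Since this is a foundational result from measure theory cited directly from \cite{realrudin}, there is no genuine obstacle; the only subtle point to be careful about is invoking MCT on the $g_n$'s rather than attempting to apply it directly to the $f_n$'s (which need not be monotone). The entire argument is internal to the abstract measure space $(V,\mathcal{F},\nu)$ and requires no structure specific to the PDE setting of the paper.
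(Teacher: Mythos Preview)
Your proof is correct and is exactly the standard argument (via the auxiliary monotone sequence $g_n = \inf_{k\ge n} f_k$ and the Monotone Convergence Theorem) found in \cite{realrudin}. The paper itself does not supply a proof of this lemma---it is simply quoted from Rudin as a basic tool in Appendix~A---so there is nothing further to compare.
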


The third one is used to obtain the time-weighted estimates of the velocity.
\begin{lem}[\cite{bjr}]\label{bjr}
If $f(t,\cdot)\in L^2([0,T]; L^2(\mathcal {O}))$ ($\mathcal {O}$ can be any domain in $\mathbb R^d$), then there exists a sequence $\{s_k\}$ such that
$$
s_k\rightarrow 0 \quad \text{and}\quad s_k \|f(s_k,\cdot)\|^2_{L^2(\mathcal {O})}\rightarrow 0 \quad \text{as} \quad k\rightarrow\infty.
$$
\end{lem}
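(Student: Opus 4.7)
\medskip

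\noindent\textbf{Proof proposal for Lemma \ref{bjr}.} The plan is to reduce the statement to a one‑variable integrability fact. Define the scalar function
\begin{equation*}
g(t):=\|f(t,\cdot)\|_{L^2(\mathcal{O})}^{2},\qquad t\in[0,T].
\end{equation*}
By Fubini's theorem the hypothesis $f\in L^2([0,T];L^2(\mathcal{O}))$ is equivalent to $g\in L^1([0,T])$, and in particular $g$ is finite almost everywhere and $\int_0^T g(t)\,\mathrm{d}t<\infty$. Thus the claim becomes: there exists a sequence $s_k\to 0^{+}$ with $s_k g(s_k)\to 0$, which I will prove by a straightforward contradiction argument exploiting the non‑integrability of $1/t$ near the origin.

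The key step is to show that $\liminf_{t\to 0^{+}} t\,g(t)=0$. Suppose, for the sake of contradiction, that this liminf is strictly positive. Then there exist constants $\varepsilon>0$ and $\delta\in(0,T)$ such that
\begin{equation*}
t\,g(t)\geq \varepsilon\quad\text{for almost every }t\in(0,\delta),
\end{equation*}
equivalently $g(t)\geq \varepsilon/t$ on $(0,\delta)$. Integrating this inequality over $(0,\delta)$ gives
\begin{equation*}
\int_0^\delta g(t)\,\mathrm{d}t\;\geq\;\varepsilon\int_0^\delta\frac{\mathrm{d}t}{t}\;=\;+\infty,
\end{equation*}
which contradicts $g\in L^1([0,T])$. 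Hence $\liminf_{t\to 0^{+}} t\,g(t)=0$, and by the very definition of liminf one can extract a sequence $\{s_k\}\subset(0,T)$ with $s_k\to 0$ and $s_k\,g(s_k)=s_k\|f(s_k,\cdot)\|_{L^2(\mathcal{O})}^{2}\to 0$, which is precisely the conclusion.

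There is essentially no obstacle: the argument is purely one line of measure theory once one contracts $f$ to the scalar $g$. The only mild subtlety is that $g$ is only defined almost everywhere, so the sequence $\{s_k\}$ must be chosen among Lebesgue points of $g$ (equivalently, in the full‑measure set where $g(t)<\infty$); this is automatic since the set on which $g=\infty$ has measure zero and therefore cannot accumulate at $0$ in a way that blocks the extraction. No further hypotheses on $\mathcal{O}\subset\mathbb{R}^d$ are needed, consistent with the statement of the lemma.
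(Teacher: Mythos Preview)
Your proof is correct and follows essentially the same approach as the paper: both reduce to the scalar function $g(t)=\|f(t,\cdot)\|_{L^2(\mathcal O)}^2\in L^1([0,T])$ and derive a contradiction from the non-integrability of $1/t$ near the origin. The paper's version is phrased slightly more constructively (for each $k$ it finds $s_k\in(0,1/N_k)$ with $s_k g(s_k)<1/N_k$), while you phrase it via $\liminf_{t\to 0^+}tg(t)=0$, but the content is identical.
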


\begin{proof}
Denote $h(s)=\|f(s,\cdot)\|^2_{L^2(\mathcal {O})}$, then one can see that $0\leq h(s)\in L^1([0,T])$. We claim that for any $k\ge 1$, there exists $0<s_k<\frac{1}{N_k}$ such that 
$$s_kh(s_k)<\frac{1}{N_k}\to 0\quad \text{as}\quad k\to \infty,$$
where $N_k=N_0+k$ and $N_0$ is a positive constant satisfying $\frac{1}{N_0}\le \frac{T}{2}$.
Indeed, assume by contradiction that there exist some $k\ge1$ such that for any $s\in\big(0,\frac{1}{N_k}\big)$, 
$$sh(s)\ge \frac{1}{N_k},$$
which yields  that 
$$\int_0^T h(s)\text{d}s\ge \int_0^{\frac{1}{N_k}} \frac{1}{s N_k}\text{d}s=\infty.$$
This contradicts with the fact $ h(s)\in L^1([0,T])$.
Thus the claim holds. 

The proof of Lemma \ref{bjr} is complete. 
\end{proof}

The following one is on compactness theories  obtained via the Aubin-Lions Lemma.
\begin{lem}[\cite{js}]\label{gyl}
 Let $X_0\subset X\subset X_1$ be three Banach spaces.  Suppose that $X_0$ is compactly embedded in $X$ and $X$ is continuously embedded in $X_1$. Then the following statements hold
\begin{enumerate}
\item If $J$ is bounded in $L^p([0,T];X_0)$ for $1\leq p < \infty$, and $\frac{\partial J}{\partial t}$ is bounded in $L^1([0,T];X_1)$, then $J$ is relatively compact in $L^p([0,T];X)$.\\

\item If $J$ is bounded in $L^\infty([0,T];X_0)$  and $\frac{\partial J}{\partial t}$ is bounded in $L^p([0,T];X_1)$ for $p>1$, then $J$ is relatively compact in $C([0,T];X)$.
\end{enumerate}
\end{lem}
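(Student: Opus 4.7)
The plan is to follow the Aubin–Lions–Simon strategy, reducing both claims to two classical ingredients: the Ehrling–Lions interpolation inequality coming from the compact embedding $X_0\hookrightarrow\hookrightarrow X$, and a vector-valued translation/Fréchet–Kolmogorov criterion (respectively Arzelà–Ascoli, for part (2)) applied on $[0,T]$. The whole scheme is to trade the control of the time derivative (valued in the weakest space $X_1$) together with the uniform $X_0$-bound for control in the intermediate space $X$, and then to upgrade spatial compactness to space–time compactness.

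The first step would be to prove the Ehrling lemma: for every $\eta>0$ there exists $C_\eta>0$ such that
$$\|u\|_X\le\eta\|u\|_{X_0}+C_\eta\|u\|_{X_1}\quad\text{for all } u\in X_0.$$
This I would establish by contradiction. If the inequality failed for some $\eta_0>0$, one could extract $\{u_n\}\subset X_0$ with $\|u_n\|_X=1$, $\eta_0\|u_n\|_{X_0}\le 1$, and $\|u_n\|_{X_1}\le 1/n$. The compactness of $X_0\hookrightarrow X$ then yields a subsequence converging in $X$ to some $u$ with $\|u\|_X=1$, while the continuity $X\hookrightarrow X_1$ forces $u=0$ in $X_1$, a contradiction.

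The second step is a uniform translation estimate in the weak space $X_1$. For part (1), the identity $J(t+h)-J(t)=\int_t^{t+h}\partial_t J(s)\,\mathrm{d}s$ in $X_1$, together with the $L^1([0,T];X_1)$-bound on $\partial_t J$ and absolute continuity of the integral, gives $\|\tau_h J-J\|_{L^1([0,T-h];X_1)}\to 0$ as $h\to 0^+$, uniformly over the family (for a single $J$; for the family, the uniformity must be handled with care, for instance by exploiting the assumed uniform $L^1$-bound on the derivatives together with Dunford–Pettis type equi-integrability in the quantitative form appearing in \cite{js}). For part (2), Hölder's inequality instead yields the uniform pointwise Hölder estimate $\|J(t+h)-J(t)\|_{X_1}\le h^{1-1/p}\|\partial_t J\|_{L^p([0,T];X_1)}$.

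The third step is to combine the two ingredients. Applying the Ehrling inequality pointwise in $t$ with $u=J(t+h)-J(t)$ and integrating in time,
$$\|\tau_h J-J\|_{L^p([0,T-h];X)}\le\eta\,\|\tau_h J-J\|_{L^p([0,T-h];X_0)}+C_\eta\,\|\tau_h J-J\|_{L^p([0,T-h];X_1)},$$
so that fixing $\eta$ small enough relative to the $L^p(X_0)$-bound, and then sending $h\to 0$, produces translation smallness in $L^p([0,T];X)$. Combined with the $L^p(X_0)$-boundedness (which yields $L^p(X)$-boundedness through the continuous embedding $X_0\hookrightarrow X$), the Fréchet–Kolmogorov criterion for vector-valued $L^p$-spaces delivers relative compactness in $L^p([0,T];X)$, proving (1). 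For (2), the same Ehrling upgrade converts the $L^\infty(X_0)$ bound and the $X_1$-Hölder estimate into uniform boundedness and equicontinuity in $C([0,T];X)$, and Arzelà–Ascoli gives relative compactness there. The main technical obstacle lies in part (1): turning the merely $L^1$-in-time $X_1$-smallness of $\tau_h J-J$ into $L^p$-in-time smallness in $X$; this is precisely where the Ehrling inequality must be used with an appropriately chosen $\eta$ depending on the uniform $L^p(X_0)$-bound, and where one must verify that the smallness of the time-shifts is uniform across the family rather than only pointwise in $J$.
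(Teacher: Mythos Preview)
The paper does not prove this lemma: it is stated as a quotation from Simon \cite{js} with no accompanying argument, so there is no ``paper's own proof'' to compare against. Your outline is the standard Aubin--Lions--Simon scheme and is essentially what one finds in \cite{js}.

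Two remarks on the details. First, your worry about the \emph{uniformity} of the $L^1(X_1)$ translation estimate is unnecessary: from $J(t+h)-J(t)=\int_t^{t+h}\partial_t J\,\mathrm{d}s$ and Fubini one gets directly
\[
\|\tau_h J-J\|_{L^1([0,T-h];X_1)}\le h\,\|\partial_t J\|_{L^1([0,T];X_1)}\le h\,M_1,
\]
which is already uniform over the family; no Dunford--Pettis argument is needed here. Second, the genuine subtlety you flag at the end---passing from $L^1$-in-time smallness in $X_1$ to $L^p$-in-time smallness in $X$---is real, and the Ehrling inequality alone (applied pointwise and then taking the $L^p$ norm) does not close: you would need $\|\tau_h J-J\|_{L^p(X_1)}$ small, which the $L^1$ bound on $\partial_t J$ does not deliver. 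Simon's actual proof of part (1) (Theorem~5 in \cite{js}) avoids this by working with time mollifications $M_a J(t)=\frac{1}{a}\int_t^{t+a}J(s)\,\mathrm{d}s$: one shows that $\{M_a J\}$ is, for each fixed $a$, pointwise bounded in $X_0$ and hence relatively compact in $L^p(X)$ by a direct Arzel\`a--Ascoli/covering argument, and separately that $M_a J\to J$ in $L^p(X)$ uniformly over the family as $a\to 0$ (this last step uses Ehrling together with the $L^1(X_1)$ bound on $\partial_t J$, but at the level of the averaging error rather than the raw translation). Your sketch captures the right ideas but would need this extra mollification layer to be complete.
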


The last one provides the regularity estimates for the Lam\'e operator in the  exterior domain  $\Omega$ in  $\mathbb R^d$ $(d=2\ \text{or} \ 3)$. We consider 
\begin{equation}\label{aue}
\left\{
\begin{aligned}
&LU=-\alpha\Delta U-\alpha\nabla\text{div}U=F \quad \text{in} \quad \Omega,\\
&U|_{\partial \Omega}=0,\quad U(x)\to 0\quad \text{as}\quad |x|\to\infty.
\end{aligned}
\right.
\end{equation}

\begin{lem}[\cite{CK3}]\label{df3}
Let $\Omega$ be  an exterior domain in  $\mathbb R^d$ with smooth boundary. If $U\in D^{1,q}(\Omega)$ is a weak solution to \eqref{aue}, then for any $1<q<\infty$, 
$$\|U\|_{D^{k+2,q}(\Omega)}\le C\big(\|F\|_{W^{k,q}(\Omega)}+\|U\|_{D^{1,q}(\Omega)}\big),$$
where  $C$ is a positive constant  independent of $(U,F)$.

\end{lem}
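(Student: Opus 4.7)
\textbf{Proof plan for Lemma \ref{df3}.}

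The plan is to reduce the exterior-domain estimate to two well-known building blocks: (i) the Calder\'on–Zygmund estimate for the Lam\'e operator on the whole space $\mathbb R^d$, and (ii) the standard $W^{k+2,q}$ interior/boundary regularity for strongly elliptic systems on a smooth bounded domain. I will glue these together by a partition of unity at a large ball, and then iterate on $k$.

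First, I would fix $R_0>a$ large enough that $\partial \Omega\subset\{|x|<R_0\}$, and choose cutoff functions $\chi_1,\chi_2\in C^\infty(\mathbb R^d)$ with $\chi_1+\chi_2\equiv 1$, $\chi_1\equiv 1$ on $\{|x|\le R_0+1\}$, $\mathrm{supp}\,\chi_1\subset\{|x|\le R_0+2\}$, and $\mathrm{supp}\,\chi_2\subset\{|x|\ge R_0\}$. The pieces $V_i=\chi_i U$ satisfy $LV_i=\chi_i F+[L,\chi_i]U$, where the commutator $[L,\chi_i]U$ is a first-order differential operator in $U$ with coefficients supported on the annulus $\{R_0\le|x|\le R_0+2\}$ and hence pointwise bounded by $C(|\nabla U|+|U|)$; the bounded-annulus zeroth-order piece is in turn controlled by $|\nabla U|$ via Poincar\'e on the annular shell.

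Next, for $V_1$, which is supported in the bounded smooth domain $\Omega_{R_0+2}=\Omega\cap\{|x|<R_0+3\}$ with zero trace on $\partial\Omega$ and on $\{|x|=R_0+3\}$, I would quote the classical $L^q$ theory for strongly elliptic systems of Agmon–Douglis–Nirenberg type to obtain $\|V_1\|_{W^{k+2,q}}\le C(\|\chi_1 F+[L,\chi_1]U\|_{W^{k,q}}+\|V_1\|_{L^q})$, and then absorb the lower-order term by Poincar\'e in the bounded domain to get $\|V_1\|_{D^{k+2,q}}\le C(\|F\|_{W^{k,q}}+\|U\|_{D^{1,q}})$. For $V_2$, I would extend it by zero across $\partial\Omega$ to obtain a $D^{1,q}(\mathbb R^d)$ function (this is licit because $\chi_2$ vanishes in a neighborhood of the boundary, so no jump is created), and then invoke the whole-space Calder\'on–Zygmund estimate for the constant-coefficient Lam\'e operator: $\|\nabla^{k+2}V_2\|_{L^q(\mathbb R^d)}\le C\|L V_2\|_{W^{k,q}(\mathbb R^d)}\le C(\|F\|_{W^{k,q}}+\|U\|_{D^{1,q}})$. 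Adding the two estimates and observing $U=V_1+V_2$ gives the claim for $k=0$, and the general $k$ follows by the same argument applied to $\partial^\beta U$, using that the commutator with $\chi_i$ lowers derivative count by one and is therefore always controllable from the previous induction step together with $\|U\|_{D^{1,q}}$.

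The main obstacle I anticipate is handling the far-field piece $V_2$ with only the natural assumption $U\in D^{1,q}(\Omega)$: the whole-space Calder\'on–Zygmund bound controls $\nabla^{k+2}V_2$ by $LV_2$ in $W^{k,q}$, but the raw $L^q$ norm of $V_2$ itself is not directly available since we only assume $U(x)\to 0$ without a quantitative decay rate. This is exactly why the estimate is stated for the homogeneous norm $D^{k+2,q}$ rather than for $W^{k+2,q}$, and why the lower-order term on the right is $\|U\|_{D^{1,q}}$ rather than $\|U\|_{L^q}$: the Calder\'on–Zygmund/Riesz-transform argument on $\mathbb R^d$ only produces the top-order seminorm, which matches the inhomogeneous left-hand side precisely because the $V_1$ contribution is supported in a bounded set. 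Once this matching of function spaces is recognized, the rest is routine elliptic bookkeeping, and the proof can indeed be read off from the arguments of Cho–Choe–Kim \cite{CK3}.
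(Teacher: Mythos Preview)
The paper does not prove this lemma at all: it is stated in Appendix A with a citation to Cho--Choe--Kim \cite{CK3} and no argument is given, since it is quoted as a known regularity estimate for the Lam\'e operator on exterior domains. So there is nothing in the paper to compare your proposal against.

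That said, your sketch is the standard route to such an estimate and is essentially correct in spirit: localize by a cutoff into a near-boundary piece handled by Agmon--Douglis--Nirenberg $L^q$ theory on a bounded domain, and a far-field piece handled by whole-space Calder\'on--Zygmund estimates for the constant-coefficient Lam\'e system, with the commutators being lower order and supported on a compact annulus. Your observation about why the conclusion is stated in the homogeneous norm $D^{k+2,q}$ and why only $\|U\|_{D^{1,q}}$ appears on the right is exactly the right point. One small wrinkle in the induction step: when you pass to general $k$, the commutator $[L,\chi_i]U$ must be estimated in $W^{k,q}$, which requires control of $\nabla^{k+1}U$ on the bounded annulus; this is available from the $k-1$ step (or, for $k=0$, from interpolation between $\nabla U$ and $\nabla^2 U$ on the compact set), so the bootstrap closes, but it is worth saying explicitly rather than folding it into ``routine bookkeeping.''
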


\section{ Conversion of  Sobolev spaces}

In order to  understand the well-posedness theories  established   in Theorems \ref{th1}-\ref{thshallow}, \ref{zth} and \ref{jordan} clearly, this appendix is devoted to showing the conversion of some Sobolev  spaces between   the pure M-D  coordinate and  the spherically symmetric coordinate.

%These auxiliary results will then be used to prove Theorems \ref{th1}-\ref{th2}.

\begin{lem}\label{poss}
Let $m=d-1$. If the initial data $(\rho_0(x),U_0(x))$ have the following form:
\begin{equation}\label{ssf}
    \rho_0(x)=\rho_0(r),\quad U_0(x)=u_0(r)\frac{x}{r}, 
\end{equation}
where $r=|x|$, then the following two assertions are equivalent:
\begin{equation*}
\begin{split}
    &({\rm{i}})\quad   \big(\rho_0^{\gamma-1}(x), U_0(x)\big)\in H^2(\Omega),\quad \nabla\ln\rho_0(x)\in L^q(\Omega)\cap L^\infty(\Omega)\cap D^1(\Omega).\\
    & ({\rm{ii}})\quad  r^{\frac{m}{2}}\Big(\rho_0^{\gamma-1}(r),(\rho_0^{\gamma-1}(r))_r,r^{-1}(\rho_0^{\gamma-1}(r))_r, (\rho_0^{\gamma-1}(r))_{rr}\Big)\in L^2(I_a),\\
    &\qquad \ r^{\frac{m}{2}}\Big(r^{-2}u_0(r),r^{-1}u_0(r), u_0(r), (u_0(r))_r, r^{-1}(u_0(r))_r, (u_0(r))_{rr}\Big)\in L^2(I_a),\\
    &\qquad \ r^{\frac{m}{q}}(\ln\rho_0(r))_r\in L^q(I_a),\quad r^{\frac{m}{2}}\big(r^{-1}(\ln\rho_0(r))_r,(\ln\rho_0(r))_{rr}\big)\in L^2(I_a),\\
    &\qquad \  (\ln\rho_0(r))_r\in L^\infty(I_a).\\ 
    \end{split}
\end{equation*}
\end{lem}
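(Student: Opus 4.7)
\smallskip
\noindent\textbf{Proof proposal.} The plan is to reduce every norm appearing in assertion (i) to a weighted one-dimensional integral on $I_a$ by direct computation in spherical coordinates, then read off the equivalence with (ii). Throughout, set $r=|x|$ and let $\omega_{d-1}$ denote the surface area of the unit sphere in $\mathbb R^d$. For any radial measurable $f(x)=f(r)$ one has $\int_\Omega |f|^p\,{\rm d}x=\omega_{d-1}\int_{I_a} r^m |f(r)|^p\,{\rm d}r$, which already matches the integer powers of $r$ appearing in (ii).

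\smallskip
First I would handle the scalar radial quantities $\rho_0^{\gamma-1}$ and $\ln\rho_0$. For a radial scalar $f(r)$ a direct calculation gives
\begin{equation*}
\nabla f=f'(r)\,\frac{x}{r},\qquad |\nabla f|^2=|f'(r)|^2,
\end{equation*}
and, from the identity $\partial_i\partial_j f=f''(r)\frac{x_i x_j}{r^2}+\frac{f'(r)}{r}\bigl(\delta_{ij}-\frac{x_i x_j}{r^2}\bigr)$, the pointwise relation
\begin{equation*}
|\nabla^2 f|^2=|f''(r)|^2+m\,\frac{|f'(r)|^2}{r^2},
\end{equation*}
where the cross term vanishes because $\sum_{i,j}\tfrac{x_i x_j}{r^2}(\delta_{ij}-\tfrac{x_i x_j}{r^2})=0$ and $\sum_{i,j}(\delta_{ij}-\tfrac{x_i x_j}{r^2})^2=m$. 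Integrating in spherical coordinates then yields $\|f\|_{H^2(\Omega)}^2\sim \int_{I_a} r^m\bigl(|f|^2+|f'|^2+r^{-2}|f'|^2+|f''|^2\bigr)\,{\rm d}r$, and similarly $\||\nabla f|\|_{L^q(\Omega)}^q\sim \int_{I_a}r^m|f'|^q\,{\rm d}r$, $\||\nabla f|\|_{L^\infty(\Omega)}=|f'|_{L^\infty(I_a)}$. Applying these to $f=\rho_0^{\gamma-1}$ and $f=\ln\rho_0$ gives directly the weighted conditions on $\rho_0^{\gamma-1}$, $(\rho_0^{\gamma-1})_r$, $(\rho_0^{\gamma-1})_{rr}$, on $r^{m/q}(\ln\rho_0)_r$, on $(\ln\rho_0)_r$, and on $(\ln\rho_0)_{rr}$ with their $r^{-1}(\,\cdot\,)_r$ companions.

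\smallskip
Next I would treat $U_0(x)=u_0(r)\frac{x}{r}$. Using the formulas already recorded in \eqref{wwo} of the proof of Lemma \ref{boss}, one has $|U_0|^2=u_0^2(r)$ and, after grouping the tensorial terms,
\begin{equation*}
|\nabla U_0|^2=|u_0'(r)|^2+m\,\frac{|u_0(r)|^2}{r^2},
\end{equation*}
while from the explicit expression for $\partial_i\partial_j U_0^k$ and the identities $\sum_k x_k^2=r^2$, $\sum_k\delta_{kk}=d$, a straightforward contraction gives
\begin{equation*}
|\nabla^2 U_0|^2=c_1 |u_0''(r)|^2+c_2\,\frac{|u_0'(r)|^2}{r^2}+c_3\,\frac{|u_0(r)|^2}{r^4},
\end{equation*}
for dimension-dependent positive constants $c_1,c_2,c_3$ (in fact all positive by the same vanishing-of-cross-terms mechanism as above). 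Integrating against $r^{m}\,{\rm d}r$ then shows that $\|U_0\|_{H^2(\Omega)}^2$ is equivalent to $\int_{I_a}r^m\bigl(u_0^2+r^{-2}u_0^2+r^{-4}u_0^2+|u_0'|^2+r^{-2}|u_0'|^2+|u_0''|^2\bigr)\,{\rm d}r$, which is precisely the list for $U_0$ in (ii).

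\smallskip
Finally, the implication (ii)$\Rightarrow$(i) follows by running the same identities in reverse: the pointwise relations for $|\nabla f|^2$, $|\nabla^2 f|^2$, $|U_0|^2$, $|\nabla U_0|^2$, $|\nabla^2 U_0|^2$ express each multi-dimensional integrand as a non-negative combination of the radial quantities appearing in (ii), and the change of variables $\,{\rm d}x=\omega_{d-1}r^m\,{\rm d}r\,$ converts each weighted $L^p(I_a)$ norm on the right-hand side of (ii) into the corresponding $L^p(\Omega)$ norm on the left. The only mildly subtle point — the main (and really only) obstacle — is bookkeeping the coefficients in the expansion of $|\nabla^2 U_0|^2$ so that all three radial pieces $u_0''$, $r^{-1}u_0'$, $r^{-2}u_0$ appear with strictly positive constants (which requires using that the cross-terms integrate to zero on the sphere, not just pointwise cancellation). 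Once this is verified, the two assertions are equivalent up to universal constants depending only on $d$ and $a$, proving Lemma \ref{poss}.
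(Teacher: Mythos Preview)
Your approach is the same as the paper's: both compute the partial derivatives explicitly (the paper records these in \eqref{mp}) and then pass to spherical coordinates via \eqref{mvp}, leaving the verification as routine. Your pointwise identities for the scalar quantities $\rho_0^{\gamma-1}$ and $\ln\rho_0$ are correct, as is your formula $|\nabla U_0|^2=|u_0'|^2+m\,r^{-2}|u_0|^2$.

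There is, however, a genuine slip in your treatment of $|\nabla^2 U_0|^2$. You assert that it splits as $c_1|u_0''|^2+c_2 r^{-2}|u_0'|^2+c_3 r^{-4}|u_0|^2$ with positive $c_i$, and that the cross terms disappear after integrating over the sphere. That is not what happens. From the formula in \eqref{wwo} one has $\partial_i\partial_j U_0^k=u_0''\,A_{ijk}+(u_0'-r^{-1}u_0)\,B_{ijk}$ with $\sum A_{ijk}B_{ijk}=0$ pointwise, so in fact
\[
|\nabla^2 U_0|^2=|u_0''|^2+\frac{c}{r^2}\Bigl(u_0'-\frac{u_0}{r}\Bigr)^2
\]
for a dimensional constant $c>0$. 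The cross term $-2c\,r^{-3}u_0'u_0$ is radial and does not vanish under spherical integration. What actually closes the argument --- and what the paper leaves implicit but highlights in Remark \ref{rec} by the phrase ``for any fixed constant $a>0$'' --- is that the domain is exterior: since $r\ge a$, one has $|r^{m/2}r^{-2}u_0|_{L^2(I_a)}\le a^{-1}|r^{m/2}r^{-1}u_0|_{L^2(I_a)}$, so $r^{m/2}r^{-2}u_0\in L^2(I_a)$ already follows from the first-order level, and then $r^{m/2}r^{-1}u_0'\in L^2(I_a)$ follows from the combination above by the triangle inequality. With this correction your proof is complete and coincides with the paper's.
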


\begin{proof}
First, it follows from direct calculations that
\begin{equation}\label{mp}
    \begin{split}
        \frac{\partial\rho_0^{\gamma-1}(x)}{\partial x_i}=&(\rho_0^{\gamma-1}(r))_r\frac{x_i}{r},\quad \frac{\partial\ln\rho_0(x)}{\partial x_i}=(\ln\rho_0(r))_r\frac{x_i}{r},\\ \frac{\partial^2 \rho_0^{\gamma-1}(x)}{\partial x_i\partial x_j}=&( \rho_0^{\gamma-1}(r))_{rr}\frac{x_i x_j}{r^2}+(\rho_0^{\gamma-1}(r))_r\frac{\delta_{ij}r^2-x_i x_j}{r^3},\\
        \frac{\partial U_0^j(x)}{\partial x_i}=&\frac{\partial}{\partial x_i}\Big(u_0(r)\frac{x_j}{r}\Big)=(u_0(r))_r\frac{x_i x_j}{r^2}+ u_0(r)\frac{\delta_{ij}r^2-x_i x_j}{r^3},\\
       \frac{\partial^2 U_0^k(x)}{\partial x_i\partial x_j}=&(u_0(r))_{rr}\frac{x_i x_j x_k}{r^3}+( u_0(r))_r\Big(\frac{\delta_{ij}x_k+\delta_{ik}x_j+\delta_{jk}x_i}{r^2}-3\frac{x_i x_j x_k}{r^4}\Big)\\
       &+u_0(r)\Big(\frac{3x_i x_j x_k}{r^5}-\frac{\delta_{jk}x_i+\delta_{ji}x_k+\delta_{ik}x_j}{r^3}\Big),\\
       \frac{\partial^2\ln\rho_0(x)}{\partial x_i\partial x_j}=&(\ln\rho_0(r))_{rr}\frac{x_i x_j}{r^2}+(\ln\rho_0(r))_r\frac{\delta_{ij}r^2-x_i x_j}{r^3}.
    \end{split}
\end{equation}

Second, via introducing the unit spherical coordinate transformation in $\mathbb R^d (d=2, 3)$
\begin{equation}
    \begin{split}
     &x'_1=\cos\varphi_1,\quad x'_2=\sin\varphi_1\cos\varphi_2,\\
     &\cdots\cdots\\
     &x'_{d-1}=\sin\varphi_1\sin\varphi_2\cdots\sin\varphi_{d-2}\cos\varphi_{d-1},\\ &x'_d=\sin\varphi_1\sin\varphi_2\cdots\sin\varphi_{d-2}\sin\varphi_{d-1},
    \end{split}
\end{equation}  
%$$\sin\theta\cos\varphi,\quad x'_2=\sin\theta\sin\varphi,\quad x'_3=\cos\theta, \quad x'=(x'_1,x'_2, x'_3),$$
where $x'=(x'_1,\cdots, x'_d)\in\Omega'$, $\Omega'$ is the unit sphere in $\mathbb R^d$, $\varphi_k\in [0,\pi] (k=1,\cdots, d-2)$ and  $\varphi_{d-1}\in [0,2\pi]$, then one can obtain that for any real integrable function $f(x)$
\begin{equation}\label{mvp}
\int_{\Omega}f(x)\text{d}x=\int_{\Omega'}\int_a^\infty f(rx')r^m\text{d}r\text{d}x'.
\end{equation}

Therefore, one can easily verify that $({\rm{i}})$ and $({\rm{ii}})$ are equivalent by using \eqref{ssf}-\eqref{mvp}.
\end{proof}

\begin{rk}\label{rec}
For any fixed  constant $a>0$, one can also show that $({\rm{ii}})$ is equivalent to
\begin{equation*}
\begin{split}    
&r^{\frac{m}{2}}\big(\rho_0^{\gamma-1}(r), u_0(r)\big)\in H^2(I_a), \quad r^{\frac{m}{q}}(\ln\rho_0(r))_r\in L^q(I_a),\\ 
&r^{\frac{m}{2}}\big(r^{-1}(\ln\rho_0(r))_r,(\ln\rho_0(r))_{rr}\big)\in L^2(I_a), \quad (\ln\rho_0(r))_r\in L^\infty(I_a).
\end{split}
\end{equation*}
Indeed, according to Lemma \ref{poss}  and the following facts
\begin{align*}
&(r^{\frac{m}{2}}\rho_0^{\gamma-1}(r))_r={\frac{m}{2}}r^{\frac{m-2}{2}}\rho_0^{\gamma-1}(r)+r^{\frac{m}{2}}(\rho_0^{\gamma-1}(r))_r,\\
&(r^{\frac{m}{2}}\rho_0^{\gamma-1}(r))_{rr}={\frac{m(m-2)}{4}} r^{\frac{m-4}{2}}\rho_0^{\gamma-1}(r)+mr^{\frac{m-2}{2}}(\rho_0^{\gamma-1}(r))_r+r^{\frac{m}{2}}(\rho_0^{\gamma-1}(r))_{rr},\\
&(r^{\frac{m}{2}} u_0(r))_r=\frac{m}{2} r^{\frac{m-2}{2}}u_0(r)+r^{\frac{m}{2}}(u_0(r))_r,\\
& (r^{\frac{m}{2}}u_0(r))_{rr}={\frac{m(m-2)}{4}} r^{\frac{m-4}{2}}u_0(r)+mr^{\frac{m-2}{2}}(u_0(r))_r+r^{\frac{m}{2}}(u_0(r))_{rr},
\end{align*}
one can get the desired conclusion. Therefore, the  initial assumption \eqref{etm} in  Lemma \ref{rth1} is reasonable. 
\end{rk}

\bigskip
\noindent{\bf Acknowledgement:}
The research  was  supported in part  by  National Natural Science Foundation of China under the Grant  12101395.
The research of Shengguo Zhu was also supported in part by The Royal Society--Newton International Fellowships Alumni AL/201021 and AL/211005.

\bigskip
\noindent{\bf Conflict of Interest:} The authors declare  that they have no conflict of
interest. 
The authors also  declare that this manuscript has not been previously  published, and will not be submitted elsewhere before your decision.

\bigskip

\bigskip

\end{document}